\newcommand\reallywidehat[1]{%
\savestack{\tmpbox}{\stretchto{%
  \scaleto{%
    \scalerel*[\widthof{\ensuremath{#1}}]{\kern.1pt\mathchar"0362\kern.1pt}%
    {\rule{0ex}{\textheight}}%WIDTH-LIMITED CIRCUMFLEX
  }{\textheight}% 
}{2.1ex}}%
\stackon[-6.9pt]{#1}{\tmpbox}%
}
\newcommand\varfrak[1]{\mathord{\text{\textgoth{#1}}}}
\newcommand{\ad}{\operatorname{ad}}
\newcommand{\Ad}{\operatorname{Ad}}
\newtheorem{theorem}{Theorem}[section]
\newtheorem{definition}{Definition}[section]
\newtheorem{proposition}{Proposition}[section]
\newtheorem{lemma}{Lemma}[section]
\newtheorem{corollary}{Corollary}[section]
\newtheorem{remark}{Remark}[section]
\begin{document}

\title{Gauge reduction in covariant field theory}
%Covariant reduction in gauge theories

\author[1]{Marco Castrillón López\thanks{mcastri@mat.ucm.es}}

\author[2,3]{Álvaro Rodríguez Abella\thanks{alvrod06@ucm.es}}

\affil[1]{Facultad de Ciencias Matemáticas, Universidad Complutense de Madrid, Plaza de las Ciencias, 3, Madrid, 28040, Madrid, Spain}

\affil[2]{Department of Mathematics and Computer Science, Saint Louis University (Madrid Campus), Avenida del Valle, 34, Madrid, 28003, Madrid, Spain}

\affil[3]{Instituto de Ciencias Matem\'aticas (CSIC--UAM--UC3M--UCM), Calle Nicol\'as Cabrera, 13--15, Madrid, 28049, Spain}

\date{}

\maketitle

\small

\noindent\emph{2020 Mathematics Subject Classification:} 70S05, 70S10, 70S15 (Primary), 58E15, 53Z05  (Secondary)

\noindent \emph{Key words:} covariant reduction, Euler--Lagrange equations, gauge symmetry, generalized principal bundle, Lagrangian field theory, Noether theorem

\begin{abstract}
In this work, we develop a Lagrangian reduction theory for covariant field theories with gauge symmetries. These symmetries are modeled by a Lie group fiber bundle acting fiberwisely on a configuration bundle. In order to reduce the variational principle, we utilize generalized principal connections, a type of Ehresmann connections that are equivariant by the fiberwise action. After obtaining the reduced equations, we give the reconstruction condition and we relate the vertical reduced equation with the Noether theorem. Lastly, we illustrate the theory with several examples, including the classical case (Lagrange--Poincaré reduction), Electromagnetism, symmetry-breaking and non-Abelian gauge theories.
\end{abstract}

\normalsize

%\tableofcontents

\section{Introduction}

Reduction by symmetries has played a major role in Geometric Mechanics since its was first introduced with the its modern approach \cite{Ar1966,Sm1970,Me1973,MaWe1974}. The key idea is to use the symmetry group of a (Lagrangian or Hamiltonian) system to obtain a reduced set of equations on a space of lower dimension. In the Lagrangian setting, this reduced space is the quotient of the configuration manifold by the symmetry group, and the reduced equations come from a reduced variational principle. More precisely,  (see for example, \cite{MaSh1993,CeMaRa2001,CeMaRa2001b} and the references there in) reduction is performed in the tangent bundle $TQ$ of a configuration manifold $Q$ where a Lie group $G$ acts on. The key point is the determination of the geometry of $(TQ)/G$ as well as the nature of the projected variations to that quotient. 

This philosophy has been extended successfully to the realm of classical field theories \cite{CaMa2008,GaRa2014} and, in particular, to covariant field theories \cite{CaGaRa2001,CaRa2003,ElGaHoRa2011,CaGaRo2013}. Here, the configuration space is a fiber bundle $Y\to X$, the objects under study are sections of this bundle, and the phase space is the corresponding jet bundle $J^1Y\to Y$. The base $X$ can be space-time or, for non-dynamical problems, any manifold. 
 In this context, reduction is performed when there is a Lie group $G$ acting vertically on $Y$. This situation is sometimes known as global symmetries or global gauge symmetries. 
 
 Nevertheless, there is a wide variety of problems in Field Theories where symmetries are not global but \emph{local}, that is, the action of the symmetry group depends on the point $x\in X$ where it is evaluated. This is also known as local gauge (or simply gauge) symmetry and its geometric model is written in terms of a Lie group fiber bundle $\mathcal{G}\to X$ acting fiberwisely on the configuration bundle $Y\to X$. The main instances of this framework are pure gauge theories as Electromagnetism or Yang--Mills,  where gauge transformations are sections of $T^*X$ or the jet space of an adjoint bundle, respectively \cite{MaMa1992,Fo2012}. It is impossible to grant the appropriate importance to this kind of gauge theories both in Physics (as one of the foundations of the models for fields) and in Differential Geometry (as the basic tool for the constructions of a growing number of geometric and topological invariants).
 Interestingly, these gauge theories posses a particular kind of reduction result in the celebrated theorem of Utiyama \cite{Ut1956,Ga1977}, also generalized to the setting of interaction  \cite{Be1989}. The theorem proves that gauge invariant Lagrangian densities only depend on the curvature (force), instead of depending on the principal connection itself (potential). However, it is frustrating to see that the standard field theory reduction scheme (Lagrange--Poincaré reduction) stemmed from Geometric Mechanics does not cover gauge symmetries, a fact that has prevented the gauge case from taking advantage of the rich geometric interpretations and constructions that the powerful reduction program has provided so far (for example, geometric integrators just to mention one of them).

This article addresses the construction of a general Lagrangian reduction procedure for (covariant) field theories with local symmetries. The reduction of the variational principle relies on the use of the generalized principal connections (see \cite{CaRo2023} or \cite{Fischer2022}), that is, Ehresmann connections that are equivariant by the fiberwise action. Our theory of local symmetries extends the classical case investigated in \cite{ElGaHoRa2011} for global symmetries. In some sense, we can say that this is the culmination of the reduction program in its generality. 

We organise the paper as follows. In Section \ref{sec:preliminaries}, we recall the geometric tools needed to develop our theory: fibered actions of Lie group fiber bundles, generalized principal connections, and the natural connections arising on the corresponding quotient bundles. Section \ref{sec:reducedconfigurationspace} is devoted to studying the geometry of the quotient of the first jet of the configuration bundle by the fibered action. We have to note that, given a Lie group bundle $\mathcal{G}\to X$ acting on a bundle $Y\to X$, the interesting first order gauge Lagrangian densities are not invariant by the full jet bundle $J^1\mathcal{G}$ but only by an affine subbundle $H\subset J^1 \mathcal{G}$ projecting surjectively onto $\mathcal{G}$. This is the case of all the important examples one can find in the literature. This bundle $H$ can be thought as a non-holonomic constraint in the nature of the derivatives of the symmetries, a subbundle that is usually affine. The existence of this constraint is in the the core of the complexity of the theory we develop here, since invariance under the full jet bundle $J^1\mathcal{G}$ is in fact rather trivial (see examples in \S \ref{sec:examples}.2 below). Next, in Section \ref{sec:reducedvariationalprinciple} we compute the reduced variations and we apply it to the reduced Lagrangian, thus obtaining the main result of this work: the reduced field equations. After that, in Section \ref{sec:reconstruction} we investigate the reconstruction condition, that is, the additional equations that a solution of the reduced equations must satisfy to come from a solution of the original problem. This necessary condition is a characteristic trait of reduction field theories that is absent in Classical Mechanics and it is written as the flatness of a generalized connection. Furthermore, we relate part of the reduced equations (the so-called vertical equation) with Noether theorem in Section \ref{sec:noethertheorem} by showing that the conservation laws derived from the gauge symmetries become part of the reduced equations themselves. Finally, in Section \ref{sec:examples} we apply our theory in several contexts. We begin with classical reduction in field theories (global symmetries) obtained as a particular case of the main result. We follow with by reduction under the full jet symmetry. We continue with Electromagnetism in vacuum as well as $k$-form Electromagnetism. EM is also analyzed in the case of symmetry breaking of gauge theories when there is a product of groups. The last application works with gauge invariance in the Yang--Mills setting. We describe the geometry of the reduced space and, in particular, we get a new proof of Utiyama theorem from a reduction point of view. In addition we get the reduced equations that, together with the reconstruction condition, are the celebrated Yang--Mills equations for the appropriated Lagrangian density. In fact, the reduction technique of this article sheds light to an uncomfortable situation that Utiyama reduction does not fully explain: even though the reduced Lagrangian depends on curvature only, the equations still depend on both curvature and connection. Surprisingly, we prove that this is connected with the fact that the non-Abelian gauge symmetry is not a free action and the reduction is, in principle, singular. There is not, even in Mechanics, a well defined theory of Lagrangian singular reduction. The way to overcome this difficulty is to enlarge the phase bundle (from the bundle of connections $C(P)$ of a principal bundle $P\to X$ to its 1-jet bundle $J^1P$) before reduction. It turns out that the reduced phase bundle comprises both connections and curvature and hence the equations as well. But the elimination of that augmented geometry after reduction (in a sort of unreduction-reduction process) gives that the Lagrangian does not depend on the connection, without eliminating that dependence in the equations.

This article opens new interesting questions for future work. First, even though the main gauge examples we know have symmetries defined by an affine subbundle, we think that the comprehension of reduction by any subbundle $H\subset J^1\mathcal{G}$ is still a valuable topic. The generalization to higher order cases would also be of much interest, a questions that is connected with higher order gauge theories (for example, see \cite{CuMePo07}). Furthermore, it would be interesting to connect the reduction approach in this work with some constructions of curved gauge theories (\cite{Fischer2022, KoSt15}). The singular actions in these cases could be tackled again with the unreduction-reduction idea used for Yang--Mills in \S \ref{sec:examples}.6. Finally, the construction of variational integrators (see, for example, \cite{BlCoJi2019, BoLoSu1998, MaPeSh1999, MaPeSh2000} for Mechanics or  \cite{Va2007, ChFeGaRo19} for Field Theories) in gauge theories with reduction is a big goal that will require a careful analysis of the discrete analogs of the objects in the article.

In the following, every manifold or map is assumed to be smooth, meaning $C^\infty$, unless otherwise stated. In addition, every fiber bundle $\pi_{Y,X}: Y\rightarrow X$ is assumed to be locally trivial and is denoted by $\pi_{Y,X}$. Given $x\in X$, $Y_x=\pi_{Y,X}^{-1}(\{x\})$ denotes the fiber over $x$. The space of (smooth) global sections of $\pi_{Y,X}$ is denoted by $\Gamma(\pi_{Y,X})$. In particular, vector fields on a manifold $X$ are denoted by $\mathfrak X(X)=\Gamma(\pi_{TX,X})$, where $TX$ is the tangent bundle of $X$. Likewise, the space of local sections on an open set $\mathcal U\subset X$ is denoted by $\Gamma(\mathcal U,\pi_{Y,X})$. The tangent map of a map $f\in C^\infty(X,X')$ between the manifolds $X$ and $X'$ is denoted by $(df)_x:  T_xX\rightarrow T_{f(x)}X'$ for each $x\in X$. In the same vein, the pull-back of $\alpha\in\Omega^k(X')$ is denoted by $f^*\alpha\in\Omega^k(X)$ and its exterior derivative is denoted by ${\rm d}\alpha\in\Omega^{k+1}(X')$. When working in local coordinates, we assume the Einstein summation convention for repeated indices. A compact interval will be denoted by $I=[a,b]$.

%%%%%%%%%%%%%%%%%%%%
\section{Preliminaries}\label{sec:preliminaries}

This section is devoted to introducing the main geometric tools used in the forthcoming development. In particular, the theory of generalized principal bundles and connections is an essential point. We refer the reader to \cite{CaRo2023} (see also \cite{Fischer2022}) for a complete exposition of this topic.

%%%%%%%%%%
\subsection{Actions of Lie group bundles}

A \emph{Lie group fiber bundle} with typical fiber a Lie group $G$ is a fiber bundle $\pi_{\mathcal G,X}:\mathcal{G} \to X$ such that for any point $x\in X$ the fiber $\mathcal G_x$ is equipped with a Lie group structure and there is a neighborhood $\mathcal U\subset X$ and a diffeomorphism $\mathcal U\times G  \rightarrow \pi_{\mathcal G,X}^{-1}(\mathcal U)$ preserving the Lie group structure fiberwisely.

Note that the map $1: X\rightarrow\mathcal G$ that assigns the identity element $1_x\in\mathcal G_x$ to each $x\in X$ is a global section (called the \emph{unit section}) of $\pi_{\mathcal G,X}$. Any Lie group bundle defines a Lie algebra bundle $\pi_{\varfrak g,X}:\varfrak g\to X$ as the vector bundle whose fiber $\varfrak g_x$ at each $x\in X$ is the Lie algebra of $\mathcal G_x$. That is, $\varfrak g=1^*(V\mathcal G)$, where $V\mathcal G \subset T\mathcal G$ is the vertical bundle of $\pi_{\mathcal G,X}$, i.e. the kernel of $(\pi_{\mathcal G,X})_*$. We consider subgroups of Lie group bundles in the following sense.

\begin{definition}
A \emph{Lie group subbundle} of a Lie group bundle $\pi_{\mathcal{G},X}:\mathcal{G}\to X$ is a Lie group bundle $\pi_{\mathcal H,X}:\mathcal H\to X$ such that $\mathcal H$ is a submanifold of $\mathcal{G}$ and $\mathcal H_x$ is a Lie subgroup of $\mathcal{G}_x$ for each $x\in X$. It is said to be \emph{closed} if $\mathcal H_x$ is a closed Lie subgroup of $\mathcal G_x$ for every $x\in X$.
\end{definition}

Let $\pi_{Y,X}$ be a fiber bundle and $\pi_{\mathcal G,X}$ be a Lie group fiber bundle. We denote by $Y\times_X\mathcal G$ the corresponding fibered product, which is also a fiber bundle over $X$.

\begin{definition}
A \emph{right fibered action} of $\pi_{\mathcal G,X}$ on $\pi_{Y,X}$ is a bundle morphism $\Phi: Y\times_X\mathcal G\rightarrow Y$ covering the identity $\operatorname{id}_X$ such that $\Phi(y,hg)=\Phi(\Phi(y,h),g)$ and $\Phi(y,1_x)=y$, for all $(y,g),(y,h)\in Y\times_X\mathcal G$, $\pi_{\mathcal G,X}(y)=x$.
\end{definition}

\begin{remark}
The notion of a left action $\Phi :\mathcal{G}\times _X Y\to Y$ is as above but with the condition $\Phi (hg,y)=\Phi (h,\Phi (g,y))$. However, if it is not explicitly indicated, all action considered in this article will be right actions. The results for left action would be completely analogous.
\end{remark}
For the sake of simplicity, we will denote $\Phi(y,g)=y\cdot g$ and we will say that $\pi_{\mathcal G,X}$ acts fiberwisely on the right on $\pi_{Y,X}$. Note that it induces a right action on each fiber, $\Phi_x=\Phi|_{Y_x\times\mathcal G_x}: Y_x\times \mathcal G_x\rightarrow Y_x$. The fibered action is said to be \emph{free} if $y\cdot g=y$ for some $(y,g)\in Y\times_X\mathcal G$ implies that $g=1_x$, $x=\pi_{Y,X}(y)$. In the same way, it is said to be \emph{proper} if the bundle morphism $Y\times_X \mathcal G\ni (y,g)\mapsto(y,y\cdot g)\in  Y\times_X Y$ is proper. If $\Phi$ is free and proper, so is each action $\Phi_x$, since the fibers of a bundle are closed. 

As the fibered action is vertical (i.e. it covers the identity $\operatorname{id}_X$), we may regard the quotient space $Y/\mathcal G$ as the disjoint union of the quotients of the fibers by the induced actions, that is,
\begin{equation*}
Y/\mathcal G=\bigsqcup_{x\in X}Y_x/\mathcal G_x=\left\{[y]_{\mathcal G}=(x,[y]_{\mathcal G_x}): x\in X,y\in Y_x\right\},
\end{equation*}
The following diagram is commutative: 

\begin{equation*} \label{eq:diagramafibrados}
\begin{array}{cc}
\begin{tikzpicture}
\matrix (m) [matrix of math nodes,row sep=3em,column sep=3em,minimum width=2em]
{	Y & & X \\
	& Y/\mathcal G & \\};
\path[-stealth]
(m-1-1) edge [] node [above] {$\pi_{Y,X}$} (m-1-3)
(m-1-1) edge [] node [left] {$\pi_{Y,Y/\mathcal G}\,$} (m-2-2)
(m-2-2) edge [] node [right] {$\;\,\pi_{Y/\mathcal G,X}$} (m-1-3);
\end{tikzpicture}
& 
\begin{tikzpicture}
\matrix (m) [matrix of math nodes,row sep=3em,column sep=3em,minimum width=2em]
{	y & & x \\
	& \left[y\right]_{\mathcal G} & \\};
\path[-stealth]
(m-1-1) edge [|->,decoration={markings,mark=at position 1 with {\arrow[scale=1.7]{>}}},
    postaction={decorate},shorten >=0.4pt] node [above] {} (m-1-3)
(m-1-1) edge [|->,decoration={markings,mark=at position 1 with {\arrow[scale=1.7]{>}}},
    postaction={decorate},shorten >=0.4pt] node [left] {} (m-2-2)
(m-2-2) edge [|->,decoration={markings,mark=at position 1 with {\arrow[scale=1.7]{>}}},
    postaction={decorate},shorten >=0.4pt] node [right] {} (m-1-3);
\end{tikzpicture}
\end{array}
\end{equation*}

\begin{proposition}\label{prop:Y-Y/Gfibrado}
If $\pi_{\mathcal G,X}$ acts on $\pi_{Y,X}$ freely and properly, then $Y/\mathcal G$ admits a unique smooth structure such that 
\begin{enumerate}[(i)]
    \item $\pi_{Y,Y/\mathcal G}$ is a fiber bundle with typical fiber $G$.
    \item $\pi_{Y/\mathcal G,X}$ is a fibered manifold, i.e. a surjective submersion.
\end{enumerate}
\end{proposition}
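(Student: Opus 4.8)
The plan is to bootstrap the classical quotient manifold theorem, applied fiberwise, into a global statement by means of the closed-embedding (Godement) criterion for quotients by an equivalence relation. As already observed just before the statement, freeness and properness of $\Phi$ force each induced action $\Phi_x:Y_x\times\mathcal G_x\to Y_x$ to be free and proper; hence the classical theorem gives that every fiber quotient $Y_x/\mathcal G_x$ is a smooth manifold and that $Y_x\to Y_x/\mathcal G_x$ is a principal $\mathcal G_x$-bundle with fiber $G$. This already identifies, set-theoretically, the fibers of $\pi_{Y,Y/\mathcal G}$ and the fibration $\pi_{Y/\mathcal G,X}$; the real work is to equip the disjoint union $Y/\mathcal G=\bigsqcup_x Y_x/\mathcal G_x$ with a single smooth structure compatible across $x\in X$.

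To do this I would consider the orbit map $\Psi:Y\times_X\mathcal G\to Y\times_X Y$, $(y,g)\mapsto(y,y\cdot g)$, whose image $R$ is the graph of the orbit equivalence relation defining $Y/\mathcal G$. Freeness makes $\Psi$ injective, and an infinitesimal computation shows it is an immersion: if $\Psi_*(v,w)=0$, then the first component forces $v=0$, and $w$ is then a vertical vector of $\mathcal G$ killed by the fundamental-field map $\varfrak g_x\to T_yY_x$, which is injective precisely because $\Phi_x$ is free. Properness of $\Phi$ makes $\Psi$ proper, so $\Psi$ is a proper injective immersion, hence a closed embedding; consequently $R$ is a closed embedded submanifold of $Y\times_X Y$ and, under $\Psi$, the projection $\mathrm{pr}_1:R\to Y$ is the bundle projection $Y\times_X\mathcal G\to Y$, a surjective submersion. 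Godement's criterion then produces on $Y/\mathcal G=Y/R$ a unique smooth structure for which $\pi_{Y,Y/\mathcal G}$ is a surjective submersion; uniqueness is automatic from the universal property of submersions as quotient maps.

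With the submersion $\pi_{Y,Y/\mathcal G}$ in hand, item (ii) is immediate: the factorization $\pi_{Y,X}=\pi_{Y/\mathcal G,X}\circ\pi_{Y,Y/\mathcal G}$, together with surjectivity of $\pi_{Y,X}$ and the fact that one may cancel a surjective submersion on the right, shows that $\pi_{Y/\mathcal G,X}$ is a surjective submersion. For item (i) I would build local trivializations of $\pi_{Y,Y/\mathcal G}$ exactly as in the principal-bundle case: around a class $[y_0]_{\mathcal G}$ over $x_0$, pick a group-bundle trivialization $\tau:\mathcal U\times G\to\mathcal G|_{\mathcal U}$ and, using that $\pi_{Y,Y/\mathcal G}$ is a submersion, a local section $\sigma:\mathcal W\to Y$ with $\pi_{Y/\mathcal G,X}(\mathcal W)\subset\mathcal U$; then $\chi([z],g)=\sigma([z])\cdot\tau(\pi_{Y/\mathcal G,X}([z]),g)$ is a smooth fiberwise-equivariant bijection onto $\pi_{Y,Y/\mathcal G}^{-1}(\mathcal W)$, exhibiting $G$ as the typical fiber.

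The step I expect to be the main obstacle is the smoothness of $\chi^{-1}$, that is, genuine local triviality rather than a mere continuous bijection. This is resolved by the closed embedding established above: it yields a smooth translation (division) map $\delta:R\to\mathcal G$ characterized by $y'=y\cdot\delta(y,y')$, and $\delta$ supplies the smooth inverse to $\chi$. The one point demanding care throughout is that all constructions be smooth jointly in the base variable $x\in X$ and not merely fiber by fiber; phrasing the argument through the single global map $\Psi$ and the criterion for $R$, rather than gluing the fiberwise quotients $Y_x/\mathcal G_x$ by hand, is precisely what circumvents this difficulty.
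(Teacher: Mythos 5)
Your proposal cannot be compared against an in-paper argument, because the paper states this proposition without proof: it is recalled as a preliminary and deferred to \cite{CaRo2023}. Judged on its own terms, your proof is correct. Properness of the fibered action is, by the paper's very definition, properness of $\Psi:(y,g)\mapsto(y,y\cdot g)$; freeness makes $\Psi$ injective, and your immersion argument works once you write out the one routine detail you gloss over, namely that the vector $w$ lies in $T_g\mathcal G_x$ rather than in $\varfrak g_x$, so one must right-translate it to the identity and use that $\Phi_g$ is a diffeomorphism, via $(d\Phi_y)_g\circ (dR_g)_{1_x}=(d\Phi_g)_y\circ(d\Phi_y)_{1_x}$, before invoking injectivity of the fundamental-field map (which indeed needs only freeness). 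Hence $\Psi$ is a proper injective immersion, so a closed embedding; since $Y\times_X Y$ is itself a closed embedded submanifold of $Y\times Y$ (preimage of the diagonal under the submersion $\pi_{Y,X}\times\pi_{Y,X}$), the orbit relation $R$ meets the hypotheses of Godement's criterion, with $\mathrm{pr}_1|_R$ a surjective submersion because it is conjugate under $\Psi$ to the bundle projection $Y\times_X\mathcal G\to Y$; closedness of $R$ also gives Hausdorffness of the quotient. The cancellation argument for (ii), and the trivialization $\chi$ with smooth inverse supplied by the division map $\delta=\mathrm{pr}_{\mathcal G}\circ\Psi^{-1}$, are both sound--you correctly identified smoothness of $\chi^{-1}$ as the crux, and the closed embedding is exactly what resolves it. For comparison, a more elementary route (and the one closest in spirit to the cited source) is to localize in the base: over an open set $\mathcal U\subset X$ trivializing $\mathcal G$ via $\tau$, the fibered action becomes an honest free and proper action of the Lie group $G$ on the manifold $Y|_{\mathcal U}$ by setting $y\cdot g:=y\cdot\tau(\pi_{Y,X}(y),g)$ (a genuine group action because $\tau$ is fiberwise a group isomorphism), so the classical quotient manifold theorem applies to $Y|_{\mathcal U}$, and the resulting smooth structures on the pieces $(Y/\mathcal G)|_{\mathcal U}$ glue by the uniqueness part of that theorem. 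That route uses only the classical theorem as a black box; your global argument through $\Psi$ and Godement buys a chart-free treatment of joint smoothness in the base variable--precisely the difficulty you flagged--at the price of invoking the stronger quotient criterion.
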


If we fix $x\in X$, $y_0\in Y_x$ and $g_0\in \mathcal G_x$, we can consider the maps
\begin{align*}
\Phi_{y_0}:\mathcal G_x\to Y_x,\quad g\mapsto y_0\cdot g;\qquad\Phi_{g_0}:Y_x\to Y_x,\quad y\mapsto y\cdot g_0.
\end{align*}
In the same way, denote by $L_{g_0}: \mathcal G_x\rightarrow \mathcal G_x$ and $R_{g_0}: \mathcal G_x\rightarrow \mathcal G_x$ the left and right multiplication by $g_0\in \mathcal G_x$, respectively. \emph{Infinitesimal generators} (or \emph{fundamental fields}) are defined in the same fashion as in classical actions of Lie groups. Namely, for each $\xi$ belonging to the Lie algebra $\varfrak g_x$ of $\mathcal G _x$, then $\xi^*\in\mathfrak X(Y_x)$ is defined as
\begin{equation}\label{eq:definfinitesimalgenerator}
\xi^*_y=\left.\frac{d}{dt}\right|_{t=0} y\cdot\exp(t\xi)=(d\Phi _y)_{1_x}(\xi),\qquad y\in Y_x.
\end{equation}
Fundamental vector fields are $\pi_{Y,Y/\mathcal G}$-vertical, i.e. $\xi^*_y\in V_y Y$ for each $y\in Y_x$, where $VY=\ker{(\pi_{Y,Y/\mathcal G})_*}$ is the vertical bundle of $\pi_{Y,Y/\mathcal G}$. Of course, they are also $\pi_{Y,X}$-vertical. 

\begin{lemma}\label{lemma:isomorphismverticalbundle}
Let $\pi_{\varfrak g,X}$ be the Lie algebra bundle of $\pi_{\mathcal G,X}$. The following map is a vertical isomorphism of vector bundles over $Y$: 
\begin{align}\label{eq:isog}
Y\times_X\varfrak g\to VY,\quad(y,\xi)\mapsto\xi^*_y.
\end{align}
In addition, for any $(g,\xi)\in \mathcal G\times_X\varfrak g$, we have $(\Phi _g)_*(\xi^*)=\Ad_{g^{-1}}(\xi)^*$.
\end{lemma}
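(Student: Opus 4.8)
The plan is to reduce everything to the fiberwise picture and then invoke the classical theory of fundamental vector fields. Fixing $x\in X$, the fibered action restricts to an honest right action $\Phi_x$ of the Lie group $\mathcal G_x$ on the manifold $Y_x$, which is again free and proper, and the restriction of the map \eqref{eq:isog} to the fiber over $y\in Y_x$ is precisely $\xi\mapsto(d\Phi_y)_{1_x}(\xi)$, the infinitesimal-generator map of this classical action. Since \eqref{eq:isog} sends the fiber $\{y\}\times\varfrak g_x$ into $V_yY$, it covers $\operatorname{id}_Y$; to prove it is a vertical vector bundle isomorphism it therefore suffices to check that it is smooth, fiberwise linear, and fiberwise bijective.

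Linearity on each fiber is immediate, since $\xi\mapsto(d\Phi_y)_{1_x}(\xi)$ is the differential at $1_x$ of the smooth orbit map $\Phi_y:\mathcal G_x\to Y_x$, restricted to $\varfrak g_x=T_{1_x}\mathcal G_x$. For injectivity I would use that the kernel of $(d\Phi_y)_{1_x}$ coincides with the Lie algebra of the isotropy group $(\mathcal G_x)_y=\{g\in\mathcal G_x: y\cdot g=y\}$; since $\Phi_x$ is free this isotropy group is trivial, so its Lie algebra vanishes and $\xi^*_y=0$ forces $\xi=0$. For surjectivity I would compare dimensions: by Proposition \ref{prop:Y-Y/Gfibrado} the bundle $\pi_{Y,Y/\mathcal G}$ has typical fiber $G$, whence $\dim V_yY=\dim G=\dim\varfrak g_x$, and an injective linear map between vector spaces of equal dimension is an isomorphism. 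Smoothness of \eqref{eq:isog} follows from that of $\Phi$, and a smooth fiberwise-linear isomorphism between vector bundles over the common base $Y$ is automatically a vector bundle isomorphism; this settles the first claim.

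For the equivariance relation I would compute directly at the point $y\cdot g$. Writing out the push-forward and using the definition \eqref{eq:definfinitesimalgenerator},
\begin{equation*}
\big((\Phi_g)_*\xi^*\big)_{y\cdot g}=(d\Phi_g)_y(\xi^*_y)=\left.\frac{d}{dt}\right|_{t=0}\big(y\cdot\exp(t\xi)\big)\cdot g.
\end{equation*}
Associativity of the action gives $\big(y\cdot\exp(t\xi)\big)\cdot g=y\cdot\big(\exp(t\xi)\,g\big)$, and the conjugation–exponential identity in $\mathcal G_x$, namely $\exp(t\xi)\,g=g\,\exp\!\big(t\,\Ad_{g^{-1}}\xi\big)$, lets me rewrite the argument as $(y\cdot g)\cdot\exp\!\big(t\,\Ad_{g^{-1}}\xi\big)$. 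Differentiating at $t=0$ yields $\Ad_{g^{-1}}(\xi)^*_{y\cdot g}$, and since $y\cdot g$ ranges over all of $Y_x$ as $y$ does, this proves $(\Phi_g)_*(\xi^*)=\Ad_{g^{-1}}(\xi)^*$.

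I expect the only genuinely non-formal step to be the injectivity argument, that is, identifying $\ker(d\Phi_y)_{1_x}$ with the Lie algebra of the isotropy group so that freeness can be brought to bear; everything else is either the dimension count resting on Proposition \ref{prop:Y-Y/Gfibrado} or a direct computation. One should also take minimal care that the exponential map, the adjoint action, and the multiplication appearing in the second part are the ones intrinsic to the fiber group $\mathcal G_x$, but since the fibered action preserves fibers this poses no difficulty.
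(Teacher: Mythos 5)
Your proof is correct. The paper itself states this lemma without proof --- it belongs to the preliminaries, for which the authors defer to \cite{CaRo2023} --- so there is no in-paper argument to compare against; your reduction to the fiberwise classical picture (injectivity from triviality of the isotropy algebra under freeness, surjectivity by the dimension count $\dim V_yY=\dim G$ furnished by Proposition \ref{prop:Y-Y/Gfibrado}, and the conjugation identity $\exp(t\xi)\,g=g\exp\bigl(t\,\Ad_{g^{-1}}\xi\bigr)$ for the equivariance) is exactly the standard argument one would expect, and every step is sound.
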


%%%%%%%%%%

\subsection{Lie group bundle connections}

Recall that an Ehresmann connection (for example see \cite{michor1993}) on a fiber bundle $\pi_{Z,X}: Z\to X$ is a fiber map $TZ \to VZ=\ker(\pi_{Z,X})_*$ such that its restriction to $VZ$ is the identity. Similarly, we can regard an Ehresmann connection as a distribution $HZ\subset TZ$ complementary to $VZ$. Finally, an Ehresmann connection is also a section of the jet bundle $\pi_{J^1Z,Z}: J^1 Z \to Z$. In the case where $\pi_{\mathcal G,X}$ is a Lie group bundle, an Ehresmann connection $\nu: T\mathcal G\to V\mathcal G$ can be also regarded as a linear bundle map (denoted by the same letter for the sake of simplicity)
$$\nu: T\mathcal G \to \varfrak g,\qquad U_g\mapsto \left(dR_{g^{-1}}\right)_g(\nu(U_g)). $$ 

\begin{definition}\label{def:liegroupconnection}
A \emph{Lie group bundle connection} on $\pi_{\mathcal G,X}$ is an Ehresmann connection $\nu: T\mathcal G\rightarrow\varfrak g$ satisfying
\begin{enumerate}[(i)]
    \item $\ker\nu_{1_x}=(d1)_x(T_x X)$ for each $x\in X$.
    \item For every $(g,h)\in \mathcal G\times_X\mathcal G$ and $(U_g,U_h)\in T_g \mathcal G\times_{T_x X}T_h \mathcal G$, $x=\pi_{\mathcal G,X}(g)$, then:
    \begin{equation*}
    \nu\left((dM)_{(g,h)}(U_g,U_h)\right)=\nu(U_g)+\Ad_g\left(\nu(U_h)\right),
    \end{equation*}
where $M:\mathcal G\times_X \mathcal G\rightarrow\mathcal G$ is the fiber multiplication map.
\end{enumerate}
\end{definition}

The geometric interpretation of Lie group bundle connections is provided by the following results. We denote by $^\nu\big|\big|$ the parallel transport associated to $\nu$ and by $Hor_g^\nu: T_x X\rightarrow T_g \mathcal G$ its horizontal lift at any $g\in\mathcal G$, $x=\pi_{\mathcal G,X}(g)$.

\begin{proposition}\label{prop:liegroupconnection}
Let $\nu$ be an Ehresmann connection on $\pi_{\mathcal G,X}$ such that $\ker\nu_{1_x}=(d1)_x(T_x X)$ for each $x\in X$. Then $\nu$ is a Lie group connection if and only if for any curve $x: I\rightarrow X$ we have
\begin{equation*}\label{eq:compatibilityofnu}
^\nu\big|\big|^{x(b)}_{x(a)}(gh)=\left( ^\nu\big|\big|^{x(b)}_{x(a)}g\right)\left( ^\nu\big|\big|^{x(b)}_{x(a)}h\right),\qquad g,h\in \mathcal G_{x(a)}.
\end{equation*}
Consequently,
\begin{equation*}\label{eq:inversatransportenu}
^\nu\big|\big|^{x(b)}_{x(a)}\,g^{-1}=\left(^\nu\big|\big|^{x(b)}_{x(a)}\,g\right)^{-1},\qquad^\nu\big|\big|^{x(b)}_{x(a)}\,1_{x(a)}=1_{x(b)}.
\end{equation*}
\end{proposition}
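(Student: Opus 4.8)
The plan is to derive the global multiplicativity of parallel transport from the infinitesimal condition (ii) of Definition \ref{def:liegroupconnection}, and conversely, by exploiting that horizontal lifts are the unique integral curves of the horizontal distribution covering a prescribed base curve.

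For the forward implication I would fix a curve $x\colon I\to X$ and $g,h\in\mathcal G_{x(a)}$, take the horizontal lifts $g(t)={}^\nu\big|\big|^{x(t)}_{x(a)}g$ and $h(t)={}^\nu\big|\big|^{x(t)}_{x(a)}h$ (so that $\nu(\dot g)=\nu(\dot h)=0$), and test whether the fiber product $k(t)=g(t)h(t)=M(g(t),h(t))$ is itself horizontal. Differentiating and applying $\nu$, condition (ii) gives
\[
\nu(\dot k(t))=\nu\bigl((dM)_{(g(t),h(t))}(\dot g(t),\dot h(t))\bigr)=\nu(\dot g(t))+\Ad_{g(t)}\bigl(\nu(\dot h(t))\bigr)=0 ,
\]
so $k(t)$ is a horizontal lift through $gh$; by uniqueness of horizontal lifts it must equal ${}^\nu\big|\big|^{x(t)}_{x(a)}(gh)$, which is precisely the asserted homomorphism property.

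For the converse I would recover (ii) from the homomorphism property by a horizontal--vertical splitting. Given $(U_g,U_h)\in T_g\mathcal G\times_{T_xX}T_h\mathcal G$ projecting to a common $v\in T_xX$, I would write $U_g=Hor^\nu_g(v)+V_g$ and $U_h=Hor^\nu_h(v)+V_h$, noting that $\nu$ annihilates the horizontal parts and restricts to the right trivialization on $V\mathcal G$, so that $\nu(U_g)=\nu(V_g)$ and $\nu(U_h)=\nu(V_h)$. Linearity of $(dM)_{(g,h)}$ and of $\nu$ then splits $\nu\bigl((dM)_{(g,h)}(U_g,U_h)\bigr)$ into a horizontal--horizontal term and a vertical--vertical term. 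The first vanishes: choosing any curve with $\dot x(a)=v$ and differentiating the multiplicativity of parallel transport at $t=a$ exhibits $(dM)_{(g,h)}(Hor^\nu_g(v),Hor^\nu_h(v))$ as the velocity of a horizontal curve, hence an element of $\ker\nu$. The second lives entirely in the single fiber $\mathcal G_x$, where the desired identity $\nu\bigl((dM)(V_g,V_h)\bigr)=\nu(V_g)+\Ad_g(\nu(V_h))$ is just the classical formula for the right logarithmic derivative of a product and holds with no connection hypothesis. Summing the two contributions returns exactly $\nu(U_g)+\Ad_g(\nu(U_h))$, i.e. condition (ii).

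Finally, for the ``consequently'' clause I would use condition (i): since $\ker\nu_{1_x}=(d1)_x(T_xX)$ is the horizontal space at the identity, the unit section is parallel, whence ${}^\nu\big|\big|^{x(b)}_{x(a)}1_{x(a)}=1_{x(b)}$; setting $h=g^{-1}$ in the homomorphism property then forces ${}^\nu\big|\big|^{x(b)}_{x(a)}g^{-1}=\bigl({}^\nu\big|\big|^{x(b)}_{x(a)}g\bigr)^{-1}$. I expect the only genuine obstacle to be the converse direction, namely the passage from the global identity back to the infinitesimal condition on arbitrary (not necessarily horizontal) tangent vectors; the splitting above is exactly what isolates the connection-dependent content (the horizontal--horizontal term) from the tautological single-fiber computation, and one should also confirm, as is standard for Ehresmann connections, that $Hor^\nu_g(v)$ is independent of the representing curve.
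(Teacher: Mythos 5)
Your proof is correct. Note that the paper states Proposition \ref{prop:liegroupconnection} without proof---it is a preliminary recalled from \cite{CaRo2023}---so there is no in-text argument to compare against; your route is the standard and complete one: in the forward direction, the product of the horizontal lifts through $g$ and $h$ is horizontal by condition (ii) of Definition \ref{def:liegroupconnection} and hence, by uniqueness of horizontal lifts, equals the lift through $gh$; in the converse, the splitting $(U_g,U_h)=(Hor^\nu_g(v),Hor^\nu_h(v))+(V_g,V_h)$ isolates the connection-dependent term, which is killed by differentiating the multiplicativity of parallel transport at $t=a$, while the vertical--vertical term reduces to the single-fiber identity $\nu\bigl((dM)_{(g,h)}(V_g,V_h)\bigr)=\nu(V_g)+\Ad_g\bigl(\nu(V_h)\bigr)$, which indeed needs no connection hypothesis since $(dM)_{(g,h)}(V_g,V_h)=(dR_h)_g(V_g)+(dL_g)_h(V_h)$ and $R_{(gh)^{-1}}\circ L_g=\operatorname{Conj}_g\circ R_{h^{-1}}$, so applying the right trivialization gives exactly $\nu(V_g)+\Ad_g(\nu(V_h))$. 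Your handling of the consequent clause is also right: hypothesis (i) says precisely that the unit section is parallel, giving ${}^\nu\big|\big|^{x(b)}_{x(a)}1_{x(a)}=1_{x(b)}$, and then $h=g^{-1}$ in the homomorphism property yields the inversion formula.
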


\begin{proposition}\label{prop:liegroupconnectionjet}
Let $\nu$ be an Ehresmann connection on $\pi_{\mathcal G,X}$ and consider the corresponding jet section $\hat{\nu}\in\Gamma(\pi_{J^1 \mathcal G,\mathcal G})$. Then $\nu$ is a Lie group bundle connection if and only if
\begin{enumerate}[(i)]
    \item $\hat\nu\circ 1=j^1 1=d1$
    \item $\hat{\nu}:\mathcal{G}\to J^1 \mathcal{G}$ is a Lie group bundle morphism with respect to the natural Lie group bundle structure of $J^1\mathcal{G}$, that is, $\hat{\nu}(gh)=\hat{\nu}(g)\,\hat{\nu}(h)$ for each $(g,h)\in \mathcal G\times_X\mathcal G$.
\end{enumerate}
\end{proposition}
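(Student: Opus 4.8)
The plan is to translate each of the two defining conditions of a Lie group bundle connection (Definition \ref{def:liegroupconnection}) into the corresponding statement about the jet section $\hat\nu$, exploiting the fact that $\hat\nu(g)$ is precisely the $1$-jet of a local section through $g$ whose tangent image is the horizontal subspace $\ker\nu_g=\operatorname{im}Hor_g^\nu$. First I would make explicit the natural Lie group bundle structure on $J^1\mathcal{G}$: since the jet functor commutes with fibered products, there is a canonical identification $J^1(\mathcal{G}\times_X\mathcal{G})\cong J^1\mathcal{G}\times_X J^1\mathcal{G}$, and the fiber multiplication on $J^1\mathcal{G}$ is $J^1 M$ read through this identification, with unit section $j^1 1=d1$. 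Concretely, representing $\hat\nu(g)$ and $\hat\nu(h)$ (over the same $x$) by local sections $s_g,s_h$ with $(ds_g)_x=Hor_g^\nu$ and $(ds_h)_x=Hor_h^\nu$, the product $\hat\nu(g)\,\hat\nu(h)$ is the $1$-jet of $x'\mapsto s_g(x')\,s_h(x')$, hence the element of $J^1\mathcal{G}$ based at $gh$ whose associated lift sends $w\in T_xX$ to $(dM)_{(g,h)}\big(Hor_g^\nu(w),Hor_h^\nu(w)\big)$.

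Condition (i) of the proposition is then immediate: $\hat\nu(1_x)$ is the $1$-jet at $x$ with base point $1_x$ whose tangent image is $\ker\nu_{1_x}$, so $\hat\nu\circ 1=j^1 1=d1$ holds if and only if $\ker\nu_{1_x}=(d1)_x(T_xX)$, which is exactly condition (i) of Definition \ref{def:liegroupconnection}.

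For (ii), by the description above the morphism identity $\hat\nu(gh)=\hat\nu(g)\,\hat\nu(h)$ is equivalent to $Hor_{gh}^\nu(w)=(dM)_{(g,h)}\big(Hor_g^\nu(w),Hor_h^\nu(w)\big)$ for every $w\in T_xX$, i.e.\ to the statement that $(dM)_{(g,h)}$ carries pairs of horizontal lifts to horizontal vectors. The key step is to relate this to the additive-plus-$\Ad$ identity of Definition \ref{def:liegroupconnection}(ii). Given any compatible pair $(U_g,U_h)\in T_g\mathcal{G}\times_{T_xX}T_h\mathcal{G}$ projecting to $w$, I would split $U_g=Hor_g^\nu(w)+\tilde U_g$ and $U_h=Hor_h^\nu(w)+\tilde U_h$ with $\tilde U_g,\tilde U_h$ vertical (so $\nu(\tilde U_g)=\nu(U_g)$ and $\nu(\tilde U_h)=\nu(U_h)$), and use the linearity of $(dM)_{(g,h)}$ on the fibered product. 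The vertical part is handled by the decomposition $(dM)_{(g,h)}(\tilde U_g,\tilde U_h)=(dR_h)_g(\tilde U_g)+(dL_g)_h(\tilde U_h)$; writing $\tilde U_g=(dR_g)_{1_x}(\nu(U_g))$ and $\tilde U_h=(dR_h)_{1_x}(\nu(U_h))$ and using $R_h\circ R_g=R_{gh}$ together with $R_{(gh)^{-1}}\circ L_g\circ R_h=C_g$ (conjugation by $g$, whose differential at $1_x$ is $\Ad_g$), one obtains after applying $\nu$ exactly $\nu(U_g)+\Ad_g(\nu(U_h))$, with no hypothesis imposed on $\nu$.

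Combining the two pieces yields the identity
\[
\nu\big((dM)_{(g,h)}(U_g,U_h)\big)=\nu\big((dM)_{(g,h)}(Hor_g^\nu(w),Hor_h^\nu(w))\big)+\nu(U_g)+\Ad_g(\nu(U_h)),
\]
valid for all compatible $(U_g,U_h)$. Hence Definition \ref{def:liegroupconnection}(ii) holds if and only if the first term on the right vanishes for all $g,h,w$, which is precisely the morphism condition (ii) of the proposition; since this term depends only on $g,h,w$, the equivalence yields both implications at once. The main obstacle I anticipate is the bookkeeping in the vertical computation: one must identify the differential of the fiber multiplication on vertical vectors with the sum of left and right translations, correctly right-trivialize $\nu$, and recognize the conjugation map whose differential produces the $\Ad_g$ factor. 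Everything else is a routine application of linearity and of the defining property that $\hat\nu$ records horizontal subspaces.
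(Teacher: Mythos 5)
Your proof is correct. There is nothing in the paper to compare it against: this proposition sits in the preliminaries and is quoted without proof from the authors' earlier work \cite{CaRo2023}, so your argument stands as a self-contained proof of the statement. Judged on its own merits, both pillars are sound. First, your reduction of the morphism identity $\hat\nu(gh)=\hat\nu(g)\,\hat\nu(h)$ to the statement that $(dM)_{(g,h)}$ sends the pair $\left(Hor_g^\nu(w),Hor_h^\nu(w)\right)$ to $Hor_{gh}^\nu(w)$ is right; the one step you use silently is that $\pi_{\mathcal G,X}\circ M=\pi_{\mathcal G,X}\circ\mathrm{pr}_1$, so that $(dM)_{(g,h)}\left(Hor_g^\nu(w),Hor_h^\nu(w)\right)$ projects onto $w$ and hence its horizontality (i.e.\ the vanishing of its $\nu$-image) is equivalent to its being equal to $Hor_{gh}^\nu(w)$. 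Second, the key identity $\nu\left((dM)_{(g,h)}(U_g,U_h)\right)=\nu\left((dM)_{(g,h)}(Hor_g^\nu(w),Hor_h^\nu(w))\right)+\nu(U_g)+\Ad_g\left(\nu(U_h)\right)$ is established correctly: the splitting into horizontal and vertical parts, the formula $(dM)_{(g,h)}(\tilde U_g,\tilde U_h)=(dR_h)_g(\tilde U_g)+(dL_g)_h(\tilde U_h)$ on fiber-tangent vectors, and the identities $R_{(gh)^{-1}}\circ R_h\circ R_g=\operatorname{id}$ and $R_{(gh)^{-1}}\circ L_g\circ R_h=C_g$ (whose differential at $1_x$ is $\Ad_g$) all check out, and indeed require nothing of $\nu$ beyond its being an Ehresmann connection. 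Since your equivalences (i)$\,\Leftrightarrow\,$Definition~(i) and (ii)$\,\Leftrightarrow\,$Definition~(ii) are proved independently of one another, the biconditional for the conjunction follows, as you note.
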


Lie group connections induce linear connections on the corresponding Lie algebra (vector) bundle.

\begin{proposition}\label{prop:nablaginducida}
Let $x: I\rightarrow X$ be a smooth curve. Then the map
${^{\varfrak g}\big|\big|}_{x(a)}^{x(b)}:\varfrak g_{x(a)}\rightarrow\varfrak g_{x(b)}$ defined as
\begin{equation*}
{^{\varfrak g}\big|\big|}_{x(a)}^{x(b)}\xi=\left.\frac{d}{d\epsilon}\right|_{\epsilon=0}{^\nu\big|\big|}_{x(a)}^{x(b)}\exp(\epsilon\,\xi),\quad \xi\in\varfrak g_{x(a)}
\end{equation*}
is a linear parallel transport on $\pi_{\varfrak g,X}$.
\end{proposition}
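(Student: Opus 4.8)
The plan is to reinterpret the defining formula as the infinitesimal version of the fiberwise Lie group isomorphism furnished by $\nu$-parallel transport, and then to transfer the parallel-transport axioms from $\mathcal G$ to $\varfrak g$ by differentiating at the unit section. First I would check that the map is well defined. Fix a curve $x: I\rightarrow X$ and $\xi\in\varfrak g_{x(a)}=V_{1_{x(a)}}\mathcal G$. The curve $\epsilon\mapsto\exp(\epsilon\xi)$ lies in the fiber $\mathcal G_{x(a)}$ and passes through $1_{x(a)}$ at $\epsilon=0$; by Proposition \ref{prop:liegroupconnection} its image under ${^\nu\big|\big|}_{x(a)}^{x(b)}$ lies in $\mathcal G_{x(b)}$ and passes through $1_{x(b)}$ at $\epsilon=0$. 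Differentiating at $\epsilon=0$ thus yields a vector tangent to the fiber $\mathcal G_{x(b)}$ at $1_{x(b)}$, i.e. an element of $V_{1_{x(b)}}\mathcal G=\varfrak g_{x(b)}$, so ${^{\varfrak g}\big|\big|}_{x(a)}^{x(b)}$ is a well-defined map $\varfrak g_{x(a)}\rightarrow\varfrak g_{x(b)}$.

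The crucial observation is that this map is the tangent map at the unit of a Lie group isomorphism. By Proposition \ref{prop:liegroupconnection}, $\Psi:={^\nu\big|\big|}_{x(a)}^{x(b)}:\mathcal G_{x(a)}\rightarrow\mathcal G_{x(b)}$ is a homomorphism of honest Lie groups; it is invertible, with inverse the transport along the reversed curve, so it is a Lie group isomorphism. Since Lie group homomorphisms intertwine the exponential maps, $\Psi(\exp(\epsilon\xi))=\exp\!\big(\epsilon\,(d\Psi)_{1_{x(a)}}(\xi)\big)$, and differentiating at $\epsilon=0$ gives ${^{\varfrak g}\big|\big|}_{x(a)}^{x(b)}\xi=(d\Psi)_{1_{x(a)}}(\xi)$. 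Hence ${^{\varfrak g}\big|\big|}_{x(a)}^{x(b)}$ coincides with $\big(d\,{^\nu\big|\big|}_{x(a)}^{x(b)}\big)_{1_{x(a)}}$, which is linear — in fact a Lie algebra isomorphism — so each ${^{\varfrak g}\big|\big|}_{x(a)}^{x(b)}$ is a linear isomorphism of fibers.

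It then remains to verify the parallel-transport axioms. The normalization ${^{\varfrak g}\big|\big|}_{x(a)}^{x(a)}=\mathrm{id}$ on constant curves and reparametrization invariance are inherited from ${^\nu\big|\big|}$, since the differential of $\mathrm{id}_{\mathcal G_{x(a)}}$ at the unit is $\mathrm{id}_{\varfrak g_{x(a)}}$. For the concatenation law I would differentiate the identity ${^\nu\big|\big|}_{x(b)}^{x(c)}\circ{^\nu\big|\big|}_{x(a)}^{x(b)}={^\nu\big|\big|}_{x(a)}^{x(c)}$ at the unit, using the chain rule (licit because ${^\nu\big|\big|}_{x(a)}^{x(b)}(1_{x(a)})=1_{x(b)}$) to obtain ${^{\varfrak g}\big|\big|}_{x(b)}^{x(c)}\circ{^{\varfrak g}\big|\big|}_{x(a)}^{x(b)}={^{\varfrak g}\big|\big|}_{x(a)}^{x(c)}$. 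Smoothness follows from smooth dependence of $\nu$-parallel transport on its initial data together with smoothness of $\exp$. These properties characterize the parallel transport of a unique linear connection $\nabla^{\varfrak g}$ on $\pi_{\varfrak g,X}$, which is the asserted conclusion.

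I expect the only genuine obstacle to be the linearity, since the defining formula looks nonlinear; the resolution is precisely the reinterpretation carried out in the second step as the tangent map at the unit of the fiberwise group isomorphism, after which linearity (indeed the Lie algebra morphism property) and all transport axioms become formal consequences of the chain rule and of smooth dependence on parameters.
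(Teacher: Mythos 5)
Your proof is correct. Note that the paper itself states Proposition \ref{prop:nablaginducida} without proof: it belongs to the preliminaries recalled from \cite{CaRo2023}, so there is no in-paper argument to compare against. Your route is the natural one for exactly this setting: Proposition \ref{prop:liegroupconnection} makes ${^\nu\big|\big|}_{x(a)}^{x(b)}$ a (smooth, by smooth dependence of horizontal lifts on initial conditions) Lie group isomorphism of fibers, the intertwining of exponentials identifies ${^{\varfrak g}\big|\big|}_{x(a)}^{x(b)}$ with its differential at the unit, and linearity plus the transport axioms (identity, concatenation, smoothness) then follow by the chain rule — which is precisely how the cited reference handles it.
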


Denoting by $\nabla^{\varfrak g}$ the linear connection corresponding to this parallel transport ${^{\varfrak g}\big|\big|}$, it can be checked that
\begin{equation}\label{eq:nablavarfrakg}
\nabla ^{\varfrak g}\xi =\left.\frac{d}{dt}\right|_{t=0}\nu\circ d\,\mathrm{exp}(t\xi),\qquad\xi\in\Gamma(\pi_{\varfrak g,X}).    
\end{equation}

%%%%%%%%%%
\subsection{Generalized principal connections}

Let $\pi _{Y,X}:Y\to X$ a fiber bundle on which a Lie group bundle $\pi_{\mathcal{G},X}:\mathcal{G}\to X$ acts freely and properly on the right. We denote by $\Phi :Y\times _X\mathcal{G}\to Y$ the fibered action.

\begin{definition}\label{def:connectionform}
Let $\nu$ be a Lie group bundle connection on the Lie group bundle on $\pi : \mathcal{G}\to X$. A \emph{generalized principal connection} on $\pi_{Y,Y/\mathcal G}$ associated to $\nu$ is a form with values in the Lie algebra bundle\footnote{
In fact, $\omega$ takes values on the vector bundle $\pi_{Y\times_X\varfrak g,Y}$, which is the pull-back of $\pi_{\varfrak g,X}$ by $\pi_{Y,X}$. Abusing the notation, we denote the pull-back bundle by the same symbol.
} $\omega\in\Omega^1(Y,\varfrak g)$ satisfying:

\begin{enumerate}[(i)]
    \item (Complementarity) $\omega_y(\xi^*_y)=\xi$ for every $(y,\xi)\in Y\times_X\varfrak g$.
    \item ($\Ad$-equivariance) For each $(y,g)\in Y\times_X \mathcal G$ and $(U_y,U_g)\in T_yY\times_{T_x X} T_g \mathcal G$, $x=\pi_{Y,X}(y)$, then:
\begin{equation*}
\omega_{y\cdot g}\left((d\Phi)_{(y,g)}(U_y,U_g)\right)=\Ad_{g^{-1}}\left(\omega_y(U_y)+\nu(U_g)\right).
\end{equation*}
\end{enumerate}
\end{definition}

We denote by $Hor^\omega_{y}: T_{[y]_{\mathcal G}}(Y/\mathcal G)\rightarrow T_y Y$ the \emph{horizontal lifting} given by $\omega$ at $y\in Y$. The next result gives a geometric interpretation of the above definition in terms of the parallel transports ${^\nu\big|}\big|$ and ${^\omega\big|}\big|$, in the same vein as Proposition \ref{prop:liegroupconnection} above.

\begin{proposition}\label{prop:compatibilityofomega}
Let $\nu$ be a Lie group bundle connection on $\pi_{\mathcal G,X}$ and $\omega\in\Omega^1(Y,\varfrak g)$ be an Ehresmann connection on $\pi_{Y,Y/\mathcal G}$. Then $\omega$ is a generalized principal connection associated to $\nu$ if and only if for any curve $\gamma: I\rightarrow Y/\mathcal G$, the corresponding parallel transports satisfy
\begin{equation*}\label{eq:compatibilityofomega}
{^\omega\big|}\big|^{\gamma(t)}_{\gamma(a)}(y\cdot g)=\left({^\omega\big|}\big|^{\gamma(t)}_{\gamma(a)}y\right)\cdot\left( {^\nu\big|}\big|^{x(t)}_{x(a)}g\right),\qquad g\in \mathcal G_{x(a)},~y\in Y_{\gamma(a)},~t\in I,
\end{equation*}
where $x=\pi_{Y/\mathcal G,X}\circ\gamma$.
\end{proposition}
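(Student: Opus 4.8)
The plan is to reduce the statement to the standard fact that an Ehresmann connection is characterized by its horizontal lifts, and that parallel transport is the unique integral curve of the horizontal distribution with a prescribed initial point. Throughout, recall that $\omega$ encodes horizontality through $\omega_y(U_y)=0$ and $\nu$ through $\nu(U_g)=0$; thus the asserted identity is precisely the statement that the product of an $\omega$-horizontal lift in $Y$ with a $\nu$-horizontal lift in $\mathcal G$ is again $\omega$-horizontal. I would prove the two implications separately, using uniqueness of horizontal lifts for the direct implication and a differentiation argument for the converse, exactly mirroring the proof of Proposition \ref{prop:liegroupconnection}.

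For the direct implication, fix a curve $\gamma\colon I\to Y/\mathcal G$, set $x=\pi_{Y/\mathcal G,X}\circ\gamma$, and let $y(t)={^\omega\big|}\big|^{\gamma(t)}_{\gamma(a)}y$ and $g(t)={^\nu\big|}\big|^{x(t)}_{x(a)}g$ be the horizontal lifts through $y$ and $g$. Define $z(t)=y(t)\cdot g(t)$. Since the action is vertical, $z(t)$ lies in the orbit of $y(t)$, so $\pi_{Y,Y/\mathcal G}(z(t))=\gamma(t)$ and $z(a)=y\cdot g$; moreover $\dot y(t)$ and $\dot g(t)$ both project to $\dot x(t)$, so the pair is admissible and $\dot z(t)=(d\Phi)_{(y(t),g(t))}(\dot y(t),\dot g(t))$. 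Applying $\Ad$-equivariance (Definition \ref{def:connectionform}(ii)) together with $\omega_{y(t)}(\dot y(t))=0$ and $\nu(\dot g(t))=0$ yields $\omega_{z(t)}(\dot z(t))=\Ad_{g(t)^{-1}}(0+0)=0$, so $z$ is the $\omega$-horizontal lift of $\gamma$ through $y\cdot g$. Uniqueness of horizontal lifts then gives $z(t)={^\omega\big|}\big|^{\gamma(t)}_{\gamma(a)}(y\cdot g)$, which is the desired identity.

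For the converse, property (i) of Definition \ref{def:connectionform} is automatic because $\omega$ is assumed to be an Ehresmann connection, identified as a $\varfrak g$-valued form via Lemma \ref{lemma:isomorphismverticalbundle}; so only $\Ad$-equivariance (ii) must be derived. Since both sides of the equivariance equation are linear in the admissible pair $(U_y,U_g)$, I would decompose $U_y=U_y^{h}+\eta^*_y$ and $U_g=U_g^{h}+U_g^{v}$ into their $\omega$- and $\nu$-horizontal parts plus vertical parts, and verify the identity on each piece. The two purely vertical contributions $(\eta^*_y,0)$ and $(0,U_g^{v})$ are handled directly by Lemma \ref{lemma:isomorphismverticalbundle} (using $(\Phi_g)_*(\xi^*)=\Ad_{g^{-1}}(\xi)^*$) together with complementarity, and do not use the hypothesis. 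For the horizontal pair $(U_y^{h},U_g^{h})$, which project to a common $v\in T_xX$, I would choose a curve $\gamma$ in $Y/\mathcal G$ with $\dot\gamma(a)=(\pi_{Y,Y/\mathcal G})_*U_y^{h}$, form the horizontal lifts $y(t),g(t)$ as above so that $\dot y(a)=U_y^{h}$ and $\dot g(a)=U_g^{h}$, and differentiate the assumed identity at $t=a$: the right-hand side ${^\omega\big|}\big|(y\cdot g)$ is $\omega$-horizontal, so $\omega_{y\cdot g}\big((d\Phi)_{(y,g)}(U_y^{h},U_g^{h})\big)=0$, matching $\Ad_{g^{-1}}(0+0)$. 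Summing the three contributions yields (ii).

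The main obstacle is bookkeeping rather than conceptual: I must ensure at every stage that the pairs of tangent vectors remain admissible (project to the same vector of $T_xX$) so that $(d\Phi)_{(y,g)}$ is defined, and that the horizontal lift of the chosen $\gamma$ really has initial velocity $U_y^{h}$. The point worth emphasizing is that the parallel-transport identity only constrains horizontal directions, whereas $\Ad$-equivariance is a statement about all tangent vectors; the gap in the vertical directions is bridged entirely by Lemma \ref{lemma:isomorphismverticalbundle}, which is why the splitting into horizontal and vertical parts is indispensable in the converse. The differentiation step itself is routine once the admissibility and initial conditions are arranged correctly.
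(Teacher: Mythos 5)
The paper itself gives no proof of this proposition: it is recalled as background from \cite{CaRo2023}, so there is nothing internal to compare against. Judged on its own merits, your proof is correct and complete in structure. The direct implication (horizontal lift times horizontal lift is horizontal, then uniqueness of horizontal lifts) is exactly right, and the converse correctly isolates the only non-automatic content, namely $\Ad$-equivariance, and recovers it by splitting an admissible pair as $(U_y,U_g)=(U_y^{h},U_g^{h})+(\eta^*_y,0)+(0,U_g^{v})$, each summand again admissible, with linearity of both sides of the equivariance identity. One point you wave at but should spell out is the piece $(0,U_g^{v})$: here one writes $U_g^{v}=(dR_g)_{1_x}(\zeta)$ with $\zeta=\nu(U_g^{v})$, represents it by the curve $t\mapsto\exp(t\zeta)\,g$, and uses $y\cdot(\exp(t\zeta)g)=(y\cdot\exp(t\zeta))\cdot g$ to get $(d\Phi)_{(y,g)}(0_y,U_g^{v})=(d\Phi_g)_y(\zeta^*_y)=\bigl(\Ad_{g^{-1}}(\zeta)\bigr)^*_{y\cdot g}$, after which complementarity closes the computation; this is the only step where Lemma \ref{lemma:isomorphismverticalbundle} alone does not immediately apply and a short calculation is genuinely needed. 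With that detail filled in, the argument stands, and it parallels the strategy suggested by the paper's analogous Proposition \ref{prop:liegroupconnection}.
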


The \emph{cuvature} of $\omega$ (see, for example \cite[\S 9.4]{michor1993}) is the 2-form $\Omega\in\Omega^2(Y,\varfrak g)$ defined as:
\begin{equation*}
\Omega(U_1,U_2)=-\omega\left(\left[U_1-\omega(U_1)^*,U_2-\omega(U_2)^*\right]\right),\qquad U_1,U_2\in\mathfrak X(Y).
\end{equation*}
The linear connection $\nabla^{\varfrak g}$ on $\pi_{\varfrak g,X}$ enables us to express the curvature as follows.

\begin{proposition}\label{prop:curvature}
Let ${\rm d}^{\varfrak g}$ be the exterior covariant derivative\footnote{
The \emph{exterior covariant derivative} of a linear connection $\nabla^E$ on a vector bundle $\pi_{E,X}$ is an operator in the family of $E$-valued forms on $X$, ${\rm d}^E:\Omega^k(X,E)\to \Omega^{k+1}(X,E)$. For a 1-form $\alpha\in\Omega^1(X,E)$ it is given by
\begin{equation*}
{\rm d}^E \alpha (U_1,U_2)=\nabla_{U_1}^E(\alpha(U_2))-\nabla_{U_2}^E(\alpha(U_1))-\alpha([U_1,U_2]),\qquad U_1,U_2\in\mathfrak X(X).
\end{equation*}
}
associated to $\nabla^{\varfrak g}$. Then
\begin{equation*}
\Omega\left(U_1,U_2\right)={\rm d}^{\varfrak g}\,\omega\left(U_1^h,U_2^h\right),\qquad U_1,U_2\in\mathfrak X(Y).
\end{equation*}
\end{proposition}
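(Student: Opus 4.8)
The plan is to compute the right-hand side directly from the defining formula of the exterior covariant derivative together with the complementarity of $\omega$, and to recognise that it collapses to the definition of $\Omega$. First I would clarify the meaning of $\mathrm{d}^{\varfrak g}\omega$. The footnote defines the exterior covariant derivative for $\varfrak g$-valued forms on $X$, whereas $\omega\in\Omega^1(Y,\varfrak g)$ is a form on $Y$ valued in the pull-back bundle $Y\times_X\varfrak g\to Y$. Thus I would first pull back $\nabla^{\varfrak g}$ to a linear connection on $Y\times_X\varfrak g\to Y$: for a section $s$ of this bundle (equivalently, a section of $\pi_{\varfrak g,X}$ along $\pi_{Y,X}$) and $U\in T_yY$, the covariant derivative $\nabla^{\varfrak g}_U s$ is the $\nabla^{\varfrak g}$-derivative of $s$ in the projected direction $(d\pi_{Y,X})_y(U)$. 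With this pull-back connection the same expression defines $\mathrm{d}^{\varfrak g}$ on $Y$-valued $1$-forms, namely $\mathrm{d}^{\varfrak g}\omega(V_1,V_2)=\nabla^{\varfrak g}_{V_1}(\omega(V_2))-\nabla^{\varfrak g}_{V_2}(\omega(V_1))-\omega([V_1,V_2])$ for $V_1,V_2\in\mathfrak X(Y)$.

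Next I would record the single property that makes everything work: writing $U^h=U-\omega(U)^*$ for the horizontal projection appearing in the definition of $\Omega$, complementarity (property (i) of Definition \ref{def:connectionform}) gives $\omega(U^h)=\omega(U)-\omega(\omega(U)^*)=\omega(U)-\omega(U)=0$. Moreover $U^h$ is a genuine smooth vector field whenever $U$ is, since $U\mapsto\omega(U)^*$ is smooth by the bundle isomorphism of Lemma \ref{lemma:isomorphismverticalbundle}. Evaluating the formula above on the horizontal fields $U_1^h,U_2^h$, the first two terms $\nabla^{\varfrak g}_{U_i^h}(\omega(U_j^h))$ vanish because $\omega(U_j^h)$ is the identically zero section, whose covariant derivative is zero; hence $\mathrm{d}^{\varfrak g}\omega(U_1^h,U_2^h)=-\omega([U_1^h,U_2^h])=-\omega([U_1-\omega(U_1)^*,U_2-\omega(U_2)^*])$, which is exactly $\Omega(U_1,U_2)$ by definition.

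The computation is therefore short, and I expect the only delicate point to be the first paragraph: verifying that $\nabla^{\varfrak g}$ really does pull back to a well-defined linear connection on $Y\times_X\varfrak g\to Y$ and that its exterior covariant derivative obeys the stated Koszul-type formula, so that $\mathrm{d}^{\varfrak g}\omega$ makes sense on $Y$ at all. Once this is in place, the vanishing of the two covariant-derivative terms on horizontal vectors is immediate and the identity follows. As a cross-check I might instead invoke the structure equation $\Omega=\mathrm{d}^{\varfrak g}\omega+\tfrac12[\omega\wedge\omega]$, where $[\,\cdot\,\wedge\,\cdot\,]$ is the fibrewise bracket of the Lie algebra bundle $\varfrak g$; on horizontal arguments the bracket term drops out since $\omega(U_i^h)=0$, recovering the same conclusion, but this route requires establishing the structure equation first and so is less direct than the computation above.
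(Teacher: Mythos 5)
Your proof is correct. There is nothing in the paper to compare it against: Proposition \ref{prop:curvature} sits in the preliminaries and is stated without proof, being recalled from \cite{CaRo2023}, so your argument fills a genuine gap rather than duplicating one. Your route is the natural one, and you correctly identify the one point the statement glosses over — that the footnote only defines ${\rm d}^E$ for forms on $X$, whereas $\omega\in\Omega^1(Y,\varfrak g)$ lives on $Y$ with values in the pull-back bundle $Y\times_X\varfrak g\to Y$, so ${\rm d}^{\varfrak g}$ must be read through the pull-back of $\nabla^{\varfrak g}$. Once that is in place, the computation is exactly as you say: complementarity gives $\omega(U_i^h)=\omega(U_i)-\omega\left(\omega(U_i)^*\right)=0$ as identically vanishing sections, so both covariant-derivative terms in the Koszul formula vanish (the covariant derivative of the zero section is zero by linearity), leaving ${\rm d}^{\varfrak g}\omega\left(U_1^h,U_2^h\right)=-\omega\left(\left[U_1-\omega(U_1)^*,U_2-\omega(U_2)^*\right]\right)=\Omega(U_1,U_2)$ by definition. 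One minor imprecision: describing the pull-back connection as ``the $\nabla^{\varfrak g}$-derivative in the projected direction'' only determines it on sections pulled back from $X$; for general sections of $Y\times_X\varfrak g\to Y$ one defines it via local frames. This does not affect your argument, since the only section you differentiate is the zero section.
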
 

The \emph{(generalized) adjoint bundle} of the action of $\pi_{Y,X}$ is defined to be the quotient $\tilde{\varfrak{g}} = (Y\times_X\varfrak{g})/\mathcal{G}$ by the (right) fibered action
\begin{equation}\label{eq:defadjunto}
\left(Y\times_X\varfrak g\right)\times_X\mathcal G\to Y\times_X\varfrak g,\quad\left((y,\xi),g\right)\mapsto\left(y\cdot g,\Ad_{g^{-1}}(\xi)\right),
\end{equation}
where $\Ad_g\in\operatorname{End}(\varfrak g_x)$ denotes the adjoint representation of $\mathcal G_x$, where $x=\pi_{\mathcal G,X}(g)$. It is a vector bundle over $Y/\mathcal{G}$ equipped with a Lie algebra bundle structure. As in the case of (standard) principal connections, it is possible to regard the curvature as a 2-form on the base space $Y/\mathcal G$ with values in $\tilde{\varfrak g}$. 

\begin{definition}\label{def:reducedcurvature}
The \emph{reduced curvature} of $\omega$ is the 2-form $\tilde\Omega\in\Omega^2\left(Y/\mathcal G,\tilde{\varfrak g}\right)$ given by
\begin{equation*}
\tilde\Omega_{[y]_{\mathcal G}}\left(U_1,U_2\right)=\left[y,\Omega_y\left(Hor_y^\omega(U_1),Hor_y^\omega(U_2)\right)\right]_{\mathcal G}
\end{equation*}
for each $[y]_{\mathcal G}\in Y/\mathcal G$ and $U_1,U_2\in T_{[y]_{\mathcal G}}(Y/\mathcal G)$, where $y\in Y$ is such that $\pi_{Y,Y/\mathcal G}(y)=[y]_{\mathcal G}$.
\end{definition}

The reduced curvature is well-defined, i.e. it does not depend on the choice of $y\in Y$. Indeed, let $g\in\mathcal G_x$, where $x=\pi_{Y,X}(y)$, $u_i=(d\pi_{Y/\mathcal G,X})_{[y]_{\mathcal G}}(U_i)\in T_x X$ for $i=1,2$ and $\gamma\in\Gamma(\pi_{\mathcal G,X})$ be such that $\gamma(x)=g$ and $\nu_{\gamma(x)}\circ(d\gamma)_x=0$. Using \cite[Proposition 3.10]{CaRo2023} we obtain
\begin{equation*}
Hor_{y\cdot g}^\omega(U_i)=(d\Phi)_{(y,g)}\left(Hor_y^\omega(U_i),(d\gamma)_x(u_i)\right),\qquad i=1,2.
\end{equation*}
Hence, we have
\begin{equation}\label{eq:reducedcurvature}
\begin{array}{l}
\left[y\cdot g,\Omega_{y\cdot g}\left(Hor_{y\cdot g}^\omega(U_1),Hor_{y\cdot g}^\omega(U_2)\right)\right]_{\mathcal G}\quad=\quad\left[y\cdot g,-\omega_{y\cdot g}\left(\left[Hor_{y\cdot g}^\omega(U_1),Hor_{y\cdot g}^\omega(U_2)\right]\right)\right]_{\mathcal G}\vspace{0.1cm}\\
\quad\begin{array}{cl}
= & \left[y\cdot g,-\omega_{y\cdot g}\left(\left[(d\Phi)_{(y,g)}\left(Hor_y^\omega(U_1),(d\gamma)_x(u_1)\right),(d\Phi)_{(y,g)}\left(Hor_y^\omega(U_2),(d\gamma)_x(u_2)\right)\right]\right)\right]_{\mathcal G}\vspace{0.1cm}\\
\overset{(\star)}{=} & \left[y\cdot g,-\omega_{y\cdot g}\left((d\Phi)_{(y,g)}\left(\left[Hor_y^\omega(U_1),Hor_y^\omega(U_2)\right],[(d\gamma)_x(u_1),(d\gamma)_x(u_2)]\right)\right)\right]_{\mathcal G}\vspace{0.1cm}\\
= & \left[y\cdot g,-\omega_{y\cdot g}\left((d\Phi)_{(y,g)}\left(\left[Hor_y^\omega(U_1),Hor_y^\omega(U_2)\right],(d\gamma)_x\left([u_1,u_2]\right)\right)\right)\right]_{\mathcal G}\vspace{0.1cm}\\
= & \left[y\cdot g,-\Ad_{g^{-1}}\left(\omega_y\left(\left[Hor_y^\omega(U_1),Hor_y^\omega(U_2)\right]\right)\right)\right]_{\mathcal G}\vspace{0.1cm}\\
= & \left[y,-\omega_y\left(\left[Hor_y^\omega(U_1),Hor_y^\omega(U_2)\right]\right)\right]_{\mathcal G}\vspace{0.1cm}\\
= & \left[y,\Omega_y\left(Hor_y^\omega(U_1),Hor_y^\omega(U_2)\right)\right]_{\mathcal G},
\end{array}
\end{array}
\end{equation}
where we have used that $\left[(d\gamma)_x(u_1),(d\gamma)_x(u_2)\right]=(d\gamma)_x\left([u_1,u_2]\right)$.

%%%%%%%%%%%%%%%%%%%%
\section{Geometry of the reduced configuration space}\label{sec:reducedconfigurationspace}

A Lie group bundle connection, regarded as a section $\hat{\nu} :\mathcal{G}\to J^1\mathcal{G}$, provides an identification of the affine bundle $J^1\mathcal{G} \to \mathcal{G}$ with its model vector bundle $T^*X\otimes V\mathcal{G} \simeq \mathcal{G}\times _X (T^*X\otimes \varfrak{g})$. If the Lie group bundle connection is regarded as a 1-form $\nu$ taking values in $\varfrak{g}$, the identification is given explicitly as
\begin{align}\label{eq:Thetanu}
\Theta_\nu:J^1\mathcal G\to\mathcal G\times_X(T^*X\otimes\varfrak g),\quad j^1_x\eta\mapsto\left(\eta(x),\nu\circ(d\eta)_x\right).
\end{align}
We can transfer the Lie group bundle structure of $J^1 \mathcal G\to X$ (for example, cf. \cite[\S3, Theorem 1]{Fo2012}) via the above identification, as
\begin{equation*}
(g,\xi_x)\,(h,\eta_x)=(gh,\xi_x+\Ad_g\circ\eta_x),\qquad (g,\xi_x),(h,\eta_x)\in\mathcal G_x\times(T_x^*X\otimes\varfrak g_x),~x\in X.
\end{equation*}
Note that the identity element is $(1_x,0_x)\in\mathcal G\times_X(T^*X\otimes\varfrak g)$. 

\noindent {\bf Assumption}. Henceforth, let $\pi_{Y,X}$ be a fiber bundle on which $\pi_{\mathcal G,X}$ acts fiberwisely, freely and properly, and $\pi_{H,X}$ be a closed Lie group subbundle $H\subset J^1\mathcal G$ of $\pi_{J^1 \mathcal G,X}$ such that $\pi_{J^1 \mathcal G,\mathcal G}(H)=\mathcal G$. Besides, we suppose that $\pi_{H,\mathcal G}$ is an affine subbundle of $\pi_{J^1\mathcal G,\mathcal G}$. Identification \eqref{eq:Thetanu} enables us to regard 
\[
\Theta_\nu(H)\subset \mathcal G\times_X(T^*X\otimes \varfrak g),
\]
We denote $\Theta_\nu(H)$ by the same symbol, $H$, since the context will clarify the distinction between the two objects. Then $H\subset\mathcal G\times_X(T^*X\otimes\varfrak g)$ is both a closed Lie group subbundle over $X$ and an affine subbundle over $\mathcal G$.

\begin{proposition}\label{ggg}
Let $H\subset J^1\mathcal G$ be a closed Lie group subbundle of $\pi_{J^1\mathcal G,X}$ and an affine subbundle of $\pi_{J^1\mathcal G,\mathcal G}$. Let $\varphi:\mathcal G\to T^*X\otimes\varfrak g$ be a bundle morphism such that $(g,\varphi(g))\in H$ for each $g\in\mathcal G$ and\footnote{
Observe that $(1_x,0_x)\in H_{1_x}$, since $H\subset\mathcal G\times_X(T^*X\otimes\varfrak g)$ is a Lie group subbundle (over $X$).} $\varphi(1_x)=0_x$ for each $x\in X$. Taking into account the identification \eqref{eq:Thetanu}  there exists a vector subbundle $\mathfrak{H}\subset T^*X\otimes\varfrak g$ such that
\begin{equation*}
H=\left\{(g,\hat\eta_x)\in\mathcal G\times_X(T^*X\otimes\varfrak g)\mid\hat\eta_x=\varphi(g)+\eta_x,~\eta_x\in\mathfrak{H}_x\right\}.
\end{equation*}
Furthermore, $\mathfrak{H}$ is $\Ad$-invariant, i.e., $\Ad_g(\mathfrak{H}_x)\subset\mathfrak{H}_x$ for each $x\in X$ and $g\in\mathcal G_x$.
\end{proposition}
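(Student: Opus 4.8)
The plan is to exploit the transferred Lie group bundle structure on $\mathcal G\times_X(T^*X\otimes\varfrak g)$ and to work fiberwise over each $x\in X$, where the fiber is precisely the semidirect product $\mathcal G_x\ltimes(T_x^*X\otimes\varfrak g_x)$ for the $\Ad$-action on the second factor, with multiplication $(g,\xi)(h,\eta)=(gh,\xi+\Ad_g\eta)$ and inverse $(g,\xi)^{-1}=(g^{-1},-\Ad_{g^{-1}}\xi)$. Since $H$ is an affine subbundle of $\pi_{J^1\mathcal G,\mathcal G}$, for each $g\in\mathcal G_x$ the fiber $H_g:=H\cap(\{g\}\times(T_x^*X\otimes\varfrak g_x))$ is an affine subspace with a well-defined direction (model) subspace $\vec H_g\subset T_x^*X\otimes\varfrak g_x$, and because $\varphi(g)\in H_g$ I may write $H_g=\varphi(g)+\vec H_g$. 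I would then single out the fiber over the identity: since $\varphi(1_x)=0_x$, the subspace $\mathfrak H_x:=\vec H_{1_x}=H_{1_x}$ is linear, and it is exactly the kernel of the surjective homomorphism $H_x\to\mathcal G_x$ (surjective because $\pi_{J^1\mathcal G,\mathcal G}(H)=\mathcal G$), so $N_x:=\{1_x\}\times\mathfrak H_x$ is a closed normal subgroup of $H_x$.

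The core computation is the conjugation identity $(g,\xi)(1_x,v)(g,\xi)^{-1}=(1_x,\Ad_g v)$, valid for any $(g,\xi)\in H_x$ and $v\in\mathfrak H_x$. Normality of $N_x$ then forces $\Ad_g v\in\mathfrak H_x$, i.e. $\Ad_g(\mathfrak H_x)\subseteq\mathfrak H_x$ for every $g\in\mathcal G_x$; applying this to $g^{-1}$ and using invertibility of $\Ad_g$ upgrades the inclusion to the equality $\Ad_g(\mathfrak H_x)=\mathfrak H_x$, which is the claimed $\Ad$-invariance. With this in hand I would establish $\vec H_g=\mathfrak H_x$ for every $g$ by a double inclusion: multiplying $(g,\varphi(g))\in H_x$ on the right by $(1_x,v)$, $v\in\mathfrak H_x$, gives $\varphi(g)+\Ad_g v\in H_g$, so $\mathfrak H_x=\Ad_g(\mathfrak H_x)\subseteq\vec H_g$; conversely, for $w\in\vec H_g$ the product $(g,\varphi(g))^{-1}(g,\varphi(g)+w)=(1_x,\Ad_{g^{-1}}w)$ lies in $N_x$, whence $\Ad_{g^{-1}}w\in\mathfrak H_x$ and so $w\in\Ad_g(\mathfrak H_x)=\mathfrak H_x$. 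This yields the desired description $H=\{(g,\varphi(g)+\eta_x):\eta_x\in\mathfrak H_x\}$ fiberwise and, in particular, shows that the model bundle of the affine bundle $H\to\mathcal G$ is constant along the fibers of $\pi_{\mathcal G,X}$, i.e. equals $\pi_{\mathcal G,X}^*\mathfrak H$.

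Finally, for smoothness I identify $\mathfrak H$ with the pullback $1^*\vec H$ of the smooth, constant-rank model vector bundle $\vec H\to\mathcal G$ of the affine subbundle $H\to\mathcal G$ along the smooth unit section $1:X\to\mathcal G$; equivalently $\mathfrak H=1^*H$ is a smooth submanifold of constant fiber rank, hence a genuine vector subbundle $\mathfrak H\subset T^*X\otimes\varfrak g$. I expect the only real subtlety to be precisely this passage from the pointwise statements to a bona fide smooth subbundle over $X$ — that the direction subspaces $\vec H_g$ depend only on $x=\pi_{\mathcal G,X}(g)$ and assemble smoothly — which is exactly what the group-theoretic constancy argument above is designed to guarantee; the remainder is routine semidirect-product algebra.
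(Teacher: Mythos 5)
Your proposal is correct and follows essentially the same route as the paper: both arguments work fiberwise in the semidirect product $\mathcal G_x\ltimes(T_x^*X\otimes\varfrak g_x)$, use the affine structure over $\mathcal G$ to write $H_g=\{g\}\times(\varphi(g)+\mathfrak H_g)$, and extract both the constancy of the direction spaces and the $\Ad$-invariance from the multiplication law $(g,\xi)(h,\eta)=(gh,\xi+\Ad_g\circ\eta)$ --- the paper via one-sided multiplications by $(g,\varphi(g))$ and $(1_x,\eta_x)$, you via conjugation of the kernel $\{1_x\}\times\mathfrak H_x$ of $H_x\to\mathcal G_x$. If anything, your write-up is slightly more thorough: you establish $\vec H_g=\mathfrak H_x$ by a genuine double inclusion (the paper's displayed computation only exhibits $\mathfrak H_{1_x}\subseteq\mathfrak H_g$ before asserting equality), and you address the smoothness of $\mathfrak H$ as a vector subbundle via the pullback $1^*\vec H$ along the unit section, a point the paper leaves implicit.
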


\begin{proof}
Let $g\in\mathcal G$ and $x=\pi_{\mathcal G,X}(g)\in X$. Since $H_g\subset\{g\}\times(T_x^*X\otimes\varfrak g_x)$ is an affine subspace, we may write $H_g=\{g\}\times(\varphi(g)+\mathfrak{H}_g)$ for certain vector subspace $\mathfrak{H}_g\subset T_x^*X\otimes\varfrak g_x$. In addition, $H_x\subset\mathcal G_x\times(T_x^*X\otimes\varfrak g_x)$ is a Lie subgroup, whence
\begin{equation*}
(1_x,\eta_x)\cdot(g,\varphi(g))=(g,\varphi(g)+\eta_x)\in H_g,\qquad\eta_x\in\mathfrak{H}_{1_x}.
\end{equation*}
Hence, $\mathfrak{H}_g=\mathfrak{H}_{1_x}$ and we denote $\mathfrak{H}_x\equiv\mathfrak{H}_{1_x}$. We thus define $\mathfrak{H}=\bigsqcup_{x\in X}\mathfrak{H}_x\to X$.

For the second part, observe that for each $\eta_x\in\mathfrak{H}_x$ and $g\in\mathcal G_x$ we have
\begin{equation*}
(g,\varphi(g))(1_x,\eta_x)=(g,\varphi(g)+\Ad_g\circ\eta_x)\in H_g.
\end{equation*}
\end{proof}

Note that, as a vector subbundle, we may consider the corresponding quotient, $(T^*X\otimes\varfrak g)/\mathfrak{H}$, which is again a vector bundle over $X$. We denote the corresponding elements by $[\xi_x]_{\mathfrak{H}}$.

\begin{lemma}\label{lemma:classvarphi}
The map $\varphi:\mathcal G\to T^*X\otimes\varfrak g$ satisfies
\begin{equation*}
\left[\varphi(gh)\right]_{\mathfrak{H}}=\left[\varphi(g)+\Ad_g\circ\varphi(h)\right]_{\mathfrak{H}},\qquad g,h\in\mathcal G_x,~x\in X.
\end{equation*}
\end{lemma}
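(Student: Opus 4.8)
The plan is to leverage the two structures that $H$ carries simultaneously: the affine description $H_{gh}=\{gh\}\times(\varphi(gh)+\mathfrak{H}_x)$ supplied by Proposition \ref{ggg}, and the closure of $H$ under the fiberwise Lie group multiplication inherited from $\mathcal G\times_X(T^*X\otimes\varfrak g)$. Fixing $x\in X$ and $g,h\in\mathcal G_x$, by hypothesis both $(g,\varphi(g))$ and $(h,\varphi(h))$ belong to $H$, so since $H_x$ is a subgroup their product must belong to $H$ as well. The identity we want is simply the shadow of this closure after projecting to the quotient.

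Concretely, I would compute the product explicitly with the transported group law. Using
\begin{equation*}
(g,\varphi(g))\,(h,\varphi(h))=(gh,\varphi(g)+\Ad_g\circ\varphi(h)),
\end{equation*}
membership of the right-hand side in $H$ forces, by the affine description of the fiber $H_{gh}$, the existence of some $\eta_x\in\mathfrak{H}_x$ with
\begin{equation*}
\varphi(g)+\Ad_g\circ\varphi(h)=\varphi(gh)+\eta_x.
\end{equation*}
Rearranging gives $\varphi(g)+\Ad_g\circ\varphi(h)-\varphi(gh)\in\mathfrak{H}_x$, and projecting onto the quotient $(T^*X\otimes\varfrak g)/\mathfrak{H}$ yields precisely the claimed identity $[\varphi(gh)]_{\mathfrak{H}}=[\varphi(g)+\Ad_g\circ\varphi(h)]_{\mathfrak{H}}$.

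There is essentially no analytic obstacle here; the content is bookkeeping against the group law. The one point requiring care is that the subspace $\mathfrak{H}_x$ appearing in the description of $H_{gh}$ is the \emph{same} as the one appearing over the identity (and over $g$ and $h$): this is exactly the fiberwise constancy $\mathfrak{H}_g=\mathfrak{H}_{1_x}$ established inside the proof of Proposition \ref{ggg}, and it is what makes quotienting by the single subbundle $\mathfrak{H}$ meaningful. I would also note that the $\Ad$-invariance of $\mathfrak{H}$ (also from Proposition \ref{ggg}) guarantees that $\Ad_g\circ\varphi(h)$ is well-defined modulo $\mathfrak{H}_x$, so the right-hand side depends only on the class $[\varphi(h)]_{\mathfrak{H}}$. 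Read this way, the lemma is an $\Ad$-twisted cocycle relation for the $\mathfrak{H}$-class of $\varphi$, which is the structural fact that the subsequent reduction will exploit.
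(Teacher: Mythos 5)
Your proposal is correct and follows exactly the paper's own argument: multiply $(g,\varphi(g))$ and $(h,\varphi(h))$ with the transferred group law, use that $H_x$ is a subgroup so the product $(gh,\varphi(g)+\Ad_g\circ\varphi(h))$ lies in $H_{gh}=\{gh\}\times(\varphi(gh)+\mathfrak{H}_x)$, and read off the identity modulo $\mathfrak{H}_x$. The additional remarks on the fiberwise constancy of $\mathfrak{H}$ and its $\Ad$-invariance are sound clarifications but do not change the route.
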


\begin{proof}
We have that
\begin{equation*}
(g,\varphi(g))\cdot(h,\varphi(h))=(gh,\varphi(g)+\Ad_g\circ\varphi(h))\in H_{gh}.
\end{equation*}
Hence, $\varphi(g)+\Ad_d\circ\varphi(h)-\varphi(gh)\in\mathfrak{H}_x$ and we conclude.
\end{proof}

Now we define the following map
\begin{equation}\label{eq:affineaction}
\begin{array}{rccc}
\Phi_{\varphi}: & \left(Y\times_X(T^*X\otimes\varfrak g)/\mathfrak{H}\right)\times_X\mathcal G & \to & Y\times_X(T^*X\otimes\varfrak g)/\mathfrak{H}\\
& \left((y,[\xi_x]_{\mathfrak{H}}),g\right) & \mapsto & \left(y\cdot g,[\Ad_{g^{-1}}\circ(\xi_x+\varphi(g))]_{\mathfrak{H}}\right)
\end{array}
\end{equation}

\begin{remark} If the action of $\mathcal{G}$ on $Y$ is left, then the map \eqref{eq:affineaction} should be
\begin{equation}\label{eq:affineaction2}
\begin{array}{rccc}
\Phi_{\varphi}: & \mathcal G \times _X\left(Y\times_X(T^*X\otimes\varfrak g)/\mathfrak{H}\right) & \to & Y\times_X(T^*X\otimes\varfrak g)/\mathfrak{H}\\
& \left(g,(y,[\xi_x]_{\mathfrak{H}})\right) & \mapsto & \left(g\cdot y,[\Ad_{g}\circ\xi_x+\varphi(g)]_{\mathfrak{H}}\right)
\end{array}
\end{equation}
\end{remark}

Thanks to the  Ad-invariance of $\mathfrak{H}$, the map \eqref{eq:affineaction} is well-defined. Moreover, a straightforward computation shows that it is a (free and proper) right fibered action. As a result, we may consider the corresponding quotient, which is a fiber bundle over $Y/\mathcal G$:
\begin{equation}\label{eq:cocienteafin}
(T^*X\otimes\varfrak{g})_{\varphi,\mathfrak{H}}=\frac{Y\times_X(T^*X\otimes\varfrak g)/\mathfrak{H}}{\Phi_{\varphi}}.
\end{equation} 
The elements of this quotient are denoted by $\llbracket y,\xi_x\rrbracket_{\mathfrak{H},\varphi}$, where $y\in Y_x$, $\xi_x\in (T^*X\otimes \varfrak g)_x$ and $x\in X$.Analogously, we have the fibered action given by \eqref{eq:affineaction} with $\varphi=0$, which yields the quotient 
\begin{equation*}\label{eq:cocientelineal}
(T^*X\otimes\varfrak{g})_{0,\mathfrak{H}}=\frac{Y\times_X(T^*X\otimes\varfrak g)/\mathfrak{H}}{\Phi_0}
\end{equation*} 
\begin{proposition}\label{prop:vectormodelaffine}
In the above conditions, the following statements are true:
\begin{enumerate}[(i)]
    \item $(T^*X\otimes\varfrak{g})_{0,\mathfrak{H}}\to Y/\mathcal G$ is a vector bundle.
    \item $(T^*X\otimes\varfrak{g})_{\varphi,\mathfrak{H}}\to Y/\mathcal G$ is an affine bundle modelled on $(T^*X\otimes\varfrak{g})_{0,\mathfrak{H}}\to Y/\mathcal G$.
\end{enumerate}
\end{proposition}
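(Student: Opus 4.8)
The plan is to regard both quotients as associated bundles of the generalized principal bundle $\pi_{Y,Y/\mathcal G}$ and to descend the linear (respectively affine) structure of the typical fibre $(T^*X\otimes\varfrak g)/\mathfrak{H}$ along the quotient map. The underlying smooth fibre-bundle structure over $Y/\mathcal G$ is already granted by the discussion preceding the statement, where $\Phi_0$ and $\Phi_\varphi$ are seen to be free and proper fibred actions, so that Proposition \ref{prop:Y-Y/Gfibrado}, applied to the action of $\pi_{\mathcal G,X}$ on $Y\times_X(T^*X\otimes\varfrak g)/\mathfrak{H}\to X$, yields genuine fibre bundles over $Y/\mathcal G$. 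The task therefore reduces to equipping the fibres with the claimed operations, verifying independence of the chosen representative, and checking smoothness in a local trivialization.

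For item (i), I would define, using a common representative $y$ over $[y]_{\mathcal G}$, the operations
\begin{equation*}
\llbracket y,\xi_x\rrbracket_{\mathfrak{H},0}+\lambda\,\llbracket y,\xi_x'\rrbracket_{\mathfrak{H},0}:=\llbracket y,\xi_x+\lambda\,\xi_x'\rrbracket_{\mathfrak{H},0},\qquad\lambda\in\mathbb{R},
\end{equation*}
with zero element $\llbracket y,0_x\rrbracket_{\mathfrak{H},0}$. The only point to check is representative independence: passing from $y$ to $y\cdot g$ replaces $\xi_x$ by $\Ad_{g^{-1}}\circ\xi_x$ through $\Phi_0$, and since $\Ad_{g^{-1}}$ is a linear automorphism of $(T^*X\otimes\varfrak g)_x$ preserving $\mathfrak{H}_x$ (the $\Ad$-invariance of Proposition \ref{ggg}), it commutes with the operations above. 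Smoothness then follows by trivializing over an open $\mathcal U\subset Y/\mathcal G$ through a local section $s\in\Gamma(\mathcal U,\pi_{Y,Y/\mathcal G})$, which identifies the restriction of $(T^*X\otimes\varfrak{g})_{0,\mathfrak{H}}$ with the pullback $(\pi_{Y/\mathcal G,X})^*\big((T^*X\otimes\varfrak g)/\mathfrak{H}\big)$ via $\llbracket s([y]_{\mathcal G}),\xi_x\rrbracket_{\mathfrak{H},0}\leftrightarrow([y]_{\mathcal G},[\xi_x]_{\mathfrak{H}})$; in this chart the operations are precisely the fibrewise-linear ones of the pullback bundle.

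For item (ii), the guiding idea is that $\Phi_\varphi$ acts on the fibres of $Y\times_X(T^*X\otimes\varfrak g)/\mathfrak{H}\to Y$ by affine isomorphisms whose linear part is exactly $\Phi_0$. Accordingly, I would define a translation of the affine fibre by the vector fibre of (i) through
\begin{equation*}
\llbracket y,\eta_x\rrbracket_{\mathfrak{H},0}+\llbracket y,\xi_x\rrbracket_{\mathfrak{H},\varphi}:=\llbracket y,\eta_x+\xi_x\rrbracket_{\mathfrak{H},\varphi}.
\end{equation*}
Representative independence is the heart of the matter: replacing $y$ by $y\cdot g$ transforms $\eta_x$ by $\Phi_0$ and $\xi_x$ by $\Phi_\varphi$, so that $\eta_x+\xi_x$ is carried to $\Ad_{g^{-1}}\circ(\eta_x+\xi_x+\varphi(g))$, which is again its $\Phi_\varphi$-transform; this is where the affine shift by $\varphi(g)$, consistent with the cocycle identity of Lemma \ref{lemma:classvarphi}, must interlock with the linear co-action by $\Ad_{g^{-1}}$. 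I would then check that this translation is free and transitive on each fibre: transitivity holds because any two elements over $[y]_{\mathcal G}$ written with the same $y$ differ by $\llbracket y,\xi_x'-\xi_x\rrbracket_{\mathfrak{H},0}$, while freeness follows from freeness of $\Phi$ on $Y$, which forces $g=1_x$ in any coincidence and hence $\eta_x\in\mathfrak{H}_x$, i.e. the zero vector. Smoothness is again read off in the trivialization induced by $s$, under which the translation becomes the standard affine structure of $(T^*X\otimes\varfrak g)/\mathfrak{H}$.

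I expect the main obstacle to be precisely the representative-independence and free–transitivity bookkeeping in (ii), where the affine shift by $\varphi(g)$ and the linear twist by $\Ad_{g^{-1}}$ must be reconciled; by contrast, item (i) is comparatively routine, resting only on the linearity of $\Ad_{g^{-1}}$ and its preservation of $\mathfrak{H}_x$.
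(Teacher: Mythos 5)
Your proposal is correct and follows essentially the same route as the paper: the paper's proof also defines the fibrewise operations on a common representative $y$ (sum and scalar multiplication for $(T^*X\otimes\varfrak{g})_{0,\mathfrak{H}}$, and the translation $\llbracket y,\xi_x\rrbracket_{\mathfrak{H},\varphi}+\llbracket y,\zeta_x\rrbracket_{\mathfrak{H},0}=\llbracket y,\xi_x+\zeta_x\rrbracket_{\mathfrak{H},\varphi}$ for the affine structure) and simply asserts their well-definedness. Your write-up merely fills in the details the paper leaves as ``easy to check'' (the $\Ad_{g^{-1}}$/$\varphi(g)$ bookkeeping, freeness and transitivity of the translation, and smoothness via a local section of $\pi_{Y,Y/\mathcal G}$), all of which are correct.
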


\begin{proof}
For $(i)$, it is easy to check that the following operations are well-defined
\begin{equation*}
\llbracket y,\xi_x\rrbracket_{\mathfrak{H},0}+\llbracket y,\zeta_x\rrbracket_{\mathfrak{H},0}=\llbracket y,\xi_x+\zeta_x\rrbracket_{\mathfrak{H},0},\quad\lambda\llbracket y,\xi_x\rrbracket_{\mathfrak{H},0}=\llbracket y,\lambda\xi_x\rrbracket_{\mathfrak{H},0},
\end{equation*}
for each $\llbracket y,\xi_x\rrbracket_{\mathfrak{H},0},\llbracket y,\zeta_x\rrbracket_{\mathfrak{H},0}\in (T^*X\otimes\varfrak{g})_{0,\mathfrak{H}}$ and $\lambda\in\mathbb R$. Likewise, for $(ii)$, the following map is well-defined
\begin{equation*}
\llbracket y,\xi_x\rrbracket_{\mathfrak{H},\varphi}+\llbracket y,\zeta_x\rrbracket_{\mathfrak{H},0}=\llbracket y,\xi_x+\zeta_x\rrbracket_{\mathfrak{H},\varphi},
\end{equation*}
for each $\llbracket y,\xi_x\rrbracket_{\mathfrak{H},\varphi}\in (T^*X\otimes\varfrak{g})_{\varphi,\mathfrak{H}}$ and $\llbracket y,\zeta_x\rrbracket_{\mathfrak{H},0}\in (T^*X\otimes\varfrak{g})_{0,\mathfrak{H}}$.
\end{proof}

\begin{remark}
Observe that, given a Lie group subbundle $H\subset J^1\mathcal G$, the map $\varphi:\mathcal G\to T^*X\otimes\varfrak g$ is not unique. Furthermore, $H\subset\mathcal G\times_X(T^*X\otimes\varfrak g)$ is a vector subbundle over $\mathcal G$ (that is, $(T^*X\otimes\varfrak{g})_{\varphi,\mathfrak{H}}=(T^*X\otimes\varfrak{g})_{0,\mathfrak{H}}$) if and only we can choose the map $\varphi=0$. This is equivalent to the condition $\hat\nu(\mathcal G)\subset H$ when we regard $H$ as a Lie group subbundle of $J^1\mathcal{G}$.

Event though the case $\varphi =0$ is very relevant in examples, in the following we will assume that $\varphi$ does not necessarily vanish.
\end{remark}

Analogous to the adjoint bundle defined in \eqref{eq:defadjunto}, we can consider the tensor product $T^*X\otimes \tilde{\varfrak{g}}$ as the quotient 
\begin{equation*}
	T^*X\otimes\tilde{\varfrak g}=\frac{Y\times_X(T^*X\otimes\varfrak g)}{\mathcal G}\to Y/\mathcal G
\end{equation*}
by letting $\mathcal{G}$ act trivially on $T^*X$. As the vector subbundle $\mathfrak{H}\subset T^*X\otimes\varfrak g$ obtained in Proposition \ref{ggg} is $\Ad$-equivariant, the action of $\mathcal{G}$ on $Y\times_X(T^*X\otimes\varfrak g)$ restricts to $Y\times _X\mathfrak{H}$ and we may consider the corresponding quotient  
\begin{equation}
	\label{h tilde}
	\tilde{\mathfrak{H}}=\frac{Y\times_X\mathfrak{H}}{\mathcal G}\subset T^*X\otimes\tilde{\varfrak g}\to Y/\mathcal G
\end{equation}
In addition, $\pi_{\tilde{\mathfrak{H}},Y/\mathcal G}$ is a vector subbundle of $\pi_{T^*X\otimes\tilde{\varfrak g},Y/\mathcal G}$, so it makes sense to consider the corresponding quotient vector bundle, $\left.\left(T^*X\otimes \tilde{\varfrak g}\right)\right/\tilde{\mathfrak{H}}$, whose elements are denoted by $\llbracket y,\xi_x\rrbracket_{\tilde{\mathfrak{H}}}$, where $y\in Y_x$, $\xi_x\in (T^*X\otimes \varfrak g)_x$ and $x\in X$. Moreover, it is easily seen to be isomorphic to $(T^*X\otimes\varfrak{g})_{0,\mathfrak{H}}$.

\begin{lemma}\label{lemma:igualdad}
In the above conditions, we have the following isomorphism of vector bundles over $Y/\mathcal G$,
\begin{equation*}
	\frac{T^*X\otimes \tilde{\varfrak g}}{\tilde{\mathfrak{H}}}\simeq\frac{Y\times_X(T^*X\otimes\varfrak g)/\mathfrak{H}}{\Phi_0}=(T^*X\otimes\varfrak{g})_{0,\mathfrak{H}}.
\end{equation*} 
\end{lemma}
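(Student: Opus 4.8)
The plan is to write down an explicit map on representatives and to show that it is an isomorphism by recognizing that the two bundles in the statement are obtained from the \emph{same} total space $Y\times_X(T^*X\otimes\varfrak g)$ by collapsing one and the same equivalence relation, only performed in the two opposite orders (first by $\mathcal G$ then by $\mathfrak H$ on the left, first by $\mathfrak H$ then by $\mathcal G$ on the right). Concretely, I would define
\[
\Psi:\frac{T^*X\otimes \tilde{\varfrak g}}{\tilde{\mathfrak{H}}}\to(T^*X\otimes\varfrak{g})_{0,\mathfrak{H}},\qquad\Psi\left(\llbracket y,\xi_x\rrbracket_{\tilde{\mathfrak{H}}}\right)=\llbracket y,\xi_x\rrbracket_{\mathfrak{H},0},
\]
which sends the class of $(y,\xi_x)\in Y\times_X(T^*X\otimes\varfrak g)$ in the left-hand quotient to its class in the right-hand one. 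Both bundles project onto $Y/\mathcal G$ and $\Psi$ visibly covers $\operatorname{id}_{Y/\mathcal G}$, so everything reduces to checking that $\Psi$ is well defined, injective, surjective and linear.

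The core of the argument is the simultaneous verification of well-definedness and injectivity, which I would carry out by comparing the two induced relations on the total space. On the left, $\llbracket y,\xi_x\rrbracket_{\tilde{\mathfrak{H}}}=\llbracket y',\xi'_x\rrbracket_{\tilde{\mathfrak{H}}}$ holds exactly when $y'=y\cdot g$ for some $g\in\mathcal G_x$ and the difference of the two classes in $T^*X\otimes\tilde{\varfrak g}$ lies in $\tilde{\mathfrak{H}}$. Using that, with representative $y$, the class of $(y\cdot g,\xi'_x)$ in $T^*X\otimes\tilde{\varfrak g}$ equals that of $(y,\Ad_g\circ\xi'_x)$, and that $\tilde{\mathfrak H}$ is the image of $Y\times_X\mathfrak H$, this condition becomes $\Ad_g\circ\xi'_x-\xi_x\in\mathfrak H_x$, i.e. $\xi'_x\in\Ad_{g^{-1}}\circ\xi_x+\Ad_{g^{-1}}(\mathfrak H_x)$. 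On the right, by the very definition of $\Phi_0$ in \eqref{eq:affineaction}, $\llbracket y,\xi_x\rrbracket_{\mathfrak{H},0}=\llbracket y',\xi'_x\rrbracket_{\mathfrak{H},0}$ holds exactly when $y'=y\cdot g$ and $\xi'_x-\Ad_{g^{-1}}\circ\xi_x\in\mathfrak H_x$, i.e. $\xi'_x\in\Ad_{g^{-1}}\circ\xi_x+\mathfrak H_x$.

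The two conditions therefore coincide precisely because $\mathfrak H$ is $\Ad$-invariant: applying the inclusion of Proposition \ref{ggg} to both $g$ and $g^{-1}$ gives $\Ad_{g^{-1}}(\mathfrak H_x)=\mathfrak H_x$, and this is the one and only place the invariance is used. This shows at once that $\Psi$ is well defined and injective. Surjectivity is immediate since $\Psi$ acts as the identity on representatives, and linearity follows because the vector-space operations on both sides are given by the \emph{same} formulas on representatives ($\llbracket y,\xi_x\rrbracket+\llbracket y,\zeta_x\rrbracket=\llbracket y,\xi_x+\zeta_x\rrbracket$, and likewise for scalars, cf. Proposition \ref{prop:vectormodelaffine}). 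Finally, smoothness of $\Psi$ and its inverse is routine: $\Psi$ is the map induced on quotients by $\operatorname{id}_{Y\times_X(T^*X\otimes\varfrak g)}$, hence descends to a diffeomorphism by the universal property of the submersions defining the two quotients. I expect the only real difficulty to be bookkeeping—keeping the order of the two quotients straight and the placement of $\Ad_g$ versus $\Ad_{g^{-1}}$ consistent—rather than anything conceptual.
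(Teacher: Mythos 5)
Your proof is correct, and it fills in an argument the paper itself omits (the lemma is stated there as ``easily seen'', with no proof given): the natural reasoning behind that remark is precisely yours, namely that both bundles are quotients of the same total space $Y\times_X(T^*X\otimes\varfrak g)$ by the $\mathcal G$-action and the fiberwise $\mathfrak H$-relation performed in opposite orders, with the $\Ad$-invariance of $\mathfrak H$ from Proposition \ref{ggg} (applied to $g$ and $g^{-1}$ to get $\Ad_{g^{-1}}(\mathfrak H_x)=\mathfrak H_x$) being exactly what makes the two induced equivalence relations coincide. Your identification of that as the single essential point, together with the routine descent-of-smooth-structure argument via the quotient submersions, is a complete and correct proof.
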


On the other hand, the first jet extension of the fibered action $\Phi$ is a right fibered action of $\pi_{J^1 \mathcal G,X}$ on $\pi_{J^1 Y,X}$
\begin{equation}\label{eq:accionjet}
\Phi^{(1)}:J^1 Y\times_X J^1 \mathcal G\to J^1 Y,\quad\left(j^1_x s,j^1_x\gamma\right)\mapsto j^1_x(\Phi(s,\gamma)).
\end{equation}
where, for (local) sections $s\in\Gamma(\pi_{Y,X})$ and $\gamma\in\Gamma(\pi_{\mathcal G,X})$, we have the (local) section  $\Phi(s,\gamma)(x)=\Phi(s(x),\gamma(x))$, $x\in X$. If we regard 1-jets as tangent maps of sections at a point, that is, $j_x^1s\equiv (ds)_x$ and $j_x^1\gamma\equiv(d\gamma)_x$, we can regard $\Phi ^1$ as
\begin{equation*}
\Phi^{(1)}(j^1_x s,j^1_x\gamma)=(d\Phi)_{\left(s(x),\gamma(x)\right)}\circ\left(ds_x,d\gamma_x\right): T_x X\to T_{s(x)\cdot\gamma(x)} Y.
\end{equation*}

The following result studies the geometry of the quotient of $\pi_{J^1 Y,X}$ by the fibered action \eqref{eq:accionjet} when restricted to the Lie group subbundle $H\subset J^1\mathcal G$.

\begin{theorem}\label{theorem:descomposicioncociente}
Let $\omega\in\Omega^1(Y,\varfrak g)$ be a generalized principal connection on $\pi_{Y,Y/\mathcal G}$ associated to a Lie group connection $\nu$ on $\pi_{\mathcal G,X}$ and consider a Lie group subbundle $H\subset J^1\mathcal G$ as in Proposition \ref{ggg}. Then the following map is a bundle isomorphism over $Y/\mathcal G$:
\begin{equation}
\begin{array}{cccc}
\label{ISO}
J^1 Y/H & \to & J^1(Y/\mathcal G)\times_{Y/\mathcal G}(T^*X\otimes\varfrak{g})_{\varphi,\mathfrak{H}} & \vspace{0.1cm}\\
\left[ j^1_x s\right]_H & \mapsto & \left(j^1_x\sigma_s,\left\llbracket s(x),\omega_{s(x)}\circ (ds)_x\right\rrbracket_{\mathfrak{H},\varphi}\right),
\end{array}
\end{equation}
where $\sigma_s=[s]_\mathcal G=\pi_{Y,Y/\mathcal G}\circ s\in\Gamma(\pi_{Y/\mathcal G,X})$.
\end{theorem}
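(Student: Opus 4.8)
The plan is to exhibit the stated map as the descent, to the $H$-quotient, of a global \emph{connection splitting} of $J^1 Y$ that is already a diffeomorphism before quotienting. Concretely, I would introduce the auxiliary bundle over $Y/\mathcal G$,
\begin{equation*}
\mathcal J:=J^1(Y/\mathcal G)\times_{Y/\mathcal G}\big(Y\times_X(T^*X\otimes\varfrak g)\big),
\end{equation*}
together with the map $\tilde\Psi:J^1 Y\to\mathcal J$ given by $j^1_x s\mapsto\big(j^1_x\sigma_s,(s(x),\omega_{s(x)}\circ(ds)_x)\big)$. The first step is to check that $\tilde\Psi$ is a diffeomorphism, which is exactly the horizontal--vertical decomposition induced by $\omega$: for $y=s(x)$ and $u\in T_xX$, complementarity (Definition \ref{def:connectionform}(i)) gives $(ds)_x(u)=Hor^\omega_{y}\big((d\sigma_s)_x(u)\big)+\big(\omega_{y}((ds)_x(u))\big)^*_{y}$, so a $1$-jet is completely encoded by its base point $s(x)$, its projected jet $j^1_x\sigma_s$, and the vertical datum $\omega_{s(x)}\circ(ds)_x\in T^*_xX\otimes\varfrak g_x$. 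The inverse sends $(j^1_x\sigma,(y,\xi_x))$ to the $1$-jet determined by $s(x)=y$ and $(ds)_x(u)=Hor^\omega_y((d\sigma)_x(u))+(\xi_x(u))^*_y$; this is a genuine holonomic $1$-jet because the right-hand side splits $(d\pi_{Y,X})_y$, and smoothness in both directions is clear from the construction.

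Next I would make the $H$-action visible on $\mathcal J$. Since $H\subset J^1\mathcal G$ is a Lie group subbundle, $\Phi^{(1)}$ restricts to a right action $\Phi^{(1)}|_{H}$ of $H$ on $J^1 Y$, namely $j^1_x s\cdot j^1_x\gamma=j^1_x(s\cdot\gamma)$ for $j^1_x\gamma\in H$. Writing $g=\gamma(x)$, the projected section is unchanged, $\sigma_{s\cdot\gamma}=\sigma_s$, while the $\Ad$-equivariance of $\omega$ (Definition \ref{def:connectionform}(ii)) applied to $(d(s\cdot\gamma))_x=(d\Phi)_{(s(x),g)}\circ((ds)_x,(d\gamma)_x)$ yields $\omega_{(s\cdot\gamma)(x)}\circ(d(s\cdot\gamma))_x=\Ad_{g^{-1}}\big(\omega_{s(x)}\circ(ds)_x+\nu\circ(d\gamma)_x\big)$. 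Transporting this through $\tilde\Psi$ shows that $\tilde\Psi$ is $H$-equivariant for the action on $\mathcal J$ that fixes the $J^1(Y/\mathcal G)$ factor and acts on $Y\times_X(T^*X\otimes\varfrak g)$ by $(y,\xi_x)\cdot j^1_x\gamma=(y\cdot g,\Ad_{g^{-1}}(\xi_x+\nu\circ(d\gamma)_x))$. Because it is conjugate through $\tilde\Psi$ to $\Phi^{(1)}|_H$ this transferred map is a right action, and it is manifestly free and proper, being assembled from the free proper $\mathcal G$-action on $Y$ and a translation in the vector factor; hence the $H$-action on $J^1 Y$ is free and proper as well, and no action axioms need separate verification.

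The heart of the argument---and the step I expect to be the main obstacle---is identifying the quotient $\mathcal J/H$ with the target bundle. The transferred action preserves the fibered-product base point in $Y/\mathcal G$ (since $y\cdot g$ and $y$ have the same class) and fixes $J^1(Y/\mathcal G)$, so $\mathcal J/H=J^1(Y/\mathcal G)\times_{Y/\mathcal G}Q$, where $Q$ is the quotient of $Y\times_X(T^*X\otimes\varfrak g)$ by $(y,\xi_x)\cdot j^1_x\gamma=(y\cdot g,\Ad_{g^{-1}}(\xi_x+\nu\circ(d\gamma)_x))$. Here I would invoke Proposition \ref{ggg} to write $\nu\circ(d\gamma)_x=\varphi(g)+\eta_x$ with $\eta_x\in\mathfrak{H}_x$, all such values being attained as $j^1_x\gamma$ ranges over $H_g$, turning the action into $(y,\xi_x)\mapsto(y\cdot g,\Ad_{g^{-1}}(\xi_x+\varphi(g))+\Ad_{g^{-1}}\eta_x)$. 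The delicate point is that the affine term $\varphi(g)$ and the linear slot $\eta_x$ interact: using the $\Ad$-invariance $\Ad_{g^{-1}}(\mathfrak{H}_x)=\mathfrak{H}_x$ of Proposition \ref{ggg}, the $\eta_x$-freedom is exactly translation by $\mathfrak{H}_x$ (take $g=1_x$, where $\varphi(1_x)=0$) and the residual $g$-dependence is precisely the action $\Phi_\varphi$ of \eqref{eq:affineaction}. Thus the orbit of $(y,\xi_x)$ coincides with the equivalence class defining $(T^*X\otimes\varfrak g)_{\varphi,\mathfrak{H}}$ in \eqref{eq:cocienteafin}, giving $Q=(T^*X\otimes\varfrak g)_{\varphi,\mathfrak{H}}$.

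Finally, since $\tilde\Psi$ is an $H$-equivariant diffeomorphism between spaces carrying free and proper $H$-actions, it descends to a diffeomorphism of quotients $J^1 Y/H\to\mathcal J/H=J^1(Y/\mathcal G)\times_{Y/\mathcal G}(T^*X\otimes\varfrak g)_{\varphi,\mathfrak{H}}$ covering the identity on $Y/\mathcal G$. Tracing $[j^1_x s]_H$ through $\tilde\Psi$ and the two quotient projections shows the descended map is $[j^1_x s]_H\mapsto(j^1_x\sigma_s,\llbracket s(x),\omega_{s(x)}\circ(ds)_x\rrbracket_{\mathfrak{H},\varphi})$, which is the stated isomorphism. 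I expect essentially all the genuine work to sit in the previous paragraph, verifying that the single $H$-orbit relation on $Y\times_X(T^*X\otimes\varfrak g)$ factors exactly as the successive quotients by $\mathfrak{H}$ and by $\Phi_\varphi$, which is where the affineness of $H$ over $\mathcal G$ (the nonzero $\varphi$) and the $\Ad$-invariance of $\mathfrak{H}$ are both essential.
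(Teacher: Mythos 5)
Your proof is correct, and it is assembled from the same two ingredients as the paper's proof: the $\omega$-splitting of tangent vectors along a section (your inverse formula $Hor^\omega_y\circ(d\sigma)_x+(\xi_x)^*_y$ is precisely the inverse map the paper writes down, only stated before passing to $H$-classes), and the $\Ad$-equivariance of $\omega$ combined with the description $H=\varphi+\mathfrak{H}$ of Proposition \ref{ggg} (your equivariance computation is literally the paper's well-definedness computation). The difference is architectural rather than in the key lemmas. The paper argues directly at the level of quotients: it checks that the stated assignment is constant on $H$-orbits and then exhibits an inverse on classes, invoking the reference \cite{CaRo2023} for its well-definedness. You instead work upstairs: you show the splitting map $\tilde\Psi$ is a diffeomorphism before any quotient is taken, transfer the $H$-action through it, identify the $H$-orbits in $Y\times_X(T^*X\otimes\varfrak g)$ with the two-step equivalence classes ($\mathfrak{H}$-translation followed by the action $\Phi_\varphi$ of \eqref{eq:affineaction}) that define \eqref{eq:cocienteafin}, and conclude by equivariant descent. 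Your organization buys two things the paper leaves implicit: smoothness of the quotient map and of its inverse comes for free (an equivariant diffeomorphism intertwining free and proper actions descends to a diffeomorphism of quotients), and you obtain along the way that the $H$-action on $J^1Y$ is free and proper, which is what endows $J^1Y/H$ with its manifold structure in the first place. The paper's version is shorter and stays closer to the statement, at the price of tacitly assuming these quotient-structure facts. Your orbit-factorization step, where the $\Ad$-invariance of $\mathfrak{H}$ lets $\Ad_{g^{-1}}\eta_x$ sweep out all of $\mathfrak{H}_x$ so that the single $H$-orbit relation splits into the two successive quotients, is exactly where the affineness of $H$ over $\mathcal G$ and the $\Ad$-invariance of $\mathfrak{H}$ are consumed, and it is correct; the only point that deserves an explicit line rather than the word ``manifestly'' is the properness of the transferred action, which follows from properness of the $\mathcal G$-action on $Y$, continuity of $\varphi$, and closedness of $H$.
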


\begin{proof}
For $j^1_x s\in J^1 Y$ and $j^1_x\eta\in H$, let $s$ and $\eta$ be (local) sections defining those jet elements. On one hand, it is clear that $\sigma_{\Phi(s,\eta)}=\sigma_s$. On the other, by \eqref{eq:Thetanu} we have $j_x^1\eta\simeq\left(\eta(x),\nu\circ(d\eta)_x\right)\in H$, whence $\nu\circ(d\eta)_x=\varphi(\eta(x))+\eta_x$ for some $\eta_x\in\mathfrak{H}_x$, by Proposition \ref{ggg}. Therefore, 
\begin{align*}
& \left\llbracket\Phi(s,\eta)(x),\omega_{\Phi(s,\eta)(x)}\circ(d\Phi(s,\eta))_x\right\rrbracket_{\mathfrak{H},\varphi}\vspace{0.1cm}\\
& \hspace{15mm} =\left\llbracket s(x)\cdot\eta(x),\omega_{s(x)\cdot\eta(x)}\circ(d\Phi)_{(s(x),\eta(x))}\circ\left((ds)_x,(d\eta)_x\right)\right\rrbracket_{\mathfrak{H},\varphi}\vspace{0.1cm}\\
& \hspace{15mm} =\left\llbracket s(x)\cdot\eta(x),\Ad_{\eta(x)^{-1}}\left(\omega_{s(x)}\circ(ds)_x+\nu\circ (d\eta)_x\right)\right\rrbracket_{\mathfrak{H},\varphi}\vspace{0.1cm}\\
& \hspace{15mm} =\left\llbracket s(x),\omega_{s(x)}\circ(ds)_x+\eta_x\right\rrbracket_{\mathfrak{H},\varphi}\vspace{0.1cm}\\
& \hspace{15mm} =\left\llbracket s(x),\omega_{s(x)}\circ(ds)_x\right\rrbracket_{\mathfrak{H},\varphi}.
\end{align*}
Subsequently, the map is well-defined. A straightforward computation shows that the inverse map is
\begin{equation*}
J^1(Y/\mathcal G)\times_{Y/\mathcal G}(T^*X\otimes\varfrak{g})_{\varphi,\mathfrak{H}}\to J^1 Y/H,\quad\left(j^1_x\sigma,\left\llbracket y,\xi_x\right\rrbracket_{\mathfrak{H},\varphi}\right)\mapsto\left[Hor^\omega_y\circ(d\sigma)_x+(\xi_x)^*_y\right]_H,
\end{equation*}
where $Hor^\omega_y: T_{[y]_{\mathcal G}}(Y/\mathcal G)\rightarrow T_y Y$ is the horizontal lifting given by $\omega$ at $y\in Y$. Observe that it is well-defined thanks to \cite[Lemma 3.1, Proposition 3.10]{CaRo2023}.

\end{proof}

%%%%%%%%%%
\subsection{Connections on the reduced spaces}\label{sec:inducedconnection}

Let $\omega$ be a generalized principal connection on $\pi_{Y,Y/\mathcal G}$ associated to a Lie group connection $\nu$ on $\pi_{\mathcal G,X}$ and $\nabla^X$ be a linear connection on $\pi_{TX,X}$. These connections induce a linear connection on the vector bundle $\pi_{\tilde{\varfrak g},Y/\mathcal G}$, and an affine connection on the affine bundle $\pi_{(T^*X\otimes\varfrak{g})_{\varphi,\mathfrak{H}},Y/\mathcal G}$ as follows. Consider the linear connection $\nabla^{\varfrak g}$ on $\pi_{\varfrak g,X}$ induced by $\nu$ (cf. Proposition \ref{prop:nablaginducida}). As above, denote by $^\nu\big|\big|$, $^{\varfrak g}\big|\big|$ and $^\omega\big|\big|$ the corresponding parallel transports. It is easy to check from the definition that the connections $\nu$ and $\nabla^{\varfrak g}$ satisfy the following compatibility relation: 
\begin{equation}\label{eq:compatibilityofnablag}
{^{\varfrak g}\big|\big|}_{x(a)}^{x(b)}\Ad_g(\xi)= \Ad_{{^\nu||}_{x(a)}^{x(b)}g}\left({^{\varfrak g}\big|\big|}_{x(a)}^{x(b)}\xi\right),\qquad g\in\mathcal G_{x(a)},~\xi\in\varfrak g_{x(a)}
\end{equation}
for every curve $x: I\rightarrow X$. This and Proposition \ref{prop:compatibilityofomega} ensure that the following parallel transport is well-defined.

\begin{proposition}\label{prop:nablatildevarfarkg}
The assignment sending any curve $\gamma: I\to Y/\mathcal G$ to the map
\begin{equation*}
{^{\tilde{\varfrak g}}\big|\big|}_{\gamma(a)}^{\gamma(b)}:\tilde{\varfrak g}_{\gamma(a)}\to\tilde{\varfrak g}_{\gamma(b)},\quad
\left[y,\xi\right]_{\mathcal G}\mapsto\left[{^\omega\big|\big|}_{\gamma(a)}^{\gamma(b)}y,{^{\varfrak g}\big|\big|}_{x(a)}^{x(b)}\xi\right]_{\mathcal G},
\end{equation*}
is a linear parallel transport on $\pi_{\tilde{\varfrak g},Y/\mathcal G}$, where $x=\pi_{Y/\mathcal G,X}\circ\gamma$.
\end{proposition}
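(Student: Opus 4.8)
The plan is to verify the three features that characterize a linear parallel transport in the sense of \cite{michor1993}: that the proposed assignment is well-defined on equivalence classes, that it restricts to a linear isomorphism on each fibre, and that the family indexed by curves obeys the identity, cocycle and smoothness axioms. The whole argument rests on the two compatibility relations already available, namely Proposition \ref{prop:compatibilityofomega} relating $^\omega\big|\big|$ to $^\nu\big|\big|$, and equation \eqref{eq:compatibilityofnablag} relating $^{\varfrak g}\big|\big|$ to $^\nu\big|\big|$; once these are brought to bear, the remaining steps are formal.

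First I would establish well-definedness, which is the crux of the matter and the main obstacle, since it is precisely where the two compatibility relations must interlock. As $\tilde{\varfrak g}$ is the quotient of $Y\times_X\varfrak g$ by the action $((y,\xi),g)\mapsto(y\cdot g,\Ad_{g^{-1}}(\xi))$ of \eqref{eq:defadjunto}, I must show that replacing $(y,\xi)$ by $(y\cdot g,\Ad_{g^{-1}}(\xi))$ for an arbitrary $g\in\mathcal G_{x(a)}$ leaves the output class unchanged. Writing $g'={^\nu\big|\big|}_{x(a)}^{x(b)}g$, Proposition \ref{prop:compatibilityofomega} gives ${^\omega\big|\big|}_{\gamma(a)}^{\gamma(b)}(y\cdot g)=\left({^\omega\big|\big|}_{\gamma(a)}^{\gamma(b)}y\right)\cdot g'$, while \eqref{eq:compatibilityofnablag} together with the identity ${^\nu\big|\big|}_{x(a)}^{x(b)}(g^{-1})=(g')^{-1}$ from Proposition \ref{prop:liegroupconnection} yields ${^{\varfrak g}\big|\big|}_{x(a)}^{x(b)}(\Ad_{g^{-1}}\xi)=\Ad_{(g')^{-1}}\left({^{\varfrak g}\big|\big|}_{x(a)}^{x(b)}\xi\right)$. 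Hence the transported pair is exactly $\left({^\omega\big|\big|}_{\gamma(a)}^{\gamma(b)}y,\,{^{\varfrak g}\big|\big|}_{x(a)}^{x(b)}\xi\right)$ acted upon by $g'$ under \eqref{eq:defadjunto}, so it represents the same class in $\tilde{\varfrak g}_{\gamma(b)}$.

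Next, linearity on fibres is immediate: fixing a representative $y$ over $\gamma(a)$ and the representative ${^\omega\big|\big|}_{\gamma(a)}^{\gamma(b)}y$ over $\gamma(b)$ identifies $\tilde{\varfrak g}_{\gamma(a)}\cong\varfrak g_{x(a)}$ and $\tilde{\varfrak g}_{\gamma(b)}\cong\varfrak g_{x(b)}$ as vector spaces, and under these identifications the map ${^{\tilde{\varfrak g}}\big|\big|}_{\gamma(a)}^{\gamma(b)}$ becomes the linear map ${^{\varfrak g}\big|\big|}_{x(a)}^{x(b)}$, which is linear by Proposition \ref{prop:nablaginducida}.

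Finally I would check the parallel transport axioms. Because the assignment acts diagonally on the two representatives, the identity ${^{\tilde{\varfrak g}}\big|\big|}_{\gamma(a)}^{\gamma(a)}=\operatorname{id}$ and the cocycle property ${^{\tilde{\varfrak g}}\big|\big|}_{\gamma(b)}^{\gamma(c)}\circ{^{\tilde{\varfrak g}}\big|\big|}_{\gamma(a)}^{\gamma(b)}={^{\tilde{\varfrak g}}\big|\big|}_{\gamma(a)}^{\gamma(c)}$ follow termwise from the corresponding properties of $^\omega\big|\big|$ and $^{\varfrak g}\big|\big|$, and invertibility follows likewise. Smoothness in the curve is inherited from the smoothness of $^\omega\big|\big|$ and $^{\varfrak g}\big|\big|$ together with the fact that the quotient projection $\pi_{Y\times_X\varfrak g,\tilde{\varfrak g}}$ is a surjective submersion, which descends the smooth dependence to the quotient. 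By the characterization of parallel transports in \cite{michor1993}, this establishes that ${^{\tilde{\varfrak g}}\big|\big|}$ is a linear parallel transport on $\pi_{\tilde{\varfrak g},Y/\mathcal G}$.
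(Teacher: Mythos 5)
Your proof is correct and follows exactly the route the paper takes: the paper justifies this proposition in a single sentence by citing the compatibility relation \eqref{eq:compatibilityofnablag} and Proposition \ref{prop:compatibilityofomega}, and your argument is precisely the fleshed-out version of that claim, using those same two facts (plus the inverse-compatibility from Proposition \ref{prop:liegroupconnection}) to verify well-definedness, with the linearity and transport axioms following termwise as you say.
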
 

We denote by $\nabla^{\tilde{\varfrak g}}$ the corresponding linear connection on $\pi_{\tilde{\varfrak g},Y/\mathcal G}$ and by $\nabla^{\tilde{\varfrak g}}/dt$ the corresponding covariant derivative.

\begin{lemma}
Let $\nabla^{\mathfrak{H}}$ be a linear connection on $\pi_{\mathfrak{H},X}$. The assignment sending a curve $x: I\rightarrow X$ to the map
\begin{equation*}
{^H\big|\big|}_{x(a)}^{x(b)}: H_{x(a)}\rightarrow H_{x(b)},\quad{^{ H}\big|\big|}_{x(a)}^{x(b)}(g,\varphi(g)+\eta_{x(a)})=\left({^\nu\big|\big|}_{x(a)}^{x(b)}g,\varphi\left({^\nu\big|\big|}_{x(a)}^{x(b)}g\right)+{^{\mathfrak{H}}\big|\big|}_{x(a)}^{x(b)}\eta_{x(a)}\right)
\end{equation*}
for each $(g,\varphi(g)+\eta_{x(a)})\in H_{x(a)}$ is a parallel transport on $\pi_{H,X}$.
\end{lemma}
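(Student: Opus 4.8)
The plan is to verify directly the defining properties of a parallel transport for the assignment ${^H\big|\big|}$: that each ${^H\big|\big|}_{x(a)}^{x(b)}$ is a well-defined bijection $H_{x(a)}\to H_{x(b)}$, that it reduces to the identity when $a=b$, that it obeys the cocycle (concatenation) law, and that it varies smoothly with the curve. The whole argument hinges on the canonical decomposition of $H$ furnished by Proposition \ref{ggg}.

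First I would note that, by Proposition \ref{ggg}, every element of a fiber $H_x$ is written \emph{uniquely} as $(g,\varphi(g)+\eta)$ with $g\in\mathcal G_x$ and $\eta\in\mathfrak{H}_x$; indeed $g$ is the projection to $\mathcal G$ and $\eta=\hat\eta-\varphi(g)\in\mathfrak{H}_x$ by the affine structure of $\pi_{H,\mathcal G}$. This renders the formula for ${^H\big|\big|}$ unambiguous on points. That the image lands in $H_{x(b)}$ is then automatic: the transport ${^\nu\big|\big|}_{x(a)}^{x(b)}$ of the Lie group connection sends $\mathcal G_{x(a)}$ into $\mathcal G_{x(b)}$, while ${^{\mathfrak{H}}\big|\big|}_{x(a)}^{x(b)}$ preserves $\mathfrak{H}$ (as $\nabla^{\mathfrak{H}}$ is a connection on $\pi_{\mathfrak{H},X}$), so $\big({^\nu\big|\big|}_{x(a)}^{x(b)}g,\ \varphi({^\nu\big|\big|}_{x(a)}^{x(b)}g)+{^{\mathfrak{H}}\big|\big|}_{x(a)}^{x(b)}\eta\big)$ has exactly the form $(g',\varphi(g')+\eta')$ with $g'\in\mathcal G_{x(b)}$, $\eta'\in\mathfrak{H}_{x(b)}$ required by Proposition \ref{ggg} for membership in $H_{x(b)}$.

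Next I would check the parallel-transport axioms. The identity law ${^H\big|\big|}_{x(a)}^{x(a)}=\operatorname{id}$ is immediate from ${^\nu\big|\big|}_{x(a)}^{x(a)}=\operatorname{id}$ and ${^{\mathfrak{H}}\big|\big|}_{x(a)}^{x(a)}=\operatorname{id}$. For the cocycle law the crucial observation is that the output $\big({^\nu\big|\big|}_{x(a)}^{x(b)}g,\ \varphi({^\nu\big|\big|}_{x(a)}^{x(b)}g)+{^{\mathfrak{H}}\big|\big|}_{x(a)}^{x(b)}\eta\big)$ is \emph{already in canonical form}, with group part ${^\nu\big|\big|}_{x(a)}^{x(b)}g$ and $\mathfrak{H}$-part ${^{\mathfrak{H}}\big|\big|}_{x(a)}^{x(b)}\eta$. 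Hence applying ${^H\big|\big|}_{x(b)}^{x(c)}$ afterwards simply composes each transport on its own part, and the cocycle identities for ${^\nu\big|\big|}$ and ${^{\mathfrak{H}}\big|\big|}$ give ${^H\big|\big|}_{x(b)}^{x(c)}\circ{^H\big|\big|}_{x(a)}^{x(b)}={^H\big|\big|}_{x(a)}^{x(c)}$. Invertibility, with inverse ${^H\big|\big|}_{x(b)}^{x(a)}$, follows identically, so each ${^H\big|\big|}_{x(a)}^{x(b)}$ is a diffeomorphism of fibers; smoothness in the curve and reparametrization invariance are inherited from those of ${^\nu\big|\big|}$, ${^{\mathfrak{H}}\big|\big|}$ and the smooth bundle morphism $\varphi$.

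The point demanding the most care — and the reason the construction succeeds although $\varphi$ is neither linear nor parallel — is that $\varphi$ is evaluated at the \emph{already transported} element ${^\nu\big|\big|}_{x(a)}^{x(b)}g$ rather than being transported itself. Thus no compatibility relation between $\varphi$ and either parallel transport is needed: the group part and the $\mathfrak{H}$-part evolve independently under ${^\nu\big|\big|}$ and ${^{\mathfrak{H}}\big|\big|}$, while $\varphi$ merely rides along on the group part, keeping every image in canonical form. This decoupling is the sole structural input; once it is isolated, both well-definedness and the cocycle law are formal, and the remaining verifications are routine.
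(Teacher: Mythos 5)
Your proof is correct; the paper in fact states this lemma without proof, treating it as a routine verification, and your argument is exactly the intended one — the unique decomposition $(g,\varphi(g)+\eta_x)$ of points of $H$ provided by Proposition \ref{ggg} makes the formula well defined and shows the image lies in $H_{x(b)}$, after which the identity, cocycle, invertibility and smoothness axioms are inherited componentwise from ${^\nu\big|\big|}$, ${^{\mathfrak{H}}\big|\big|}$ and the smoothness of the bundle morphism $\varphi$. Your closing observation that no compatibility between $\varphi$ and the parallel transports is needed here, because $\varphi$ is evaluated at the already transported group element, correctly isolates the structural reason the construction works; note the contrast with Proposition \ref{prop:inducedconnection}, where the analogous quotient construction does force the compatibility conditions \eqref{eq:compatibilityofnablag} and \eqref{eq:compatibilityofvarphi} on $\nabla^\otimes$.
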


In the same vein, a parallel transport may be defined on $\overline H=\mathcal G\times_X\mathfrak{H}\to X$ as follows,
\begin{equation*}
{^{\overline H}\big|\big|}_{x(a)}^{x(b)}:\overline H_{x(a)}\rightarrow\overline H_{x(b)},\quad{^{\overline H}\big|\big|}_{x(a)}^{x(b)}(g,\eta_{x(a)})=\left({^\nu\big|\big|}_{x(a)}^{x(b)}g,{^{\mathfrak{H}}\big|\big|}_{x(a)}^{x(b)}\eta_{x(a)}\right).
\end{equation*}

Now we extend $\nabla^{\mathfrak{H}}$ to a linear connection $\nabla^\otimes$ on $T^*X\otimes\varfrak g$ that is \emph{compatible with $\nu$}, i.e., the relation \eqref{eq:compatibilityofnablag} holds for $^\otimes\big|\big|$ instead of $^{\varfrak g}\big|\big|$. Similarly, $\nabla^\otimes$ is said to be \emph{compatible with $\varphi$} if it satisfies
    \begin{equation}\label{eq:compatibilityofvarphi}
{^\otimes\big|\big|}^{x(b)}_{x(a)}\,\varphi(g)=\varphi\left({^\nu\big|\big|}^{x(b)}_{x(a)}\,g\right),\qquad g\in\mathcal G_{x(a)},
\end{equation}
for each curve $x:I\to X$. Henceforth, we assume that $\nabla^\otimes$ is compatible with $\nu$ and $\varphi$. We are ready to define a linear connection on the reduced bundle.

\begin{proposition}\label{prop:inducedconnection}
The assignment that each curve $\gamma: I\rightarrow Y/\mathcal G$ corresponds to the map
\begin{equation*}
\big|\big|^{\gamma(b)}_{\gamma(a)}:\left((T^*X\otimes\varfrak{g})_{\varphi,\mathfrak{H}}\right)_{\gamma(a)}\to\left((T^*X\otimes\varfrak{g})_{\varphi,\mathfrak{H}}\right)_{\gamma(b)},\quad
\llbracket y,\xi_x\rrbracket_{\mathfrak{H},\varphi}\mapsto\left\llbracket^\omega\big|\big|^{\gamma(b)}_{\gamma(a)}\,y,{^\otimes\big|\big|}^{x(b)}_{x(a)}\,\xi_x\right\rrbracket_{\mathfrak{H},\varphi},
\end{equation*}
where $x=\pi_{Y/\mathcal G,X}\circ\gamma$, is an affine parallel transport on $\pi_{(T^*X\otimes\varfrak{g})_{\varphi,\mathfrak{H}},Y/\mathcal G}$
\end{proposition}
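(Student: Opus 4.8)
The plan is to establish, in order, that the stated assignment descends to the quotient (independence of the chosen representative), that it respects the affine structure of Proposition \ref{prop:vectormodelaffine}, and that it satisfies the groupoid axioms of a parallel transport. The last point will be inherited directly from the corresponding properties of $^\omega\big|\big|$ and $^\otimes\big|\big|$, so the essential content lies in the well-definedness, which is exactly where the three compatibility hypotheses enter.

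For well-definedness, two representatives of a class $\llbracket y,\xi_x\rrbracket_{\mathfrak{H},\varphi}$ differ by the action $\Phi_\varphi$ with some $g\in\mathcal G_{x(a)}$, so I must transport the representative $(y\cdot g,\Ad_{g^{-1}}\circ(\xi_x+\varphi(g)))$ and check that the image agrees with the image of $(y,\xi_x)$. Writing $\bar y={^\omega\big|\big|}^{\gamma(b)}_{\gamma(a)}\,y$ and $\bar g={^\nu\big|\big|}^{x(b)}_{x(a)}\,g$, Proposition \ref{prop:compatibilityofomega} gives ${^\omega\big|\big|}^{\gamma(b)}_{\gamma(a)}(y\cdot g)=\bar y\cdot\bar g$ for the first component. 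For the second component I would chain three facts: linearity of $^\otimes\big|\big|$ to split $\xi_x+\varphi(g)$; compatibility of $\nabla^\otimes$ with $\nu$ --- relation \eqref{eq:compatibilityofnablag} for $^\otimes\big|\big|$ --- together with ${^\nu\big|\big|}^{x(b)}_{x(a)}\,g^{-1}=\bar g^{-1}$ from Proposition \ref{prop:liegroupconnection}, to pass $\Ad_{g^{-1}}$ to $\Ad_{\bar g^{-1}}$; and compatibility of $\nabla^\otimes$ with $\varphi$ --- relation \eqref{eq:compatibilityofvarphi} --- to turn ${^\otimes\big|\big|}^{x(b)}_{x(a)}\,\varphi(g)$ into $\varphi(\bar g)$. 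The transported second component then equals $\Ad_{\bar g^{-1}}\circ({^\otimes\big|\big|}^{x(b)}_{x(a)}\,\xi_x+\varphi(\bar g))$, which is precisely the result of acting by $\Phi_\varphi$ with $\bar g\in\mathcal G_{x(b)}$ on the pair $(\bar y,{^\otimes\big|\big|}^{x(b)}_{x(a)}\,\xi_x)$. Hence both representatives define the same class and the map is well-defined.

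Next I would verify compatibility with the affine structure. Adding a model element $\llbracket y,\zeta_x\rrbracket_{\mathfrak{H},0}$ and invoking linearity of $^\otimes\big|\big|$ shows that the transport of $\llbracket y,\xi_x\rrbracket_{\mathfrak{H},\varphi}+\llbracket y,\zeta_x\rrbracket_{\mathfrak{H},0}$ splits as the transport of $\llbracket y,\xi_x\rrbracket_{\mathfrak{H},\varphi}$ plus $\big|\big|_0(\llbracket y,\zeta_x\rrbracket_{\mathfrak{H},0})$, where $\big|\big|_0$ denotes the same formula with $\varphi=0$. Repeating the well-definedness argument with $\varphi=0$ (where the $\varphi$-compatibility is vacuous) and using linearity shows $\big|\big|_0$ is a well-defined linear parallel transport on the model bundle $(T^*X\otimes\varfrak g)_{0,\mathfrak{H}}$; thus each map $\big|\big|^{\gamma(b)}_{\gamma(a)}$ is affine with linear part $\big|\big|_0$. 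Finally, $\big|\big|^{\gamma(a)}_{\gamma(a)}=\operatorname{id}$ and the concatenation law follow fiberwise from those of $^\omega\big|\big|$ and $^\otimes\big|\big|$, so each $\big|\big|^{\gamma(b)}_{\gamma(a)}$ is an affine isomorphism and the family is an affine parallel transport.

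The main obstacle is the bookkeeping in the well-definedness step: the $\Ad$-twist, the $\varphi$-shift and the group translation must all be propagated through the transports in a consistent order, and it is precisely the three compatibility conditions imposed just above the statement that make these pieces line up. Once well-definedness is secured, the affine and groupoid properties are formal.
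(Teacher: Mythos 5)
Your overall strategy (reduce everything to well-definedness, then get the affine and groupoid properties formally) is the same as the paper's, and the part of the computation you do carry out --- propagating the $\Ad$-twist via the $\nu$-compatibility of $\nabla^\otimes$ together with ${^\nu\big|\big|}^{x(b)}_{x(a)}g^{-1}=\bigl({^\nu\big|\big|}^{x(b)}_{x(a)}g\bigr)^{-1}$, and the $\varphi$-shift via \eqref{eq:compatibilityofvarphi} --- is correct. However, there is a genuine gap in your description of the equivalence relation. The class $\llbracket y,\xi_x\rrbracket_{\mathfrak{H},\varphi}$ is a \emph{two-fold} quotient: by construction \eqref{eq:cocienteafin}, one first passes to $Y\times_X(T^*X\otimes\varfrak g)/\mathfrak{H}$ and only then quotients by $\Phi_\varphi$. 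Hence two representatives $(y,\xi_x)$ and $(y',\xi'_x)$ of the same class satisfy $y'=y\cdot g$ and $\xi'_x=\Ad_{g^{-1}}\circ(\xi_x+\varphi(g))+\eta_x$ for some $g\in\mathcal G_{x(a)}$ \emph{and some} $\eta_x\in\mathfrak{H}_{x(a)}$; your claim that they "differ by the action $\Phi_\varphi$ with some $g$" omits the $\mathfrak{H}$-translation entirely. In particular you never check the simplest ambiguity, $(y,\xi_x)\sim(y,\xi_x+\eta_x)$, $\eta_x\in\mathfrak{H}_{x(a)}$.

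Closing this gap requires an ingredient that none of the three compatibility conditions you cite provides: one needs ${^\otimes\big|\big|}^{x(b)}_{x(a)}\bigl(\mathfrak{H}_{x(a)}\bigr)\subset\mathfrak{H}_{x(b)}$, i.e.\ that $\mathfrak{H}$ is a parallel subbundle for $\nabla^\otimes$. This holds in the paper's setting only because $\nabla^\otimes$ is, by construction, an \emph{extension} of the linear connection $\nabla^{\mathfrak{H}}$ on $\pi_{\mathfrak{H},X}$ --- an independent hypothesis, not a consequence of linearity, $\nu$-compatibility or $\varphi$-compatibility. Moreover, if one writes the general ambiguity in the form $\Ad_{g^{-1}}\circ(\xi_x+\eta_x+\varphi(g))$, one also needs the $\Ad$-invariance of $\mathfrak{H}$ (Proposition \ref{ggg}) to recognize the transported error term as lying in $\mathfrak{H}_{x(b)}$; the paper explicitly lists this invariance among the facts guaranteeing well-definedness, and it is absent from your argument. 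The same omission affects your treatment of the linear part $\big|\big|_0$ on $(T^*X\otimes\varfrak{g})_{0,\mathfrak{H}}$, whose well-definedness again rests on the $\mathfrak{H}$-preservation, not merely on linearity. Once these two facts are added, your argument becomes a complete and correct proof along the paper's lines; the affine and groupoid parts are fine as you wrote them.
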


Proposition \ref{prop:compatibilityofomega}, Equation \eqref{eq:compatibilityofnablag}, together with the $\Ad$-invariance of $\mathfrak{H}$ and \eqref{eq:compatibilityofvarphi}, guarantee that it is well-defined. Besides, it is affine thanks to the linearity of $\nabla^\otimes$. We denote by $\nabla$ the affine connection on $\pi_{(T^*X\otimes\varfrak{g})_{\varphi,\mathfrak{H}},Y/\mathcal G}$ associated to $\big|\big|$ and by $\nabla/dt$ the corresponding covariant derivative.

%%%%%%%%%%%%%%%%%%%%
\section{Reduction of the variational principle}\label{sec:reducedvariationalprinciple}

%%%%%%%%%%
\subsection{Calculus of variations and reduced Lagrangian}

Let $\pi_{Y,X}: Y\to X$ be a fiber bundle. A \emph{(first order) Lagrangian density} on $\pi_{Y,X}$ is a bundle morphism 
\begin{equation*}
\textstyle\mathfrak L: J^1 Y\to \bigwedge^n T^*X
\end{equation*}
covering the identity on $X$, where $n=\dim X$. Assuming that $X$ is orientable and $v\in\Omega^n(X)$ is a volume form, we can write $\mathfrak L=Lv$ for certain $L: J^1 Y\rightarrow\mathbb R$ called \emph{Lagrangian}. 

Henceforth, we suppose that $X$ is compact for simplicity. The \emph{action functional} defined by $\mathfrak L$ is
\begin{equation*}
\mathbb S(s)=\int_X\,\mathfrak L\left(j^1s\right),\qquad s\in\Gamma\left(\pi_{Y,X}\right)
\end{equation*}

A \emph{variation} of a section $s\in\Gamma\left(\pi_{Y,X}\right)$ is a smooth 1-parameter family 
\begin{equation*}
\{s_t\}=\left\{s_t\in\Gamma\left(\pi_{Y,X}\right): t\in(-\epsilon,\epsilon)\right\}
\end{equation*}
such that $s_0=s$. The vector field $\delta s=\left.ds_t/dt\right|_{t=0}\in\Gamma\left(\pi_{s^*TY,X}\right)$ along $s$ is called the \emph{infinitesimal variation}. In what follows, we consider only $\pi_{Y,X}$-vertical variations, that is, $d\pi_{Y,X}\circ\delta s=0$.

\begin{definition}
A section $s\in\Gamma\left(\pi_{Y,X}\right)$ is \emph{critical for the variational problem defined by $\mathfrak L$} if the variation of the corresponding action functional\footnote{
The variation of $\mathbb S$ only depends on the infinitesimal variation. This means that if $\{s_t\}$ and $\{s'_t\}$ are two variations of $s$ such that $\delta s=\delta s'$, then $\left.d\mathbb S(s_t)/dt\right|_{t=0}=\left.d\mathbb S(s'_t)/dt\right|_{t=0}$
} vanishes for every vertical variation of $s$, that is,
\begin{equation*}
\delta\mathbb S(s)=\left.\frac{d}{dt}\right|_{t=0}\mathbb S(s_t)=\left.\frac{d}{dt}\right|_{t=0}\int_X\mathfrak L\left(j^1s_t\right)=0,\qquad\forall\delta s\in\Gamma\left(\pi_{s^*TY,X}\right).
\end{equation*}
\end{definition}
The section $s\in\Gamma\left(\pi_{Y,X}\right)$ is critical for $\mathbb S$ if and only if it satisfies the Euler--Lagrange equations for the Lagrangian $L$, i.e. $\mathcal{EL}(L)\left(j^2 s\right)=0$, where $\mathcal{EL}(L): J^2 Y\rightarrow V^*Y=(\ker{d\pi_{Y,X}})^*$ is the \emph{Euler--Lagrange operator} (see \cite[\S 2.4]{CaRa2003}).
We now pick a closed, affine, Lie group subbundle $H\subset J^1\mathcal G$ as in Proposition \ref{ggg}, and we assume that $L$ is $H$-invariant, i.e., we have 
\begin{equation*}
L\left(\Phi^{(1)}\left(j_x^1 s,j_x^1\eta\right)\right)=L\left(j_x^1 s\right),\qquad\forall \left(j_x^1 s,j_x^1\eta\right)\in J^1 Y\times_X H.
\end{equation*}
This enables us to define the \emph{dropped} or \emph{reduced Lagrangian} as $$l: J^1 Y/H\to\mathbb R,\quad[j_x^1 s]_H\mapsto l\left(\left[j_x^1 s\right]_H\right)=L\left(j_x^1 s\right)$$ 

Let $\omega\in\Omega^1(Y,\varfrak g)$ be a generalized principal connection on $\pi_{Y,Y/\mathcal G}$ associated to a Lie group connection $\nu$ on $\pi_{\mathcal G,X}$, and choose a map $\varphi:\mathcal G\to T^*X\otimes\varfrak g$ for the Lie group subbundle $H$ as in Proposition \ref{ggg}. Thanks to Theorem \ref{theorem:descomposicioncociente}, we may regard the reduced Lagrangian as defined on $J^1(Y/\mathcal G)\times_{Y/\mathcal G}(T^*X\otimes\varfrak{g})_{\varphi,\mathfrak{H}}$. Given $s\in\Gamma\left(\pi_{Y,X}\right)$, the corresponding \emph{reduced section} is
\begin{equation*}
\overline s=\left\llbracket s,s^*\omega\right\rrbracket_{\mathfrak{H},\varphi}\in\Gamma\left(\pi_{(T^*X\otimes\varfrak{g})_{\varphi,\mathfrak{H}},X}\right).
\end{equation*}
Observe that the projection $\pi_{(T^*X\otimes\varfrak{g})_{\varphi,\mathfrak{H}},Y/\mathcal G}\circ \overline s$ is nothing but the quotient section $\sigma _s = [s]_\mathcal{G}= \pi _{Y,Y/\mathcal{G}} \circ s\in\Gamma\left(\pi_{Y/\mathcal G,X}\right)$. 

A variation $\{s_t\}$ of $s$ induces a variation $\left\{\overline s_t=\llbracket s_t,s_t^*\omega\rrbracket_{\mathfrak{H},\varphi}\right\}$ of the reduced section $\overline s=\llbracket s,s^*\omega\rrbracket_{\mathfrak{H},\varphi}$. By construction, $L\left(j^1 s_t\right)=l\left(j^1(\sigma_s)_t,\overline s_t\right)$ for every $t\in(-\epsilon,\epsilon)$. Therefore:
\begin{equation}\label{eq:equivalenciavariacionesparticular}
\left.\frac{d}{dt}\right|_{t=0}\int_X\, L\left(j^1 s_t\right)v=\left.\frac{d}{dt}\right|_{t=0}\int_X\,l\left(j^1(\sigma_s)_t,\overline s_t\right)v
\end{equation}

\begin{equation*}
\begin{tikzpicture}
\matrix (m) [matrix of math nodes,row sep=1.5em,column sep=2.5em,minimum width=2em]
	{J^1 Y & & &\\
	 & J^1 Y/H & \underbrace{J^1(Y/\mathcal G)} & \underbrace{(T^*X\otimes\varfrak{g})_{\varphi,\mathfrak{H}}}\\
	 Y & & &\\
	 & Y/\mathcal G & &\\
	 X & & &\\};
	\path[-stealth]
	(m-1-1) edge [] node [] {} (m-3-1)
	(m-3-1) edge [] node [] {} (m-5-1)
	(m-1-1) edge [] node [] {} (m-2-2)
	(m-2-2) edge [] node [] {} (m-4-2)
	(m-3-1) edge [] node [] {} (m-4-2)
	(m-4-2) edge [] node [] {} (m-5-1)
	(m-2-3) edge [] node [] {} (m-4-2)
	(m-2-2) edge [] node [above] {$\sim$} (m-2-3)
	(m-2-3) edge [draw=none] node [] {$\times_{Y/\mathcal G}$} (m-2-4)
	(m-5-1) edge [bend left=50,dashed] node [left] {$j^1 s$} (m-1-1)
	(m-5-1) edge [bend left=35,dashed] node [left] {$s$} (m-3-1)
	(m-5-1) edge [bend right,dashed] node [above] {$\sigma_s\;$} (m-4-2)
	(m-5-1) edge [bend right=50,dashed] node [above] {$j^1\sigma_s\quad$} (m-2-3)
	(m-5-1) edge [bend right=45,dashed] node [above] {$\overline s\;$} (m-2-4);
\end{tikzpicture}
\end{equation*}

\begin{remark}
The calculus of variations described above is straightforwardly extended to a non-compact base manifold $X$ by considering compactly supported variations. In other words, given a section $s\in\Gamma(\pi_{Y,X})$, the only variations $\delta s$ of $s$ allowed are those satisfying 
\begin{equation*}
\{x\in X: \delta s(x)\neq 0\}\subset\mathcal U
\end{equation*}
for some open subset $\mathcal U\subset X$ with compact closure $\overline{\mathcal U}$. Observe that, in particular $\delta s=0$ on the boundary $\partial\mathcal U$.
\end{remark}

%%%%%%%%%%
\subsection{Reduced variations on \texorpdfstring{$(T^*X\otimes\varfrak{g})_{\varphi,\mathfrak{H}}$}{the reduced space}}

In this section we compute the variation $\delta \overline s$ of the reduced section $\overline s\in \Gamma (\pi_{(T^*X\otimes\varfrak{g})_{\varphi,\mathfrak{H}},X})$ induced by a variation $\delta s$ of an unreduced section $s\in\Gamma(\pi_{Y,X})$. More particularly, we are first interested in the vertical part 
\begin{equation*}
\delta^\nabla\overline s(x)=\delta\overline s(x)^v=\left.\frac{\nabla\overline s_t(x)}{dt}\right|_{t=0},\qquad x\in X,
\end{equation*}
of that reduced variation with respect to the connection $\nabla$ on $\pi_{(T^*X\otimes\varfrak{g})_{\varphi,\mathfrak{H}},Y/\mathcal G}$ built in Proposition \ref{prop:inducedconnection}. For that, we analyze below the corresponding expression of $\delta^\nabla\overline s$ when the variation $\delta s = ds_t/dt|_{t=0}$ is vertical or horizontal with respect to $\omega$. By linearity, the expression of $\delta ^\nabla \overline s$ will be the combination of both terms. In addition, since $(T^*X\otimes\varfrak{g})_{\varphi,\mathfrak{H}}\to Y/\mathcal G$ is an affine bundle, we will identify the tangent space of each fiber with the associated vector space, i.e.,
\begin{equation*}
V_{\overline y}\left((T^*X\otimes\varfrak{g})_{\varphi,\mathfrak{H}}\right)=T_{\overline y}\left(\left((T^*X\otimes\varfrak{g})_{\varphi,\mathfrak{H}}\right)_\sigma\right)=\left((T^*X\otimes\varfrak{g})_{0,\mathfrak{H}}\right)_\sigma,
\end{equation*}
for each $\overline y\in (T^*X\otimes\varfrak{g})_{\varphi,\mathfrak{H}}$, where $\sigma=\pi_{(T^*X\otimes\varfrak{g})_{\varphi,\mathfrak{H}},Y/\mathcal G}(\overline y)\in Y/\mathcal G$. Analogously, the restriction of the tangent map of $\varphi:\mathcal G\to T^*X\otimes\varfrak g$ to $\varfrak g=1^*(V\mathcal G)$ yields the following morphism of vector bundles,
\begin{equation*}
\varphi_*=d\varphi|_{\varfrak g}:\varfrak g\to T^*X\otimes\varfrak g.
\end{equation*}

\begin{lemma}
If $\{s_t\}$ is a $\pi_{Y,Y/\mathcal G}$-vertical variation, that is, $\delta s=\xi_{s}^*%=(d\Phi_{s(x)})_{1_x}(\xi(x))
$ for some $\xi\in\Gamma(\pi_{\varfrak g,X})$, then we can suppose that it is of the form $s_t=s\cdot\exp(t\,\xi)$. In that
case, $$\left.\frac{d}{dt}\right|_{t=0}\Ad_{\exp(t\,\xi)}\circ\left(s_t^*\omega+\varphi(\exp(-t\,\xi))\right)=\nabla^{\varfrak g}\xi-\varphi_*\xi.$$
\end{lemma}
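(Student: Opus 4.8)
The plan is to reduce to the canonical representative $s_t = s\cdot\exp(t\xi)$ and then compute directly, using the $\Ad$-equivariance of $\omega$ to handle the pullback $s_t^*\omega$. First I would justify the reduction to this specific family: the curve $t\mapsto s(x)\cdot\exp(t\xi(x))$ passes through $s(x)$ at $t=0$ with velocity $(d\Phi_{s(x)})_{1_x}(\xi(x)) = \xi^*_{s(x)}$ by the definition \eqref{eq:definfinitesimalgenerator} of the infinitesimal generator, so its infinitesimal variation is exactly $\delta s = \xi^*_s$; since the reduced variation depends only on the infinitesimal variation, this representative is admissible. Write $g_t = \exp(t\xi)\in\Gamma(\pi_{\mathcal G,X})$, so that $s_t = \Phi\circ(s,g_t)$ and $g_0$ is the unit section.

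The key step is to compute the pullback $s_t^*\omega$. Differentiating $s_t = \Phi\circ(s,g_t)$ gives $(ds_t)_x = (d\Phi)_{(s(x),g_t(x))}\circ\left((ds)_x,(dg_t)_x\right)$, and since $(ds)_x$ and $(dg_t)_x$ share the same base projection, the $\Ad$-equivariance property of Definition \ref{def:connectionform}(ii) applies pointwise to yield
\begin{equation*}
s_t^*\omega = \Ad_{g_t^{-1}}\circ\left(s^*\omega + \nu\circ dg_t\right).
\end{equation*}
Substituting this into the expression to be differentiated and using $\Ad_{g_t}\circ\Ad_{g_t^{-1}} = \operatorname{id}$, the conjugation collapses on the first two terms, leaving
\begin{equation*}
\Ad_{g_t}\circ\left(s_t^*\omega + \varphi(g_t^{-1})\right) = s^*\omega + \nu\circ dg_t + \Ad_{g_t}\circ\varphi(g_t^{-1}).
\end{equation*}

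It then remains to differentiate the three summands at $t=0$. The term $s^*\omega$ is independent of $t$ and drops out. The derivative of $\nu\circ dg_t = \nu\circ d\exp(t\xi)$ is exactly $\nabla^{\varfrak g}\xi$ by the defining formula \eqref{eq:nablavarfrakg}. For the last term I would apply the Leibniz rule to $t\mapsto\Ad_{g_t}\circ\varphi(g_t^{-1})$: since $\varphi(g_0^{-1}) = \varphi(1) = 0$ (the normalization from Proposition \ref{ggg}), the contribution from differentiating the $\Ad$-factor vanishes, and since $\Ad_{1_x} = \operatorname{id}$ the surviving term is $\left.\tfrac{d}{dt}\right|_{t=0}\varphi(\exp(-t\xi)) = \varphi_*\left(\left.\tfrac{d}{dt}\right|_{t=0}\exp(-t\xi)\right) = -\varphi_*\xi$, using that $\varphi_* = d\varphi|_{\varfrak g}$ and that $t\mapsto\exp(-t\xi)$ has velocity $-\xi\in\varfrak g = 1^*(V\mathcal G)$ at $t=0$. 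Adding the three contributions gives $\nabla^{\varfrak g}\xi - \varphi_*\xi$, as claimed.

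The main obstacle is the pullback identity for $s_t^*\omega$: one must track the fiberwise adjoint action carefully and verify that the two differentials fed into $d\Phi$ are compatible (sharing the base projection $u\in T_xX$) so that Definition \ref{def:connectionform}(ii) can be invoked. Everything else is a bookkeeping exercise in the Leibniz rule, hinging on the two normalizations $\varphi(1) = 0$ and $\Ad_{1_x} = \operatorname{id}$, which are precisely what make the $\Ad$-derivative term disappear.
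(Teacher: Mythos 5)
Your proof is correct, and it rests on the same ingredients as the paper's: the $\Ad$-equivariance of $\omega$ (property (ii) of Definition \ref{def:connectionform}) applied to $s_t=\Phi(s,\exp(t\xi))$, the defining formula \eqref{eq:nablavarfrakg} for $\nabla^{\varfrak g}$, and the Leibniz rule with the normalizations $\varphi(1)=0$ (Proposition \ref{ggg}) and $\Ad_{1_x}=\operatorname{id}$; your treatment of the $\varphi$-term and your justification of the reduction to $s_t=s\cdot\exp(t\xi)$ are identical to the paper's. The one genuine difference is the order of operations on the $\omega$-term. The paper first applies the product rule to $\left.\tfrac{d}{dt}\right|_{t=0}\Ad_{\exp(t\xi)}\circ s_t^*\omega$ and only then invokes equivariance; this produces a term $\ad(\xi)(s^*\omega)$ that must cancel against the $-\ad(\xi)(s^*\omega)$ arising from differentiating $\Ad_{\exp(-t\xi)}\left(s^*\omega+\nu\circ d\exp(t\xi)\right)$, and it additionally needs property (i) of Definition \ref{def:liegroupconnection} (i.e.\ $\nu\circ d1=0$, the step marked $(\star)$) to kill the leftover term $\left.\tfrac{d}{dt}\right|_{t=0}\Ad_{\exp(-t\xi)}\circ\nu\circ d1$. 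You instead substitute the equivariance identity $s_t^*\omega=\Ad_{\exp(-t\xi)}\circ\left(s^*\omega+\nu\circ d\exp(t\xi)\right)$ for \emph{all} $t$, so the conjugation collapses exactly, $\Ad_{\exp(t\xi)}\circ\Ad_{\exp(-t\xi)}=\operatorname{id}$, before any differentiation; the bracket terms never appear and property (i) of the Lie group connection is never invoked. What this buys is a shorter and more transparent computation — three decoupled summands, the first of which is constant in $t$ — at no extra cost.
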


\begin{proof}
The first statement is because the variation of the functional only depends on the infinitesimal variation. For the second part, we have
\begin{align*}
\left.\frac{d}{dt}\right|_{t=0}\Ad_{\exp(t\,\xi)}\circ s_t^*\omega & =\left.\frac{d}{dt}\right|_{t=0} s_t^*\omega+\left.\frac{d}{dt}\right|_{t=0}\Ad_{\exp(t\,\xi)}\circ s^*\omega\vspace{0.15cm}\\
& =\left.\frac{d}{dt}\right|_{t=0}\omega_{\Phi(s,\exp(t\,\xi))}\left((d\Phi)_{(s,\exp(t\,\xi))}\left(ds,d\exp(t\,\xi)\right)\right)+\operatorname{ad}(\xi)(s^*\omega)\vspace{0.15cm}\\
& =\left.\frac{d}{dt}\right|_{t=0}\Ad_{\exp(-t\,\xi)}\left(s^*\omega+\nu\circ d\exp(t\,\xi)\right)+[\xi,s^*\omega]\vspace{0.15cm}\\
& =\left.\frac{d}{dt}\right|_{t=0}\nu\circ d\exp(t\,\xi)+\left.\frac{d}{dt}\right|_{t=0}\Ad_{\exp(-t\,\xi)}\circ\nu\circ d1\vspace{0.15cm}\\
& \overset{(\star)}{=}\left.\frac{d}{dt}\right|_{t=0}\nu\circ d\exp(t\,\xi)\vspace{0.15cm}\\
%& =\left.\frac{d}{dt}\right|_{t=0}\nu_{\exp(t\,\xi)}\circ d\exp(t\,\xi)\vspace{0.15cm}\\
%& = left.\frac{d}{dt}\right|_{t=0}\left(d\exp(t\,\xi)-\hat\nu(\exp(t\,\xi)\left((d\pi_{\mathcal G,X})_{\exp(t\,\xi)}\circ d\exp(t\,\xi)\right)\right)\vspace{0.15cm}\\
%& =d\xi-\left.\frac{d}{dt}\right|_{t=0}\hat\nu(\exp(t\,\xi)\left(d(\pi_{\mathcal G,X}\circ\exp(t\,\xi))\right)\vspace{0.15cm}\\
%& =d\xi-\left.\frac{d}{dt}\right|_{t=0}\hat\nu(\exp(t\,\xi))\\
& =\nabla^{\varfrak g}\xi.
\end{align*}
Equality $(\star)$ comes from property (i) in the definition of Lie group connection. Similarly, 
\begin{align*}
\left.\frac{d}{dt}\right|_{t=0}\Ad_{\exp(t\,\xi)}\circ\varphi(\exp(-t\,\xi)) & =\left.\frac{d}{dt}\right|_{t=0}\varphi(\exp(-t\,\xi))+\left.\frac{d}{dt}\right|_{t=0}\Ad_{\exp(t\,\xi)}\circ\varphi(\exp(0))\vspace{0.15cm}\\
& =-\varphi_* \left((d\exp)_0(\xi)\right)+\operatorname{ad}(\xi)(\varphi(1))\vspace{0.15cm}\\
& =-\varphi_*\xi ,
\end{align*}
where we have used that $\varphi(1)=0$ and $(d\exp)_0=\operatorname{id}_{\varfrak g}$. 
\end{proof}

\begin{proposition}[$\delta ^\nabla \overline s$ for vertical variations]\label{prop:variacionesverticales}
If $\{s_t\}$ is $\pi_{Y,Y/\mathcal G}$-vertical, that is, $\delta s=\xi_{s}^*%=(d\Phi_{s(x)})_{1_x}(\xi(x))
$ for some $\xi\in\Gamma(\pi_{\varfrak g,X})$, then:
\begin{equation*}
\delta^\nabla\overline s(x)=\left\llbracket s(x),\left(\nabla^{\varfrak g}\xi\right)(x)-(\varphi_*\xi)(x)\right\rrbracket_{\mathfrak{H},0},\qquad x\in X.
\end{equation*}
\end{proposition}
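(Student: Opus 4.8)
The plan is to exploit the defining feature of a $\pi_{Y,Y/\mathcal G}$-vertical variation, namely that it fixes the base point in $Y/\mathcal G$, so that the covariant derivative on the affine bundle collapses to an ordinary fibre derivative which can be read off directly from the Lemma immediately above.

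First I would record that, since $\delta s=\xi_s^*$, we may take $s_t=s\cdot\exp(t\,\xi)$. The key observation is that $\sigma_{s_t}=[s_t]_{\mathcal G}=[s]_{\mathcal G}=\sigma_s$ for every $t$, because $s_t$ and $s$ differ by the fibered action of $\mathcal G$. Hence the curve $t\mapsto\overline s_t(x)$ projects to the \emph{constant} curve $t\mapsto\sigma_s(x)$ in $Y/\mathcal G$, i.e. it lies entirely in the single affine fibre $\left((T^*X\otimes\varfrak g)_{\varphi,\mathfrak{H}}\right)_{\sigma_s(x)}$.

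Second, I would invoke the explicit parallel transport of Proposition \ref{prop:inducedconnection}: along the constant base curve $\gamma\equiv\sigma_s(x)$ both $^\omega\big|\big|$ and $^\otimes\big|\big|$ are the identity, so $\big|\big|$ is the identity on that fibre. Therefore the covariant derivative $\nabla\overline s_t(x)/dt$ along this curve reduces to the ordinary derivative taken inside the fibre which, under the affine identification $V_{\overline y}\left((T^*X\otimes\varfrak g)_{\varphi,\mathfrak{H}}\right)=\left((T^*X\otimes\varfrak g)_{0,\mathfrak{H}}\right)_{\sigma_s(x)}$ used in the text, lands in $(T^*X\otimes\varfrak g)_{0,\mathfrak{H}}$. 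This is precisely what pins the result to the bracket $\llbracket\,\cdot\,\rrbracket_{\mathfrak{H},0}$ rather than $\llbracket\,\cdot\,\rrbracket_{\mathfrak{H},\varphi}$. To then differentiate, I rewrite $\overline s_t(x)=\llbracket s_t(x),(s_t^*\omega)_x\rrbracket_{\mathfrak{H},\varphi}$ using a representative whose first slot is the fixed point $s(x)$. Writing $g=\exp(t\,\xi(x))$, so that $s_t(x)=s(x)\cdot g$, and applying the defining action $\Phi_\varphi$ of \eqref{eq:affineaction} with $g^{-1}=\exp(-t\,\xi(x))$, I obtain
\begin{equation*}
\overline s_t(x)=\left\llbracket s(x),\Ad_{\exp(t\,\xi(x))}\circ\left((s_t^*\omega)_x+\varphi(\exp(-t\,\xi(x)))\right)\right\rrbracket_{\mathfrak{H},\varphi}.
\end{equation*}
Now the first slot is independent of $t$, so the fibre derivative acts only on the second slot, and by the Lemma preceding this proposition $\left.\frac{d}{dt}\right|_{t=0}\Ad_{\exp(t\,\xi)}\circ\left(s_t^*\omega+\varphi(\exp(-t\,\xi))\right)=\nabla^{\varfrak g}\xi-\varphi_*\xi$, evaluated at $x$. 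Combining the three steps yields the stated formula $\delta^\nabla\overline s(x)=\llbracket s(x),(\nabla^{\varfrak g}\xi)(x)-(\varphi_*\xi)(x)\rrbracket_{\mathfrak{H},0}$.

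The main obstacle I anticipate is the bookkeeping of the representative change in the final step: one must pass to the representative whose first argument is the fixed point $s(x)$, being careful with the inversion $g\mapsto g^{-1}$ and with the exact placement of $\Ad$ and $\varphi$ dictated by \eqref{eq:affineaction}, so that the second slot coincides \emph{verbatim} with the expression differentiated in the preceding Lemma. The remaining subtlety is the justification that parallel transport over the constant base curve is the identity, which is what legitimizes replacing the covariant derivative by the naive fibre derivative and guarantees that the output sits in the model vector bundle and carries the label $\mathfrak{H},0$.
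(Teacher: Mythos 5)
Your proposal is correct and follows essentially the same route as the paper's proof: change the representative via the action $\Phi_\varphi$ with group element $\exp(-t\,\xi(x))$ so the first slot is frozen at $s(x)$, observe that the reduced variation is $\pi_{(T^*X\otimes\varfrak{g})_{\varphi,\mathfrak{H}},Y/\mathcal G}$-vertical so the covariant derivative reduces to the ordinary fibre derivative, and then apply the preceding Lemma. The only difference is that you spell out the parallel-transport-is-the-identity justification, which the paper states in one line.
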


\begin{proof}
The reduced variations are
\begin{equation*}
\overline s_t(x)=\left\llbracket s(x)\cdot\exp(t\,\xi(x)),\left(s_t^*\omega\right)_x\right\rrbracket_{\mathfrak{H},\varphi}=\left\llbracket s(x),\Ad_{\exp(t\,\xi(x))}\circ\left(\left(s_t^*\omega\right)_x+\varphi(\exp(-t\,\xi(x)))\right)\right\rrbracket_{\mathfrak{H},\varphi}.
\end{equation*}
Since $\pi_{(T^*X\otimes\varfrak{g})_{\varphi,\mathfrak{H}},Y/\mathcal G}\circ\overline s_t =\sigma_s$ for all $t\in(-\epsilon,\epsilon)$, we have that $\{\overline s_t\}$ is a $\pi_{(T^*X\otimes\varfrak{g})_{\varphi,\mathfrak{H}},Y/\mathcal G}$-vertical variation and $\left.\nabla \overline s_t(x)/dt\right|_{t=0}=\left.d\overline s_t(x)/dt\right|_{t=0}$. Subsequently, the previous Lemma results in
\begin{align*}
\delta^\nabla\overline s(x) & =\left\llbracket s(x),\left.\frac{d}{dt}\right|_{t=0}\Ad_{\exp(t\,\xi(x))}\circ\left((s_t^*\omega)_x+\varphi(\exp(-t\,\xi_x))\right)\right\rrbracket_{\mathfrak{H},0}\vspace{0.15cm}\\
& =\left\llbracket s(x),(\nabla^{\varfrak g}\xi)(x)-(\varphi_*\xi)(x)\right\rrbracket_{\mathfrak{H},0}.
\end{align*}
\end{proof}

\begin{lemma}
If $\{s_t\}$ is $\omega$-horizontal, i.e. $\omega(\delta s)=0$, then the horizontal component of $\delta \overline s$ with respect to $\nabla$ is $$\delta\overline s(x)^h=\left.\frac{d}{dt}\right|_{t=0}\left\llbracket s_t(x),(s^*\omega)_x\right\rrbracket_{\mathfrak{H},\varphi},\qquad x\in X.$$
\end{lemma}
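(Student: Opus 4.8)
The plan is to prove the identity by showing directly that the right-hand side is the $\nabla$-horizontal lift of the projected variation $\delta\sigma_s(x)$, which is by definition the horizontal component $\delta\overline s(x)^h$. Recall that $\overline s(x)=\llbracket s(x),(s^*\omega)_x\rrbracket_{\mathfrak{H},\varphi}$ projects under $\pi_{(T^*X\otimes\varfrak{g})_{\varphi,\mathfrak{H}},Y/\mathcal G}$ to $\sigma_s(x)=[s(x)]_{\mathcal G}$, and that the horizontal part of $\delta\overline s(x)$ is, by the splitting induced by $\nabla$, the horizontal lift of $\delta\sigma_s(x)=\left.\frac{d}{dt}\right|_{t=0}\sigma_{s_t}(x)$ to the point $\overline s(x)$.

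First I would exhibit a genuinely $\nabla$-horizontal curve realizing this lift. Set $\gamma(t)=\sigma_{s_t}(x)\in Y/\mathcal G$ and transport $\overline s(x)$ along $\gamma$ by the parallel transport of Proposition \ref{prop:inducedconnection}, obtaining
\begin{equation*}
\tilde c(t)=\left\llbracket {^\omega\big|\big|}^{\gamma(t)}_{\gamma(0)}\, s(x),\ {^\otimes\big|\big|}^{x(t)}_{x(0)}\,(s^*\omega)_x \right\rrbracket_{\mathfrak{H},\varphi},
\end{equation*}
where $x(t)=\pi_{Y/\mathcal G,X}(\gamma(t))$. Since all the $s_t$ are $\pi_{Y,X}$-vertical variations of $s$, the base point does not move, $x(t)=x$ for every $t$, so ${^\otimes\big|\big|}^{x}_{x}=\operatorname{id}$ and the Lie-algebra slot stays frozen at $(s^*\omega)_x$. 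Hence $\tilde c$ is a horizontal curve through $\overline s(x)$, and $\dot{\tilde c}(0)$ is the horizontal lift of $\dot\gamma(0)=\delta\sigma_s(x)$; that is, $\dot{\tilde c}(0)=\delta\overline s(x)^h$.

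It then remains to identify $\dot{\tilde c}(0)$ with the right-hand side of the statement. The key point is that the two curves $t\mapsto {^\omega\big|\big|}^{\gamma(t)}_{\gamma(0)}\,s(x)$ and $t\mapsto s_t(x)$ in $Y_x$ both start at $s(x)$ and share the same velocity at $t=0$: the first, being a horizontal lift, has velocity $Hor^\omega_{s(x)}(\delta\sigma_s(x))$, while the second has velocity $\delta s(x)$, which equals $Hor^\omega_{s(x)}(\delta\sigma_s(x))$ precisely because $\{s_t\}$ is $\omega$-horizontal, i.e. $\omega(\delta s)=0$ forces $\delta s(x)$ to be the unique $\omega$-horizontal vector over $\delta\sigma_s(x)$. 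As the two curves agree to first order and carry the same constant slot $(s^*\omega)_x$, the composed curves $\tilde c(t)$ and $\llbracket s_t(x),(s^*\omega)_x\rrbracket_{\mathfrak{H},\varphi}$ have equal derivatives at $t=0$, yielding
\begin{equation*}
\delta\overline s(x)^h=\dot{\tilde c}(0)=\left.\frac{d}{dt}\right|_{t=0}\left\llbracket s_t(x),(s^*\omega)_x\right\rrbracket_{\mathfrak{H},\varphi}.
\end{equation*}

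The main subtlety, and the only place the hypothesis enters, is this first-order matching: the condition $\omega(\delta s)=0$ is imposed only at $t=0$, yet this suffices because we differentiate at $t=0$, so there is no need for $\{s_t\}$ to be $\omega$-horizontal for all $t$ nor for $s_t(x)$ to agree with the horizontal lift beyond first order. One should also note that freezing the slot $(s^*\omega)_x$ is legitimate exactly because a $\pi_{Y,X}$-vertical variation leaves the base point $x$ fixed, trivializing the ${^\otimes\big|\big|}$ transport; indeed, were the variation instead $\pi_{Y,Y/\mathcal G}$-vertical, the same curve $\llbracket s_t(x),(s^*\omega)_x\rrbracket_{\mathfrak{H},\varphi}$ would project constantly and its tangent would be vertical, consistently with Proposition \ref{prop:variacionesverticales}.
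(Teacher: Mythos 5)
Your proof is correct and follows essentially the same route as the paper's: both identify $\delta\overline s(x)^h$ as the $\nabla$-horizontal lift of $\delta\sigma_s(x)$, realize that lift through the quotient map applied to the pair consisting of the $\omega$-horizontal lift at $s(x)$ and the (trivial, since the base point is fixed) $\nabla^\otimes$-lift at $(s^*\omega)_x$, and then use the hypothesis $\omega(\delta s)=0$ to replace the $\omega$-horizontal lift by $\delta s(x)$, so that the derivative of $t\mapsto\left\llbracket s_t(x),(s^*\omega)_x\right\rrbracket_{\mathfrak{H},\varphi}$ computes the result. The only cosmetic difference is that you unwind the construction of $\nabla$ via its parallel transport (Proposition \ref{prop:inducedconnection}) to build the horizontal curve explicitly, whereas the paper invokes the resulting horizontal-lift formula directly.
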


\begin{proof}
Let $Hor_{\overline s(x)}^\nabla: T_{\sigma_s(x)}(Y/\mathcal G)\rightarrow T_{\overline s(x)}((T^*X\otimes\varfrak{g})_{\varphi,\mathfrak{H}})$ be the horizontal lift given by $\nabla$ at $\overline s(x)$. Then $\delta\overline s(x)^h=Hor_{\overline s(x)}^{\nabla}\left(\delta\sigma_s(x)\right)$, since $(d\pi_{(T^*X\otimes\varfrak{g})_{\varphi,\mathfrak{H}},Y/\mathcal G})_{\overline s(x)}\circ\delta\overline s(x)=\delta\sigma_s(x)$. By construction of $\nabla$, we have\footnote{
Recall that $(d\pi_{Y/\mathcal G,X})_{\sigma_s(x)}(\delta\sigma_s(x))=0$, since the variation $\{s_t\}$ is $\pi_{Y,X}$-vertical.
}:
\begin{equation*}
Hor_{\overline s(x)}^\nabla\left(\delta\sigma_s(x)\right)=\left(dq\right)_{\left(s(x),(s^*\omega)_x\right)}\left(Hor_{s(x)}^\omega\left(\delta\sigma_s(x)\right),Hor_{(s^*\omega)_x}^{\nabla^\otimes}(0_x)\right),
\end{equation*}
where $q: Y\times_X(T^*X\otimes\varfrak g)\to (T^*X\otimes\varfrak{g})_{\varphi,\mathfrak{H}}$ is the quotient projection. The horizontality of $\delta s(x)$ yields $Hor_{s(x)}^\omega\left(\delta\sigma_s(x)\right)=\delta s(x)$, and it is clear that $Hor_{(s^*\omega)_x}^{\nabla^\otimes}(0_x)=0_{(s^*\omega)_x}$. Hence,
\begin{equation*}
\delta\overline s(x)^h=Hor_{\overline s(x)}^\nabla(\delta\sigma_s(x))=\left.\frac{d}{dt}\right|_{t=0}(q\circ\gamma)(t)=\left.\frac{d}{dt}\right|_{t=0}\left\llbracket s_t(x),(s^*\omega)_x\right\rrbracket_{\mathfrak{H},\varphi},
\end{equation*}
where for the computation of the tangent map $dq$, we consider the curve $\gamma:(-\epsilon,\epsilon)\rightarrow Y\times_X(T^*X\otimes\varfrak g)$, $\gamma(t)=\left(s_t(x),(s^*\omega)_x\right)$, since $\gamma (0)=(s(x),(s^*\omega)_x)$ and $\gamma '(0)=(\delta s(x),0_x)$.
\end{proof}

\begin{proposition}[$\delta ^\nabla \overline s$ for horizontal variations]
If $\{s_t\}$ is $\omega$-horizontal, i.e. $\omega(\delta s)=0$, then
\begin{equation*}
\delta^\nabla\overline s(x)=\left\llbracket s(x),\Omega_{s(x)}\left(\delta s(x),(ds)_x\right)\right\rrbracket_{\mathfrak{H},0},\qquad x\in X.
\end{equation*}
\end{proposition}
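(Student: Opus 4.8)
The plan is to get $\delta^\nabla\overline s$ by subtracting from the full variation $\delta\overline s$ its $\nabla$-horizontal part, which is already identified in the preceding lemma, and then to recognise the remaining vertical term as the reduced curvature. Writing $q:Y\times_X(T^*X\otimes\varfrak{g})\to(T^*X\otimes\varfrak{g})_{\varphi,\mathfrak{H}}$ for the quotient projection, we have $\overline s_t(x)=q\big(s_t(x),(s_t^*\omega)_x\big)$, so the chain rule gives $\delta\overline s(x)=(\mathrm dq)\big(\delta s(x),0\big)+(\mathrm dq)\big(0,\tfrac{d}{dt}\big|_{t=0}(s_t^*\omega)_x\big)$. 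The first summand is exactly $\delta\overline s(x)^h=\tfrac{d}{dt}\big|_{t=0}\llbracket s_t(x),(s^*\omega)_x\rrbracket_{\mathfrak{H},\varphi}$ from the preceding lemma, hence $\nabla$-horizontal by the construction of $\nabla$ in Proposition \ref{prop:inducedconnection}; the second summand freezes the base point $s(x)$ and moves only the fibre element, so it is vertical for the projection onto $Y/\mathcal G$, and by the affine structure of Proposition \ref{prop:vectormodelaffine} it equals $\llbracket s(x),\tfrac{d}{dt}\big|_{t=0}(s_t^*\omega)_x\rrbracket_{\mathfrak{H},0}$. Since a $\nabla$-horizontal plus a vertical vector is the $\nabla$-decomposition, the second summand is precisely $\delta^\nabla\overline s(x)$, and the whole statement reduces to proving $\tfrac{d}{dt}\big|_{t=0}(s_t^*\omega)_x=\Omega_{s(x)}\big(\delta s(x),(ds)_x\big)$.

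For this identity I would pass to the homotopy $S:(-\epsilon,\epsilon)\times X\to Y$, $S(t,x)=s_t(x)$, and set $\beta=S^*\omega$, a form with values in $\mathrm{pr}_X^*\varfrak{g}$ since $\pi_{Y,X}\circ S=\mathrm{pr}_X$. As pull-back commutes with the covariant exterior derivative of the pulled-back connection $\nabla^{\varfrak g}$, one has $S^*(\mathrm d^{\varfrak g}\omega)=\mathrm d^{\varfrak g}\beta$. Evaluating this on $\partial_t$ and the constant lift $\widetilde U$ of a field $U$ on $X$ with $U(x)=u$ (which commute), the covariant derivative in the $t$-direction is ordinary differentiation because the coefficient bundle is pulled back from $X$, while the term $\nabla^{\varfrak g}_{\widetilde U}\beta(\partial_t)$ drops out: $\beta(\partial_t)=\omega(\delta s)\equiv0$ along the slice $t=0$ by $\omega$-horizontality, and $\widetilde U$ is tangent to that slice. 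This yields $\tfrac{d}{dt}\big|_{t=0}(s_t^*\omega)_x(u)=(\mathrm d^{\varfrak g}\omega)_{s(x)}\big(\delta s(x),(ds)_x(u)\big)$.

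It remains to match $\mathrm d^{\varfrak g}\omega$ with $\Omega$. By Proposition \ref{prop:curvature}, $\Omega_{s(x)}\big(\delta s(x),(ds)_x(u)\big)=(\mathrm d^{\varfrak g}\omega)\big(\delta s(x),((ds)_x(u))^h\big)$, because $\delta s(x)$ is already $\omega$-horizontal; hence the discrepancy with the expression above is $(\mathrm d^{\varfrak g}\omega)(\delta s(x),\zeta^*)$ with $\zeta=\omega((ds)_x(u))$. Expanding this with an $\omega$-horizontal extension $V$ of $\delta s$, the term $(\pi^*\nabla^{\varfrak g})_V\zeta$ vanishes since $\delta s(x)$ is $\pi_{Y,X}$-vertical, leaving $-\omega([V,\zeta^*])$. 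The key computation, and the step I expect to be the main obstacle, is showing $\omega([V,\zeta^*])=0$: differentiating the $\Ad$-equivariance of Definition \ref{def:connectionform} along the flow $\Phi_{\exp(t\zeta)}$ of $\zeta^*$ gives $\mathcal L_{\zeta^*}\omega=-\ad_\zeta\circ\omega$, and then $(\mathcal L_{\zeta^*}\omega)(V)=\zeta^*(\omega(V))-\omega([\zeta^*,V])$ with $\omega(V)\equiv0$ forces $\omega([\zeta^*,V])=0$. This kills the discrepancy and gives the desired $\tfrac{d}{dt}\big|_{t=0}(s_t^*\omega)_x=\Omega_{s(x)}(\delta s(x),(ds)_x)$; everything else is bookkeeping with the horizontal/vertical splitting and the pull-back Cartan calculation, whereas justifying $\mathcal L_{\zeta^*}\omega=-\ad_\zeta\circ\omega$ for the bundle-valued $\omega$ and tracking the vertical correction $\zeta^*$ is the delicate part I would write out in full.
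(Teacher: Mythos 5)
Your argument is correct and shares the paper's skeleton --- the same splitting $\delta^\nabla\overline s(x)=\delta\overline s(x)-\delta\overline s(x)^h=\left\llbracket s(x),\left.\tfrac{d}{dt}\right|_{t=0}(s_t^*\omega)_x\right\rrbracket_{\mathfrak H,0}$ obtained from the preceding lemma, followed by the identification of $\left.\tfrac{d}{dt}\right|_{t=0}(s_t^*\omega)_x$ with $\Omega_{s(x)}\left(\delta s(x),(ds)_x\right)$ --- but you execute the second half along a genuinely different route. The paper localizes: it trivializes $\mathcal G$, $\varfrak g$ and $Y$, treats $\omega$ as an ordinary $\mathfrak g$-valued form, computes $\left.\tfrac{d}{dt}\right|_{t=0}s_t^*\omega=s^*\pounds_{\delta s}\omega=d\omega(\delta s,ds)$ by Cartan's magic formula, and disposes of the vertical term $d\omega(\delta s,(ds)^v)$ by a flow computation showing $[\delta s,(ds)^v]$ is horizontal. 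You stay intrinsic: the homotopy $S(t,x)=s_t(x)$, the commutation of pull-back with ${\rm d}^{\varfrak g}$ for the pulled-back coefficient bundle, and evaluation on the commuting pair $\partial_t,\widetilde U$ give $\left.\tfrac{d}{dt}\right|_{t=0}(s_t^*\omega)_x={\rm d}^{\varfrak g}\omega(\delta s,ds)$ directly --- your observation that $(S^*\omega)(\partial_t)$ vanishes identically on the slice $t=0$, to which $\widetilde U$ is tangent, is exactly the right justification --- and you then remove the vertical correction via infinitesimal $\Ad$-equivariance rather than a flow argument. Your version buys coordinate-freeness and avoids the informal extension of $\delta s$ to a genuine vector field implicit in writing $\pounds_{\delta s}\omega$; the paper's version is more elementary, and its localization is legitimate since the claim is pointwise.

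One correction on the step you yourself flagged as delicate: in this generalized setting, $\mathcal L_{\zeta^*}\omega=-\ad_\zeta\circ\omega$ is \emph{not} what differentiating the equivariance of Definition \ref{def:connectionform} yields. Since $\zeta^*$ is only a vector field on $Y$ after extending $\zeta$ to a section $\xi\in\Gamma(\pi_{\varfrak g,X})$, the flow of $\xi^*$ is $\Phi_{\exp(t\xi)}$, whose differential feeds the nonzero argument $U_g=(d\exp(t\xi))_x(u)$ into the equivariance identity; differentiating and using \eqref{eq:nablavarfrakg} gives $\mathcal L_{\xi^*}\omega=-\ad_\xi\circ\omega+\pi_{Y,X}^*\left(\nabla^{\varfrak g}\xi\right)$. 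This does not create a gap in your proof: the extra term is evaluated on $V_{s(x)}=\delta s(x)$, which is $\pi_{Y,X}$-vertical because variations are vertical, so it vanishes; alternatively, choose the extension with $\left(\nabla^{\varfrak g}\xi\right)(x)=0$, so that your clean formula holds on the fiber $Y_x$, which is all you use. Either fix should be stated explicitly when the step is written out in full.
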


\begin{proof}
Thanks to the previous Lemma:
\begin{align*}
\delta^\nabla\overline s(x) & =\delta\overline s(x)-\delta\overline s(x)^h\\
& =\left.\frac{d}{dt}\right|_{t=0}\left\llbracket s_t(x),(s_t^*\omega)_x\right\rrbracket_{\mathfrak{H},\varphi}-\left.\frac{d}{dt}\right|_{t=0}\left\llbracket s_t(x),(s^*\omega)_x\right\rrbracket_{\mathfrak{H},\varphi}\\
& =\left.\frac{d}{dt}\right|_{t=0}\left\llbracket s_t(x),(s_t^*\omega)_x-(s^*\omega)_x\right\rrbracket_{\mathfrak{H},0}\\
& =\left\llbracket s(x),\left.\frac{d}{dt}\right|_{t=0}(s_t^*\omega)_x\right\rrbracket_{\mathfrak{H},0}.
\end{align*}

Since the formula that we are proving is local, we can suppose that our bundles are trivial, that is, $\mathcal G=X\times G$, $\varfrak g=X\times\mathfrak g$ and $Y=Y/\mathcal G\times G$, where $\mathfrak g$ is the Lie algebra of $G$.
We can thus regard $\omega$ as a 1-form on $Y$ with values in $\mathfrak g$. Then
\[
\left.\frac{d}{dt}\right|_{t=0}(s_t^*\omega)_x = s^*\pounds_{\delta s}\omega = s^*(i_{\delta s}d\omega+d(i_{\delta s}\omega))=d\omega _{s(x)}(\delta s, ds),
\]
where $\pounds$ is the Lie derivative and $\omega (\delta s)=0$. Then
\begin{align*}
\left.\frac{d}{dt}\right|_{t=0}(s_t^*\omega)_x & =d\omega _{s(x)}(\delta s, (ds)^h)+ d\omega _{s(x)}(\delta s, (ds)^v)\\
& =d\omega _{s(x)}(\delta s, (ds)^h)\\
& ={\rm d}^{\varfrak g} \omega (\delta s, ds)=\Omega _{s(x)}(\delta s, ds),
\end{align*}
since $d\omega_{s(x)}(\delta s, (ds)^v)=\delta s(\omega((ds)^v))-\omega_{s(x)}([\delta s, (ds)^v])=0$. Indeed, we are working on a trivialization, so we can write $(ds)^v=\hat\xi^*$ for some $\hat\xi\in\mathfrak g$. Thus, $\omega((ds)^v)=\omega\left(\hat\xi^*\right)=\hat\xi$ and $\delta s(\hat\xi)=0$. In addition, we have
\begin{align*}
\left[\delta s,(ds)^v\right] & =\left(\pounds_{(ds)^v}\delta s\right)\vspace{0.1cm}\\
& =\left.\frac{d}{dt}\right|_{t=0}(d\Phi)_{(s,g(t))}\left(\delta s,0_{g(t)}\right)\vspace{0.1cm}\\
& =\left.\frac{d}{dt}\right|_{t=0} U_t=U,
\end{align*}
where $g:(-\epsilon,\epsilon)\rightarrow G$ is defined as $g(t)=\exp\left(t\hat\xi\right)$. Note that $U\in T_{s}Y$ is $\pi_{Y,Y/\mathcal G}$-horizontal since so is each $Z_t\in T_{s\cdot g(t)}Y$:
\begin{equation*}
\omega_{s\cdot g(t)}\left((d\Phi)_{s,g(t)}\left(\delta s,0_{g(t)}\right)\right)=\Ad_{g(t)^{-1}}\left(\omega_{s}(\delta s(x))+\nu(0_{g(t)})\right)=0.
\end{equation*}
\end{proof}

Since every arbitrary variation $\delta s$ can be split into its $\pi_{Y,Y/\mathcal G}$-vertical and horizontal parts, we obtain the following result. We also make use of Lemma \ref{lemma:igualdad}.

\begin{corollary}\label{corollary:variacionesreducidas}
Let $\delta s$ be a variation of a section $s\in\Gamma(\pi_{Y,X})$ and consider the induced variation $\delta^\nabla\overline s$ of the reduced section $\overline s=\llbracket s,s^*\omega\rrbracket_{\mathfrak{H},\varphi}\in\Gamma(\pi_{(T^*X\otimes\varfrak{g})_{\varphi,\mathfrak{H}},X})$. Then:
\begin{equation*}
\delta^\nabla\overline s=\left\llbracket s,\nabla^{\varfrak g}\xi-\varphi_*\xi +\Omega\left(\delta s,ds\right)\right\rrbracket_{\tilde{\mathfrak{H}}},\qquad\xi=\omega(\delta s).
\end{equation*}
\end{corollary}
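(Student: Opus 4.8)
The plan is to reduce the statement to the two preceding propositions by decomposing an arbitrary variation into its $\omega$-vertical and $\omega$-horizontal parts and then invoking linearity. First I would split $\delta s=\delta s^v+\delta s^h$, where $\delta s^v=\xi^*_s$ with $\xi=\omega(\delta s)\in\Gamma(\pi_{\varfrak g,X})$, so that $\omega(\delta s^v)=\xi$ by complementarity, and where $\delta s^h=\delta s-\xi^*_s$ is $\omega$-horizontal because $\omega(\delta s^h)=\omega(\delta s)-\xi=0$. Since the reduced variation depends only on the infinitesimal variation, I may realize $\delta s^v$ and $\delta s^h$ by separate one-parameter families and treat each contribution on its own.

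Next I would observe that the assignment $\delta s\mapsto\delta^\nabla\overline s$ is $\mathbb R$-linear: the reduced section is $\overline s=F(j^1 s)$ for the bundle map $F$ underlying Theorem \ref{theorem:descomposicioncociente}, so $\delta\overline s$ is the image of $j^1\delta s$ under the (linear) tangent map of $F$, and $\delta^\nabla\overline s$ is obtained from $\delta\overline s$ by the vertical projection attached to the affine connection $\nabla$ of Proposition \ref{prop:inducedconnection}, which is again linear. Hence $\delta^\nabla\overline s$ equals the contribution of $\delta s^v$ plus that of $\delta s^h$. Applying Proposition \ref{prop:variacionesverticales} to $\delta s^v$ yields $\llbracket s,\nabla^{\varfrak g}\xi-\varphi_*\xi\rrbracket_{\mathfrak{H},0}$, while applying the preceding proposition for $\omega$-horizontal variations to $\delta s^h$ yields $\llbracket s,\Omega(\delta s^h,ds)\rrbracket_{\mathfrak{H},0}$.

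To match the stated formula I would then use that the curvature $\Omega$ depends only on the $\omega$-horizontal parts of its arguments, as is immediate from $\Omega(U_1,U_2)=-\omega([U_1-\omega(U_1)^*,U_2-\omega(U_2)^*])$; since the horizontal part of $\delta s^v=\xi^*_s$ vanishes, $\Omega(\delta s^h,ds)=\Omega(\delta s,ds)$. Summing the two contributions in the vector bundle $(T^*X\otimes\varfrak g)_{0,\mathfrak{H}}$ gives $\llbracket s,\nabla^{\varfrak g}\xi-\varphi_*\xi+\Omega(\delta s,ds)\rrbracket_{\mathfrak{H},0}$, and transporting this class across the isomorphism of Lemma \ref{lemma:igualdad} rewrites $\llbracket\,\cdot\,\rrbracket_{\mathfrak{H},0}$ as $\llbracket\,\cdot\,\rrbracket_{\tilde{\mathfrak{H}}}$, which is the asserted identity.

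The step I expect to require the most care is the linearity/additivity argument. Each of the two propositions computes $\delta^\nabla\overline s$ with a specific convenient family (namely $s_t=s\cdot\exp(t\xi)$ in the vertical case and an $\omega$-horizontal family in the other), so I must ensure that $\nabla\overline s_t/dt|_{t=0}$ genuinely depends on $\delta s$ alone and adds under the decomposition $\delta s=\delta s^v+\delta s^h$. This rests on the fact, used throughout the section, that the reduced variation is determined by the infinitesimal variation, combined with the linearity of the vertical projection induced by $\nabla$.
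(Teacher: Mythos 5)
Your proposal is correct and follows essentially the same route as the paper, which proves the corollary precisely by splitting $\delta s$ into its $\omega$-vertical and $\omega$-horizontal parts, applying Proposition \ref{prop:variacionesverticales} and the horizontal-variation proposition to each piece, and identifying $\llbracket\,\cdot\,\rrbracket_{\mathfrak{H},0}$ with $\llbracket\,\cdot\,\rrbracket_{\tilde{\mathfrak{H}}}$ via Lemma \ref{lemma:igualdad}. Your write-up in fact makes explicit two details the paper leaves tacit — the linearity of $\delta s\mapsto\delta^\nabla\overline s$ and the equality $\Omega(\delta s^h,ds)=\Omega(\delta s,ds)$ — both of which are justified correctly.
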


At last, we express the reduced variation in terms of the reduced section. To that end, we define the operator $\overline\nabla^{\tilde{\varfrak g}}:\Gamma(\pi_{\tilde{\varfrak g},X})\to\Gamma(\pi_{T^*X\otimes\tilde{\varfrak g},X})$ as
\begin{equation}\label{eq:overlinenablag}
\left(\overline\nabla_U^{\tilde{\varfrak g}}\tilde\xi\right)(x_0)=\left.\frac{\nabla^{\tilde{\varfrak g}}\left(\tilde\xi\circ x\right)(t)}{dt}\right|_{t=0},\qquad\tilde\xi\in\Gamma(\pi_{\tilde{\varfrak g},X}),~U\in\mathfrak X(X),~x_0\in X,
\end{equation}
where $x:(-\epsilon,\epsilon)\to X$ is a curve such that $x'(0)=U(x_0)$.

\begin{lemma}\label{lemma:overlinenablag}
For each $\tilde\xi=[s,\xi]_{\mathcal G}\in\Gamma(\pi_{\tilde{\varfrak g},X})$, we have
\begin{equation*}
\overline\nabla^{\tilde{\varfrak g}}\tilde\xi=[s,\nabla^{\varfrak g}\xi+\ad_{s^*\omega}(\xi)]_{\mathcal G}.
\end{equation*}
\end{lemma}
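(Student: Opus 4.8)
The plan is to compute $\overline\nabla^{\tilde{\varfrak g}}\tilde\xi$ directly from its definition \eqref{eq:overlinenablag} by evaluating the covariant derivative of the curve $\tilde\xi\circ x$ in $\tilde{\varfrak g}$ along a chosen curve $x$ in $X$. The whole strategy rests on the fact that $\nabla^{\tilde{\varfrak g}}$ is defined through the parallel transport ${^{\tilde{\varfrak g}}\big|\big|}$ described in Proposition \ref{prop:nablatildevarfarkg}, which splits a class $[y,\xi]_{\mathcal G}$ into its two factors and transports them separately by ${^\omega\big|\big|}$ and ${^{\varfrak g}\big|\big|}$. So the first move is to lift the computation to the product bundle $Y\times_X\varfrak g$, where the representative of $\tilde\xi$ is the pair $(s,\xi)$, and to differentiate each component.

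**First I would** fix $U\in\mathfrak X(X)$ and $x_0\in X$, choose a curve $x$ with $x'(0)=U(x_0)$, and write $\tilde\xi\circ x = [s\circ x,\xi\circ x]_{\mathcal G}$. To compute the covariant derivative of this curve at $t=0$, I recall that for any linear connection defined by a parallel transport, $(\nabla^{\tilde{\varfrak g}}/dt)(\tilde\xi\circ x)(0) = \frac{d}{dt}\big|_{t=0}\,{^{\tilde{\varfrak g}}\big|\big|}_{x(t)}^{x(0)}\big(\tilde\xi(x(t))\big)$. Plugging in the explicit form of ${^{\tilde{\varfrak g}}\big|\big|}$ from Proposition \ref{prop:nablatildevarfarkg}, this becomes the class of the pair obtained by transporting $s(x(t))$ back by ${^\omega\big|\big|}$ and $\xi(x(t))$ back by ${^{\varfrak g}\big|\big|}$, then differentiating. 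The key observation is that the representative $y\in Y$ in the class is \emph{not} fixed: the curve $s\circ x$ moves inside $Y$, and its $\omega$-vertical motion contributes an extra adjoint term when we normalize back to a fixed fiber representative.

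**The main step** is to separate the two contributions. Differentiating the $\varfrak g$-component along a \emph{fixed} representative gives exactly $\nabla^{\varfrak g}\xi$ evaluated on $U$, since ${^{\varfrak g}\big|\big|}$ is precisely the parallel transport defining $\nabla^{\varfrak g}$ (Proposition \ref{prop:nablaginducida}). The remaining term comes from the fact that the representative $s(x(t))$ drifts: to write the final answer as a class over the fixed representative $s(x_0)$, I must reabsorb the horizontal motion of $s$ in $Y$. Because $\omega$ is a generalized principal connection, the compatibility relation \eqref{eq:compatibilityofnablag} together with Proposition \ref{prop:compatibilityofomega} shows that changing the representative along the curve acts on the $\varfrak g$-component by $\Ad$, and infinitesimally this produces the bracket term $\ad_{s^*\omega}(\xi)$, where $s^*\omega$ contracted with $U$ measures the $\omega$-connection form pulled back along $s$. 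Combining the two pieces yields $[s,\nabla^{\varfrak g}\xi+\ad_{s^*\omega}(\xi)]_{\mathcal G}$.

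**The hard part** will be bookkeeping the two sources of infinitesimal change cleanly — the genuine covariant derivative of the $\varfrak g$-part versus the correction arising from the moving representative in $Y$ — and making sure the $\Ad$-equivariance of the class (the action \eqref{eq:defadjunto}) is differentiated correctly so that the adjoint action on the group level linearizes to $\ad_{s^*\omega}$ rather than to some transported variant. To keep this rigorous I would work in a local trivialization where $\mathcal G = X\times G$, $\varfrak g = X\times\mathfrak g$ and $Y = (Y/\mathcal G)\times G$, so that $\omega$ becomes a genuine $\mathfrak g$-valued $1$-form and the class $[s,\xi]_{\mathcal G}$ has an explicit representative; then the equivariance \eqref{eq:defadjunto} reduces to the standard $\Ad$-action and the differentiation of $\Ad_{g(t)^{-1}}\xi$ produces $-\ad_{\dot g g^{-1}}\xi$, with $\dot g g^{-1}$ identified with $s^*\omega(U)$ via the complementarity property of $\omega$. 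Finally I would verify independence of the choices (the curve $x$ and the local trivialization), which follows from the well-definedness of $\overline\nabla^{\tilde{\varfrak g}}$ already guaranteed by \eqref{eq:overlinenablag}.
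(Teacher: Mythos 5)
Your proposal is correct and takes essentially the same route as the paper's proof, which likewise computes $\overline\nabla^{\tilde{\varfrak g}}_U\tilde\xi$ directly from definition \eqref{eq:overlinenablag}: it writes $(s\circ x)(t)=\bigl({}^\omega\big|\big|_{\gamma(0)}^{\gamma(t)}s(x_0)\bigr)\cdot g(t)$, applies the transport of Proposition \ref{prop:nablatildevarfarkg} together with \eqref{eq:compatibilityofnablag} and Proposition \ref{prop:compatibilityofomega} to split the derivative into $\nabla^{\varfrak g}_U\xi$ plus $\ad_{\zeta}(\xi)$ with $\zeta=\frac{d}{dt}\big|_{t=0}{}^\nu\big|\big|_{x(t)}^{x(0)}g(t)$, and then identifies $\zeta=(s^*\omega)(U)$ using complementarity. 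One caution about your final trivialization step (which the paper avoids by staying intrinsic): the curve $g(t)$ whose logarithmic derivative equals $(s^*\omega)(U)$ must be the deviation of $s\circ x$ from the $\omega$-horizontal lift of $\gamma$, not the $G$-component of $s$ in an arbitrary trivialization, and the identification rests on the $\Ad$-equivariance of $\omega$ (equivalently Proposition \ref{prop:compatibilityofomega}) combined with complementarity, not on complementarity alone.
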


\begin{proof}
Given $U\in\mathfrak X(X)$ and $x_0\in X$, let $x:(-\epsilon,\epsilon)\to X$ be such that $x'(0)=U(x_0)$. Let $\gamma=\pi_{Y,Y/\mathcal G}\circ s\circ x:(-\epsilon,\epsilon)\to Y/\mathcal G$ and write
\begin{equation}\label{eq:g1}
(s\circ x)(t)=\left({^\omega\big|\big|}_{\gamma(0)}^{\gamma(t)}s(x_0)\right)\cdot g(t)
\end{equation}
for some $g:(-\epsilon,\epsilon)\to\mathcal G$. Observe that $\pi_{\mathcal G,X}\circ g=x$ and $g(0)=1_{x_0}$. From Proposition \ref{prop:nablatildevarfarkg} and \eqref{eq:compatibilityofnablag}, we get
\begin{align*}
\left(\overline\nabla_U^{\tilde{\varfrak g}}\tilde\xi\right)(x_0) & =\left.\frac{\nabla^{\tilde{\varfrak g}}\left(\tilde\xi\circ x\right)(t)}{dt}\right|_{t=0}\\
& =\left.\frac{d}{dt}\right|_{t=0}{^{\tilde{\varfrak g}}\big|\big|}_{\gamma(t)}^{\gamma(0)}\left[(s\circ x)(t),(\xi\circ x)(t)\right]_{\mathcal G}\\
& =\left.\frac{d}{dt}\right|_{t=0}{^{\tilde{\varfrak g}}\big|\big|}_{\gamma(t)}^{\gamma(0)}\left[{^\omega\big|\big|}_{\gamma(0)}^{\gamma(t)}s(x_0),\Ad_{g(t)}\circ(\xi\circ x)(t)\right]_{\mathcal G}\\
& =\left.\frac{d}{dt}\right|_{t=0}\left[{^\omega\big|\big|}_{\gamma(t)}^{\gamma(0)}\left({^\omega\big|\big|}_{\gamma(0)}^{\gamma(t)}s(x_0)\right),{^{\varfrak g}\big|\big|}_{x(t)}^{x(0)}\left(\Ad_{g(t)}\circ(\xi\circ x)(t)\right)\right]_{\mathcal G}\\
& =\left.\frac{d}{dt}\right|_{t=0}\left[s(x_0),\Ad_{{^\nu||}_{x(t)}^{x(0)}g(t)}\circ{^{\varfrak g}\big|\big|}_{x(t)}^{x(0)}(\xi\circ x)(t)\right]_{\mathcal G}\\
& =\left[s(x_0),\left(\nabla^{\varfrak g}_U\xi\right)(x_0)+\ad_{\zeta}(\xi(x_0))\right]_{\mathcal G},
\end{align*}
where we denote $\zeta=\left.\frac{d}{dt}\right|_{t=0}{^\nu||}_{x(t)}^{x(0)}g(t) \in\varfrak g_{x_0}$. By taking the covariant derivative in \eqref{eq:g1}, we obtain
\begin{align*}
\left.\frac{D^\omega}{Dt}\right|_{t=0}(s\circ x)(t) & =\left.\frac{d}{dt}\right|_{t=0}{^\omega\big|\big|}_{\gamma(t)}^{\gamma(0)}(s\circ x)(t)\\
& =\left.\frac{d}{dt}\right|_{t=0}{^\omega\big|\big|}_{\gamma(t)}^{\gamma(0)}\left(\left({^\omega\big|\big|}_{\gamma(0)}^{\gamma(t)}s(x_0)\right)\cdot g(t)\right)\\
& =\left.\frac{d}{dt}\right|_{t=0}\left(s(x_0)\cdot{^\nu\big|\big|}_{x(t)}^{x(0)}g(t)\right)\\
& =\zeta_{s(x_0)}^*,
\end{align*}
where Proposition \ref{prop:compatibilityofomega} has been used. We conclude by applying $\omega$ at both sides of the previous equation:
\begin{align*}
(s^*\omega)_{x_0}(U(x_0)) & =\omega_{s(x_0)}\left((ds)_{x_0}(U(x_0))\right)\\
& =\omega_{s(x_0)}\left(\left.\frac{d}{dt}\right|_{t=0}(s\circ x)(t)\right)\\
& =\omega_{s(x_0)}\left(\left.\frac{D^\omega}{Dt}\right|_{t=0}(s\circ x)(t)\right)\\
& =\omega_{s(x_0)}\left(\zeta_{s(x_0)}^*\right)\\
& =\zeta,
\end{align*}
where $(i)$ of Definition \ref{def:connectionform} has been used.
\end{proof}

By linearity, the following map is well defined
\begin{equation}\label{eq:tildevarphi}
\tilde\varphi_*:\tilde{\varfrak g}\to T^*X\otimes\tilde{\varfrak g},\quad [y,\xi_x]_{\mathcal G}\mapsto\left[y,\varphi_* \xi_x\right]_{\mathcal G}.
\end{equation}
Moreover, given $\alpha\otimes\tilde\zeta\in T^*X\otimes\tilde{\varfrak g}$, we define the vector bundle morphism
\begin{equation}\label{eq:adadrepresentation}
\ad_{\alpha\otimes\tilde\zeta}:\tilde{\varfrak g}\to T^*X\otimes\tilde{\varfrak g},\quad\tilde\xi\mapsto\alpha\otimes\ad_{\tilde\zeta}(\tilde\xi),
\end{equation}
which is well defined provided $\pi_{\tilde{\varfrak g},Y/\mathcal G}(\tilde\xi)=\pi_{\tilde{\varfrak g},Y/\mathcal G}(\tilde\zeta)$, where $\ad:\tilde{\varfrak g}\times_{Y/\mathcal G}\tilde{\varfrak g}\to\tilde{\varfrak g}$ is the (fibered) adjoint representation on the (generalized) adjoint bundle. Note that we keep the same symbol for both maps for simplicity. By denoting $\tilde s=[s,s^*\omega]_{\mathcal G}\in\Gamma(\pi_{T^*X\otimes\tilde{\varfrak g},X})$ and $\tilde\xi=[s,\xi]_{\mathcal G}\in\Gamma(\pi_{\tilde{\varfrak g},X})$, it is clear that $[s,\ad_{s^*\omega}(\xi)]_{\mathcal G}=\ad_{\tilde s}(\tilde\xi)$. Thence, the following result is now straightforward.

\begin{corollary}\label{corollary:variacionesverticales}
Let $\delta s$ be a variation of a section $s\in\Gamma(\pi_{Y,X})$ and consider the induced variation of the reduced section $\overline s=\llbracket s,s^*\omega\rrbracket_{\mathfrak{H},\varphi}$. Then for each $x\in X$ we have
\begin{equation*}
\delta ^\nabla \overline s (x)=\left[\overline\nabla^{\tilde{\varfrak{g}}}\tilde \xi-\ad_{\tilde s}(\tilde\xi)-\tilde\varphi_*\tilde\xi+\tilde\Omega(\delta\sigma_s,d\sigma_s)\right]_{\tilde{\mathfrak{H}}},
\end{equation*}
where $\tilde\xi(x)=[s(x),\omega_{s(x)}(\delta s(x))]_{\mathcal G}$ and $\tilde s(x)=[s(x),(s^*\omega)_x]_{\mathcal G}$ for each $x\in X$.
\end{corollary}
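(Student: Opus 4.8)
The plan is to rewrite the right-hand side of Corollary \ref{corollary:variacionesreducidas}, term by term, so that the unreduced data are replaced by the reduced objects $\tilde\xi$ and $\tilde s$. Corollary \ref{corollary:variacionesreducidas} already gives
\begin{equation*}
\delta^\nabla\overline s=\llbracket s,\nabla^{\varfrak g}\xi-\varphi_*\xi+\Omega(\delta s,ds)\rrbracket_{\tilde{\mathfrak{H}}},\qquad\xi=\omega(\delta s),
\end{equation*}
so it suffices to show that the four terms in the statement, written as representatives of the form $[s,\cdot]_{\mathcal G}\in T^*X\otimes\tilde{\varfrak g}$ and then projected through $\tilde{\mathfrak{H}}$, reproduce exactly $\nabla^{\varfrak g}\xi-\varphi_*\xi+\Omega(\delta s,ds)$. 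Here I also invoke Lemma \ref{lemma:igualdad} to identify the quotient $(T^*X\otimes\tilde{\varfrak g})/\tilde{\mathfrak{H}}$ with the target bundle of $\overline s$.

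First I would treat the covariant-derivative and adjoint terms jointly. Lemma \ref{lemma:overlinenablag} gives $\overline\nabla^{\tilde{\varfrak g}}\tilde\xi=[s,\nabla^{\varfrak g}\xi+\ad_{s^*\omega}(\xi)]_{\mathcal G}$, while the identity $\ad_{\tilde s}(\tilde\xi)=[s,\ad_{s^*\omega}(\xi)]_{\mathcal G}$ recorded just before the statement supplies the adjoint term. Subtracting, the $\ad_{s^*\omega}(\xi)$ contributions cancel and one is left with $\overline\nabla^{\tilde{\varfrak g}}\tilde\xi-\ad_{\tilde s}(\tilde\xi)=[s,\nabla^{\varfrak g}\xi]_{\mathcal G}$. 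The term $\tilde\varphi_*\tilde\xi=[s,\varphi_*\xi]_{\mathcal G}$ is immediate from the definition \eqref{eq:tildevarphi}.

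The curvature term carries the only genuinely geometric step. Here I would use the horizontality of $\Omega$: since $\Omega$ vanishes as soon as one of its arguments is $\pi_{Y,Y/\mathcal G}$-vertical, the vectors $\delta s$ and $ds$ may be replaced by their $\omega$-horizontal parts without altering $\Omega(\delta s,ds)$. These horizontal parts are precisely $Hor^\omega_{s}(\delta\sigma_s)$ and $Hor^\omega_{s}(d\sigma_s)$, because $\delta\sigma_s=(d\pi_{Y,Y/\mathcal G})(\delta s)$ and $d\sigma_s=(d\pi_{Y,Y/\mathcal G})\circ ds$, and the $\omega$-horizontal projection of a vector is the horizontal lift of its image under $d\pi_{Y,Y/\mathcal G}$. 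Comparison with Definition \ref{def:reducedcurvature} then yields $[s,\Omega(\delta s,ds)]_{\mathcal G}=\tilde\Omega(\delta\sigma_s,d\sigma_s)$.

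Assembling the four identities, the bracketed expression in the statement equals $[s,\nabla^{\varfrak g}\xi-\varphi_*\xi+\Omega(\delta s,ds)]_{\mathcal G}$, and applying the projection $[\cdot]_{\tilde{\mathfrak{H}}}$ gives $\llbracket s,\nabla^{\varfrak g}\xi-\varphi_*\xi+\Omega(\delta s,ds)\rrbracket_{\tilde{\mathfrak{H}}}=\delta^\nabla\overline s$ by Corollary \ref{corollary:variacionesreducidas}. I expect the curvature identification to be the main obstacle, as it is the one place where one must use both the horizontality of $\Omega$ and the compatibility between the horizontal lift and the quotient projection built into Definition \ref{def:reducedcurvature}; the remaining manipulations are the purely formal cancellations above.
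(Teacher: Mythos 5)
Your proof is correct and takes essentially the same route as the paper: the paper declares the corollary ``straightforward'' once Lemma \ref{lemma:overlinenablag}, the identity $[s,\ad_{s^*\omega}(\xi)]_{\mathcal G}=\ad_{\tilde s}(\tilde\xi)$, and the map $\tilde\varphi_*$ are in place, and these, together with Corollary \ref{corollary:variacionesreducidas}, Lemma \ref{lemma:igualdad}, the horizontality of $\Omega$ and Definition \ref{def:reducedcurvature}, are exactly the pieces you assemble. Your explicit verification of the curvature identification $[s,\Omega(\delta s,ds)]_{\mathcal G}=\tilde\Omega(\delta\sigma_s,d\sigma_s)$ via horizontal projections is just the careful filling-in of the step the paper leaves implicit.
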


%%%%%%%%%%
\subsection{Variations on \texorpdfstring{$J^1(Y/\mathcal G)$}{the first jet of the quotient space}}

The induced variations $\delta\sigma_s$ of $\sigma_s=\pi_{Y,Y/\mathcal G}\circ s$ are just the projection of the variations of $s$, that is,
$$\delta \sigma _s = d\pi _{Y,Y/\mathcal G}(\delta s).$$
In particular, these reduced variations are free, with no particular constraints (in contrast with the constraints for $\delta \overline s$ analyzed in the previous section). For later convenience, we now analyze the vertical part of the 1-jet lift of $\delta \sigma _s$ to $J^1(Y/\mathcal{G})$ with respect to a suitable connection. 

Let $\nabla^{Y/\mathcal G}$ be a linear connection on the tangent bundle $T(Y/\mathcal G)\to Y/\mathcal G$ and consider the operator $\overline\nabla^{Y/\mathcal G}:\Gamma(\pi_{T(Y/\mathcal G),X})\rightarrow\Gamma(\pi_{T^*X\otimes T(Y/\mathcal G),X})$ defined as
\begin{equation}\label{eq:overlinenablaEG}
	\left(\overline\nabla^{Y/\mathcal G}_U\alpha\right)(x)=\left.\frac{\nabla ^{Y/\mathcal G}(\alpha\circ \gamma)(t)}{dt}\right|_{t=0},\qquad\alpha\in\Gamma(\pi_{T(Y/\mathcal G),X}),~U\in\mathfrak X(X),~x\in X,
\end{equation}
where $\gamma:(-\epsilon,\epsilon)\rightarrow X$ is such that $\gamma'(0)=U(x)$. The following Lemma is an adaptation of \cite[Corollary 3.4]{ElGaHoRa2011}.

\begin{lemma}\label{lemma:variacionj1sigma}
	Let $\{s_t\}$ be a variation of a section $s\in\Gamma(\pi_{Y,X})$ and consider the induced variation of $\sigma_s\in\Gamma(\pi_{Y/\mathcal G,X})$. Then:
	\begin{equation*}
		\left.\frac{\nabla ^{Y/\mathcal G}d(\sigma_s)_t(U_x)}{dt}\right|_{t=0}=\left(\overline\nabla^{Y/\mathcal G}_{U}\delta\sigma_s\right)(x)+T^{Y/\mathcal G}\left(\delta\sigma_s(x),j^1_x\sigma_s(U_x)\right),\qquad x\in X,~U_x\in T_x X,
	\end{equation*}
	where $U\in\mathfrak X(X)$ is such that $U(x)=U_x$, and $T^{Y/\mathcal G}\in\mathcal T^1_2(Y/\mathcal G)$ is the \emph{torsion tensor} of $\nabla^{Y/\mathcal G}$.
\end{lemma}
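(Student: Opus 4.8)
The plan is to reduce the identity to the classical commutation rule for the two covariant derivatives of a two-parameter map into a manifold with linear connection, in which the torsion is precisely the obstruction to interchanging the order of differentiation. First I would fix $x\in X$ together with a curve $\gamma:(-\epsilon,\epsilon)\to X$ satisfying $\gamma(0)=x$ and $\gamma'(0)=U_x$, and introduce the smooth map
\begin{equation*}
\Sigma:(-\epsilon,\epsilon)^2\to Y/\mathcal G,\qquad \Sigma(t,\tau)=(\sigma_s)_t(\gamma(\tau))=\pi_{Y,Y/\mathcal G}\bigl(s_t(\gamma(\tau))\bigr).
\end{equation*}
Writing $\partial_t\Sigma$ and $\partial_\tau\Sigma$ for the associated velocity fields along $\Sigma$, one reads off $\partial_\tau\Sigma(t,0)=d(\sigma_s)_t(U_x)$ and, recalling that $\delta\sigma_s=d\pi_{Y,Y/\mathcal G}(\delta s)$, also $\partial_t\Sigma(0,\tau)=\delta\sigma_s(\gamma(\tau))$.

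Next I would apply the commutation identity for $\nabla^{Y/\mathcal G}$ along $\Sigma$, namely
\begin{equation*}
\frac{\nabla^{Y/\mathcal G}\partial_\tau\Sigma}{dt}-\frac{\nabla^{Y/\mathcal G}\partial_t\Sigma}{d\tau}=T^{Y/\mathcal G}\bigl(\partial_t\Sigma,\partial_\tau\Sigma\bigr),
\end{equation*}
which holds because $[\partial_t,\partial_\tau]=0$ and is readily verified in local coordinates from $T^{Y/\mathcal G}(A,B)=\nabla^{Y/\mathcal G}_A B-\nabla^{Y/\mathcal G}_B A-[A,B]$. The crux of the argument is then to evaluate this identity at $(t,\tau)=(0,0)$ and to match its three terms with the objects in the statement. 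The first term is by construction the left-hand side $\left.\nabla^{Y/\mathcal G}d(\sigma_s)_t(U_x)/dt\right|_{t=0}$. For the second, along $t=0$ we have $\partial_t\Sigma(0,\tau)=(\delta\sigma_s\circ\gamma)(\tau)$, so its covariant derivative in $\tau$ at the origin is exactly $\left(\overline\nabla^{Y/\mathcal G}_U\delta\sigma_s\right)(x)$ by the defining relation \eqref{eq:overlinenablaEG}. Finally, at the corner both $\partial_t\Sigma(0,0)=\delta\sigma_s(x)$ and $\partial_\tau\Sigma(0,0)=j^1_x\sigma_s(U_x)$ lie in $T_{\sigma_s(x)}(Y/\mathcal G)$, so the torsion term is $T^{Y/\mathcal G}(\delta\sigma_s(x),j^1_x\sigma_s(U_x))$. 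Rearranging produces the claimed formula.

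I expect the only genuinely delicate point to be the bookkeeping of directions and signs: one must be careful that the variation parameter $t$ is the direction differentiated covariantly on the left-hand side while the base direction $U$ is the one carried by $\overline\nabla^{Y/\mathcal G}$, and conversely in the mixed term, so that the torsion enters with the correct sign and argument order. Since the whole computation is local along $\gamma$ and takes place entirely on $Y/\mathcal G$, no property of the fibered action or of $\omega$ is needed; if one prefers to avoid quoting the coordinate-free commutation lemma, the same conclusion follows by trivialising $T(Y/\mathcal G)$ near $\sigma_s(x)$ and computing directly with Christoffel symbols, where the symmetric second-derivative terms cancel and leave exactly the antisymmetric combination defining $T^{Y/\mathcal G}$.
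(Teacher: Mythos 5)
Your proof is correct. Note that the paper gives no proof of this lemma at all: it is stated as an adaptation of \cite[Corollary 3.4]{ElGaHoRa2011}, so there is no in-paper argument to compare against, and your proposal supplies the missing details. The route you take --- introducing the two-parameter map $\Sigma(t,\tau)=(\sigma_s)_t(\gamma(\tau))$ and applying the commutation identity
\begin{equation*}
\frac{\nabla^{Y/\mathcal G}\partial_\tau\Sigma}{dt}-\frac{\nabla^{Y/\mathcal G}\partial_t\Sigma}{d\tau}=T^{Y/\mathcal G}\left(\partial_t\Sigma,\partial_\tau\Sigma\right)
\end{equation*}
--- is exactly the standard mechanism underlying the cited result, and the term-by-term identification is sound: $\partial_\tau\Sigma(t,0)=d(\sigma_s)_t(U_x)$ gives the left-hand side, $\partial_t\Sigma(0,\tau)=(\delta\sigma_s\circ\gamma)(\tau)$ gives $\bigl(\overline\nabla^{Y/\mathcal G}_U\delta\sigma_s\bigr)(x)$ by the defining relation \eqref{eq:overlinenablaEG}, and the torsion enters with the correct sign and argument order $T^{Y/\mathcal G}\bigl(\delta\sigma_s(x),j^1_x\sigma_s(U_x)\bigr)$. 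Your closing observations are also accurate: the statement lives entirely on $Y/\mathcal G$ and uses nothing about the fibered action or $\omega$; the only implicit requirement is that $\delta\sigma_s$ be a genuine section of $\pi_{T(Y/\mathcal G),X}$ covering $\sigma_s$, which holds since $\delta\sigma_s=d\pi_{Y,Y/\mathcal G}(\delta s)$ and the variations are $\pi_{Y,X}$-vertical.
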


The connection $\nabla^{Y/\mathcal G}$ is said to be \emph{projectable} on a linear connection $\nabla^X$ on $\pi_{TX,X}$ if the following diagram is commutative:
\begin{equation*}
	\begin{tikzpicture}
			\matrix (m) [matrix of math nodes,row sep=3em,column sep=3em,minimum width=2em]
			{	T\left(T(Y/\mathcal G)\right) & T(Y/\mathcal G)\\
				T(TX) & TX\\};
			\path[-stealth]
			(m-1-1) edge [] node [left] {$d(d\pi_{Y/\mathcal G,X})$} (m-2-1)
			(m-1-1) edge [] node [above] {$\nu^{Y/\mathcal G}$} (m-1-2)
			(m-1-2) edge [] node [right] {$d\pi_{Y/\mathcal G,X}$} (m-2-2)
			(m-2-1) edge [] node [above] {$\nu^X$} (m-2-2);
	\end{tikzpicture}
\end{equation*}
where $\nu^{Y/\mathcal G}$ and $\nu^X$ are the vertical projections of the connections $\nabla^{Y/\mathcal G}$ and $\nabla^X$, respectively. Consider the vector bundle
$$\pi_{V,Y/\mathcal G}: V=T^*X\otimes T(Y/\mathcal G)\rightarrow Y/\mathcal G.$$
A section $\rho\in\Gamma(\pi_{J^1(Y/\mathcal G),Y/\mathcal G})$ can be regarded as a section $\rho\in\Gamma(\pi_{V,Y/\mathcal G})$ since $J^1(Y/\mathcal G)\subset V$. Likewise, $\rho$ can be regarded as a connection on $\pi_{Y/\mathcal G,X}$. The next Proposition is an adaptation of \cite[Theorem 3.1, Lemma 3.1]{janyska1996} to our case. See also \cite[Equations (3.13), (3.14)]{ElGaHoRa2011}.

\begin{proposition}\label{prop:conexionJ1EG}
If $\nabla^{Y/\mathcal G}$ is projectable onto $\nabla^X$, then it induces an affine connection $\nabla^{J^1(Y/\mathcal G)}$ in $\pi_{J^1(Y/\mathcal G),Y/\mathcal G}$ given by
\begin{equation*}
	\nabla^{J^1(Y/\mathcal G)}_Z\rho=(id\otimes\nu^{\rho})\circ\nabla^V_Z\rho,\qquad Z\in\mathfrak X(Y/\mathcal G),~\rho\in\Gamma(\pi_{J^1(Y/\mathcal G),Y/\mathcal G}),
\end{equation*}
where $\operatorname{id}: T^*X\rightarrow T^*X$ is the identity map, $\nu^\rho$ is the vertical projection associated to $\rho$, and $\nabla^V$ is the linear connection induced on $V=T^*X\otimes T(Y/\mathcal{G})\to Y/\mathcal{G}$ by the tensor product of the connections $\nabla^{Y/\mathcal{G}}$ and $\nabla^X$.
\end{proposition}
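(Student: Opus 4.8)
The plan is to verify that the stated formula defines a genuine affine connection on the affine bundle $\pi_{J^1(Y/\mathcal G),Y/\mathcal G}$; write $p=\pi_{Y/\mathcal G,X}$ and $\mathcal V=\ker(dp)\subset T(Y/\mathcal G)$ for the vertical subbundle of $p$. First I would record the affine structure explicitly: inside $V=T^*X\otimes T(Y/\mathcal G)$, the subbundle $J^1(Y/\mathcal G)$ is cut out by the jet condition $(\operatorname{id}\otimes dp)(\rho)=\operatorname{Id}_{TX}$, so it is an affine subbundle modelled on the vector subbundle $T^*X\otimes\mathcal V$. Thus there are three things to check: that $\nabla^{J^1(Y/\mathcal G)}_Z\rho$ takes values in $T^*X\otimes\mathcal V$; that it is $C^\infty(Y/\mathcal G)$-linear in $Z$; and that it is affine in $\rho$, with linear part a genuine linear connection on $T^*X\otimes\mathcal V$.

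The core of the proof is to extract from projectability two compatibility properties of $\nabla^V$. Reading the diagrammatic hypothesis $dp\circ\nu^{Y/\mathcal G}=\nu^X\circ d(dp)$ at the level of covariant derivatives yields $dp(\nabla^{Y/\mathcal G}_Z W)=(p^*\nabla^X)_Z(dp\circ W)$ for $Z,W\in\mathfrak X(Y/\mathcal G)$. From this I would deduce, first, that $\nabla^{Y/\mathcal G}$ preserves $\mathcal V$ (take $W$ vertical, so the right-hand side vanishes), hence $\nabla^V$ preserves $T^*X\otimes\mathcal V$; and second, that $\operatorname{id}\otimes dp$ intertwines $\nabla^V$ with the connection induced by $\nabla^X$ on $T^*X\otimes p^*TX$. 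Since the canonical section $\operatorname{Id}_{TX}$ is parallel for any connection induced by $\nabla^X$, this gives $(\operatorname{id}\otimes dp)(\nabla^V_Z\rho)=0$ for every jet section $\rho$.

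Granting these two facts, the verification is mechanical. The value of $\nabla^{J^1(Y/\mathcal G)}_Z\rho$ lies in $T^*X\otimes\mathcal V$ because $\nu^\rho$ projects $T(Y/\mathcal G)$ onto $\mathcal V$, and $C^\infty(Y/\mathcal G)$-linearity in $Z$ is inherited from $\nabla^V$ since $\operatorname{id}\otimes\nu^\rho$ acts pointwise on the value. For affineness, take $\rho\in\Gamma(\pi_{J^1(Y/\mathcal G),Y/\mathcal G})$ and $\tau\in\Gamma(\pi_{T^*X\otimes\mathcal V,Y/\mathcal G})$; using $\nu^{\rho+\tau}=\nu^\rho-\tau\circ dp$ and $\nabla^V_Z(\rho+\tau)=\nabla^V_Z\rho+\nabla^V_Z\tau$, I would expand $(\operatorname{id}\otimes\nu^{\rho+\tau})\circ\nabla^V_Z(\rho+\tau)$ and discard three cross terms. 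The term $(\operatorname{id}\otimes\nu^\rho)\circ\nabla^V_Z\tau$ equals $\nabla^V_Z\tau$, because $\nabla^V_Z\tau$ is $\mathcal V$-valued and $\nu^\rho$ fixes $\mathcal V$; the term $(\operatorname{id}\otimes(\tau\circ dp))\circ\nabla^V_Z\rho$ vanishes, since it factors through $(\operatorname{id}\otimes dp)(\nabla^V_Z\rho)=0$; and the quadratic term $(\operatorname{id}\otimes(\tau\circ dp))\circ\nabla^V_Z\tau$ vanishes because $dp$ annihilates the $\mathcal V$-valued $\nabla^V_Z\tau$. Hence $\nabla^{J^1(Y/\mathcal G)}_Z(\rho+\tau)=\nabla^{J^1(Y/\mathcal G)}_Z\rho+\nabla^V_Z\tau$, exhibiting the linear part as the restriction $\nabla^V|_{T^*X\otimes\mathcal V}$, which is exactly the required affine connection axiom.

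The step I expect to be the main obstacle is the passage from the diagrammatic projectability condition to the covariant-derivative identity $dp(\nabla^{Y/\mathcal G}_Z W)=(p^*\nabla^X)_Z(dp\circ W)$ and its tensor-product consequence on $T^*X\otimes T(Y/\mathcal G)$, since this single fact is what forces all three cross terms to collapse. This is precisely where the results \cite[Theorem 3.1, Lemma 3.1]{janyska1996} must be adapted to the submersion $p=\pi_{Y/\mathcal G,X}$; once it is in place, the remainder is bookkeeping with the affine structure.
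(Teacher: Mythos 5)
Your argument is correct; note, though, that the paper itself gives no proof of this proposition --- it is stated as an adaptation of \cite[Theorem 3.1, Lemma 3.1]{janyska1996} (see also \cite[Equations (3.13), (3.14)]{ElGaHoRa2011}) --- so what you have produced is a self-contained verification where the paper argues by citation to the general theory of connections induced on jet prolongations. Writing $p=\pi_{Y/\mathcal G,X}$ and $\mathcal V=\ker(dp)$ as you do, your key step is right: reading $dp\circ\nu^{Y/\mathcal G}=\nu^X\circ d(dp)$ through the identity $\nabla_Z W=\nu(dW(Z))$ gives $dp\left(\nabla^{Y/\mathcal G}_Z W\right)=(p^*\nabla^X)_Z(dp\circ W)$, and this does yield both facts you need: $\nabla^V$ preserves $T^*X\otimes\mathcal V$, and $\operatorname{id}\otimes dp$ intertwines $\nabla^V$ with the $\nabla^X$-induced connection on $T^*X\otimes p^*TX$, so parallelism of the identity endomorphism forces $(\operatorname{id}\otimes dp)(\nabla^V_Z\rho)=0$ for every jet section $\rho$. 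Your four-term expansion with $\nu^{\rho+\tau}=\nu^\rho-\tau\circ dp$ then closes correctly, giving $\nabla^{J^1(Y/\mathcal G)}_Z(\rho+\tau)=\nabla^{J^1(Y/\mathcal G)}_Z\rho+\nabla^V_Z\tau$ with linear part the restriction of $\nabla^V$ to the model bundle, which is a genuine linear connection by the preservation fact. One remark your computation contains implicitly but does not state: since $(\operatorname{id}\otimes dp)(\nabla^V_Z\rho)=0$, under projectability $\nabla^V_Z\rho$ is already $T^*X\otimes\mathcal V$-valued, so the projector $\operatorname{id}\otimes\nu^\rho$ in the defining formula acts as the identity on it; the projector is needed only to make the formula model-bundle-valued without the projectability hypothesis, while projectability is precisely what makes the resulting operator affine. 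Compared with the paper's route, yours buys self-containedness and isolates exactly where projectability enters; the citation route buys brevity and places the construction inside Janyška's natural-operations framework, which the paper also leans on for the torsion term appearing in Lemma \ref{lemma:variacionj1sigma} and Corollary \ref{corollary:variacionesJEG}.
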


\begin{corollary}\label{corollary:variacionesJEG}
For any variation $\delta \sigma _s = d/dt|_{t=0}(\sigma _s)_t$ of a reduced variation $\sigma _s$, the vertical part with respect to the connection $\nabla ^{J^1(Y/\mathcal{G})}$ of its 1-jet lift is given by
\begin{equation*}
\delta^{J^1(Y/\mathcal G)}j^1\sigma_s=\left.\frac{\nabla ^{J^1(Y/\mathcal G)}j^1(\sigma_s)_t}{dt}\right|_{t=0}=\overline\nabla^{Y/\mathcal G}\delta\sigma_s+T^{Y/\mathcal G}\left(\delta\sigma_s,d\sigma_s\right).
\end{equation*}
\end{corollary}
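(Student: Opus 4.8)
The plan is to obtain the formula by combining the explicit shape of the connection $\nabla^{J^1(Y/\mathcal G)}$ furnished by Proposition \ref{prop:conexionJ1EG} with the variational identity of Lemma \ref{lemma:variacionj1sigma}, and then observing that the vertical projection built into the former acts trivially on the two terms produced by the latter. Concretely, I would first use the inclusion $J^1(Y/\mathcal G)\subset V=T^*X\otimes T(Y/\mathcal G)$ to regard each $j^1(\sigma_s)_t$ as the section $x\mapsto d((\sigma_s)_t)_x$ of $V$ along $(\sigma_s)_t$, and differentiate this family covariantly in $t$ with the tensor-product connection $\nabla^V$. Fixing $x\in X$ and $U_x\in T_x X$, the cotangent factor is paired against the constant (in $t$) vector $U_x$, so the $\nabla^V$-derivative reduces to the $\nabla^{Y/\mathcal G}$-covariant $t$-derivative of $d((\sigma_s)_t)_x(U_x)$, which is precisely the left-hand side of Lemma \ref{lemma:variacionj1sigma}. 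Evaluating at $t=0$ I would thus read off
\begin{equation*}
\left.\frac{\nabla^V j^1(\sigma_s)_t}{dt}\right|_{t=0}(U_x)=\left(\overline\nabla^{Y/\mathcal G}_U\delta\sigma_s\right)(x)+T^{Y/\mathcal G}\left(\delta\sigma_s(x),j^1_x\sigma_s(U_x)\right).
\end{equation*}
By Proposition \ref{prop:conexionJ1EG}, the covariant $t$-derivative with respect to $\nabla^{J^1(Y/\mathcal G)}$ is then obtained by applying $\operatorname{id}\otimes\nu^{j^1\sigma_s}$ to this expression, where $\nu^{j^1\sigma_s}$ is the vertical projection determined by the connection $j^1\sigma_s$ on $\pi_{Y/\mathcal G,X}$.

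The decisive step, which I expect to be the only point requiring genuine argument, is to check that both terms on the right are already $\pi_{Y/\mathcal G,X}$-vertical, so that $\operatorname{id}\otimes\nu^{j^1\sigma_s}$ leaves them unchanged. Here I would invoke that the variation is $\pi_{Y,X}$-vertical, whence $\delta\sigma_s=d\pi_{Y,Y/\mathcal G}(\delta s)$ satisfies $d\pi_{Y/\mathcal G,X}(\delta\sigma_s)=0$. Since $\nabla^{Y/\mathcal G}$ is projectable onto $\nabla^X$, covariant differentiation of the vertical field $\delta\sigma_s$ again yields a vertical field, so $\overline\nabla^{Y/\mathcal G}\delta\sigma_s$ lands in $\ker d\pi_{Y/\mathcal G,X}$; projectability likewise forces the torsion to project onto the torsion $T^X$ of $\nabla^X$, so that $T^{Y/\mathcal G}(\delta\sigma_s,\cdot)$ has base component $T^X(0,\cdot)=0$ and is therefore vertical as well. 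Because $\nu^{j^1\sigma_s}$ restricts to the identity on the vertical bundle, applying $\operatorname{id}\otimes\nu^{j^1\sigma_s}$ returns the same two terms, giving
\begin{equation*}
\delta^{J^1(Y/\mathcal G)}j^1\sigma_s=\overline\nabla^{Y/\mathcal G}\delta\sigma_s+T^{Y/\mathcal G}\left(\delta\sigma_s,d\sigma_s\right),
\end{equation*}
as asserted.

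The remaining work is purely organizational: I would make explicit the identification $J^1(Y/\mathcal G)\subset V$ used to transport $j^1(\sigma_s)_t$ into a section of $V$, and confirm that the tensor-product connection $\nabla^V$ genuinely reduces to $\nabla^{Y/\mathcal G}$ in the fiber direction once a fixed base vector $U_x$ is inserted. No new computation beyond Lemma \ref{lemma:variacionj1sigma} is needed, since the whole content of the corollary is that the vertical projection in Proposition \ref{prop:conexionJ1EG} is inert on the vertical output of that lemma.
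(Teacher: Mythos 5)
Your proposal is correct, and it follows the same route as the paper: both proofs combine Proposition \ref{prop:conexionJ1EG} with Lemma \ref{lemma:variacionj1sigma}, compute the $\nabla^V$-derivative paired against a fixed $U_x$ (the term $j^1_x\sigma_s\left(\nabla^X U_x/dt\right)$ vanishing because the variation is $\pi_{Y,X}$-vertical, so the underlying base curve is constant), and conclude by showing that $\operatorname{id}\otimes\nu^{j^1\sigma_s}$ acts as the identity. The one place where you diverge is the verticality check that makes this projection inert. The paper verifies it once, upstream of the lemma: since $d\pi_{Y/\mathcal G,X}\left(j^1_x(\sigma_s)_t(U_x)\right)$ is constantly $U_x$, projectability gives that $\nabla^{Y/\mathcal G}j^1_x(\sigma_s)_t(U_x)/dt$ projects onto $\nabla^X U_x/dt=0$, hence is already vertical before the lemma is invoked. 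You instead verify it downstream, term by term on the lemma's output: $\overline\nabla^{Y/\mathcal G}\delta\sigma_s$ is vertical because a projectable connection sends vertical fields to vertical covariant derivatives, and $T^{Y/\mathcal G}(\delta\sigma_s,\cdot)$ is vertical because the torsion of a projectable connection projects onto $T^X$ and one slot projects to zero. Both are legitimate consequences of projectability; the paper's check is more economical (a single observation covering both terms at once), while yours isolates exactly why each summand in the final formula is a well-defined vertical quantity, which doubles as a sanity check on the statement itself.
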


\begin{proof}
Given $x\in X$ and $U_x\in T_x X$, the tensor product connection $\nabla ^V$ satisfies
\begin{eqnarray*}
\frac{\nabla^V j^1_x(\sigma_s)_t}{dt}(U_x)= \frac{\nabla^{Y/\mathcal G}j^1_x(\sigma_s)_t(U_x)}{dt}-(j^1_x\sigma _s)\left(\frac{\nabla^X U_x}{dt}\right)=
\frac{\nabla^{Y/\mathcal G}j^1_x(\sigma_s)_t(U_x)}{dt}.
\end{eqnarray*}
Since the projection of $j^1_x(\sigma_s)_t(U)_x$ by $d\pi _{Y/\mathcal{G},X}$ is constantly $U_x$, we have that $\nabla^{Y/\mathcal G}j^1_x(\sigma_s)_t(U_x)/dt$ is $\pi _{Y/\mathcal{G},X}$ vertical so that
\begin{equation*}
\frac{\nabla^{Y/\mathcal G}j^1_x(\sigma_s)_t}{dt}= (id\otimes \nu ^{j^1\sigma _s})\circ\frac{\nabla^{V}j^1_x(\sigma_s)_t}{dt}
= \frac{\nabla^{V}j^1_x(\sigma_s)_t}{dt},
\end{equation*}
and the proof is complete by Lemma \ref{lemma:variacionj1sigma}.
\end{proof}

If the connection $\nabla ^X$ is torsionless (and that will be our choice from now on), the formula above simply reads
\[
\delta^{J^1(Y/\mathcal G)}j^1\sigma_s=\overline\nabla^{Y/\mathcal G}\delta\sigma_s.
\]

%%%%%%%%%%
\subsection{Reduced equations}

Let $\mathfrak{H}^\circ\subset TX\otimes \varfrak{g}^*$ be the annihilator of $\mathfrak{H}\subset T^*X\otimes \varfrak{g}$. Then, $\tilde{\mathfrak{H}}^\circ=(Y\times_X \mathfrak{H}^\circ)/\mathcal{G}  \subset TX\otimes \tilde{\varfrak{g}}^*$ is the annihilator of $\tilde{\mathfrak{H}}\subset T^*X\otimes\tilde{\varfrak{g}}$. This space is canonically isomorphic to the dual vector bundle
\begin{equation*}
\tilde{\mathfrak{H}}^\circ \simeq \left((T^*X\otimes\tilde{\varfrak g})/\tilde{\mathfrak{H}}\right)^*,
\end{equation*}
where the dual pairing is given by $\left\langle [y,\zeta_x]_{\mathcal G},\llbracket y,\xi_x\rrbracket_{\tilde{\mathfrak{H}}}\right\rangle=\left\langle\zeta_x,\xi_x\right\rangle$ for each $[y,\zeta_x]_{\mathcal G}\in \tilde{\mathfrak{H}}^\circ$ and $\llbracket y,\xi_x\rrbracket_{\tilde{\mathfrak{H}}}\in(T^*X\otimes\tilde{\varfrak g})/\tilde{\mathfrak{H}}$.

On the other hand, $J^1(Y/\mathcal G)\times_{Y/\mathcal G}(T^*X\otimes\varfrak{g})_{\varphi,\mathfrak{H}}\to Y/\mathcal G$ is an affine bundle, since both $J^1(Y/\mathcal G)$ and $(T^*X\otimes\varfrak{g})_{\varphi,\mathfrak{H}}$ are affine bundles over $Y/\mathcal G$. Let $\nabla^\times$ be the affine connection on this bundle induced by the affine connections $\nabla^{J^1(Y/\mathcal G)}$ and $\nabla$ introduced above.

\begin{definition}
Let $s\in\Gamma(\pi_{Y,X})$ and consider the reduced section $\overline s=\llbracket s,s^*\omega\rrbracket_{\mathfrak{H},\varphi}$, as well as $\sigma_s=\pi_{Y,Y/\mathcal G}\circ s$. The \emph{partial derivatives} of the reduced Lagrangian
\begin{equation*}
l: J^1(Y/\mathcal G)\times_{Y/\mathcal G}(T^*X\otimes\varfrak{g})_{\varphi,\mathfrak{H}}\to\mathbb R
\end{equation*}
are the sections
\begin{equation*}
\frac{\delta l}{\delta\sigma_s}\in\Gamma\left(\pi_{T^*(Y/\mathcal G),X}\right),\quad\frac{\delta l}{\delta j^1\sigma_s}\in\Gamma\left(\pi_{TX\otimes V^*(Y/\mathcal G),X}\right),\quad\frac{\delta l}{\delta\overline s}\in\Gamma\left(\pi_{\mathfrak{\tilde H}^\circ,X}\right)
\end{equation*}
defined as
\begin{align*}
\displaystyle\left\langle\frac{\delta l}{\delta\sigma_s}(x),U_x\right\rangle & =\left.\frac{d}{dt}\right|_{t=0}l\left({\gamma_U(t)}^h(t)\right),\quad && \displaystyle\forall U_x\in T_{\sigma_s(x)}(Y/\mathcal G),\vspace{0.2cm}\\
\displaystyle\left\langle\frac{\delta l}{\delta j^1\sigma_s}(x),V_x\right\rangle & =\left.\frac{d}{dt}\right|_{t=0}l\left(j_x^1\sigma_s+t\,V_x,\overline s(x)\right),\quad && \forall V_x\in T_x^*X\otimes V_{\sigma_s(x)}(Y/\mathcal G),\vspace{0.2cm}\\
\displaystyle\left\langle\frac{\delta l}{\delta\overline s}(x),W_x\right\rangle & =\left.\frac{d}{dt}\right|_{t=0}l\left(j_x^1\sigma_s,\overline s(x)+t\,W_x\right),\quad && \forall W_x\in%\left((T^*X\otimes\varfrak{g})_{0,\mathfrak{H}}\right)_{\sigma_s(x)}
=\left((T^*X\otimes\tilde{\varfrak g})/\tilde{\mathfrak{H}}\right)_{\sigma_s(x)},
\end{align*}
for each $x\in X$, where $\gamma _U (t)^h$ is the horizontal lift with respect to the connection $\nabla ^\times$ of a curve $\gamma_U:(-\epsilon,\epsilon)\rightarrow Y/\mathcal G$ such that $\gamma'(0)=U_x$. As usual, $\langle\cdot,\cdot\rangle$ denotes the corresponding dual pairings. 
\end{definition}

We remark that, whereas the two latter derivatives are (intrinsic) fiber derivatives, the partial derivative $\delta l/\delta\sigma_s$ depends on the choice of the connections. Anyway, all of them are sections projecting onto $\sigma_s$.

Since $\delta l/\delta\overline s$ lies in the annihilator of $\tilde{\mathfrak{H}}$, then for each $\llbracket s,\xi\rrbracket_{\tilde{\mathfrak{H}}}\in\Gamma\left(\pi_{(T^*X\otimes\tilde{\varfrak g})/\tilde{\mathfrak{H}},X}\right)$ the dual pairing satisfies
\begin{equation}\label{eq:emparejadodualparcial}
\left\langle\frac{\delta l}{\delta\overline s},\llbracket s,\xi\rrbracket_{\tilde{\mathfrak{H}}}\right\rangle %=\left\langle \zeta_{s}(x),\xi_x\right\rangle
=\left\langle\frac{\delta l}{\delta\overline s},\left[s,\xi\right]_{\mathcal G}\right\rangle
\end{equation}
Indeed, let $g\in\Gamma(\pi_{\mathcal G,X})$ and $\eta\in\Gamma(\pi_{\mathfrak{H},X})$. Then $\left[s\cdot g,\Ad_{g^{-1}}\circ(\xi+\eta)\right]_{\mathcal G}=\left[s,\xi\right]_{\mathcal G}+\left[s,\eta\right]_{\mathcal G}$ and $\left\langle\delta l/\delta\overline s,\left[s,\eta\right]_{\mathcal G}\right\rangle=0$, since $\left[s,\eta\right]_{\mathcal G}\in\tilde{\mathfrak{H}}$.

\begin{definition}\label{def:divergence}
The \emph{divergence} of the operator $\overline\nabla^{\tilde{\varfrak g}}$ defined in \eqref{eq:overlinenablag} is minus the adjoint of $\overline\nabla^{\tilde{\varfrak g}}$, i.e., the operator $\operatorname{div}^{\tilde{\varfrak g}}:\Gamma\left(\pi_{TX\otimes\tilde{\varfrak g}^*,X}\right)\rightarrow\Gamma\left(\pi_{\tilde{\varfrak g}^*,X}\right)$ given by:
\begin{equation*}
\int_X\left\langle \zeta,\overline\nabla^{\tilde{\varfrak g}}\xi\right\rangle v=-\int_X\left\langle\operatorname{div}^{\tilde{\varfrak g}}\zeta,\xi\right\rangle v
\end{equation*}
for every $\zeta\in\Gamma\left(\pi_{TX\otimes\tilde{\varfrak g}^*,X}\right)$ and $\xi\in\Gamma\left(\pi_{\tilde{\varfrak g},X}\right)$.

Analogously, the \emph{divergence of $\overline\nabla^{Y/\mathcal G}$} is minus the adjoint of \eqref{eq:overlinenablaEG} restricted to vertical sections (the restriction is allowed since the linear connection $\nabla^{Y/\mathcal G}$ is projectable), that is, $\operatorname{div}^{Y/\mathcal G}:\Gamma\left(\pi_{TX\otimes V^*(Y/\mathcal G),X}\right)\rightarrow\Gamma\left(\pi_{V^*(Y/\mathcal G),X}\right)$.

In the same vein, the dual operator of $\tilde\varphi_*:\tilde{\varfrak g}\to T^*X\otimes\tilde{\varfrak g}$ given in \eqref{eq:tildevarphi} is denoted by $\tilde\varphi_*^\dagger:TX\otimes\tilde{\varfrak g}^*\to\tilde{\varfrak g}^*$.
\end{definition}

The (pointwise) \emph{coadjoint representation} of the morphism $\ad_{\tilde s}:\tilde{\varfrak g}\to T^*X\otimes\tilde{\varfrak g}$ introduced in \eqref{eq:adadrepresentation}, i.e., minus its dual morphism, is denoted by $\ad_{\tilde s}^*:TX\otimes\tilde{\varfrak g}^*\to\tilde{\varfrak g}^*$.

\begin{lemma}\label{lemma:restrictedcoadjoint}
In the previous conditions, the coadjoint representation can be restricted to $\tilde{\mathfrak H}^\circ$ yielding the following vector bundle morphism:
\begin{equation*}
\ad_{\overline s}^*:\tilde{\mathfrak H}^\circ\to\tilde{\varfrak g}^*,\quad\tilde\zeta\mapsto\ad_{\tilde s}^*(\tilde\zeta).
\end{equation*}
\end{lemma}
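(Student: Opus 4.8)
The plan is to isolate the only nontrivial point behind this statement: since restricting a linear bundle map to a subbundle is automatic, the content is that $\ad^*_{\tilde s}$ applied to sections of $\tilde{\mathfrak H}^\circ$ is insensitive to the $\tilde{\mathfrak H}$-part of $\tilde s$, so that the restriction is a genuinely reduced morphism, justifying the notation $\ad^*_{\overline s}$. First I would recall the two relevant pairings. By the definition of the coadjoint representation, $\ad^*_{\tilde s}$ is minus the dual of $\ad_{\tilde s}:\tilde{\varfrak g}\to T^*X\otimes\tilde{\varfrak g}$ of \eqref{eq:adadrepresentation}, so that $\langle\ad^*_{\tilde s}(\tilde\zeta),\tilde\xi\rangle=-\langle\tilde\zeta,\ad_{\tilde s}(\tilde\xi)\rangle$ for every $\tilde\xi\in\tilde{\varfrak g}$, the pairing on the right being the natural one between $TX\otimes\tilde{\varfrak g}^*$ and $T^*X\otimes\tilde{\varfrak g}$; and $\tilde{\mathfrak H}^\circ$ is by definition the annihilator of $\tilde{\mathfrak H}$ for that pairing.

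The key step is to upgrade the $\Ad$-invariance of $\mathfrak{H}$ from Proposition \ref{ggg} to an infinitesimal, fibered statement. Since each $\mathfrak H_x$ is a closed linear subspace and $\Ad_{\exp(t\psi)}(\mu)\in\mathfrak H_x$ for all $t$ whenever $\mu\in\mathfrak H_x$ and $\psi\in\varfrak g_x$, differentiating at $t=0$ yields $\ad_\psi(\mu)\in\mathfrak H_x$, where $\ad_\psi$ acts only on the $\varfrak g$-factor, i.e. $\ad_\psi(\mathfrak H)\subset\mathfrak H$. Passing to the quotient by $\mathcal G$ and using the $\Ad$-equivariance that makes $\tilde{\mathfrak H}=(Y\times_X\mathfrak H)/\mathcal G$ well-defined, this descends to the fibered adjoint representation: $\ad_{\tilde\xi}(\tilde{\mathfrak H})\subset\tilde{\mathfrak H}$ for every $\tilde\xi\in\tilde{\varfrak g}$. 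Consequently, for $\tilde\eta\in\tilde{\mathfrak H}$ and $\tilde\xi\in\tilde{\varfrak g}$ over a common point of $Y/\mathcal G$, antisymmetry of the bracket gives $\ad_{\tilde\eta}(\tilde\xi)=-\ad_{\tilde\xi}(\tilde\eta)\in\tilde{\mathfrak H}$.

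With this in hand the conclusion is immediate. For $\tilde\eta\in\tilde{\mathfrak H}$ and $\tilde\zeta\in\tilde{\mathfrak H}^\circ$ one has $\langle\ad^*_{\tilde\eta}(\tilde\zeta),\tilde\xi\rangle=-\langle\tilde\zeta,\ad_{\tilde\eta}(\tilde\xi)\rangle=0$ for all $\tilde\xi$, because $\ad_{\tilde\eta}(\tilde\xi)\in\tilde{\mathfrak H}$ and $\tilde\zeta$ annihilates $\tilde{\mathfrak H}$; hence $\ad^*_{\tilde\eta}(\tilde\zeta)=0$. Writing $\ad^*_{\tilde s+\tilde\eta}=\ad^*_{\tilde s}+\ad^*_{\tilde\eta}$ then shows that $\ad^*_{\tilde s}|_{\tilde{\mathfrak H}^\circ}$ is unchanged when $\tilde s$ is modified by a section of $\tilde{\mathfrak H}$, so the restriction depends only on the reduced datum $\overline s$ and defines $\ad^*_{\overline s}:\tilde{\mathfrak H}^\circ\to\tilde{\varfrak g}^*$; smoothness and fiberwise linearity are inherited from those of $\ad^*_{\tilde s}$, so it is a morphism of vector bundles over $Y/\mathcal G$.

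The main obstacle I anticipate is purely organizational: keeping straight the three distinct structures involved—the natural $TX\otimes\tilde{\varfrak g}^*$–$T^*X\otimes\tilde{\varfrak g}$ pairing, the adjoint action acting only on the Lie-algebra factor of these tensor products, and the descent of the $\Ad$-invariance of $\mathfrak H$ to the $\ad$-invariance of $\tilde{\mathfrak H}$—and ensuring that the antisymmetry identity $\ad_{\tilde\eta}(\tilde\xi)=-\ad_{\tilde\xi}(\tilde\eta)$ is invoked over a common base point so that the fibered bracket is defined. Once these identifications are set up, no analysis is required.
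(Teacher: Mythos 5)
Your proposal is correct and follows essentially the same route as the paper: differentiate the $\Ad$-invariance of $\mathfrak{H}$ along $\Ad_{\exp(t\tilde\xi)}$ to get $\ad_{\tilde\eta}(\tilde\xi)\in\tilde{\mathfrak{H}}$ for $\tilde\eta\in\tilde{\mathfrak{H}}$, then use the annihilator pairing to conclude that $\ad^*_{\tilde s}|_{\tilde{\mathfrak{H}}^\circ}$ is unchanged when $\tilde s$ is shifted by a section of $\tilde{\mathfrak{H}}$. The only cosmetic differences are that you first prove the infinitesimal invariance fiberwise on $\mathfrak{H}$ and then descend to the quotient, and that you carry the minus sign of the coadjoint convention consistently, whereas the paper works directly on $\tilde{\mathfrak{H}}$ and drops that (harmless) sign.
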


\begin{proof}
Let $x\in X$. As a consequence of the $\Ad$-invariance of $\mathfrak H$, we have that $\ad_{\tilde\eta}(\tilde\xi)\in\tilde{\mathfrak H}$ for each $\tilde\eta\in\tilde{\mathfrak H}_{\sigma_s(x)}$ and $\tilde\xi\in\tilde{\varfrak g}_{\sigma_s(x)}$. Indeed, $\ad_{\tilde\eta}(\tilde\xi)=-[\tilde\xi,\tilde\eta]=\left.(d/dt)\right|_{t=0}\Ad_{\exp(t\,\tilde\xi)}\circ\tilde\eta\in\tilde{\mathfrak H}$. Hence, the map $\ad_{\overline s}^*:\tilde{\mathfrak H}^\circ\to\tilde{\varfrak g}^*$ is well defined. Indeed, for each $\tilde\zeta\in\tilde{\mathfrak H}^\circ$ we have $\langle\ad_{\tilde s(x)+\tilde\eta}^*(\tilde\zeta),\tilde\xi\rangle=\langle\tilde\zeta,\ad_{\tilde s(x)+\tilde\eta}(\tilde\xi)\rangle=\langle\tilde\zeta,\ad_{\tilde s(x)}(\tilde\xi)\rangle=\langle\ad_{\tilde s(x)}^*(\tilde\zeta),\tilde\xi\rangle$, where Proposition \ref{prop:vectormodelaffine} and Lemma \ref{lemma:igualdad} have been taken into account.
\end{proof}

\begin{theorem}[Reduced field equations]\label{theorem:reducedequationsmathfrakH}
Let $\pi_{Y,X}$ be a fiber bundle over a compact manifold $X$, $\pi_{\mathcal G,X}$ be a Lie group bundle and $\pi_{\varfrak g,X}$ be its Lie algebra bundle. Suppose that $\pi_{\mathcal G,X}$ acts fiberwisely, freely and properly on the right on $\pi_{Y,X}$.
Let $\omega\in\Omega^1(Y, \varfrak g)$ be a generalized principal connection on $\pi_{Y,Y/\mathcal G}$ associated to a Lie group connection $\nu$ on $\pi_{\mathcal G,X}$. Let $H\subset J^1\mathcal{G}$ be a Lie group subbundle projecting surjectively onto $\mathcal{G}$ and such that, with the identification $J^1\mathcal{G}\simeq \mathcal{G} \times _X T^*X\otimes \varfrak{g}$ given by $\nu$, it is an affine subbundle $\varphi + \mathfrak{H}$ as in Proposition \ref{ggg}.

For a $H$-invariant Lagrangian density $\mathfrak L=Lv: J^1 Y\rightarrow\bigwedge^n T^*X$ we consider the corresponding reduced Lagrangian $l: J^1(Y/\mathcal G)\times_{Y/\mathcal G}(T^*X\otimes\varfrak{g})_{\varphi,\mathfrak{H}}\rightarrow\mathbb R$. Then, for any section $s\in\Gamma(\pi_{Y,X})$ and its reduced section $\overline s=\llbracket s,s^*\omega\rrbracket_{\mathfrak{H},\varphi}$ and $\sigma = \pi_{Y,Y/\mathcal G}\circ s$, the following assertions are equivalent:

\begin{enumerate}[(i)]
    \item The variational principle $\displaystyle\delta\int_X\mathfrak L(j^1 s)=0$ holds for arbitrary (vertical) variations of $s$.

    \item The section $s$ satisfies the Euler--Lagrange equations for $L$, i.e. $\mathcal{EL}(L)\left(j^2 s\right)=0$.

    \item The variational principle $\displaystyle\delta\int_X l(j^1\sigma_s,\overline s)v=0$ holds for variations of the form:
    \begin{equation*}
    \delta^\nabla\overline s=\left[\overline\nabla^{\tilde{\varfrak{g}}}\tilde \xi-\ad_{\tilde s}\left(\tilde\xi\right)-\tilde\varphi_*\tilde\xi+\tilde\Omega(\delta\sigma_s,d\sigma_s)\right]_{\tilde{\mathfrak{H}}},
    \end{equation*}
    where $\tilde\xi\in\Gamma(\pi_{\tilde{\varfrak g},X})$ is an arbitrary section and $\delta\sigma_s$ is an arbitrary variation of $\sigma_s$.

    \item The reduced section $\overline s$ satisfies the \emph{reduced field equations}:
    \begin{equation*}
    \left\{
    \begin{array}{l}
    \displaystyle\frac{\delta l}{\delta\sigma_s}-\operatorname{div}^{Y/\mathcal G}\left(\frac{\delta l}{\delta j^1\sigma_s}\right)=\left\langle\frac{\delta l}{\delta\overline s},\iota_{d\sigma_s}\tilde\Omega\right\rangle,\vspace{0.2cm}\\
    \displaystyle\left(\operatorname{div}^{\tilde{\varfrak g}}-\ad_{\overline s}^*+\tilde\varphi_*^\dagger\right)\left(\frac{\delta l}{\delta\overline s}\right)=0.
    \end{array}
    \right.
    \end{equation*}
\end{enumerate}
The expressions above take into account the connections described in \S \ref{sec:inducedconnection}.
\end{theorem}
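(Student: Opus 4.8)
The plan is to establish the chain of equivalences $(i)\Leftrightarrow(ii)\Leftrightarrow(iii)\Leftrightarrow(iv)$, reserving the genuine computation for the last link. The equivalence $(i)\Leftrightarrow(ii)$ is the standard fact recalled in Section \ref{sec:reducedvariationalprinciple}, namely that a section is critical for $\mathbb S$ exactly when $\mathcal{EL}(L)(j^2 s)=0$; no gauge-specific input is needed. For $(i)\Leftrightarrow(iii)$ I would use the defining property of the dropped Lagrangian, which yields the pointwise identity $L(j^1 s_t)=l(j^1(\sigma_s)_t,\overline s_t)$ for every $t$, hence the equality of action functionals and of their first variations in \eqref{eq:equivalenciavariacionesparticular}. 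It then remains to note that the admissible reduced variations in $(iii)$ are precisely those induced by arbitrary $\pi_{Y,X}$-vertical variations $\delta s$: writing
\[
\delta s=Hor_s^\omega(\delta\sigma_s)+\xi^*_{s},\qquad\tilde\xi=[s,\omega(\delta s)]_{\mathcal G},
\]
shows that the pair $(\delta\sigma_s,\tilde\xi)$ ranges over all pairs consisting of an arbitrary vertical variation of $\sigma_s$ and an arbitrary section of $\pi_{\tilde{\varfrak g},X}$, and the explicit form of $\delta^\nabla\overline s$ is exactly Corollary \ref{corollary:variacionesreducidas} (equivalently Corollary \ref{corollary:variacionesverticales}). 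Thus $(i)$ and $(iii)$ assert the same vanishing.

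The core of the argument is $(iii)\Leftrightarrow(iv)$. I would start from the decomposition of the total variation along the three partial derivatives adapted to $\nabla^{J^1(Y/\mathcal G)}$, $\nabla$ and $\nabla^\times$,
\[
\delta\int_X l(j^1\sigma_s,\overline s)\,v=\int_X\left[\left\langle\frac{\delta l}{\delta\sigma_s},\delta\sigma_s\right\rangle+\left\langle\frac{\delta l}{\delta j^1\sigma_s},\delta^{J^1(Y/\mathcal G)}j^1\sigma_s\right\rangle+\left\langle\frac{\delta l}{\delta\overline s},\delta^\nabla\overline s\right\rangle\right]v,
\]
which holds by the very definition of the partial derivatives (the $\sigma_s$-derivative uses horizontal lifts with respect to $\nabla^\times$, while the other two are the fiber derivatives paired with the vertical parts of the jet and reduced variations). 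Into this I substitute $\delta^{J^1(Y/\mathcal G)}j^1\sigma_s=\overline\nabla^{Y/\mathcal G}\delta\sigma_s$ from Corollary \ref{corollary:variacionesJEG} (using that $\nabla^X$ is torsionless) and the expression for $\delta^\nabla\overline s$ from Corollary \ref{corollary:variacionesverticales}. Using \eqref{eq:emparejadodualparcial} I replace the pairing of $\delta l/\delta\overline s$ with the $\tilde{\mathfrak H}$-class by its pairing with the $\mathcal G$-class, so that $\overline\nabla^{\tilde{\varfrak g}}$, $\ad_{\tilde s}$ and $\tilde\varphi_*$ may act at the level of $\tilde{\varfrak g}$. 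Then I integrate by parts: the terms $\overline\nabla^{Y/\mathcal G}\delta\sigma_s$ and $\overline\nabla^{\tilde{\varfrak g}}\tilde\xi$ produce $-\operatorname{div}^{Y/\mathcal G}(\delta l/\delta j^1\sigma_s)$ and $-\operatorname{div}^{\tilde{\varfrak g}}(\delta l/\delta\overline s)$ by Definition \ref{def:divergence}, the boundary contributions vanishing since $X$ is compact; the algebraic terms pass to the dual side as $\ad_{\tilde s}^*$ and $\tilde\varphi_*^\dagger$, where Lemma \ref{lemma:restrictedcoadjoint} ensures that $\ad_{\tilde s}^*$ restricts to $\ad_{\overline s}^*$ on $\tilde{\mathfrak H}^\circ$, the space in which $\delta l/\delta\overline s$ lives; and the curvature term $\langle\delta l/\delta\overline s,\tilde\Omega(\delta\sigma_s,d\sigma_s)\rangle$ is rewritten as a pairing against $\delta\sigma_s$ through $\iota_{d\sigma_s}\tilde\Omega$.

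Collecting terms and tracking signs, the first variation takes the form
\[
\int_X\left\langle\frac{\delta l}{\delta\sigma_s}-\operatorname{div}^{Y/\mathcal G}\!\left(\frac{\delta l}{\delta j^1\sigma_s}\right)-\left\langle\frac{\delta l}{\delta\overline s},\iota_{d\sigma_s}\tilde\Omega\right\rangle,\,\delta\sigma_s\right\rangle v-\int_X\left\langle\left(\operatorname{div}^{\tilde{\varfrak g}}-\ad_{\overline s}^*+\tilde\varphi_*^\dagger\right)\!\left(\frac{\delta l}{\delta\overline s}\right),\,\tilde\xi\right\rangle v.
\]
Since $\delta\sigma_s$ and $\tilde\xi$ are independent and arbitrary, the fundamental lemma of the calculus of variations forces each integrand to vanish, yielding exactly the two reduced field equations of $(iv)$; conversely, those equations make the first variation vanish for every admissible variation, giving $(iii)$. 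The step I expect to require the most care is precisely this integration-by-parts bookkeeping, and in particular the interplay between the affine constraint $\mathfrak H$ and the duality: one must verify that $\delta l/\delta\overline s\in\tilde{\mathfrak H}^\circ$ pairs consistently with the $\tilde{\mathfrak H}$-quotient-valued variation $\delta^\nabla\overline s$ via \eqref{eq:emparejadodualparcial}, and that the coadjoint term genuinely descends to $\ad_{\overline s}^*$ on $\tilde{\mathfrak H}^\circ$ as in Lemma \ref{lemma:restrictedcoadjoint} — this is where the nonholonomic nature of $H$ is felt. Ensuring that the $\ad$ and $\iota_{d\sigma_s}\tilde\Omega$ contributions land with the correct signature in $(iv)$ is the remaining delicate point.
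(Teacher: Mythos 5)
Your proposal is correct and follows essentially the same route as the paper's proof: the same three-term decomposition of the first variation via the partial derivatives, substitution from Corollaries \ref{corollary:variacionesverticales} and \ref{corollary:variacionesJEG}, the duality bookkeeping through \eqref{eq:emparejadodualparcial} and Lemma \ref{lemma:restrictedcoadjoint}, integration by parts via Definition \ref{def:divergence}, and the fundamental lemma applied to the independent variations $\delta\sigma_s$ and $\tilde\xi$. The only (welcome) difference is that you make explicit, via the splitting $\delta s=Hor_s^\omega(\delta\sigma_s)+\xi^*_s$, why the pairs $(\delta\sigma_s,\tilde\xi)$ exhaust all admissible reduced variations, a point the paper leaves as ``straightforward.''
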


\begin{proof}
The equivalence of $(i)$ and $(ii)$ is a well-known fact as stated above. The equivalence of $(i)$ and $(iii)$ is a straightforward consequence of equation \eqref{eq:equivalenciavariacionesparticular} and Corollary \ref{corollary:variacionesverticales}. To complete the proof, we show the equivalence of $(iii)$ and $(iv)$.

Let $\delta s$ be a $\pi_{Y,X}$-vertical variation of $s$ and consider induced variations of $\sigma_s$ and $\overline s$. Then:
\begin{align*}
\displaystyle\left.\frac{d}{dt}\right|_{t=0}\int_{X} l\left(j^1(\sigma_s)_t,\overline s_t\right)v & =\displaystyle\int_{X}dl\left(\delta\left(j^1\sigma_s,\overline s\right)\right)v\vspace{0.15cm}\\
& =\displaystyle\int_{X}dl\left(\delta\left(j^1\sigma_s,\overline s\right)^h\right)v+\int_{X}dl\left(\delta\left(j^1\sigma_s,\overline s\right)^v\right)v\vspace{0.15cm}\\
& =\displaystyle\int_{X}\left\langle \frac{\delta l}{\delta \sigma_s},\delta \sigma _s\right\rangle v + \int_{X}\left\langle \frac{\delta l}{\delta j^1 \sigma _s},\delta^{J^1(Y/\mathcal G)} j^1\sigma _s\right\rangle v + \int _X \left\langle \frac{\delta l}{\delta \overline{s}},\delta^\nabla \overline{s} \right\rangle v,
\end{align*}
where the vertical parts $\delta^{J^1(Y/\mathcal G)} j^1 \sigma _s$ and $\delta^\nabla \overline{s}$ are defined with the connections $\nabla ^{J^1(Y/\mathcal{G})}$ and $\nabla$ respectively. Making use of Corollaries \ref{corollary:variacionesverticales} and \ref{corollary:variacionesJEG} (with vanishing torsion) we get

\begin{align*}
\displaystyle\left.\frac{d}{dt}\right|_{t=0}\int_{X} l\left(j^1(\sigma_s)_t,\overline s_t\right)v & =\displaystyle\int_{X}\left\langle\frac{\delta l}{\delta\sigma_s},\delta\sigma_s\right\rangle v
+\int_{X}\left\langle\frac{\delta l}{\delta j^1\sigma_s},\overline\nabla^{Y/\mathcal G}\delta\sigma_s\right\rangle v\vspace{0.1cm}\\
& \hspace{5mm}+\int_{X} \left\langle\frac{\delta l}{\delta\overline s},\left[\overline\nabla^{\tilde{\varfrak{g}}}\tilde \xi-\ad_{\tilde s}\left(\tilde\xi\right)-\tilde\varphi_*\tilde\xi+\tilde\Omega(\delta\sigma_s,d\sigma_s)\right]_{\tilde{\mathfrak{H}}}\right\rangle v.
\end{align*}
Thanks to \eqref{eq:emparejadodualparcial}, we may write:
\begin{equation*}
\left\langle\frac{\delta l}{\delta\overline s},\left[\overline\nabla^{\tilde{\varfrak{g}}}\tilde \xi-\ad_{\tilde s}\left(\tilde\xi\right)-\tilde\varphi_*\tilde\xi+\tilde\Omega(\delta\sigma_s,d\sigma_s)\right]_{\tilde{\mathfrak{H}}}\right\rangle=\left\langle\frac{\delta l}{\delta\overline s},\overline\nabla^{\tilde{\varfrak{g}}}\tilde \xi-\ad_{\tilde s}\left(\tilde\xi\right)-\tilde\varphi_*\tilde\xi+\tilde\Omega(\delta\sigma_s,d\sigma_s)\right\rangle.
\end{equation*}
Analogously, thanks to Lemma \ref{lemma:restrictedcoadjoint}, we have
\begin{equation*}
\left\langle\frac{\delta l}{\delta\overline s},\ad_{\tilde s}\left(\tilde\xi\right)\right\rangle=\left\langle\ad_{\overline s}\left(\frac{\delta l}{\delta\overline s}\right),\tilde\xi\right\rangle.
\end{equation*}
Using this and the divergence operators defined above we get:
\begin{align*}
\left.\frac{d}{dt}\right|_{t=0}\int_{X} l\left(j^1(\sigma_s)_t,\overline s_t\right)v  & =\int_{X}\left\langle\frac{\delta l}{\delta\sigma_s},\delta\sigma_s\right\rangle v+\int_{X}\left\langle-\operatorname{div}^{Y/\mathcal G}\left(\frac{\delta l}{\delta j^1\sigma_s}\right),\delta\sigma_s\right\rangle v\vspace{0.1cm}\\
& \hspace{5mm}+\int_{X}\left\langle\left(-\operatorname{div}^{\tilde{\varfrak g}}+\ad_{\overline s}^*-\tilde\varphi_*^\dagger\right)\left(\frac{\delta l}{\delta\overline s}\right),\tilde\xi\right\rangle v\\
& \hspace{5mm}+\int_{X}\left\langle-\left\langle\frac{\delta l}{\delta\overline s},\iota_{d\sigma_s}\tilde\Omega\right\rangle,\delta\sigma_s\right\rangle v.
\end{align*}
As a result, the variational principle of \emph{(iii)} reads
\begin{align*}
& \int_{X}\left\langle\frac{\delta l}{\delta\sigma_s}-\operatorname{div}^{Y/\mathcal G}\left(\frac{\delta l}{\delta j^1\sigma_s}\right)-\left\langle\frac{\delta l}{\delta\overline s},\iota_{d\sigma_s}\tilde\Omega\right\rangle,\delta\sigma_s\right\rangle v\\
& -\int_{X} \left\langle\left(\operatorname{div}^{\tilde{\varfrak g}}-\ad_{\overline s}^*+\tilde\varphi_*^\dagger\right)\left(\frac{\delta l}{\delta\overline s}\right),\tilde\xi\right\rangle v=0
\end{align*}
for every section $\tilde\xi\in\Gamma\left(\pi_{\tilde{\varfrak g},X}\right)$ and every variation $\delta\sigma_s$ of $\sigma_s$.
\end{proof}

\begin{remark}
The first equation holds on $\pi_{\sigma_s^*V^*(Y/\mathcal G),X}$, and the second one on $\pi_{\sigma_s^*\tilde{\varfrak g}^*,X}$. On the other hand, if the connection $\nabla^{Y/\mathcal G}$ had non-vanishing torsion, then the reduced equations would read
\begin{equation*}
    \left\{
    \begin{array}{l}
    \displaystyle\frac{\delta l}{\delta\sigma_s}-\operatorname{div}^{Y/\mathcal G}\left(\frac{\delta l}{\delta j^1\sigma_s}\right)+\left\langle\frac{\delta l}{\delta j^1\sigma_s},\iota_{d\sigma_s}T^{Y/\mathcal G}\right\rangle=\left\langle\frac{\delta l}{\delta\overline s},\iota_{d\sigma_s}\tilde\Omega\right\rangle,\vspace{0.2cm}\\
    \displaystyle\left(\operatorname{div}^{\tilde{\varfrak g}}-\ad_{\overline s}^*+\tilde\varphi_*^\dagger\right)\left(\frac{\delta l}{\delta\overline s}\right)=0.
    \end{array}
    \right.
\end{equation*}
\end{remark}

%%%%%%%%%%%%%%%%%%%%
\section{Reconstruction}\label{sec:reconstruction}

Let $\overline s$ be a section of the reduced bundle $(T^*X\otimes\varfrak{g})_{\varphi,\mathfrak{H}}\to X$ and let  
\begin{equation*}
\sigma=\pi_{(T^*X\otimes\varfrak{g})_{\varphi,\mathfrak{H}},Y/\mathcal G}\circ\overline s
\end{equation*}
be the induced section of $Y/\mathcal{G}\to X$. We consider the subset
\[
Y^\sigma=\pi_{Y,Y/\mathcal G}^{-1}\left(\sigma(X)\right)=\{ y\in Y:\pi _{Y,Y/\mathcal{G}}(y)=\sigma (\pi _{Y,X}(y))\}\subset Y.
\]
The action of $\mathcal{G}$ on $Y$ restricts to $Y^\sigma$ and $Y^\sigma /\mathcal{G}\simeq X$. In fact, we can regard $Y^\sigma$ as a pull-back bundle $Y^\sigma \simeq \sigma ^*Y \to X$ on which the Lie group bundle $\mathcal{G}$ acts transitively along the fibers:
\begin{equation*}
\begin{tikzpicture}
\matrix (m) [matrix of math nodes,row sep=3em,column sep=3em,minimum width=2em]
{Y & Y^\sigma\simeq\sigma^* Y\\
Y/\mathcal G & X\\};
\path[-stealth]
(m-1-1) edge [] node [] {} (m-2-1)
(m-1-2) edge [] node [] {} (m-2-2)
(m-2-1) edge [] node [] {} (m-2-2)
(m-2-2) edge [bend right,dashed] node [above] {$\sigma$} (m-2-1);
\end{tikzpicture}
\end{equation*} 
In particular, the adjoint bundle of $Y^\sigma \to X$ is the pull-back $\sigma ^* \tilde{\varfrak{g}}\to X$. From this point of view, the section $\overline{s}$ can be also considered as a section of the pull-backed bundle
\[
\sigma^*((T^*X\otimes\varfrak{g})_{\varphi,\mathfrak{H}})=\frac{Y^\sigma\times_X(T^*X\otimes\varfrak g)/\mathfrak{H}}{\Phi_\varphi}\to X,
\]

Moreover, the restriction of the connection form $\omega$ to $Y^\sigma$ is a generalized principal connection on $Y^\sigma \to X$ associated to the Lie group connection $\nu$, which we denote by $\sigma ^*\omega$. From \cite[Proposition 3.11]{CaRo2023} we know that, for any section $\tilde{\xi}$ of $T^*X\otimes \sigma ^*\tilde{\varfrak{g}}\to X$, the form $\sigma ^* \omega - \tilde{\xi }$ is a generalized principal connection on $Y^\sigma\to X$.

\begin{proposition}
\label{rec0}
Let $s\in\Gamma(\pi_{Y,X})$. We regard $s$ as a section of $Y^\sigma\to X$, where $\sigma = \pi_{Y,Y/\mathcal{G}}\circ s$. Then the generalized principal connection defined as
\begin{equation*}
\omega^{\overline s}=\sigma ^*\omega - \tilde{s}
\end{equation*}
is flat, where $\tilde{s}=[s,s^*\omega]_\mathcal{G}\in\Gamma(\pi_{T^*X\otimes \sigma ^*\tilde{\varfrak{g}},X})$.
\end{proposition}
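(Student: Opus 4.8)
The plan is to compute the curvature of $\omega^{\overline s} = \sigma^*\omega - \tilde s$ directly and show it vanishes identically. Since $\omega^{\overline s}$ is a generalized principal connection on $Y^\sigma \to X$ associated to $\nu$ (by the cited result from \cite{CaRo2023}, as $\tilde s$ is a section of $T^*X\otimes\sigma^*\tilde{\varfrak g}$), its flatness can be tested by evaluating the curvature on horizontal lifts, via the formula $\Omega^{\overline s}(U_1,U_2) = -\omega^{\overline s}([U_1^h, U_2^h])$, or equivalently through Proposition \ref{prop:curvature} in terms of ${\rm d}^{\varfrak g}$. The key geometric observation is that $s \in \Gamma(Y^\sigma)$ is a \emph{global section} of the bundle $Y^\sigma \to X$ on which $\mathcal G$ acts transitively along fibers; a section trivializes the bundle, and the point is that $\omega^{\overline s}$ is precisely the connection that makes this section parallel.

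The cleanest route is through parallel transport. First I would use Proposition \ref{prop:compatibilityofomega} characterizing generalized principal connections by the compatibility of ${^\omega\big|}\big|$ with ${^\nu\big|}\big|$. I would show that the parallel transport of $\omega^{\overline s}$ along any curve $x: I \to X$ sends $s(x(a))$ to $s(x(b))$, i.e. that $s$ is horizontal for $\omega^{\overline s}$. Concretely, the correction term $\tilde s = [s, s^*\omega]_{\mathcal G}$ is built so that $\omega^{\overline s}_{s(x)}\big((ds)_x(U)\big) = \omega_{s(x)}\big((ds)_x(U)\big) - (s^*\omega)_x(U) = 0$ for every $U \in T_x X$. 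Hence the image $s(X) \subset Y^\sigma$ is an integral submanifold of the horizontal distribution $H^{\overline s} Y^\sigma = \ker \omega^{\overline s}$, and by the transitivity of the $\mathcal G$-action on fibers together with the $\Ad$-equivariance in Definition \ref{def:connectionform}, the horizontal distribution is the $\mathcal G$-saturation of $Ts(X)$, translated by $\nu$-horizontal lifts along $\mathcal G$.

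The main step is then to verify integrability of $H^{\overline s}Y^\sigma$, which by the Frobenius-type characterization is equivalent to flatness. Any horizontal lift decomposes as $U^h = (ds)(U) \cdot$ (a $\nu$-horizontal curve in $\mathcal G$), so the bracket $[U_1^h, U_2^h]$ projects onto $[U_1,U_2]$ and its $\omega^{\overline s}$-value measures the failure of the $s$-image and the $\nu$-parallel transports to close up. Because $s$ is a genuine section (so $(ds)[U_1,U_2] = [(ds)U_1,(ds)U_2]$ along $s(X)$) and because $\nu$ itself need not be flat, the curvature of $\omega^{\overline s}$ reduces to the curvature contribution of $s^*\omega$ minus the compensating term; these cancel exactly because $\tilde s$ is the $\mathcal G$-class of $s^*\omega$ itself. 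I expect the main obstacle to be bookkeeping the noncommutativity: one must track the $\Ad$-twist from the equivariance property together with the induced connection $\nabla^{\varfrak g}$ on $\varfrak g$, so that the ${\rm d}^{\varfrak g}$-terms coming from differentiating $\tilde s$ precisely annihilate the reduced-curvature term. The computation should mirror the well-definedness argument in \eqref{eq:reducedcurvature}, and the cleanest formulation is to observe that $\omega^{\overline s}$ has a global horizontal section, which for a principal-type connection is equivalent to flatness.
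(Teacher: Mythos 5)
Your proposal is correct and is essentially the paper's own proof: the key step is that $\omega^{\overline s}_{s(x)}\circ(ds)_x=\omega_{s(x)}\circ(ds)_x-(s^*\omega)_x=0$, so $s(X)$ is an integral leaf of the horizontal distribution of $\omega^{\overline s}$ and its curvature vanishes along $s(X)$, after which the $\Ad$-equivariance of the curvature (the computation mirroring \eqref{eq:reducedcurvature}) together with the fiberwise transitivity of $\mathcal G$ on $Y^\sigma$ propagates the vanishing to every point of $Y^\sigma$ — your lengthy Frobenius bookkeeping paragraph is unnecessary once this is observed, as your own final sentence recognizes. One small correction: your closing claim should be an implication rather than an equivalence, since a global horizontal section forces flatness, but flatness alone does not yield a global horizontal section (that requires in addition trivial holonomy, cf. Theorem \ref{theorem:reconstruction}); only the correct direction is used here.
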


\begin{proof}
To begin with, we have that $s(X)\subset Y^\sigma$ is an integral leaf of $\sigma^*\omega-\tilde{s}$. Indeed, it is easy to check that $(ds)_x(T_x X)$ is $\omega^{\overline s}$-horizontal for each $x\in X$. This means that $\operatorname{Curv}\left(\omega^{\overline s}\right)_{s(x)}=0$ for each $x\in X$, where $\operatorname{Curv}\left(\omega^{\overline s}\right)$ is the curvature of $\omega^{\overline s}$.

An analogous computation to \eqref{eq:reducedcurvature} yields
\begin{equation*}
\left[s(x)\cdot g,\operatorname{Curv}\left(\omega^{\overline s}\right)_{s(x)\cdot g}\right]_{\mathcal G}=\left[s(x),\operatorname{Curv}\left(\omega^{\overline s}\right)_{s(x)}\right]_{\mathcal G}=0,\qquad x\in X,~g\in\mathcal G_x,
\end{equation*}
whence $\operatorname{Curv}\left(\omega^{\overline s}\right)_y=0$ for every $y\in Y^\sigma$.
\end{proof}

\begin{theorem}[Reconstruction]\label{theorem:reconstruction}
Let $\overline s\in\Gamma\left(\pi_{(T^*X\otimes\varfrak{g})_{\varphi,\mathfrak{H}},X}\right)$ be a critical section for the variational problem defined in $(iii)$ of Theorem \ref{theorem:reducedequationsmathfrakH}, and $\sigma=\pi_{(T^*X\otimes\varfrak{g})_{\varphi,\mathfrak{H}},Y/\mathcal G}\circ\overline s\in\Gamma(\pi_{Y/\mathcal{G},X})$. Let $\tilde{s}$ be a section of $T^*X\otimes \sigma ^*\tilde{\varfrak{g}}\to X$ such that $\overline s(x)=\llbracket y,\xi_x\rrbracket_{\mathfrak{H},\varphi}$ for each $x\in X$, where $\tilde s(x)=[y,\xi_x]_{\mathcal G}$. If the connection
\[
\omega^{\overline{s}}=\sigma^*\omega-\tilde{s}
\]
is flat and has trivial holonomy, then the integral leaves of the $\omega^{\overline s}$ are critical sections of the variational problem defined by $L$. Furthermore, any critical section of $L$ is obtained in this way.
\end{theorem}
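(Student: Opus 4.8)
The plan is to prove the two implications separately, using the flatness and the triviality of the holonomy of $\omega^{\overline s}$ to manufacture the reconstructed sections as integral leaves, and then transferring criticality back and forth through Theorem \ref{theorem:reducedequationsmathfrakH}. First I would treat the reconstruction statement (\emph{integral leaves are critical}). Since $\omega^{\overline s}=\sigma^*\omega-\tilde s$ is flat, its horizontal distribution on $Y^\sigma\to X$ is involutive, so by Frobenius it integrates to a foliation by immersed submanifolds transverse to the fibres; each leaf projects to $X$ as a local diffeomorphism via $\pi_{Y,X}$. The triviality of the holonomy then makes these projections bijective, upgrading the leaves to honest global sections $s:X\to Y^\sigma\subset Y$ (this is where connectedness of $X$ and a monodromy/developing argument enter). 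By construction such a leaf satisfies $\omega^{\overline s}\circ ds=0$.

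Next I would show that any such leaf reduces precisely to the given data. Writing $\tilde s(x)=[s(x),\beta_x]_{\mathcal G}$ with $s(x)$ itself as representative, the horizontality $\big(\sigma^*\omega-\tilde s\big)_{s(x)}\circ(ds)_x=0$ forces $(s^*\omega)_x=\beta_x$, that is, $\tilde s=[s,s^*\omega]_{\mathcal G}$. Hence $\pi_{Y,Y/\mathcal G}\circ s=\sigma$ and the reduced section of $s$ is $\llbracket s,s^*\omega\rrbracket_{\mathfrak H,\varphi}$, which coincides with $\overline s$ because $\tilde s$ was chosen compatibly with $\overline s$ through a common representative (when $\varphi=0$ this is immediate; in general one checks that the residual ambiguity lies in $\mathfrak H$). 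With $s$ now a genuine section whose reduction is the critical pair $(\sigma,\overline s)$, the equivalence of $(i)$ and $(iii)$ in Theorem \ref{theorem:reducedequationsmathfrakH} shows that $s$ is critical for $\mathfrak L$.

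For the converse I would start from a critical $s\in\Gamma(\pi_{Y,X})$ and set $\sigma=\pi_{Y,Y/\mathcal G}\circ s$, $\overline s=\llbracket s,s^*\omega\rrbracket_{\mathfrak H,\varphi}$ and $\tilde s=[s,s^*\omega]_{\mathcal G}$. By the equivalence of $(i)$ and $(iii)$ in Theorem \ref{theorem:reducedequationsmathfrakH}, $\overline s$ is critical for the reduced problem, so the hypotheses of the present statement are met. Proposition \ref{rec0} then yields that $\omega^{\overline s}=\sigma^*\omega-\tilde s$ is flat and that $s(X)$ is one of its integral leaves. Finally, the mere existence of this global horizontal section forces the holonomy to be trivial: the principal-type connection $\omega^{\overline s}$ has holonomy acting on each fibre by right multiplication (a consequence of Proposition \ref{prop:compatibilityofomega}), a global parallel section is a fixed point of that action, and freeness of the $\mathcal G$-action then forces the holonomy to be the identity. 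Thus $s$ is recovered as a leaf of $\omega^{\overline s}$, as claimed.

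The main obstacle is the passage from the infinitesimal condition (flatness) to the global one (existence of global section leaves) together with the exact matching of the reconstructed leaf with $\overline s$. Flatness alone only produces local leaves; manufacturing global sections is precisely what the trivial-holonomy hypothesis buys, and making this rigorous requires the monodromy argument combined with the principal-type equivariance of $\omega^{\overline s}$ (Proposition \ref{prop:compatibilityofomega}) so that parallel transport is compatible with the group action. The second delicate point is the $\varphi$-twist: two leaves differ by a $\nu$-parallel gauge transformation $g$, under which the reduced section shifts by $[\varphi(g)]_{\mathfrak H}$; one must verify that the leaves genuinely reducing to $\overline s$ are those related by transformations with $j^1 g\in H$, for which the $H$-invariance of $L$ preserves criticality. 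All remaining steps are bookkeeping with the identifications of \S\ref{sec:reducedconfigurationspace} and the pairing manipulations already employed in Theorem \ref{theorem:reducedequationsmathfrakH}.
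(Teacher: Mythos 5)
Your proposal follows the same route as the paper's proof: the forward implication combines trivial holonomy (leaves are global sections), the identification of a leaf's reduced data with $\overline s$, and the equivalence $(i)\Leftrightarrow(iii)$ of Theorem \ref{theorem:reducedequationsmathfrakH}; the converse rests on Proposition \ref{rec0}. Your discussion of the $\varphi$-twist --- that two leaves differ by a $\nu$-parallel gauge transformation $g$, and that only those with $j^1g\in H$ (equivalently $\varphi(g)\in\mathfrak{H}$) reduce to the same $\overline s$ --- is in fact more explicit than the paper's one-line assertion, and is consistent with the Remark following the theorem.

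There is, however, one step in your converse that fails: the claim that the existence of the global horizontal section $s$ forces $\omega^{\overline s}$ to have trivial holonomy because ``holonomy acts by right multiplication, a parallel section is a fixed point, and freeness does the rest.'' That argument is valid for ordinary principal connections, where parallel transport commutes strictly with the group action; but $\omega^{\overline s}$ is a \emph{generalized} principal connection, and by Proposition \ref{prop:compatibilityofomega} its parallel transport is equivariant only up to the $\nu$-transport: carrying $s(x)\cdot g$ around a loop yields $s(x)\cdot g'$, where $g'$ is the $\nu$-parallel transport of $g$ around that loop. Thus the horizontal section $s$ kills the holonomy only along its own image $s(X)$; for the holonomy transformation of the whole fibre to be the identity one needs, in addition, $g'=g$ for every loop and every $g$, i.e.\ trivial holonomy of $\nu$, which freeness of the fibered action cannot provide. (Your argument does go through in the classical setting $\mathcal G=X\times G$ with $\nu$ the trivial connection, but not in general.) Note that the paper sidesteps this issue entirely: its proof of the converse asserts only, via Proposition \ref{rec0}, that $\omega^{\overline s}$ is flat and that $s$ is one of its integral leaves --- no triviality of holonomy is claimed in that direction.
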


\begin{proof}
The trivial holonomy of $\omega^{\overline s}$ implies that its integral leaves are sections of $\sigma ^*Y\to X$ that project to the reduced section $\overline{s}$. According to Theorem \ref{theorem:reducedequationsmathfrakH}, these sections are critical. 

Conversely, if $s$ is a critical section of $L$, then $\omega^{\overline s}$ is flat and $s$ is an integral leaf of $\omega ^{\overline{s}}$, by Proposition \ref{rec0}.
\end{proof}

\begin{corollary}
If $X$ is simply connected, then any connection has trivial holonomy and we have the following equivalence of equations
\begin{equation*}
    \mathcal{EL}(L)(j^1 s)=0\iff \left\{
    \begin{array}{l}
    \displaystyle\frac{\delta l}{\delta\sigma_s}-\operatorname{div}^{Y/\mathcal G}\left(\frac{\delta l}{\delta j^1\sigma_s}\right)=\left\langle\frac{\delta l}{\delta\overline s},\iota_{d\sigma_s}\tilde\Omega\right\rangle\vspace{0.1cm}\\
    \displaystyle\left(\operatorname{div}^{\tilde{\varfrak g}}-\ad_{\overline s}^*+\tilde\varphi_*^\dagger\right)\left(\frac{\delta l}{\delta\overline s}\right)=0\vspace{0.1cm}\\
    \mathrm{Curv}\left(\omega ^{\overline{s}}\right)=0
    \end{array}
    \right. 
\end{equation*}
\end{corollary}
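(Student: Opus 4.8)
The plan is to recognize this corollary as the synthesis of the two central results, Theorem \ref{theorem:reducedequationsmathfrakH} (reduced field equations) and Theorem \ref{theorem:reconstruction} (reconstruction), with the hypothesis $\pi_1(X)=0$ entering only to supply the trivial-holonomy assumption required by the latter. I would therefore prove the two implications separately, the single genuinely new ingredient being the classical fact that a flat connection over a simply connected base has trivial holonomy.

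For the implication $\Rightarrow$, I would assume $s$ solves the Euler--Lagrange equations, $\mathcal{EL}(L)(j^1 s)=0$. The equivalence of $(ii)$ and $(iv)$ in Theorem \ref{theorem:reducedequationsmathfrakH} then yields the first two equations of the system for the reduced section $\overline s=\llbracket s,s^*\omega\rrbracket_{\mathfrak{H},\varphi}$. The third equation, $\mathrm{Curv}(\omega^{\overline s})=0$, follows at once from Proposition \ref{rec0}, which establishes flatness of $\omega^{\overline s}=\sigma^*\omega-\tilde s$ for every section $s$; note that this direction does not use simple connectedness.

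For the implication $\Leftarrow$, I would treat the reduced system as a problem to be solved on the reduced bundle. The first two equations, via the equivalence of $(iii)$ and $(iv)$ in Theorem \ref{theorem:reducedequationsmathfrakH}, say that $\overline s$ is critical for the reduced variational problem. The third equation declares $\omega^{\overline s}$ flat; since $X$ is simply connected, every loop is contractible and hence parallel transport around it is trivial, so $\omega^{\overline s}$ has trivial holonomy. With flatness and trivial holonomy secured, Theorem \ref{theorem:reconstruction} applies verbatim: the integral leaves of $\omega^{\overline s}$ are critical sections of $L$, producing a section $s$ with $\mathcal{EL}(L)(j^1 s)=0$ that reduces to $\overline s$. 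The crucial role of the flatness equation here is that, unlike in the forward direction where it was automatic, an arbitrary section of the reduced bundle need not come from any section of $Y$; flatness is precisely the integrability condition guaranteeing reconstructibility.

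The step I expect to require the most care is ``flat plus simply connected implies trivial holonomy'' for the \emph{generalized} principal connection $\omega^{\overline s}$ on $Y^\sigma\to X$, whose structure group varies fiberwise through $\mathcal G$. I would ground this on Proposition \ref{prop:compatibilityofomega}, which shows that $\omega$-parallel transport intertwines the fibered action with $\nu$-parallel transport on $\mathcal G$: this identifies the holonomy of a flat generalized connection with a homomorphism out of $\pi_1(X)$ into the relevant (parallel-transported) fiber group, which is necessarily trivial when $\pi_1(X)=0$. A secondary point is to confirm that the reconstructed integral leaf is defined over all of $X$; this too is a consequence of trivial holonomy, which prevents the leaf from failing to close up as one traverses $X$.
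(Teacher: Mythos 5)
Your proposal is correct and follows exactly the route the paper intends: the corollary is stated as an immediate consequence of Theorem \ref{theorem:reducedequationsmathfrakH} (giving the two reduced equations, with flatness automatic in the forward direction by Proposition \ref{rec0}) and Theorem \ref{theorem:reconstruction} (giving reconstruction in the backward direction), with simple connectedness serving only to upgrade flatness of $\omega^{\overline s}$ to trivial holonomy. Your extra care in grounding ``flat plus simply connected implies trivial holonomy'' for generalized principal connections via Proposition \ref{prop:compatibilityofomega} is a sound addition but does not change the argument's structure.
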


For non-simply connected manifolds, the equivalence above holds locally only. There are topological obstructions and examples of reduced sections that do not admit global unreduced sections. See \cite[\S 5.2]{Ca2012}.

\begin{remark} Given a reduced section $\overline{s}$, there may be many sections $\tilde{s}$ satisfying the conditions of Theorem \ref{theorem:reconstruction}. Each choice will induce different solution $s$, and the transitions between them are governed by the symmetries of the system, that is, sections of $\mathcal{G}\to X$ with 1-jets lying on $H\to X$.
\end{remark}

%%%%%%%%%%%%%%%%%%%%
\section{Noether's theorem}\label{sec:noethertheorem}

The well known Noether's theorem establishes that infinitesimal symmetries of the Lagrangian density yield preserved quantities for the dynamics of the system. The aim of this section is to show that the vertical part of the reduced equation is equivalent to the Noether's conservation law defined by the action of a Lie group bundle. As before, let $\mathfrak L=Lv$ be an $H$-invariant Lagrangian density for some $H\subset J^1\mathcal G\simeq\mathcal G\times_X(T^*X\otimes\varfrak g)$ as in Proposition \ref{ggg} 

\begin{definition}
An \emph{infinitesimal symmetry} of $\mathfrak L$ is a vector field $U\in\mathfrak X(J^1 Y)$ such that $\pounds_U\mathfrak L=0$ or, equivalently, $\pounds_U\Theta_{\mathfrak L}=0$, where $\pounds$ denotes the Lie derivative and $\Theta_{\mathfrak L}\in\Omega^n(J^1 Y)$ is the (covariant) Cartan form of $\mathfrak L$ (cf., for instance, \cite[\S 5]{Sa1989}).
\end{definition}

If $s\in\Gamma(\pi_{Y,X})$ is a critical section for $\mathfrak L$ and $U\in\mathfrak X(J^1 Y)$ is an infinitesimal symmetry of the Lagrangian density, the Noether's theorem gives (see, for example, \cite{Ga1974}) the following conservation law
\begin{equation*}\label{eq:conservacion}
{\rm d}\left(\left(j^1 s\right)^*\iota_{U}\Theta_{\mathfrak L}\right)=0.
\end{equation*}
In particular, if $\xi\in\Gamma(\pi_{\varfrak g,X})$ is such that its 1-jet extension falls in the Lie algebra bundle $\mathrm{Lie}(H)$ of $H$, it is clear that $(\xi^*)^ {(1)}\in\mathfrak X(J^1 Y)$ is an infinitesimal symmetry of $\mathfrak L$, thanks to the $H$-invariance. Recall that, from Proposition \ref{ggg}, we know that
\begin{equation}\label{eq:liealgebraH}
\mathrm{Lie}(H)=\left\{(\xi,\hat\eta_x)\in\varfrak g\times_X(T^*X\otimes\varfrak g)\mid\hat\eta_x=\varphi_*(\xi)+\eta_x,~\eta_x\in\mathfrak H_x\right\}\subset J^1\varfrak{g}.
\end{equation}

\begin{theorem}
Let $L:J^1Y\to \mathbb{R}$ be an $H$-invariant Lagrangian for some subgroup bundle $H\subset J^1 \mathcal{G}$ as in Proposition \ref{ggg} and let $s$ be a section of $\pi_{Y,X}$. If $\overline{s}=\llbracket s,s^*\omega\rrbracket_{\mathfrak{H},\varphi}$ is the induced section of $\pi_{(T^*X\otimes\varfrak{g})_{\varphi,\mathfrak{H}},X}$ and $l:J^1(Y/\mathcal{G})\times_{Y/\mathcal{G}}(T^*X\otimes\varfrak{g})_{\varphi,\mathfrak{H}}\to\mathbb{R}$ is the reduced Lagrangian, then the Noether conservation law 
$$
{\rm d}((j^1s)^*i_{(\xi^*)^{(1)}}\Theta_{\mathfrak L})=0
$$
holds for any section $\xi\in\Gamma(\pi_{\varfrak{g},X})$ such that $\xi^{(1)}\in\Gamma(\pi_{\operatorname{Lie}(H),X})$ if and only if the vertical reduced equation
$$
\left(\mathrm{div}^{\tilde{\varfrak{g}}}-\ad_{\overline s}^*+\tilde\varphi_*^\dagger\right)\left(\frac{\delta l}{\delta\overline{s}}\right)=0
$$
is satisfied for $\overline{s}$.
\end{theorem}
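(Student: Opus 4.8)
The plan is to combine the reduced first--variation calculus of Section \ref{sec:reducedvariationalprinciple} with the classical Noether identity, so that the exterior derivative of the (abstract) Noether current becomes exactly the vertical reduced operator evaluated on $\delta l/\delta\overline s$. Throughout I write $\mathcal O=\operatorname{div}^{\tilde{\varfrak g}}-\ad_{\overline s}^*+\tilde\varphi_*^\dagger$ for the operator appearing in the vertical equation.

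First I would prove, for an \emph{arbitrary} section $\xi\in\Gamma(\pi_{\varfrak g,X})$ (not yet assumed to generate a symmetry), the pointwise identity
\begin{equation*}
\left\langle\mathcal{EL}(L)(j^2 s),\xi^*\right\rangle=-\left\langle\mathcal O\!\left(\frac{\delta l}{\delta\overline s}\right),\tilde\xi\right\rangle,\qquad\tilde\xi=[s,\xi]_{\mathcal G}.
\end{equation*}
To obtain it I take the $\pi_{Y,Y/\mathcal G}$-vertical variation $\delta s=\xi^*_s$, for which $\delta\sigma_s=0$. On one side the first--variation formula gives $\delta_{\xi^*}\mathbb S(s)=\int_X\langle\mathcal{EL}(L),\xi^*\rangle v$; on the other, \eqref{eq:equivalenciavariacionesparticular} and Corollary \ref{corollary:variacionesverticales} (whose $\tilde\Omega$-term drops out because $\delta\sigma_s=0$) rewrite the same quantity as $\int_X\langle\delta l/\delta\overline s,\delta^\nabla\overline s\rangle v$. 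Integrating by parts precisely as in the proof of Theorem \ref{theorem:reducedequationsmathfrakH} -- using \eqref{eq:emparejadodualparcial}, Lemma \ref{lemma:overlinenablag}, Lemma \ref{lemma:restrictedcoadjoint} and the divergences of Definition \ref{def:divergence} -- transforms this into $-\int_X\langle\mathcal O(\delta l/\delta\overline s),\tilde\xi\rangle v$. The crucial point is that the integration by parts transfers every derivative of $\xi$ onto the momentum $\delta l/\delta\overline s$ (producing $\operatorname{div}^{\tilde{\varfrak g}}$), so that afterwards \emph{both} integrands depend only on the pointwise value $\xi(x)$; since $\xi$ is arbitrary, the fundamental lemma of the calculus of variations yields the displayed pointwise identity.

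Next I would invoke Noether. When $\xi^{(1)}\in\Gamma(\pi_{\operatorname{Lie}(H),X})$, the $H$-invariance makes $(\xi^*)^{(1)}$ an infinitesimal symmetry, so $\pounds_{(\xi^*)^{(1)}}\Theta_{\mathfrak L}=0$; Cartan's formula together with the characterization of $\mathcal{EL}(L)$ through the Cartan form gives the standard Noether identity
\begin{equation*}
{\rm d}\!\left((j^1 s)^*\iota_{(\xi^*)^{(1)}}\Theta_{\mathfrak L}\right)=-\left\langle\mathcal{EL}(L)(j^2 s),\xi^*\right\rangle v.
\end{equation*}
Combining the two displayed equations I obtain, for every admissible $\xi$,
\begin{equation*}
{\rm d}\!\left((j^1 s)^*\iota_{(\xi^*)^{(1)}}\Theta_{\mathfrak L}\right)=\left\langle\mathcal O\!\left(\frac{\delta l}{\delta\overline s}\right),\tilde\xi\right\rangle v.
\end{equation*}

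From here the equivalence follows. If the vertical reduced equation $\mathcal O(\delta l/\delta\overline s)=0$ holds, the right--hand side vanishes for every admissible $\xi$ and the conservation law follows; this is the easy implication and needs nothing further. For the converse, vanishing of the left--hand side for all admissible $\xi$ gives $\langle\mathcal O(\delta l/\delta\overline s),\tilde\xi\rangle=0$ for all such $\xi$; equivalently, by the pointwise identity of the first step, $\langle\mathcal{EL}(L),\xi^*\rangle=0$ along the admissible directions. Because $H\to\mathcal G$ is surjective, $\operatorname{Lie}(H)\to\varfrak g$ is onto and admissible parameters with prescribed value at a point exist locally, so the vectors $\tilde\xi=[s,\xi]_{\mathcal G}$ span $\tilde{\varfrak g}$ fibrewise and $\mathcal O(\delta l/\delta\overline s)=0$ follows. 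The step I expect to be most delicate is this last fibrewise spanning -- guaranteeing that the gauge parameters $\xi$ with $j^1\xi\in\operatorname{Lie}(H)$ really probe every direction of $\tilde{\varfrak g}$; by contrast, the pointwise reduction in the first step is routine once one checks, via \eqref{eq:emparejadodualparcial}, that after the integration by parts the integrand no longer involves derivatives of $\xi$.
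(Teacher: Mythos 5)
Your proposal is correct in its overall architecture and follows a genuinely different route from the paper's proof. The paper proceeds by a pure local-coordinate computation: it fixes normal coordinates on the group fiber, writes out $(\xi^*)^{(1)}$, the Poincaré--Cartan form, the fibered coordinates $w_\mu^\alpha$ realizing the identification of Theorem \ref{theorem:descomposicioncociente}, and the local form \eqref{localver} of the vertical equation, and then matches the two sides of the asserted equivalence term by term at a point. You instead stay intrinsic: specializing the first-variation computation of Theorem \ref{theorem:reducedequationsmathfrakH} to vertical variations $\delta s=\xi^*_s$ (for which $\delta\sigma_s=0$ and $\omega(\delta s)=\xi$, so the curvature term of Corollary \ref{corollary:variacionesverticales} disappears), and applying the fundamental lemma --- legitimate here because, after the integration by parts of Definition \ref{def:divergence}, both integrands are tensorial (zeroth order) in $\xi$ --- you obtain the pointwise identity
\begin{equation*}
\left\langle\mathcal{EL}(L)\left(j^2 s\right),\xi^*\right\rangle=-\left\langle\left(\operatorname{div}^{\tilde{\varfrak g}}-\ad_{\overline s}^*+\tilde\varphi_*^\dagger\right)\left(\frac{\delta l}{\delta\overline s}\right),\tilde\xi\right\rangle,\qquad\tilde\xi=[s,\xi]_{\mathcal G},
\end{equation*}
valid for arbitrary $s$ and arbitrary (not necessarily admissible) $\xi$: the vertical reduced equation is exactly the $\pi_{Y,Y/\mathcal G}$-vertical component of the Euler--Lagrange equations. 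Combined with the standard off-shell Cartan-form Noether identity (which the paper never states, quoting Noether only on critical sections, but which is classical), this yields the theorem. Your intermediate identity is stronger and more conceptual than anything isolated in the paper's proof and recycles the machinery of Section \ref{sec:reducedvariationalprinciple}; what the paper's computation buys in exchange is the explicit coordinate expression of the conserved current and of the vertical equation.

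The one step that is not rigorous as you state it --- and you rightly single it out as delicate --- is the fibrewise spanning in the converse implication. Fibrewise surjectivity of $\operatorname{Lie}(H)\to\varfrak g$ does \emph{not} imply that admissible parameters with prescribed value at a point exist, even locally: by \eqref{eq:liealgebraH}, admissibility of $\xi$ is the first-order PDE constraint $\nabla^{\varfrak g}\xi-\varphi_*\xi\in\Gamma(\pi_{\mathfrak{H},X})$, whose solvability with prescribed $\xi(x_0)$ is a genuine integrability issue. For instance, take $\mathcal G=X\times\mathbb R$ additive, $\mathfrak{H}=0$ and $\varphi_*\xi=\xi\,\alpha$ for a fixed non-closed $\alpha\in\Omega^1(X)$ (this $H$ satisfies all hypotheses of Proposition \ref{ggg}); then ${\rm d}\xi=\xi\alpha$ forces $0={\rm d}\xi\wedge\alpha+\xi\,{\rm d}\alpha=\xi\,{\rm d}\alpha$, so if ${\rm d}\alpha\neq0$ on a dense set the only admissible section is $\xi\equiv0$, the Noether hypothesis is vacuous, and the ``only if'' direction fails. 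To be fair, the paper's own proof makes exactly the same unproved assumption (its closing line ``as $\xi(x_0)$ is arbitrary'' presumes that globally admissible sections realize every value of $\varfrak g_{x_0}$), and the assumption does hold in all the examples of Section \ref{sec:examples} (closed forms, holonomic jets); so your argument is on the same footing as the published one, but a complete proof of the converse requires this spanning property either as an additional hypothesis or via a separate argument.
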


\begin{proof}
The result being local enables us to work in coordinates. Some conditions on these coordinates will be imposed along the proof.

Let $(x^\mu,y^i,y^\alpha)$ be bundle coordinates for $\pi_{Y,X}$ and consider the corresponding bundle coordinates $(x^\mu,y^i,y^\alpha,v_\mu^i,v_\mu^\alpha)$ for $\pi_{J^1 Y,X}$. Suppose that they are chosen so that $(x^\mu,y^i)$ are bundle coordinates for $\pi_{Y/\mathcal G,X}$ and, thus, $(x^\mu,y^i,v_\mu^i)$ are bundle coordinates for $\pi_{J^1(Y/\mathcal G),X}$. Let $\{B_\alpha: 1\leq\alpha\leq m\}$ be a basis of local sections of $\pi_{\varfrak g,X}$ and $\{B^\alpha: 1\leq\alpha\leq m\}$ be its dual basis. For a fixed $x_0=(x_0^\mu)\in X$, we suppose that, using these coordinates, we have
\begin{equation}\label{eq:mathfrakhlocal}
\mathrm{Lie}(H)_{x_0}=\operatorname{span}\{(dx^\mu)_{x_0}\otimes B_\alpha(x_0): 1\leq\mu\leq r,1\leq\alpha\leq s\}\subset T_{x_0}^*X\otimes\varfrak g_{x_0}
\end{equation}
for some $r$, $1\leq r\leq n$, and $s$, $1\leq s\leq m$. We start by finding the local expression of the infinitesimal symmetries of our Lagrangian density.

We also suppose that $(y^\alpha)$ are normal coordinates of $G$ on a neighbourhood $\mathcal U\subset G$ of the identity element, $1$. This means that there exist $g_{\beta\gamma}^\alpha\in C^\infty(\mathcal U\times\mathcal U)$, $1\leq \alpha,\beta,\gamma\leq m$, such that
\begin{equation*}
y^\alpha\left(\hat g_1\hat g_2\right)=y^\alpha\left(\hat g_1\right)+y^\alpha\left(\hat g_2\right)+g_{\beta\gamma}^\alpha\left(\hat g_1,\hat g_2\right) y^\beta\left(\hat g_1\right) y^\gamma\left(\hat g_2\right),\qquad 1\leq\alpha\leq m,
\end{equation*}
for each $\hat g_1,\hat g_2\in\mathcal U$ such that $\hat g_1\hat g_2\in\mathcal U$. Hence, the infinitesimal generators are given by
\begin{equation*}
\left(B_\beta\right)_y^*=\left(\delta_\beta^\alpha+g_{\gamma\beta}^\alpha(\hat g,1)y^\gamma(\hat g)\right)(\partial_\alpha)_y,\qquad y=(\sigma,\hat g)\in Y/\mathcal G\times\mathcal U,~1\leq\beta\leq m.
\end{equation*}
Denote by $\hat g:\mathbb R^m\rightarrow G$ the inverse of $(y^\alpha): G\rightarrow\mathbb R^m$, and define $f_{\alpha\beta}^\gamma\in C^\infty(\mathbb R^m)$, $1\leq \alpha,\beta,\gamma\leq m$, as
\begin{equation*}
f_{\alpha\beta}^\gamma(y^\alpha)=g_{\alpha\beta}^\gamma(\hat g(y^\alpha),1)
\end{equation*}
Therefore, given $\xi=\xi^\beta B_\beta\in\Gamma(\pi_{\varfrak g,X})$, where $\xi^\beta\in C^\infty(X)$, $1\leq\beta\leq m$, the previous expressions yield
\begin{equation*}
\xi^*(x^\mu,y^i,y^\alpha)=\xi^\beta(x^\mu)\left(\delta_\beta^\alpha+f_{\gamma\beta}^\alpha(y^\alpha)y^\gamma \right)\partial_\alpha.
\end{equation*}
From the formula of 1-jet lift of vector fields (for example, see, \cite{Sa1989}), we get
\begin{equation}\label{eq:xi1}
(\xi^*)^{(1)}= \xi^\beta\left(\delta_\beta^\alpha+f_{\gamma\beta}^\alpha y^\gamma \right)\partial_\alpha+\left(\partial_\mu\xi^\beta\left(\delta_\beta^\alpha+f_{\gamma\beta}^\alpha y^\gamma\right)+v_\mu^\delta\xi^\beta \left(\partial_\delta f_{\gamma\beta}^\alpha y^\gamma+f_{\delta\beta}^\alpha\right)\right)\partial_\alpha^\mu.
\end{equation}

On the other hand, in coordinates the map $\varphi:\mathcal G\to T^*X\otimes\varfrak g$ reads
\begin{equation*}
\varphi(x^\mu,y^\alpha)=\varphi_\mu^\alpha(x^\mu,y^\alpha) dx^\mu\otimes B_\alpha,
\end{equation*}
for some $\varphi_\mu^\alpha\in C^\infty(\mathcal G)$. By denoting $\hat\varphi_{\mu,\beta}^\alpha(x^\mu)=(\partial\varphi_\mu^\alpha/\partial y^\beta)(x^\mu,y^\alpha=0)$, we have
\begin{equation}\label{eq:dvarphilocal}
\varphi_*(\xi)=\xi^\beta\hat\varphi_{\mu,\beta}^\alpha dx^\mu\otimes B_\alpha,\qquad\xi=\xi^\alpha B_\alpha\in\Gamma(\pi_{\varfrak g,X}).
\end{equation}
Similarly, our generalized principal connection, $\omega\in\Omega^1(Y,\varfrak g)$, locally reads
\begin{equation*}
\omega(x^\mu,y^i,y^\alpha)=\left(\omega_\mu^\alpha(x^\mu,y^i)dx^\mu+\omega_i^\alpha(x^\mu,y^i)dy^i+dy^\alpha\right)\otimes B_\alpha,
\end{equation*}
for some functions $\omega_\mu^\alpha,\omega_i^\alpha\in C^\infty(Y/\mathcal G)$, $1\leq\mu\leq n$, $1\leq i\leq k$, $1\leq\alpha\leq m$, being $k\in\mathbb Z^+$ the dimension of the fiber of $\pi_{Y/\mathcal G,X}$. Furthermore, there exist bundle coordinates $(x^\mu,y^i,v_\mu^i;w_\mu^\alpha)$ for the reduced space, where the indices of $w_\mu^\alpha$ go through $\mu\in\{r+1,\dots,n\}$ and $\alpha\in\{s+1,\dots,m\}$, such that identification of Theorem \ref{theorem:descomposicioncociente} is given by
\begin{equation*}
\begin{array}{ccc}
J^1 Y/H & \to & J^1(Y/\mathcal G)\times_{Y/\mathcal G}(T^*X\otimes\varfrak{g})_{\varphi,\mathfrak{H}}\\
\left[x^\mu,y^i,y^\alpha,v_\mu^i,v_\mu^\alpha\right]_H & \mapsto & \left(x^\mu,y^i,v_\mu^i;w_\mu^\alpha\right)
\end{array}
\end{equation*}
and, in the fiber over $x_0$, they satisfy
\begin{equation*}
w_\mu^\alpha=\varphi_\mu^\alpha(x_0^\mu,0)+\omega_\mu^\alpha(x_0^\mu,y^i)+\omega_i^\alpha(x_0^\mu,y^i)v_\mu^i+v_\mu^\alpha,\qquad r+1\leq\mu\leq n,~s+1\leq\alpha\leq m.
\end{equation*}

Likewise, using the definition of reduced Lagrangian, i.e.,
\begin{equation*}
l(x^\mu,y^i,v_\mu^i;w_\mu^\alpha)=L(x^\mu,y^i,y^\alpha,v_\mu^i,v_\mu^\alpha),
\end{equation*}
it is easy to find the local expression of the vertical equation.
Namely, the local expression of the partial derivative of $l$ is
\begin{equation*}
\frac{\delta l}{\delta\overline s}(x^\mu)=\frac{\partial l}{\partial w_\mu^\alpha}\left(x^\mu,s^i(x^\mu),\frac{\partial s^i(x^\mu)}{\partial x^\mu};w_\mu^\alpha(x^\mu)\right)\partial_\mu\otimes B^\alpha,
\end{equation*}
and then, the vertical reduced equation becomes
\begin{equation}\label{eq:vertical}
\left(\operatorname{div}^{\tilde{\varfrak g}}-\ad_{\overline s}^*+\tilde\varphi_*^\dagger\right)\left(\frac{\partial l}{\partial w_\mu^\alpha}\partial_\mu\otimes B^\alpha\right)=0.
\end{equation}
From Lemma \ref{lemma:overlinenablag}, we may locally write $\operatorname{div}^{\tilde{\varfrak g}}=\operatorname{div}^{\varfrak g}+\operatorname{ad}_{\overline s}^*$, where $\operatorname{div}^{\varfrak g}$ is the divergence of $\nabla^{\varfrak g}$. Observe that $\sigma_s^*\left(\tilde{\varfrak g}\right)\simeq\varfrak g$, since we are working locally. By definition, $\mathrm{div}\langle\zeta,\xi\rangle=\left\langle\operatorname{div}^{\varfrak g}\zeta,\xi\right\rangle+\left\langle\zeta,\nabla^{\varfrak g}\xi\right\rangle $, for each $\zeta=\zeta_\alpha^\mu\partial_\mu\otimes B^\alpha\in\Gamma(\pi_{TX\otimes\varfrak g^*,X})$ and $\xi=\xi^\alpha B_\alpha\in\Gamma(\pi_{\varfrak g,X})$. An easy computation using this expression shows that
\begin{equation*}
\operatorname{div}^{\varfrak g}\zeta=\left(\partial_\mu\zeta_\alpha^\mu-\Gamma_{\mu,\alpha}^\beta\zeta_\beta^\mu\right) B^\alpha
\end{equation*}
where $\Gamma_{\mu,\alpha}^\beta\in C^\infty(X)$, $1\leq\mu\leq n$, $1\leq \alpha,\beta\leq m$, are the Christoffel symbols of $\nabla^{\varfrak g}$, i.e.,
\begin{equation*}
\nabla^{\varfrak g}\xi=\left(\partial_\mu\xi^\alpha+\Gamma_{\mu,\beta}^\alpha\xi^\beta\right)dx^\mu\otimes B_\alpha,\qquad\xi=\xi^\alpha B_\alpha\in\Gamma(\pi_{\varfrak g,X}).
\end{equation*}
We choose the bundle coordinates on $\pi_{\varfrak g,X}$ so that $\nabla^{\varfrak g}$ is flat at $x_0=(x_0^\mu)$, i.e., $\Gamma_{\mu,\alpha}^\beta(x_0^\mu)=0$ for each $1\leq\mu\leq n$ and $1\leq \alpha,\beta\leq m$. Analogously, $\left\langle\varphi_*(\xi),\zeta\right\rangle=\left\langle\xi,\varphi_*^\dagger(\zeta)\right\rangle$. Thus, from \eqref{eq:dvarphilocal} we get
\begin{equation*}
\tilde\varphi_*^\dagger(\zeta)=\hat\varphi_{\mu,\alpha}^\beta\zeta_\beta^\mu B^\alpha,\qquad\zeta=\zeta_\alpha^\mu\partial_\mu\otimes B^\alpha\in\Gamma(\pi_{TX\otimes\varfrak g^*,X}).
\end{equation*}
As a result, \eqref{eq:vertical} reads
\begin{equation}\label{localver}
\left(\partial_\mu\left(\frac{\partial l}{\partial w_\mu^\alpha}\right)+\hat\varphi_{\mu,\alpha}^\beta\frac{\partial l}{\partial w_\mu^\beta}\right)(x_0)B^\alpha(x_0)=0,
\end{equation}
where we recall that the summation is for $r+1\leq\mu\leq n$ and $s+1\leq\alpha,\beta\leq m$.

Using the expression for the coordinates $w_\mu^\alpha$ at $x_0$ we have
\begin{equation*}
\begin{array}{cl}
\bullet & \displaystyle\frac{\partial L}{\partial v_\mu^\alpha}(x_0)=\left\{\begin{array}{ll}
0, & 1\leq\mu\leq r,~1\leq\alpha\leq s,\\
\displaystyle\frac{\partial l}{\partial w_\mu^\alpha}(x_0), & r+1\leq\mu\leq n,\quad s+1\leq\alpha\leq m.
\end{array}\right.\vspace{0.2cm}\\
\bullet & \displaystyle\frac{\partial L}{\partial v_\mu^i}(x_0)=\left\{\begin{array}{ll}
\displaystyle\frac{\partial l}{\partial v_\mu^i}(x_0), & 1\leq\mu\leq r,\\
\displaystyle\frac{\partial l}{\partial v_\mu^i}(x_0)+\sum_{\alpha=s+1}^m\frac{\partial l}{\partial w_\mu^\alpha}(x_0)\omega_i^\alpha,\quad & r+1\leq\mu\leq n.
\end{array}\right.
\end{array}
\end{equation*}
In addition, if the volume form is given by $v=d^n x=dx^1\wedge\dots\wedge dx^n$, then the local expression of the Poincaré-Cartan form is
\begin{equation*}
\Theta_{\mathfrak L}=\frac{\partial L}{\partial v_\mu^i}\left(dy^i-v_\mu^i dx^\mu\right)\wedge d^{n-1}x_\mu+\frac{\partial L}{\partial v_\mu^\alpha}\left(dy^\alpha-v_\mu^\alpha dx^\mu\right)\wedge d^{n-1}x_\mu+L\,d^n x,
\end{equation*}
where  $d^{n-1}x_\mu=\iota_{\partial_\mu}d^n x$. By using \eqref{eq:xi1}, we obtain
\begin{align*}
\iota_{(\xi^*)^{(1)}}\Theta_{\mathfrak L}\left(x^\mu,y^i,y^\alpha,v_\mu^i,v_\mu^\alpha\right) & =\displaystyle\frac{\partial L}{\partial v_\mu^\alpha}\left(x^\mu,y^i,y^\alpha,v_\mu^i,v_\mu^\alpha\right)\xi^\beta(x^\mu)\left(\delta_\beta^\alpha+f_{\gamma\beta}^\alpha(y^\alpha)y^\gamma \right) d^{n-1}x_\mu\\
& =\displaystyle\frac{\partial l}{\partial w_\mu^\alpha}\left(x^\mu,y^i,v_\mu^i;w_\mu^\alpha\right)\xi^\beta(x^\mu)\left(\delta_\beta^\alpha+f_{\gamma\beta}^\alpha(y^\alpha)y^\gamma \right) d^{n-1}x_\mu.
\end{align*}

The last condition on the coordinates is the requirement that $s(x_0)=(\sigma_s(x_0),1)\in Y/\mathcal G\times G$, i.e., $s^\alpha(x_0^\mu)=0$, $1\leq\alpha\leq m$. Therefore,
\begin{equation*}
\begin{array}{ccl}
\left(j^1 s\right)^*\left(\iota_{(\xi^*)^{(1)}}\Theta_{\mathfrak L}\right)(x_0^\mu) & = & \displaystyle\frac{\partial l}{\partial w_\mu^\alpha}\left(x_0^\mu,s^i(x_0^\mu),\partial_\mu s^i(x_0^\mu);w_\mu^\alpha(x_0^\mu)\right)\xi^\alpha(x_0^\mu) d^{n-1}x_\mu.
\end{array}
\end{equation*}
Observe that the jet extension of $\xi$ is given by $j^1\xi=(\xi,d\xi)\in\Gamma(\pi_{\varfrak g\times_X(T^*X\otimes\varfrak g),X})$. Subsequently, from \eqref{eq:liealgebraH}, \eqref{eq:mathfrakhlocal} and \eqref{eq:dvarphilocal} we deduce that the condition $\xi^{(1)}\in\Gamma(\pi_{\operatorname{Lie}(H),X})$ at $x_0$ reads
\begin{equation*}
\left(\partial_\mu\xi^\alpha\right)(x_0)=\xi^\beta(x_0)\hat\varphi_{\mu,\beta}^\alpha(x_0),\qquad r+1\leq\mu\leq n,~s+1\leq\alpha\leq m.
\end{equation*}
Therefore, the Noether conservation law reads 
\begin{align*}
{\rm d}\left(\frac{\partial l}{\partial w_\mu^\alpha}\xi^\alpha d^{n-1}x_\mu\right)(x_0) & =\left[\partial_\mu\left(\frac{\partial l}{\partial w_\mu^\alpha}\right)\xi^\alpha+\frac{\partial l}{\partial w_\mu^\alpha}\partial_\mu\xi^\alpha\right](x_0)(d^n x)_{x_0}\\
& =\left[\partial_\mu\left(\frac{\partial l}{\partial w_\mu^\alpha}\right)+\frac{\partial l}{\partial w_\mu^\beta}\hat\varphi_{\mu,\alpha}^\beta\right](x_0)\xi^\alpha(x_0)(d^n x)_{x_0}=0.
\end{align*}
As $\xi(x_0)=\xi^\alpha(x_0)B_\alpha(x_0)\in\varfrak g_{x_0}$ is arbitrary, this is equivalent to \eqref{localver}, and we conclude.
\end{proof}

%%%%%%%%%%%%%%%%%%%%
\section{Examples}\label{sec:examples}

In this final section we discuss several applications of the reduction theory above. The first one consists of recovering the case of rigid symmetries treated in the literature. Then we describe the reduced equations when the system is invariant by the whole jet bundle, $J^1\mathcal G$. The third example is devoted to study Electromagnetism in vacuum, as well as it extension to $k$-forms. We pursue the study the proccess of symmetry breaking by product groups in pure gauge theories. Finally, non-Abelian gauge theories are analized, recovering the Utiyama theorema as well as the Tang--Mills equations.

%%%%%%%%%%
\subsection{Classical case}

The reduction by the action of a Lie group in \cite{ElGaHoRa2011} can be recovered as a particular case of our theory. Namely, let $\pi_{Y,X}$ be a fiber bundle and $G$ be a Lie group acting freely and properly on the right on $Y$, and denote the (standard) action by
\begin{equation*}
\Psi:Y\times G\to Y,\quad(y,\hat g)\mapsto\Psi(y,\hat g)=\Psi_{\hat g}(y)=\Psi_y(\hat g)=y\cdot\hat g.
\end{equation*}
In addition, suppose that $\pi_{Y,X}(y\cdot\hat g)=\pi_{Y,X}(y)$ for every $y\in Y$ and $\hat g\in G$. This action may be regarded as a fibered action by the trivial Lie group bundle,
\begin{equation*}
\mathcal G=X\times G,
\end{equation*}
by setting $\Phi(y,g)=\Psi_{\hat g}(y)=y\cdot\hat g$ for each $(y,g)\in Y\times_X\mathcal G$, with $g=(x,\hat g)$.  It is clear that the fibered quotient and the usual quotient agree, i.e. $Y/\mathcal G \simeq Y/G$.

Recall that $\pi_{Y,Y/G}$ is a principal bundle and the Lie algebra bundle of $\pi_{\mathcal G,X}$ is $\varfrak g=X\times\mathfrak g$, where $\mathfrak g$ is the Lie algebra of $G$. From \cite[Proposition 4.1.]{CaRo2023} we know that a generalized principal connection $\omega\in\Omega^1(Y,\varfrak g)$ on $\pi_{Y/\mathcal G}$ associated to the trivial connection $\nu_0$ on $\pi_{\mathcal G,X}$ is simply a principal connection $A\in\Omega^1(Y,\mathfrak g)$. Thanks to the bijective correspondence between (local) sections of $\pi_{\mathcal G,X}$ and (local) functions $X\rightarrow G$, the identification \eqref{eq:Thetanu} for $\nu_0$ may be written as
\begin{equation*}
J^1 \mathcal G\simeq G\times T^*X\otimes\mathfrak g,\qquad j^1_x\gamma\mapsto\left(\hat\gamma(x),d(R_{\hat\gamma(x)^{-1}}\circ\hat\gamma)_x\right),
\end{equation*}
where $\gamma=(\operatorname{id}_{X},\hat\gamma)$ and $R_g:G\to G$ denotes the right multiplication by $g\in G$. Let $H$ be the Lie group subbundle of $\pi_{J^1 \mathcal G,X}$ corresponding to locally constant functions, i.e., 
\begin{equation*}
H\simeq G\times T^*X\otimes\{0\}\simeq X\times G=\mathcal G.
\end{equation*}
The first jet extension of the fibered action restricted to $\pi_{H,X}$ (recall Equation \eqref{eq:accionjet}) and the jet extension of the Lie group action (cf. \cite[Equation 2.7]{ElGaHoRa2011}) yield the same quotient:
\begin{equation*}\label{eq:clasico1}
J^1 Y/H\simeq J^1 Y/G.
\end{equation*}
Since $\varphi=0$ and $\mathfrak{H}=\{0\}$, the quotient \eqref{eq:cocienteafin} is isomorphic to 
\begin{equation*}\label{eq:clasico2}
(T^*X\otimes\varfrak{g})_{0,\mathfrak{H}} \overset{\sim}{\to} T^*X\otimes\tilde{\mathfrak g},\quad\left\llbracket y,(x,\hat X_x)\right\rrbracket_{\mathfrak{H},0}\mapsto\big[y,\hat X_x\big]_{G},
\end{equation*}
where $\tilde{\mathfrak g}=(Y\times\mathfrak g)/G$ is the adjoint bundle of $\pi_{Y,Y/G}$.

\begin{theorem}\label{theorem:descomposicionclasica}
Let $A\in\Omega^1(Y,\mathfrak g)$ be a principal connection on $\pi_{Y,Y/G}$ and consider the corresponding generalized principal connection $\omega\in\Omega^1(Y,\varfrak g)$ on $\pi_{Y,Y/\mathcal G}$ associated to the canonical connection $\nu_0$ on $\pi_{\mathcal G,X}$. Then the fiber diffeomorphism given in Theorem \ref{theorem:descomposicioncociente} reduces to the identification given in \cite[Equation (2.8)]{ElGaHoRa2011} under the previous identifications, i.e.,
\begin{equation*}
J^1 Y/G\to J^1(Y/G)\times_{Y/G}\left(T^*X\otimes\tilde{\mathfrak g}\right),\quad\left[j^1_x s\right]_{ G}\mapsto\left(j^1_x\sigma_s,\left[s(x),A_{s(x)}\circ(ds)_x\right]_ G\right).
\end{equation*}
\end{theorem}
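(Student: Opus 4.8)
The plan is to specialize the general bundle isomorphism of Theorem~\ref{theorem:descomposicioncociente} to the classical data assembled above and then to trace the resulting chain of identifications until the expression of \cite[Equation (2.8)]{ElGaHoRa2011} emerges. Concretely, the classical case corresponds to the trivial Lie group bundle $\mathcal G=X\times G$ equipped with the trivial connection $\nu_0$, to the subbundle $H\simeq X\times G$ of (jets of) locally constant sections, so that $\varphi=0$ and $\mathfrak H=\{0\}$, and to the generalized principal connection $\omega$ which, by \cite[Proposition 4.1]{CaRo2023}, is exactly the principal connection $A$ under the identification $\varfrak g=X\times\mathfrak g$. The whole argument is then a compatibility check between the two isomorphisms on the nose, rather than a new computation.

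First I would pin down the identifications of the source and target bundles. Since $Y/\mathcal G\simeq Y/G$, we have $J^1(Y/\mathcal G)\simeq J^1(Y/G)$, and the quotient section $\sigma_s=[s]_{\mathcal G}$ becomes the usual $[s]_G$; likewise $J^1 Y/H\simeq J^1 Y/G$ as noted above. These make the domain and the first factor of the codomain of Theorem~\ref{theorem:descomposicioncociente} coincide with those of the map to be recovered. For the second factor, I would use that $\varphi=0$ and $\mathfrak H=\{0\}$ collapse the affine bundle $(T^*X\otimes\varfrak g)_{\varphi,\mathfrak H}$ to the vector bundle $(T^*X\otimes\varfrak g)_{0,\{0\}}$, which by the identification recorded above is sent to $T^*X\otimes\tilde{\mathfrak g}$ via $\llbracket y,(x,\hat X_x)\rrbracket_{\mathfrak H,0}\mapsto[y,\hat X_x]_G$.

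Then I would substitute into the map $[j^1_x s]_H\mapsto\bigl(j^1_x\sigma_s,\llbracket s(x),\omega_{s(x)}\circ(ds)_x\rrbracket_{\mathfrak H,\varphi}\bigr)$ of Theorem~\ref{theorem:descomposicioncociente}. The first component $j^1_x\sigma_s$ is transported unchanged by the previous step. For the second component, the class $\llbracket s(x),\omega_{s(x)}\circ(ds)_x\rrbracket_{\{0\},0}$ is sent to $[s(x),\hat X_x]_G$, where $(x,\hat X_x)=\omega_{s(x)}\circ(ds)_x\in T^*_xX\otimes\varfrak g_x$; and since $\omega$ is identified with $A$, one has $\hat X_x=A_{s(x)}\circ(ds)_x$, so the second component becomes $[s(x),A_{s(x)}\circ(ds)_x]_G$. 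This is precisely the stated expression, and it finishes the identification.

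The step that requires the most care is verifying that the two quotients $J^1 Y/H$ and $J^1 Y/G$ genuinely agree as the objects occurring in the respective isomorphisms, rather than being merely abstractly isomorphic. The point is that $H$ consists exactly of the $1$-jets of the locally constant sections $\eta$ of $\mathcal G$, that is, of constant maps $\hat\eta\equiv\hat g\in G$ (for such $\eta$ one checks $d(R_{\hat g^{-1}}\circ\hat\eta)_x=0$, so indeed $j^1_x\eta\mapsto(\hat g,0)$). For such an $\eta$ the restricted jet action \eqref{eq:accionjet} sends $j^1_x s$ to $j^1_x(\Phi(s,\eta))=j^1_x(s\cdot\hat g)$, which is precisely the first jet prolongation of the $G$-action used in \cite{ElGaHoRa2011}. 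Hence the $H$-orbits coincide with the prolonged $G$-orbits, the two maps have literally the same domain, and the identification of Theorem~\ref{theorem:descomposicioncociente} restricts to \cite[Equation (2.8)]{ElGaHoRa2011}, as claimed.
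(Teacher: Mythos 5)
Your proposal is correct and follows exactly the route the paper intends: the paper states this theorem without an explicit proof, treating it as immediate from the identifications it sets up just beforehand ($\mathcal G=X\times G$ with $\nu_0$, $H\simeq\mathcal G$ as jets of locally constant sections so that $\varphi=0$ and $\mathfrak{H}=\{0\}$, $\omega\equiv A$ via \cite[Proposition 4.1]{CaRo2023}, and $(T^*X\otimes\varfrak{g})_{0,\mathfrak{H}}\simeq T^*X\otimes\tilde{\mathfrak g}$), and your argument is precisely the careful tracing of Theorem \ref{theorem:descomposicioncociente} through those identifications. Your verification that the $H$-orbits literally coincide with the prolonged $G$-orbits is a worthwhile detail that the paper only asserts implicitly.
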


Let $A\in\Omega^1(Y,\mathfrak g)$ be a principal connection on $\pi_{Y,Y/G}$, $\nabla_0^{\varfrak g}$ be the canonical connection on $\pi_{\varfrak g,X}$ and $\nabla^X$ be a linear connection on $\pi_{TX,X}$, and consider the linear connection $\nabla^\otimes=\nabla^*\otimes\nabla_0^{\varfrak g}$ on $\pi_{T^*X\otimes\varfrak g,X}$, where $\nabla^*$ is the dual of $\nabla^X$. They yield a linear connection $\nabla$ on $\pi_{(T^*X\otimes\varfrak{g})_{0,\mathfrak{H}},Y/\mathcal G}\simeq\pi_{T^*X\otimes\tilde{\mathfrak g},Y/G}$, as we have seen in Proposition \ref{prop:inducedconnection}. By using bases of local sections of these vector bundles, it can be seen that $\nabla$ agrees with the linear connection defined in \cite[Equation (3.19)]{ElGaHoRa2011}. In addition, let $\nabla^{A}$ be the linear connection on $\pi_{\tilde{\mathfrak g},Y/G}$ associated to $A$ (cf. \cite[Equation (2.6)]{ElGaHoRa2011}). It descends to an operator $\overline\nabla^{A}:\Gamma(\pi_{\tilde{\mathfrak g},X})\rightarrow\Gamma(\pi_{T^*X\otimes \tilde{\mathfrak g},X})$ as in \cite[Equation (3.7)]{ElGaHoRa2011}, whose \emph{divergence} is denoted by:
\begin{equation*}
\operatorname{div}^{A}:\Gamma(\pi_{TX\otimes \tilde{\mathfrak g}^*,X})\to\Gamma(\pi_{\tilde{\mathfrak g}^*,X}).
\end{equation*}
Lastly, let $\nabla^{Y/\mathcal G}$ be a torsion free linear connection on $\pi_{T(Y/\mathcal G),Y/\mathcal G}$ projectable onto $\nabla^X$ and consider the induced affine connection $\nabla^{J^1(Y/\mathcal G)}$ on $\pi_{J^1(Y/\mathcal G),Y/\mathcal G}$.

Let $L: J^1 Y\rightarrow\mathbb R$ be a $G$-invariant Lagrangian and $l: J^1(Y/G)\times_{Y/G}(T^*X\otimes\tilde{\mathfrak g})\rightarrow\mathbb R$ be the reduced Lagrangian. As in the general theory, for the sake of simplicity we suppose that $X$ is compact. Let $s\in\Gamma\left(\pi_{Y,X}\right)$ and consider the reduced section $\overline s=[s,s^*A]_{G}\in\Gamma\left(\pi_{T^*X\otimes\tilde{\mathfrak g},X}\right)$. Observe that $\mathfrak{H}^\perp=TX\otimes\mathfrak g^*$ and, hence, $Y\times_{\mathcal G}\mathfrak{H}^\perp\simeq TX\otimes\tilde{\mathfrak g}^*$. Given a reduced section $\overline s\in\Gamma(\pi_{T^*X\otimes \tilde{\mathfrak g},X})$, the coadjoint representation of $\mathfrak g^*$ induces a map: 
\begin{equation*}
\ad_{\overline s}^*:\Gamma(\pi_{TX\otimes \tilde{\mathfrak g}^*,X})\to\Gamma(\pi_{\tilde{\mathfrak g}^*,X}),
\end{equation*}
which is well-defined for every $\overline\zeta\in\Gamma(\pi_{TX\otimes \tilde{\mathfrak g}^*,X})$ such that:
\begin{equation*}
\pi_{TX\otimes \tilde{\mathfrak g}^*,Y/G}\circ\overline\zeta=\pi_{T^*X\otimes \tilde{\mathfrak g},Y/G}\circ\overline s.
\end{equation*}

\begin{theorem}
Let $s\in\Gamma\left(\pi_{Y,X}\right)$ and $\overline s=[s,s^*A]_{G}\in\Gamma\left(\pi_{T^*X\otimes\tilde{\mathfrak g},X}\right)$ be the reduced section. Then the reduced equations for $\overline s$ given in Theorem \ref{theorem:reducedequationsmathfrakH} are equivalent to the Lagrange--Poincaré field equations given in \cite[Theorem 3.5]{ElGaHoRa2011}, i.e.,
\begin{equation*}
\left\{\begin{array}{l}
\displaystyle\frac{\delta l}{\delta \sigma_s}-\operatorname{div}^{Y/\mathcal G}\left(\frac{\delta l}{\delta j^1\sigma_s}\right)=\left\langle\frac{\delta l}{\delta\overline s},\iota_{d\sigma_s}\tilde F^A\right\rangle,\vspace{0.1cm}\\
\displaystyle\operatorname{div}^{A}\left(\frac{\delta l}{\delta \overline s}\right)-\operatorname{ad}_{\overline s}^*\left(\frac{\delta l}{\delta\overline s}\right)=0,
\end{array}\right.
\end{equation*}
where $\tilde F^A\in\Omega^2(Y/G,\tilde{\mathfrak g})$ is the reduced curvature of $A$. 
\end{theorem}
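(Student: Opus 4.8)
The plan is to verify, term by term, that under the classical-case identifications the reduced field equations of Theorem \ref{theorem:reducedequationsmathfrakH} reproduce the Lagrange--Poincaré field equations of \cite[Theorem 3.5]{ElGaHoRa2011}. First I would record the three structural simplifications forced by this setting: the Lie group bundle connection is the trivial one $\nu_0$, so the induced connection on $\varfrak g=X\times\mathfrak g$ is the canonical flat connection $\nabla_0^{\varfrak g}$; the subbundle $H$ of locally constant functions gives $\varphi=0$ and $\mathfrak H=\{0\}$; and the generalized principal connection $\omega$ is a genuine principal connection $A$, by \cite[Proposition 4.1]{CaRo2023}. Two consequences are immediate: since $\varphi=0$ we have $\tilde\varphi_*=0$, hence $\tilde\varphi_*^\dagger=0$; and since $\mathfrak H=\{0\}$ its annihilator is all of $TX\otimes\varfrak g^*$, so $\delta l/\delta\overline s$ is unconstrained and takes values in $TX\otimes\tilde{\mathfrak g}^*$.

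I would then identify the two genuinely geometric ingredients of the equations. For the curvature, Definition \ref{def:reducedcurvature} together with Proposition \ref{prop:curvature}, using that $\nabla^{\varfrak g}=\nabla_0^{\varfrak g}$ is flat so that $\mathrm d^{\varfrak g}$ is the ordinary exterior covariant derivative, shows that the reduced curvature $\tilde\Omega$ of $\omega=A$ is exactly the adjoint-valued curvature $\tilde F^A$. For the covariant operator, the key point is that the connection $\nabla^{\tilde{\varfrak g}}$ of Proposition \ref{prop:nablatildevarfarkg} coincides with the connection $\nabla^A$ on $\tilde{\mathfrak g}$ associated to $A$. The cleanest argument runs through Lemma \ref{lemma:overlinenablag}: specializing its formula and using $\nabla_0^{\varfrak g}\xi=d\xi$ yields $\overline\nabla^{\tilde{\varfrak g}}\tilde\xi=[s,d\xi+\ad_{s^*A}(\xi)]_{\mathcal G}$, which is precisely the operator $\overline\nabla^A$ of \cite[Equation (3.7)]{ElGaHoRa2011}. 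Taking adjoints, the divergences agree, $\operatorname{div}^{\tilde{\varfrak g}}=\operatorname{div}^A$.

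With these identifications in hand I would substitute into the two reduced equations. The horizontal equation is unchanged except that $\iota_{d\sigma_s}\tilde\Omega$ is replaced by $\iota_{d\sigma_s}\tilde F^A$, giving the first Lagrange--Poincaré equation verbatim. In the vertical equation the term $\tilde\varphi_*^\dagger(\delta l/\delta\overline s)$ vanishes and $\operatorname{div}^{\tilde{\varfrak g}}$ becomes $\operatorname{div}^A$, so the operator $\operatorname{div}^{\tilde{\varfrak g}}-\ad_{\overline s}^*+\tilde\varphi_*^\dagger$ applied to $\delta l/\delta\overline s$ collapses to $\operatorname{div}^A(\delta l/\delta\overline s)-\ad_{\overline s}^*(\delta l/\delta\overline s)$, which is the second Lagrange--Poincaré equation. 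Since the equivalence of items $(i)$--$(iv)$ in Theorem \ref{theorem:reducedequationsmathfrakH} already secures the variational content, this term-by-term matching suffices.

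The main obstacle is the connection identification $\nabla^{\tilde{\varfrak g}}=\nabla^A$. Although Lemma \ref{lemma:overlinenablag} reduces it to matching a single formula for the induced operator on sections over $X$, a fully rigorous argument should confirm that the abstract parallel transport $^{\tilde{\varfrak g}}\big|\big|$ of Proposition \ref{prop:nablatildevarfarkg}---assembled from $^\omega\big|\big|$ and the trivial $^{\varfrak g}\big|\big|$---is the same associated-bundle connection as the classical $\nabla^A$. This is most transparently checked in the local bases of sections introduced before the statement, by comparing Christoffel symbols against \cite[Equation (3.19)]{ElGaHoRa2011}, exactly as the surrounding text indicates.
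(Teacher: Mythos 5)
Your proposal is correct and follows essentially the same route as the paper, which states this theorem without a separate proof precisely because it is a term-by-term consequence of the identifications set up in the surrounding text ($\omega=A$, $\varphi=0$, $\mathfrak{H}=\{0\}$ so $\tilde\varphi_*^\dagger=0$, $\tilde\Omega=\tilde F^A$, and the agreement of the connections and divergences with their classical counterparts), which is exactly the matching you carry out. Your use of Lemma \ref{lemma:overlinenablag} to get $\operatorname{div}^{\tilde{\varfrak g}}=\operatorname{div}^{A}$ is a clean way of making explicit what the paper relegates to the local-bases comparison with \cite[Equation (3.19)]{ElGaHoRa2011}, and you correctly note that this comparison is still needed for the connection-dependent derivative $\delta l/\delta\sigma_s$ in the horizontal equation.
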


%%%%%%%%%%
\subsection{Full jet symmetry}
Let $\pi_{\mathcal G,X}$ be a Lie group bundle endowed with a Lie group connection $\nu$, and suppose that it acts (on the right) on a fiber bundle $\pi_{Y,X}$ freely and properly. In this example we consider a Lagrangian $L: J^1 Y\rightarrow\mathbb R$ which is invariant by the whole jet bundle, that is,
\begin{equation*}
H=J^1\mathcal G\overset{\nu}{\simeq}\mathcal G\times_X(T^*X\otimes\varfrak g).
\end{equation*}
It is clear that $(T^*X\otimes\varfrak{g})_{\varphi,\mathfrak{H}}=X$. Consequently, the identification of Theorem \ref{theorem:descomposicioncociente} can be performed without fixing a generalized principal connection:
\begin{equation}\label{eq:quotientfulljets}
\left(J^1 Y\right)/J^1 \mathcal G\to J^1(Y/\mathcal G),\quad\left[j_x^1s\right]_{J^1\mathcal G}\mapsto j_x^1\sigma_s.
\end{equation}

In the same way, since the reduced section $\overline s$ vanishes, the second equation of \emph{(iv)} in Theorem \ref{theorem:reducedequationsmathfrakH} does not appear. Therefore, the reduced equations are the usual \emph{Euler--Lagrange equations} for the reduced Lagrangian $l: J^1(Y/\mathcal G)\rightarrow\mathbb R$:
\begin{equation*}
\frac{\delta l}{\delta\sigma_s}-\operatorname{div}^{Y/\mathcal G}\left(\frac{\delta l}{\delta j^1\sigma_s}\right)=0.
\end{equation*}
Of course, in order to write them we have fixed a linear connection $\nabla^{Y/\mathcal G}$ on $\pi_{T(Y/\mathcal G),Y/\mathcal G}$ projectable onto a linear connection $\nabla^X$ on $\pi_{TX,X}$.

%%%%%%%%%%
\subsection{Electromagnetism in vacuum}\label{sec:em}

To describe Electromagnetism in vacuum as an Abelian geometric Yang--Mills theory, let $(X,g)$ be a 4-dimensional, compact, oriented pseudo-Riemannian manifold with volume form $v_g\in\Omega^4(X)$, and let $\pi_{P,X}:P\to X$ be a principal $\operatorname{U}(1)$-bundle. The configuration bundle of this theory is the bundle of connections of $\pi_{P,X}$, i.e.,
$$
C(P)=\left.J^1 P\right/\operatorname{U}(1)\to X.
$$
Recall that it is an affine bundle modelled on $T^*X\to X$.

\begin{definition}
The Maxwell Lagrangian density for Electromagnetism in vacuum is $\mathfrak L=L\,v_g$ with:
\begin{equation*}
L\left(j^1 A\right)=g\left(\tilde F^A,\tilde F^A\right),\qquad A\in\Gamma(\pi_{C(P),X}),
\end{equation*}
where $\tilde F^A\in\Omega^2(X)$ is the \emph{reduced curvature} of the principal connection $A$.
\end{definition}

Gauge transformations are defined by maps $g:X\to \operatorname{U}(1)$, that is, sections of the trivial bundle $X\times \operatorname{U}(1) \to X$. The Maxwell Lagrangian is invariant with respect to the transformation
\[
A_x\mapsto A_x + (dg)_xg(x)^{-1},\qquad x\in X,
\]
which can be understood as a symmetry by the fibered action $C(P)\times J^1(X,\operatorname{U}(1)) \to C(P)$ defined above. Note that the value $g(x)\in \operatorname{U}(1)$ does not play an essential role and that  $(dg)_x\,g(x)^{-1}\in T_x^* X\otimes\mathfrak u(1)\simeq T_x^* X$. Subsequently, this fibered action induces another one,
\begin{equation}\label{eq:accioncotagenteconexiones}
C(P)\times_X T^*X\to C(P),\quad\left(A_x,\alpha_x\right)\mapsto A_x+\alpha_x,
\end{equation}
which can be straightforwardly extended to the first jets, $J^1 C(P)\times_X J^1(T^*X)\rightarrow J^1 C(P)$. By virtue of this, the Lie group bundle of symmetries for Electromagnetism is the cotangent bundle of $X$ with the additive structure,
$$
\mathcal{G}=T^*X\to X.
$$

Second derivatives of gauge transformations correspond to the Lie group subbundle $J^2(X,\operatorname{U}(1))\subset J^1\left(J^1(X,\operatorname{U}(1))\right)$. One can readily obtain the corresponding subbundle of jet  symmetries as
\begin{equation*}
H=\left\{j_x^1\alpha\in J^1(T^*X): ({\rm d}\alpha)_x=0\right\}\subset J^1(T^*X).
\end{equation*}
Note that $C(P)/T^*X=X$. In order to study the quotient $J^1C(P)/H$ we need to consider a generalized principal connection on $\pi_{C(P),X}$. The following lemma can be easily proven using local coordinates.

\begin{lemma}
Let $\nu$ be a linear connection on $\pi_{T^*X,X}$, then it is Lie group bundle connection and any affine connection $\omega$ on $\pi_{C(P),X}$ modelled on $\nu$ is a generalized principal connection associated to $\nu$. Furthermore, if the connection $\nu$ is torsionless, then $\hat\nu(T^*X)\subset H$, where $\hat\nu\in\Gamma \left(\pi_{J^1(T^*X),T^*X}\right)$ 
is the section induced by $\nu$.
\end{lemma}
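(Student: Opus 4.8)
The plan is to verify the three assertions directly in local coordinates, exploiting that the structure group $\mathcal{G}=T^*X$ is abelian (so $\Ad$ is trivial) and that the Christoffel term of a linear connection is linear, hence additive, in the fibre coordinate. Fix coordinates $(x^\mu,p_a)$ on $T^*X$ and a local frame $\{e_a\}$ of $T^*X$ identifying the vertical bundle $V(T^*X)$ along the zero section with $\varfrak g=T^*X$, so that the connection form of $\nu$ reads $\nu(U)=(Q_a+\Gamma_{\mu a}{}^b\,p_b\,U^\mu)\,e_a$ for $U=U^\mu\partial_{x^\mu}+Q_a\partial_{p_a}$, where $\Gamma$ are the Christoffel symbols of $\nu$.

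First I would check that $\nu$ is a Lie group bundle connection in the sense of Definition \ref{def:liegroupconnection}. Condition (i) holds because the unit section of $\mathcal{G}=T^*X$ is the zero section, which is horizontal for any linear connection, whence $\ker\nu_{0_x}=(d0)_x(T_xX)$. For condition (ii) the fibre multiplication $M$ is fibrewise addition; given $U_v,U_w$ projecting to a common $U^\mu\partial_{x^\mu}$, the fibre component of $(dM)_{(v,w)}(U_v,U_w)$ at $v+w$ is the sum of the two fibre components, while the Christoffel term evaluated at $v+w$ splits as the sum of the terms at $v$ and at $w$ by linearity. Since $\Ad$ is trivial this yields precisely $\nu(U_v)+\Ad_v(\nu(U_w))$.

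Next I would verify Definition \ref{def:connectionform} for an affine connection $\omega$ on $\pi_{C(P),X}$ modelled on $\nu$, noting that here $Y=C(P)$, $\mathcal{G}=T^*X$ and $Y/\mathcal{G}=X$. In affine coordinates write $\omega_A(U)=(P_a+\Gamma_{\mu a}{}^b A_b U^\mu+\theta_{\mu a}U^\mu)\,e_a$, where $\theta$ is the (free) affine part. Complementarity (i) is immediate: for the additive group $\exp(t\xi)=t\xi$, so $\xi^*_A=\xi$ is a vertical constant and on a vertical vector ($U^\mu=0$) the form returns it unchanged. For the $\Ad$-equivariance (ii) the action is translation $A\mapsto A+\beta$, and the same additivity computation as above, together with the triviality of $\Ad$, gives $\omega_{A+\beta}\big((d\Phi)_{(A,\beta)}(U_A,U_\beta)\big)=\omega_A(U_A)+\nu(U_\beta)$. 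Crucially the affine part $\theta_{\mu a}$ is unchanged by the translation, so no constraint is placed on it; this is exactly why \emph{any} affine connection modelled on $\nu$ qualifies.

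Finally, for the inclusion $\hat\nu(T^*X)\subset H$ when $\nu$ is torsionless, I would use that $\hat\nu(v)=j^1_x\alpha$ for a local one-form $\alpha$ with $\alpha(x)=v$ which is covariantly constant at $x$, i.e. $(\nabla\alpha)_x=0$, $\nabla$ being the covariant derivative of $\nu$. The key step is the torsion-free identity $d\alpha(U,V)=(\nabla_U\alpha)(V)-(\nabla_V\alpha)(U)$, valid precisely because the torsion of $\nu$ vanishes; evaluating at $x$ and using $(\nabla\alpha)_x=0$ gives $(d\alpha)_x=0$, so $j^1_x\alpha\in H$. The main obstacle — the only genuinely non-formal point — is to make precise the correspondence between $\hat\nu$ as a section of $\pi_{J^1(T^*X),T^*X}$ and covariantly constant extensions, and then to invoke the torsion-free relation between $d$ and $\nabla$; the two preceding verifications reduce to routine additivity checks once the abelian structure is used.
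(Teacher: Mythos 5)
Your proposal is correct, and it follows exactly the route the paper indicates: the paper gives no written proof, remarking only that the lemma ``can be easily proven using local coordinates,'' which is precisely what you carry out (the additivity of the Christoffel term together with triviality of $\Ad$ for the abelian bundle $\mathcal{G}=T^*X$ gives claims one and two, and the torsion-free identity ${\rm d}\alpha(U,V)=(\nabla_U\alpha)(V)-(\nabla_V\alpha)(U)$ applied to a section covariantly constant at $x$ gives the inclusion $\hat\nu(T^*X)\subset H$). The point you flag as non-formal, identifying $\hat\nu(v)$ with $j^1_x\alpha$ for any $\alpha$ through $v$ with $(\nabla\alpha)_x=0$, is the standard correspondence between the jet section of an Ehresmann connection and sections horizontal at a point, so no gap remains.
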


The identification \eqref{eq:Thetanu} reads $J^1(T^*X)\overset{\nu}{\simeq}T^*X\times_X(T^*X\otimes T^*X)$ and for a torsionless connection $\nu$ we have
\begin{equation}\label{eq:Hnusimetrico}
\textstyle H\overset{\nu}{\simeq}T^*X\times_X\bigvee^2 T^*X.
\end{equation}
Subsequently, in Proposition \ref{ggg} we may choose $\varphi=0$ and $\mathfrak H=\bigvee^2 T^*X$.

\begin{proposition}\label{prop:identificacionElectromagnetismo}
Let $\omega$ be the affine connection on $\pi_{C(P),X}$ induced by a torsionless linear connection $\nu$ on $\pi_{T^*X,X}$. Then the isomorphism of Theorem \ref{theorem:descomposicioncociente} is the curvature mapping (up to a minus sign), that is,
\begin{equation*}
J^1 C(P)/H\to\textstyle\bigwedge^2 T^*X,\quad\left[j_x^1 A\right]_H\mapsto\operatorname{Skew}\left(A^*\omega\right)_x=-\tilde F_x^A,
\end{equation*}
where $\operatorname{Skew}: T^* X\otimes T^* X\rightarrow\bigwedge^2 T^* X$ is the \emph{skew-symmetrization}.
\end{proposition}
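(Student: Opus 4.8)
The plan is to specialize Theorem \ref{theorem:descomposicioncociente} to the present data and then carry out the remaining identification by a short computation in adapted coordinates. First I would record the reduced objects. Since the fibered action \eqref{eq:accioncotagenteconexiones} is fiberwise transitive, $Y/\mathcal G = C(P)/T^*X = X$, so that $J^1(Y/\mathcal G)$ collapses and the first factor in the target of \eqref{ISO} disappears. Moreover $\mathcal G = T^*X$ is abelian, hence its adjoint representation is trivial and $\tilde{\varfrak g} = (C(P)\times_X T^*X)/\mathcal G \simeq T^*X$ as a bundle over $X$; likewise, using \eqref{eq:Hnusimetrico} with $\varphi = 0$ and $\mathfrak{H} = \bigvee^2 T^*X$, the $\Ad$-invariant subbundle $\tilde{\mathfrak{H}}$ becomes $\bigvee^2 T^*X$. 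By Lemma \ref{lemma:igualdad} the reduced fibre is therefore
\begin{equation*}
(T^*X\otimes\varfrak{g})_{0,\mathfrak{H}}\simeq \frac{T^*X\otimes T^*X}{\bigvee^2 T^*X}\xrightarrow{\ \sim\ }\textstyle\bigwedge^2 T^*X,
\end{equation*}
where the last arrow is exactly the skew-symmetrization $\operatorname{Skew}$, an isomorphism because $T^*X\otimes T^*X = \bigvee^2 T^*X \oplus \bigwedge^2 T^*X$. Under these identifications the map of Theorem \ref{theorem:descomposicioncociente} reads $[j^1_x A]_H \mapsto \operatorname{Skew}((A^*\omega)_x)$, since the surviving datum with $\varphi = 0$ is precisely $\omega_{A(x)}\circ (dA)_x = (A^*\omega)_x$.

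It then remains to prove $\operatorname{Skew}((A^*\omega)_x) = -\tilde F^A_x$, which I would do in local coordinates $(x^\mu, a_\mu)$ on $C(P)$ adapted to its affine structure over $X$, so that a section is $A = A_\mu(x)\,dx^\mu$ and $\tilde F^A_{\mu\nu} = \partial_\mu A_\nu - \partial_\nu A_\mu$ (up to the sign convention). Writing the torsionless connection $\nu$ with symmetric Christoffel symbols $\Gamma^\lambda_{\mu\nu} = \Gamma^\lambda_{\nu\mu}$, the affine connection $\omega$ induced by $\nu$ has the form $\omega = (da_\nu - \Gamma^\lambda_{\mu\nu}a_\lambda\, dx^\mu)\otimes dx^\nu$, so that pulling back along $A$ gives $(A^*\omega)_{\mu\nu} = \partial_\mu A_\nu - \Gamma^\lambda_{\mu\nu}A_\lambda$. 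Applying $\operatorname{Skew}$, the symmetric term $\Gamma^\lambda_{\mu\nu}A_\lambda$ cancels exactly because $\nu$ is torsionless, leaving $\operatorname{Skew}(A^*\omega)_{\mu\nu} = \partial_{[\mu}A_{\nu]}$, which is $-\tilde F^A_{\mu\nu}$ in the conventions of the paper. This is consistent with \eqref{eq:Hnusimetrico}, where torsionlessness is precisely what places $\hat\nu(T^*X)$ inside $H$ and legitimizes the choice $\varphi = 0$.

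The main obstacle is not conceptual but one of bookkeeping: one must ensure that the affine part of the induced connection $\omega$ contributes no antisymmetric term, so that only $\partial_{[\mu}A_{\nu]}$ survives the skew-symmetrization. This is guaranteed by the torsion-freeness of $\nu$ together with the induced construction of the preceding Lemma (which enforces $\hat\nu(T^*X)\subset H$, i.e. the symmetric normalization $\varphi = 0$); any antisymmetric affine shift would amount to replacing $\omega$ by a different generalized principal connection associated to the same $\nu$ and would merely translate the isomorphism by a fixed basic $2$-form, without affecting its bijectivity. The only residual care needed is to match the sign and normalization conventions for $\operatorname{Skew}$ and for $\tilde F^A = dA$, which is what produces the explicit minus sign in the statement.
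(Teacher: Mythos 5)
Your proposal is correct and follows essentially the same route as the paper's proof: specialize Theorem \ref{theorem:descomposicioncociente} with $\varphi=0$, $\mathfrak{H}=\bigvee^2 T^*X$ and $C(P)/T^*X\simeq X$ (so the jet factor collapses), identify $\left(T^*X\otimes T^*X\right)/\bigvee^2 T^*X\simeq\bigwedge^2 T^*X$ via $\operatorname{Skew}$, and then verify $\operatorname{Skew}\left(A^*\omega\right)=-\tilde F^A$ in local coordinates. You actually carry out the coordinate computation that the paper leaves as ``it can be checked'' (the symmetric Christoffel terms cancel precisely by torsion-freeness of $\nu$), with only the sign/normalization convention relating $\partial_{[\mu}A_{\nu]}$ to $-\tilde F^A_{\mu\nu}$ left implicit, which is the same level of detail as the paper itself.
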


\begin{proof}
Firstly, note that $(T^*X\otimes T^*X)/\bigvee^2 T^*X\simeq \bigwedge^2 T^*X$ via the skew-symmetrization. Likewise, as $\varphi=0$ and the Lie group is Abelian, the action \eqref{eq:affineaction} reads
\begin{equation*}
\left(C(P)\times_X\textstyle\bigvee^2 T^*X\right)\times_X T^*X\to C(P)\times_X\textstyle\bigvee^2 T^*X,\quad\left(\left(A_x,\xi_x\right),\eta_x\right)\mapsto\left(A_x+\eta_x,\xi_x\right).
\end{equation*}
Subsequently, $\ad_{\mathfrak H,0}(C(P))\simeq\bigwedge^2 T^*X$, whence the isomorphism of Theorem \ref{theorem:descomposicioncociente} becomes $J^1 C(P)/H\simeq\bigwedge^2 T^*X$, since $C(P)/T^*X\simeq X$. To conclude, by using local coordinates it can be checked that $\operatorname{Skew}(A^*\omega)=-\tilde F^A$ for each $A\in\Gamma(\pi_{C(P),X})$.
\end{proof}

Therefore, for each $ \tilde F^A\in\Gamma\left(\pi_{\bigwedge^2 T^*X,X}\right)$, the reduced Lagrangian is given by
\begin{equation*}
\textstyle l\left(\tilde F^A\right)=g\left(\tilde F^A,\tilde F^A\right),
\end{equation*}
for which the partial derivative is\footnote{
Observe that $\left(\bigwedge^2 T^*X\right)^*=\bigwedge^2 TX=\left(\bigvee^2 T^*X\right)^\perp=\left((T^*X\otimes T^*X)\left/\bigvee^2 T^*X\right.\right)^*$
}
\begin{equation*}
\frac{\delta l}{\delta \tilde F^A}=2\,\iota_{\tilde F^A}g\in\Gamma\left(\pi_{\bigwedge^2 TX,X}\right).
\end{equation*}

Let $\nabla^*:\Gamma(\pi_{T^*X,X})\rightarrow\Gamma(\pi_{T^*X\otimes T^*X,X})$ be the covariant derivative on $\pi_{T^*X,X}$ corresponding to the linear connection $\nu$ and $\operatorname{div}^*:\Gamma(\pi_{TX\otimes TX,X})\rightarrow\Gamma(\pi_{TX,X})$ be its divergence. From Theorem \ref{theorem:reducedequationsmathfrakH} we know that if $A\in\Gamma(\pi_{C(P),X})$ is a solution of the Euler--Lagrange equations for $L$, then the reduced section $\tilde F^A\in\Gamma\left(\pi_{\bigwedge^2 T^*X,X}\right)$ satisfies the following reduced equation,
\begin{equation}\label{eq:reducedeqsElectromagnetism}
\operatorname{div}^*\left(\iota_{\tilde F^A}g\right)=0.
\end{equation}

Recall that the \emph{Hodge star operator} $\star:\Omega^k(X)\rightarrow\Omega^{4-k}(X)$ is defined implicitly as:
\begin{equation}\label{eq:hodge}
\alpha\wedge\star\beta=g(\alpha,\beta)\,v_g,\qquad \alpha,\beta\in\Omega^k(X),
\end{equation}
and it satisfies $\star\star=(-1)^{k(4-k)}\epsilon(g)$ on $\Omega^k(X)$, being $\epsilon(g)$ the parity of the signature of $g$.

\begin{theorem}[Maxwell equations]
In the above conditions, the reduced equation \eqref{eq:reducedeqsElectromagnetism} is equivalent to the \emph{Maxwell equation in vacuum}, that is:
\begin{equation*}
{\rm d}^\star \tilde F^A=0,
\end{equation*}
where ${\rm d}^\star=\star\circ{\rm d}\circ\star:\Omega^k(X)\rightarrow\Omega^{k-1}(X)$ denotes the codifferential.
\end{theorem}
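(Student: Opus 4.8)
The plan is to prove the biconditional in its weak (integral) form, exploiting that both $\operatorname{div}^*$ and ${\rm d}^\star$ are, up to sign, formal adjoints of first-order operators with respect to the $L^2$-pairing determined by $v_g$. Since $\operatorname{div}^*(\iota_{\tilde F^A}g)$ is a vector field while ${\rm d}^\star\tilde F^A$ is a $1$-form, I will show that the two are musically dual via $g$ (up to a global sign), so that one vanishes exactly when the other does; this gives the statement at once, because the musical isomorphism $\sharp\colon T^*X\to TX$ is a fiberwise isomorphism.

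First I would fix an arbitrary $\alpha\in\Omega^1(X)$ and test the vector field $\operatorname{div}^*(\iota_{\tilde F^A}g)$ against it. By the defining integration-by-parts property of the divergence (as in Definition \ref{def:divergence}, applied to the connection $\nu$ on $\pi_{T^*X,X}$),
\[
\int_X\left\langle\operatorname{div}^*(\iota_{\tilde F^A}g),\alpha\right\rangle v_g=-\int_X\left\langle\iota_{\tilde F^A}g,\nabla^*\alpha\right\rangle v_g,
\]
where on the right $\langle\cdot,\cdot\rangle$ denotes the full contraction between $TX\otimes TX$ and $T^*X\otimes T^*X$. The key algebraic step is that $\iota_{\tilde F^A}g\in\Gamma(\pi_{\bigwedge^2 TX,X})$ is skew-symmetric, so only the skew part of $\nabla^*\alpha$ survives the contraction; and since $\nu$ is torsionless, $\operatorname{Skew}(\nabla^*\alpha)={\rm d}\alpha$. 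Hence $\left\langle\iota_{\tilde F^A}g,\nabla^*\alpha\right\rangle=g(\tilde F^A,{\rm d}\alpha)$ up to the fixed combinatorial constant coming from the normalization of the induced inner product on $2$-forms.

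Next I would invoke that ${\rm d}^\star$ is, up to the sign $\epsilon(g)$ produced by $\star\star$ in the given signature, the formal adjoint of ${\rm d}$ with respect to $g$ and $v_g$, that is, $\int_X g(\tilde F^A,{\rm d}\alpha)\,v_g=\pm\int_X g({\rm d}^\star\tilde F^A,\alpha)\,v_g$; this is exactly the content of the defining relation \eqref{eq:hodge} after a Stokes integration of ${\rm d}(\alpha\wedge\star\tilde F^A)$. Writing $g({\rm d}^\star\tilde F^A,\alpha)=\left\langle({\rm d}^\star\tilde F^A)^\sharp,\alpha\right\rangle$ and combining the displays yields $\int_X\left\langle\operatorname{div}^*(\iota_{\tilde F^A}g)\mp({\rm d}^\star\tilde F^A)^\sharp,\alpha\right\rangle v_g=0$ for every $\alpha$, whence $\operatorname{div}^*(\iota_{\tilde F^A}g)=\pm({\rm d}^\star\tilde F^A)^\sharp$ and the equivalence of the vanishing follows.

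The main obstacle is the bookkeeping of signs in the pseudo-Riemannian setting: the factor $\epsilon(g)$ and the degree-dependent signs in $\star\star$ must be tracked, although they are harmless for the final biconditional since they never destroy the musical isomorphism. As a sanity check, and to make the independence of the chosen torsionless $\nu$ manifest, I would also record the coordinate computation: for torsionless $\nu$ the Christoffel contribution $\Gamma^\nu_{\mu\lambda}F^{\mu\lambda}$ vanishes by antisymmetry of $F^{\mu\lambda}$, so $\operatorname{div}^*(\iota_{\tilde F^A}g)^\nu=\tfrac{1}{\sqrt{|g|}}\,\partial_\mu\!\left(\sqrt{|g|}\,F^{\mu\nu}\right)$, which coincides up to a sign with the components of ${\rm d}^\star\tilde F^A$ raised by $g$. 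This confirms both the claimed equivalence and the fact that the left-hand side does not depend on the particular torsionless connection, as it must, since ${\rm d}^\star\tilde F^A$ depends only on $g$.
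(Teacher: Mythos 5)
Your proposal is correct, but it follows a genuinely different route from the paper. The paper disposes of the equivalence by a direct computation in local charts, using the musical isomorphisms $\sharp$ and $\flat$ to match the components of $\operatorname{div}^*\left(\iota_{\tilde F^A}g\right)$ with those of ${\rm d}^\star\tilde F^A$; your main argument is instead global and coordinate-free: you characterize both sides as formal adjoints with respect to the $L^2$-pairing given by $v_g$ — $\operatorname{div}^*$ by its very definition (the analogue of Definition \ref{def:divergence} for $\nabla^*$), and ${\rm d}^\star$ via Stokes applied to ${\rm d}\left(\alpha\wedge\star\tilde F^A\right)$ together with \eqref{eq:hodge} — and then identify the two test integrands using the two facts that skew-symmetry of $\iota_{\tilde F^A}g$ kills the symmetric part of $\nabla^*\alpha$ and that $\operatorname{Skew}(\nabla^*\alpha)={\rm d}\alpha$ precisely because $\nu$ is torsionless. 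The fundamental lemma then upgrades the weak identity to the pointwise identity $\operatorname{div}^*\left(\iota_{\tilde F^A}g\right)=\pm\left({\rm d}^\star\tilde F^A\right)^\sharp$, from which the biconditional is immediate. What your approach buys is twofold: it makes structurally transparent why the reduced equation, which a priori depends on the auxiliary torsionless connection $\nu$, is in fact independent of it (both sides are adjoints of operators whose relevant skew parts are connection-independent), and it meshes naturally with how the divergence is actually defined in the paper, namely by integration by parts rather than by a local formula. What the paper's chart computation buys is brevity and the avoidance of the sign and normalization bookkeeping ($\epsilon(g)$, $\star\star$, the factor in the pairing of $2$-forms versus $2$-vectors) that your argument must track — though, as you correctly note, none of those constants can affect the equivalence of the two vanishing conditions, since they never degrade $\sharp$ from being a fiberwise isomorphism. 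Your closing coordinate identity $\left(\operatorname{div}^*\left(\iota_{\tilde F^A}g\right)\right)^\nu=\tfrac{1}{\sqrt{|g|}}\,\partial_\mu\!\left(\sqrt{|g|}\,F^{\mu\nu}\right)$, valid because the Christoffel contribution $\Gamma^\nu_{\mu\lambda}F^{\mu\lambda}$ dies against the antisymmetry of $F$, is essentially the paper's own proof, so your sanity check subsumes it.
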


The equivalence is proved in local charts, making use of the isomorphisms $\sharp: T^*X\rightarrow TX$ and $\flat: TX\rightarrow T^*X$ implicitly defined as $g\left(\alpha^\sharp,U\right)=g\left(\alpha, U_\flat\right)=\alpha(U),\qquad (\alpha,U)\in T^* X\times_X T X$.

\begin{theorem}[Reconstruction]
Let $\mathcal U\subset X$ be a simply connected domain and let $F\in\Gamma\left(\mathcal U,\pi_{\bigwedge^2 T^*X,X}\right)$ be a solution of the Maxwell's equations in vacuum. Then there exists a solution $A\in\Gamma\left(\mathcal U,\pi_{C(P),X}\right)$ of the Euler--Lagrange equations for $L$ such that $F=\tilde F^A$ if and only if the following \emph{compatibility condition} holds,
\begin{equation*}
{\rm d}F=0.
\end{equation*}
\end{theorem}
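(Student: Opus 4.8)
The plan is to read this as the Electromagnetic specialization of the general Reconstruction Theorem~\ref{theorem:reconstruction} together with the Corollary that follows it. Since here $Y=C(P)$, $\mathcal G=T^*X$ and $Y/\mathcal G=X$, the projected section $\sigma$ is the identity on $\mathcal U$ and $Y^\sigma=C(P)$; the reduced section is $F=\tilde F^A\in\Gamma(\pi_{\bigwedge^2 T^*X,X})$ under the identification of Proposition~\ref{prop:identificacionElectromagnetismo}. Because $\mathcal G$ is abelian and $Y/\mathcal G=X$, the section $\sigma_s$ is the identity and $l$ does not depend on $\sigma_s$ or on $j^1\sigma_s$, so the first reduced equation of Theorem~\ref{theorem:reducedequationsmathfrakH} is vacuous and the only surviving field equation is the vertical one, shown in \eqref{eq:reducedeqsElectromagnetism} to be Maxwell's equation ${\rm d}^\star F=0$. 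This is precisely the standing hypothesis. Hence, by the Corollary following Theorem~\ref{theorem:reconstruction}, applied over the simply connected $\mathcal U$ where every flat connection has trivial holonomy, the existence of a critical $A$ with $\tilde F^A=F$ is equivalent to the flatness of a reconstruction connection $\omega^{\overline s}=\sigma^*\omega-\tilde s$, where $\tilde s$ is any section of $T^*X\otimes\tilde{\varfrak g}\simeq T^*X\otimes T^*X$ whose skew-symmetric part equals $-F$. The whole statement thus reduces to the identity $\operatorname{Curv}(\omega^{\overline s})=0\iff {\rm d}F=0$.

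I expect this identity to be the main obstacle, and I would establish it by a local computation in a chart adapted to the affine structure of $C(P)\to X$. Writing $\omega$ as an affine connection modelled on the torsionless $\nu$ and using the relation $\operatorname{Skew}(A^*\omega)=-\tilde F^A$ of Proposition~\ref{prop:identificacionElectromagnetismo}, one checks that the curvature of $\omega-\tilde s$, after projecting by the skew-symmetrization $(T^*X\otimes\varfrak g)/\mathfrak H\simeq\bigwedge^2 T^*X$, equals $\pm{\rm d}F$; the symmetric part of $\tilde s$ together with the Christoffel symbols of $\nu$ drop out of the antisymmetric part precisely because $\nu$ is torsionless. Granting this computation, both implications follow cleanly and independently of the chosen symmetric completion of $\tilde s$.

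For the necessity direction there is a shorter route that I would record as a sanity check: the reduced curvature of any principal connection on a $\operatorname{U}(1)$-bundle is closed by the Bianchi identity (locally $\tilde F^A={\rm d}A$, so ${\rm d}\tilde F^A=0$), whence $F=\tilde F^A$ forces ${\rm d}F=0$; this coincides with Proposition~\ref{rec0} applied to $s=A$ combined with the curvature identity above. For sufficiency, assuming ${\rm d}F=0$ the identity makes $\omega^{\overline s}$ flat for any admissible $\tilde s$, and the simple connectedness of $\mathcal U$ endows it with trivial holonomy, so its integral leaf is a global section $A\in\Gamma(\mathcal U,\pi_{C(P),X})$ with $\tilde F^A=F$. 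Since $F$ already satisfies the reduced equation~\eqref{eq:reducedeqsElectromagnetism}, Theorem~\ref{theorem:reconstruction} guarantees that this leaf $A$ is a critical section of the Maxwell Lagrangian, i.e.\ a solution of the Euler--Lagrange equations with $\tilde F^A=F$, completing the argument. I emphasize that this route integrates the flat connection using only $\pi_1(\mathcal U)=0$, so it does not require the Poincaré lemma for the $2$-form $F$ and hence avoids any hypothesis on $H^2_{\mathrm{dR}}(\mathcal U)$.
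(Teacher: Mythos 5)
Your overall route — specializing Theorem \ref{theorem:reconstruction} and its corollary to $Y=C(P)$, $\mathcal G=T^*X$, $Y/\mathcal G=X$, with the vertical reduced equation \eqref{eq:reducedeqsElectromagnetism} being Maxwell's equation — is exactly the route the paper intends (the paper states this theorem without a separate proof), and your necessity direction is sound. The genuine gap is the identity on which you rest sufficiency: it is \emph{not} true that $\operatorname{Curv}\left(\omega^{\overline s}\right)=0\iff{\rm d}F=0$ ``independently of the chosen symmetric completion of $\tilde s$.'' The curvature of $\omega^{\overline s}=\omega-\tilde s$ is a $T^*X$-valued $2$-form; in a chart where $\nu$ is the flat connection and $\omega$ the associated flat affine connection it equals $-{\rm d}^\nu\tilde s$, with components $\partial_\rho\tilde s_{\nu\mu}-\partial_\nu\tilde s_{\rho\mu}$. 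What is independent of the symmetric part of $\tilde s$ is only its image under the alternation map $\bigwedge^2T^*X\otimes T^*X\to\bigwedge^3T^*X$, and that image is indeed $\pm\,{\rm d}F$; hence flatness implies ${\rm d}F=0$, but the converse fails because this projection has a large kernel. Concretely, take $F=0$ and the admissible (symmetric) completion $\tilde s=x^1\left(dx^1\otimes dx^2+dx^2\otimes dx^1\right)$: then ${\rm d}F=0$, yet ${\rm d}^\nu\tilde s=\left(dx^1\wedge dx^2\right)\otimes dx^1\neq 0$, so $\omega^{\overline s}$ is not flat. Thus your claim that ``${\rm d}F=0$ makes $\omega^{\overline s}$ flat for any admissible $\tilde s$'' collapses at its first step.

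The quantifier structure matters: the reconstruction condition is the existence of \emph{some} admissible $\tilde s$ making $\omega^{\overline s}$ flat, and producing it from ${\rm d}F=0$ is precisely the Poincar\'e lemma you claim to avoid. One must first find a $1$-form $A$ on $\mathcal U$ with ${\rm d}A=F$; then $\tilde s=A^*\omega$ is admissible, $\omega^{\overline s}$ is flat by Proposition \ref{rec0}, and criticality of $A$ follows from Theorem \ref{theorem:reducedequationsmathfrakH} because $F$ satisfies the reduced equation. This step needs $[F]=0$ in $H^2_{\mathrm{dR}}(\mathcal U)$ (e.g.\ $\mathcal U$ contractible, or interpreting ``domain'' as a chart domain), not merely $\pi_1(\mathcal U)=0$. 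Your closing remark that the argument avoids any hypothesis on $H^2_{\mathrm{dR}}(\mathcal U)$ is a symptom of the error rather than a bonus: it would imply that every closed $2$-form on a simply connected open set (say, a tubular neighbourhood of an embedded $2$-sphere in $X$, where closed non-exact $2$-forms exist) is the curvature of some section of $C(P)$, which is false, since such curvatures must represent a fixed class determined by $P$. Parallel transport of a flat connection only integrates along paths, so no amount of $\pi_1$-triviality can see the $2$-dimensional obstruction carried by $[F]$.
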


In short, we have the following local equivalence for sections $A\in\Gamma(\pi_{C(P),X})$:
\begin{equation*}
\mathcal{EL}(L)\left(j^1 A\right)=0\qquad\Longleftrightarrow\qquad\left\{\begin{array}{l}
{\rm d}^\star \tilde F^A=0,\\
{\rm d} \tilde F^A = 0.
\end{array}\right.
\end{equation*}

%%%%%
\subsubsection{\texorpdfstring{$k\,$}{k}-form Electromagnetism}

When the principal bundle $P$ is trivial $P=X\times \mathrm{U}(1)$, the bundle of connections is $T^*X$ and the Maxwell formulation of the previous section is defined on 1-forms. This can be generalized to $k$-form Electromagnetism, $k\in \mathbb{N}$ (see for example \cite{navarrosancho2012}). In this case, both the configuration and Lie group bundle are the same so that the fiberwise actions is
\begin{equation*}
\textstyle\bigwedge^{k}T^*X\times_X\textstyle\bigwedge^{k}T^*X\to\textstyle\bigwedge^{k}T^*X,\quad(A_x,\alpha_x)\mapsto A_x+\alpha_x.
\end{equation*}
A generalized principal connection $\omega\in\Omega^1\left(\bigwedge^{k}T^*X,\bigwedge^{k}T^*X\right)$ on $\pi_{\bigwedge^{k}T^*X,X}$ is just a linear connection on that vector bundle, and it is associated to itself. The corresponding isomorphism \eqref{eq:Thetanu} is
\begin{equation*}
J^1\left(\textstyle\bigwedge^{k}T^*X\right)\to\textstyle\bigwedge^{k}T^*X\oplus\left(T^*X\otimes\textstyle\bigwedge^{k}T^*X\right),\quad j_x^1\alpha\mapsto\left(\alpha(x),(\alpha^*\omega)_x\right).
\end{equation*}
Analogous to classical Electromagnetism, we pick the Lie group subbundle of closed forms and we restrict the previous isomorphism to it, i.e.,
\begin{equation*}
\textstyle H=\left\{j_x^1\alpha\in J^1\left(\bigwedge^{k}T^*X\right): ({\rm d}\alpha)_x=0\right\}\overset{\omega}{\simeq}\bigwedge^{k}T^*X\oplus\left(T^*X\vee \bigwedge^{k}T^*X\right).
\end{equation*}

By using local coordinates, it can be shown that the identification of Theorem \ref{theorem:descomposicioncociente} reads (compare to Proposition \ref{prop:identificacionElectromagnetismo})
\begin{equation*}
\left.J^1\left(\textstyle\bigwedge^{k}T^*X\right)\right/H\to\textstyle\bigwedge^{k+1}T^*X,\quad\left[j_x^1 A\right]_H\mapsto\operatorname{Skew}(A^*\omega)_x=-({\rm d}A)_x.
\end{equation*}

The Yang--Mills Lagrangian $L: J^1\left(\bigwedge^{k}T^*X\right)\rightarrow\mathbb R$ is defined as
\begin{equation*}
L\left(j^1 A\right)=g({\rm d}A,{\rm d}A),\qquad A\in\Gamma\left(\pi_{\bigwedge^{k+1}T^*X,X}\right).
\end{equation*}
It is $H$-invariant, so we may consider the reduced Lagrangian $l:\bigwedge^{k+1}T^*X\rightarrow\mathbb R$. Namely, it is given by
\begin{equation*}
l(C)=g(C,C),\qquad C\in\Gamma\left(\pi_{\bigwedge^{k+1}T^*X,X}\right).
\end{equation*}
Fixed $C\in\Gamma\left(\pi_{\bigwedge^{k+1}T^*X,X}\right)$, the partial derivative is $\delta l/\delta C=2\,\iota_C g\in\Gamma\left(\pi_{\bigwedge^{k+1}TX,X}\right)$ and, hence, the reduced equation is
\begin{equation*}
\operatorname{div}^*\left(\iota_C g\right)=0.
\end{equation*}
In the previous expression, $\operatorname{div}^*:\Gamma\left(\pi_{TX\otimes\bigwedge^{k}TX}\right)\rightarrow\Gamma\left(\pi_{\bigwedge^{k}TX,X}\right)$ is the divergence of the linear connection $\omega$. Similarly to classical Electromagnetism, this reduced equation is equivalent to the Maxwell equations,
\begin{equation*}
{\rm d}^\star C=0.
\end{equation*}

%%%%%%%%%%
\subsection{Symmetry breaking by product groups}

In this example, we consider a gauge theory whose structure group is a direct product and we suppose that the gauge symmetry is broken to the subgroup given by one of the factors. More specifically, let $\pi_{P,X}:P\to X$ be a principal $G$-bundle with 
\begin{equation*}
G=N\times \operatorname{U}(1),    
\end{equation*}
being $N$ a semisimple Lie group, and denote by $\mathfrak g=\mathfrak{n}\oplus\mathbb R$ the corresponding Lie algebra. Consider the Lie subgroup $G_0=\{1\}\times \operatorname{U}(1)\simeq \operatorname{U}(1)$. Since action of $G$ on $G_0$ by conjugation is trivial, we have $\operatorname{Ad}(P)=(P\times G)/G\simeq(P\times H)/G\times \operatorname{U}(1)$. Hence, we may write:
\begin{equation*}
J^1\operatorname{Ad}(P)\simeq J^1\left(\frac{P\times N}{G}\right)\times_X J^1(X,\operatorname{U}(1)).
\end{equation*}

As a usual gauge theory, the configuration bundle is the bundle of connections of $\pi_{P,X}$, that is, $\pi_{C(P),X}$, but we suppose that the symmetry is broken to $G_0$, i.e., we only consider gauge transformations coming from elements of this subgroup. In other words, we restrict the right fibered action $C(P)\times_X J^1\operatorname{Ad}(P)\rightarrow C(P)$ to the Lie group subbundle $J^1(X,\operatorname{U}(1))\simeq\{1\}\times_X J^1(X,\operatorname{U}(1))\subset J^1 \operatorname{Ad}(P)$. A quick computation shows that it is given by:
\begin{equation*}
C(P)\times_X J^1(X,\operatorname{U}(1))\to C(P),\quad\left(A_x,j_x^1g\right)\mapsto A_x+(dg)_x g(x)^{-1}.
\end{equation*}
Analogous to Electromagnetism in vacuum, $T^*X$ may be taken as the Lie group bundle of symmetries,
\begin{equation}\label{eq:accionruptura}
C(P)\times_X T^*X\to C(P),\quad\left(A_x,\alpha_x\right)\mapsto A_x+\alpha_x.
\end{equation}
A trivialization of $\pi_{P,X}$ enables us to prove the following result.

\begin{lemma}
Let $P_0=P/G_0$, which is a principal $N$-bundle over $X$, and consider the corresponding bundle of connections, $C(P_0)\to X$. Then there exists a bundle isomorphism given by
\begin{equation*}
C(P)/T^*X\to C(P_0),\quad\left[A_x\right]_{T^*X}\mapsto(A_0)_x.
\end{equation*}
\end{lemma}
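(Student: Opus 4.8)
The plan is to send a principal connection $A$ on $P$ to the principal connection $A_0$ on $P_0=P/G_0$ obtained from the $\mathfrak n$-component of $A$, to check that this assignment is invariant under the $T^*X$-action, and to identify the resulting map on the quotient as an affine bundle isomorphism. The crucial structural input is that $G=N\times\operatorname{U}(1)$ is a direct product, so that $G_0=\{1\}\times\operatorname{U}(1)$ is central: the adjoint representation $\Ad_g$ then preserves the splitting $\mathfrak g=\mathfrak n\oplus\mathbb R$ and restricts to the identity on the $\mathbb R=\mathfrak u(1)$ summand, while $\Ad_h=\operatorname{id}$ for every $h\in G_0$.

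First I would establish the descent of the $\mathfrak n$-component. Let $p:P\to P_0$ denote the quotient projection, which is a principal $G_0$-bundle, and recall that the residual $N$-action turns $P_0$ into a principal $N$-bundle over $X$. Writing a connection form $A\in\Omega^1(P,\mathfrak g)$ as $A=A^{\mathfrak n}+A^{\mathbb R}$ according to $\mathfrak g=\mathfrak n\oplus\mathbb R$, the $G$-equivariance of $A$ together with the triviality of $\Ad_{G_0}$ gives $R_h^*A^{\mathfrak n}=A^{\mathfrak n}$ for all $h\in G_0$; moreover, for $\zeta\in\mathfrak u(1)=\mathbb R$ the fundamental field $\zeta_P^*$ satisfies $A(\zeta_P^*)=\zeta\in\mathbb R$, so $A^{\mathfrak n}(\zeta_P^*)=0$. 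Hence $A^{\mathfrak n}$ is $G_0$-basic and descends to a unique $A_0\in\Omega^1(P_0,\mathfrak n)$ with $p^*A_0=A^{\mathfrak n}$. A direct check shows $A_0$ is a principal connection for the $N$-action: for $\xi\in\mathfrak n$ one has $\xi_{P_0}^*=p_*\xi_P^*$, whence $A_0(\xi_{P_0}^*)=A^{\mathfrak n}(\xi_P^*)=\operatorname{pr}_{\mathfrak n}(\xi)=\xi$, and the $N$-equivariance of $A_0$ follows from that of $A$ since $\operatorname{pr}_{\mathfrak n}$ is $\Ad(G)$-equivariant and $p$ intertwines the $N$-actions. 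Because this construction only uses the restriction of $A$ to a single fibre, it is pointwise and defines a bundle morphism $C(P)\to C(P_0)$ over $X$, $A_x\mapsto(A_0)_x$.

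Then I would verify $T^*X$-invariance and bijectivity. The action is $A_x\mapsto A_x+\alpha_x$ with $\alpha_x=(dg)_xg(x)^{-1}\in T_x^*X\otimes\mathfrak u(1)$, i.e. $\alpha_x$ lies entirely in the $\mathbb R$-direction; thus $(A_x+\alpha_x)^{\mathfrak n}=A_x^{\mathfrak n}$ and the morphism is constant on $T^*X$-orbits, descending to $C(P)/T^*X\to C(P_0)$, $[A_x]_{T^*X}\mapsto(A_0)_x$. To see it is an isomorphism, recall that $C(P)$ and $C(P_0)$ are affine bundles modelled on $T^*X\otimes\operatorname{ad}(P)$ and $T^*X\otimes\operatorname{ad}(P_0)$, where $\operatorname{ad}(P)=(P\times\mathfrak g)/G$. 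The splitting $\mathfrak g=\mathfrak n\oplus\mathbb R$ with $\Ad$ acting trivially on $\mathbb R$ yields $\operatorname{ad}(P)\simeq\operatorname{ad}(P_0)\oplus(X\times\mathbb R)$, so $T^*X\otimes\operatorname{ad}(P)\simeq(T^*X\otimes\operatorname{ad}(P_0))\oplus T^*X$, and the $T^*X$-action is precisely translation along the last summand. The map above is affine and its linear part is the induced isomorphism $(T^*X\otimes\operatorname{ad}(P))/T^*X\xrightarrow{\sim}T^*X\otimes\operatorname{ad}(P_0)$; being a fibrewise affine bijection covering $\operatorname{id}_X$, it is a bundle isomorphism.

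The main obstacle is the second paragraph: showing that the $\mathfrak n$-part of $A$ really descends to a bona fide principal connection on $P_0$ and not merely to an $\mathfrak n$-valued form. This is exactly where the direct-product hypothesis is indispensable — it guarantees both the $G_0$-invariance and the verticality needed for basicity, as well as the correct normalisation $A_0(\xi_{P_0}^*)=\xi$ on fundamental fields. Once this is in place, the $T^*X$-invariance and the affine-bundle isomorphism are routine, and a local trivialisation $P|_{\mathcal U}\simeq\mathcal U\times G$ (in which $A^{\mathbb R}$ is shifted freely by $T^*X$ while $A^{\mathfrak n}$ is left untouched) makes the computations fully explicit and confirms that the identification is independent of the chosen trivialisation.
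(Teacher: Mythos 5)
Your proof is correct, but it follows a genuinely different route from the paper's. The paper disposes of this lemma with the remark that ``a trivialization of $\pi_{P,X}$ enables us to prove'' it: one works in a chart $P|_{\mathcal U}\simeq\mathcal U\times N\times\operatorname{U}(1)$, where connections become $(\mathfrak n\oplus\mathbb R)$-valued $1$-forms on $\mathcal U$, the $T^*X$-action is visibly translation of the $\mathbb R$-component, and the quotient is locally $T^*\mathcal U\otimes\mathfrak n$, i.e.\ the local model of $C(P_0)$; one must then check compatibility with transition functions. You instead argue intrinsically: the centrality of $G_0=\{1\}\times\operatorname{U}(1)$ makes $\Ad$ preserve the splitting $\mathfrak g=\mathfrak n\oplus\mathbb R$ and act trivially on $\mathbb R$, so the $\mathfrak n$-component $A^{\mathfrak n}$ of a connection form is $G_0$-basic (invariant because $\Ad_{G_0}=\operatorname{id}$, horizontal because $A^{\mathfrak n}$ kills the $\mathfrak u(1)$-fundamental fields) and descends to a genuine principal connection $A_0$ on $P_0$, with the normalisation $A_0(\xi^*_{P_0})=\xi$ and $N$-equivariance following from $\Ad(G)$-equivariance of $\operatorname{pr}_{\mathfrak n}$ and injectivity of $p^*$. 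Your identification of the linear models, $\operatorname{ad}(P)\simeq\operatorname{ad}(P_0)\oplus(X\times\mathbb R)$, correctly exhibits the map $C(P)\to C(P_0)$ as fibrewise affine with surjective linear part whose kernel is exactly the $T^*X$-direction, so its fibres are precisely the $T^*X$-orbits and the induced map on the quotient is a fibrewise affine bijection, hence a bundle isomorphism. What your approach buys is coordinate-freeness: global well-definedness and independence of trivialisation are automatic, whereas the paper's route is shorter and fully computational but leaves the gluing check implicit. Both are sound; yours is the more complete written argument.
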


On the other hand, from the jet extension of the fibered action \eqref{eq:accionruptura}, we are only interested in elements coming from $J^2(X,\operatorname{U}(1))\subset J^1\left(J^1(X,\operatorname{U}(1))\right)$, which correspond to the Lie group subbundle
\begin{equation*}
H=\left\{j_x^1\alpha\in J^1(T^*X):({\rm d}\alpha)_x=0\right\}\subset J^1(T^*X).
\end{equation*}
Let $\nu$ be a linear connection on $\pi_{T^*X,X}$ such that $\hat\nu(T^*X)\subset H$. Note that equation \eqref{eq:Hnusimetrico} is also valid for this case, so we may choose $\varphi=0$ and, hence, $\mathfrak{H}=\bigvee^2 T^*X$. Similarly, we have
\begin{equation*}
\ad_{\mathfrak H,0}(C(P))\simeq C(P_0)\times_X\textstyle\bigwedge^2 T^*X.
\end{equation*}
A slight modification of the proof of Proposition \ref{prop:identificacionElectromagnetismo} leads to the following result.

\begin{proposition}
Let $\omega\in\Omega^1(C(P),T^*X)$ be a generalized principal connection on $\pi_{C(P),C(P_0)}$ associated to $\nu$. Then the identification of Theorem \ref{theorem:descomposicioncociente} reads
\begin{equation*}
J^1 C(P)/H\to J^1C(P_0)\times_X C(P_0)\times_X\textstyle\bigwedge^2 T^*X,\quad\left[j_x^1 A\right]_H\mapsto\left(j_x^1A_0,A_0(x),-\tilde F_x^A\right).
\end{equation*}
\end{proposition}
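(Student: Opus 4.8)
The plan is to specialise Theorem~\ref{theorem:descomposicioncociente} to the data $Y=C(P)$, $\mathcal G=T^*X$ (with its fibrewise additive structure), the subbundle $H$ of closed forms, and the choices $\varphi=0$, $\mathfrak H=\bigvee^2 T^*X$ permitted by \eqref{eq:Hnusimetrico}, and then to read off each factor of the target of \eqref{ISO} from the identifications already established in this subsection.

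First I would record that the base quotient is $Y/\mathcal G=C(P)/T^*X\simeq C(P_0)$ by the preceding Lemma, under which $\sigma_A=[A]_{T^*X}$ corresponds to $A_0$. Hence $J^1(Y/\mathcal G)\simeq J^1C(P_0)$ and the first slot $j^1_x\sigma_A$ of \eqref{ISO} becomes $j^1_xA_0$, whose base point in $C(P_0)$ is $A_0(x)$. This accounts for the two leftmost factors $J^1C(P_0)\times_X C(P_0)$: the middle $C(P_0)$ merely records the base point of the jet, so that the image in fact lies in the fibred product over $C(P_0)$ prescribed by Theorem~\ref{theorem:descomposicioncociente}.

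Next I would identify the reduced vector bundle $(T^*X\otimes\varfrak g)_{0,\mathfrak H}$. Since $T^*X$ is abelian, the $\Ad$-term in \eqref{eq:affineaction} is trivial and $\Phi_0$ acts only by translation in the $C(P)$-slot, exactly as in the proof of Proposition~\ref{prop:identificacionElectromagnetismo}. Combining this with the splitting $C(P)/T^*X\simeq C(P_0)$ and the skew-symmetrisation isomorphism $(T^*X\otimes T^*X)/\bigvee^2 T^*X\simeq\bigwedge^2 T^*X$ yields $(T^*X\otimes\varfrak g)_{0,\mathfrak H}\simeq C(P_0)\times_X\bigwedge^2 T^*X$ as a vector bundle over $C(P_0)$. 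Under this identification the second slot $\llbracket A(x),\omega_{A(x)}\circ(dA)_x\rrbracket_{\mathfrak H,0}$ of \eqref{ISO} is sent to $\bigl(A_0(x),\operatorname{Skew}(A^*\omega)_x\bigr)$.

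The only computational step that remains is to check that $\operatorname{Skew}(A^*\omega)_x=-\tilde F^A_x$, and this is carried out verbatim as in Proposition~\ref{prop:identificacionElectromagnetismo}: in a local trivialisation the skew part of the pulled-back connection form equals minus the abelian curvature of $A$. The unbroken semisimple factor $N$ plays no role here, since the residual $\operatorname{U}(1)$-symmetry, and therefore $\omega$, only sees the abelian component of $A$. Assembling the three slots then gives the asserted map $[j^1_xA]_H\mapsto(j^1_xA_0,A_0(x),-\tilde F^A_x)$. I expect the only point requiring care, as opposed to genuine difficulty, to be bookkeeping: in pure Electromagnetism the quotient $C(P)/T^*X$ collapses to $X$, trivialising both the $J^1(Y/\mathcal G)$ and base-point factors, whereas here $C(P_0)\to X$ is genuinely nontrivial, which is precisely what produces the extra $J^1C(P_0)\times_X C(P_0)$ factors; no new analytic obstruction arises.
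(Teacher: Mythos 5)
Your proposal is correct and takes essentially the same route as the paper: the paper's own argument is precisely the combination of the preceding Lemma ($C(P)/T^*X\simeq C(P_0)$), the identification $(T^*X\otimes\varfrak{g})_{0,\mathfrak{H}}\simeq C(P_0)\times_X\bigwedge^2 T^*X$ obtained from the Abelian action and skew-symmetrization, and a ``slight modification'' of the proof of Proposition \ref{prop:identificacionElectromagnetismo}, ending with the local-coordinate check $\operatorname{Skew}(A^*\omega)_x=-\tilde F_x^A$. Your bookkeeping of how the nontrivial quotient $C(P_0)$ propagates into the jet and base-point slots is exactly the modification the paper has in mind.
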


Thanks to this identification, for each section $A\in\Gamma(\pi_{C(P),X})$ we define the reduced section as 
\begin{equation*}
\overline A=\left(A_0,\tilde F^A\right)\in\Gamma\left(\pi_{C(P_0)\times_X\bigwedge^2 T^*X,X}\right).
\end{equation*}

Let $L: J^1C(P)\rightarrow\mathbb R$ be an $H$-invariant Lagrangian density and consider the reduced Lagrangian, $l: J^1C(P_0)\times_X C(P_0)\times_X\bigwedge^2 T^*X\rightarrow\mathbb R$. Let $\nabla^0$ be a torsion free linear connection on $T\left(C(P_0)\right)$ projectable onto a linear connection $\nabla^X$ on $TX$. We know that it induces an affine connection $\nabla^{(1)}$ on $\pi_{J^1C(P_0),C(P_0)}$. In addition, we assume that $\nu$ is the dual connection of $\nabla^X$. These connections induce an affine connection on the reduced space, as described in Proposition \ref{prop:inducedconnection}.
The partial derivatives of the reduced Lagrangian are\footnote{
Recall that $\left(\bigwedge^2 T^*X\right)^*=\bigwedge^2 TX=\left((T^*X\otimes T^*X)\left/\bigvee^2 T^*X\right.\right)^*$.
}
\begin{equation*}
\frac{\delta l}{\delta A_0}\in\Gamma\left(\pi_{T^*\left(C(P_0)\right),X}\right),\quad\frac{\delta l}{\delta j^1 A_0}\in\Gamma\left(\pi_{TX\otimes V^*\left(C(P_0)\right),X}\right),\quad\frac{\delta l}{\delta \tilde F^A}\in\Gamma\left(\pi_{\bigwedge^2 TX,X}\right).
\end{equation*}
Let $\nabla^*:\Gamma(\pi_{T^*X,X})\rightarrow\Gamma(\pi_{T^*X\otimes T^*X,X})$ be the covariant derivative associated to $\nu$ and denote by $\operatorname{div}^*:\Gamma(\pi_{TX\otimes TX,X})\rightarrow\Gamma(\pi_{TX,X})$ its divergence. Likewise, let $\operatorname{div}^0:\Gamma(\pi_{TX\otimes V^*\left(C(P_0)\right),X})\rightarrow\Gamma(\pi_{V^*\left(C(P_0)\right),X})$ be the divergence of the operator $\overline\nabla^0$ defined from $\nabla^0$ in (\ref{eq:overlinenablaEG}). The reduced equations are straightforwardly obtained from \emph{(iv)} of Theorem \ref{theorem:reducedequationsmathfrakH}.

\begin{theorem}
Let $\omega\in\Omega^1(C(P),T^*X)$ be a generalized principal connection on $\pi_{C(P),C(P_0)}$ associated to $\nu$, and $A\in\Gamma(\pi_{C(P),X})$ be a solution of the Euler--Lagrange equations for $L$. Then the reduced section $\overline A=\left(A_0,\tilde F^A\right)\in\Gamma(\pi_{C(P_0)\times_X\bigwedge^2 T^*X,X})$ satisfies the following reduced equations:
\begin{equation*}
\left\{
\begin{array}{l}
\displaystyle\frac{\delta l}{\delta A_0}-\operatorname{div}^0\left(\frac{\delta l}{\delta j^1 A_0}\right)=\left\langle\frac{\delta l}{\delta \tilde F^A},\iota_{dA_0}\tilde\Omega\right\rangle,\vspace{0.1cm}\\
\displaystyle\operatorname{div}^*\left(\frac{\delta l}{\delta \tilde F^A}\right)=0.
\end{array}
\right.
\end{equation*}
where $\tilde\Omega\in\Omega^2(C(P_0),T^*X)$ is the reduced curvature of $\omega$.
\end{theorem}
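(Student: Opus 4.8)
The plan is to read this theorem as a direct specialization of the general reduced field equations in Theorem \ref{theorem:reducedequationsmathfrakH}, exactly as the paragraph preceding the statement announces. The applicability of that theorem (free and proper action of $T^*X$ on $C(P)$ by translations, $H$-invariance of $L$) is granted by the construction set up earlier in this subsection, so the real work is a careful translation of the general system into the present dictionary, followed by the observation that two of the three terms in the vertical operator vanish here. First I would fix the correspondences: $Y = C(P)$, the symmetry Lie group bundle is $\mathcal G = T^*X$ (additive, hence Abelian), and by the Lemma above $Y/\mathcal G \simeq C(P_0)$. Under the isomorphism of Theorem \ref{theorem:descomposicioncociente}, the reduced section $\overline A = (A_0, \tilde F^A)$ decomposes so that the base section $\sigma_s$ of the general theorem is $A_0$ while the role of $\overline s$ is played by $\tilde F^A$. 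Matching the pieces, $\operatorname{div}^{Y/\mathcal G} = \operatorname{div}^0$, and the three partial derivatives $\delta l/\delta A_0$, $\delta l/\delta j^1 A_0$, $\delta l/\delta\tilde F^A$ become $\delta l/\delta\sigma_s$, $\delta l/\delta j^1\sigma_s$, $\delta l/\delta\overline s$ respectively; here $\delta l/\delta\tilde F^A$ lands in the annihilator $\tilde{\mathfrak H}^\circ$, which via skew-symmetrization of $(T^*X\otimes T^*X)/\bigvee^2 T^*X \simeq \bigwedge^2 T^*X$ is identified with $\bigwedge^2 TX$, matching the stated codomain.

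With this dictionary the horizontal equation of Theorem \ref{theorem:reducedequationsmathfrakH}$(iv)$ transcribes verbatim into the first displayed equation, because $\nabla^0$ is chosen torsion-free, so the torsion correction of the Remark following that theorem drops out, and $\tilde\Omega$ is precisely the reduced curvature of $\omega$ on $C(P_0)$ with values in $\tilde{\varfrak g} \simeq T^*X$.

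The genuine content — and the only point I expect to require care — is collapsing the vertical operator $\operatorname{div}^{\tilde{\varfrak g}} - \ad_{\overline s}^* + \tilde\varphi_*^\dagger$ down to $\operatorname{div}^*$. The key is that $\mathcal G = T^*X$ is Abelian: its adjoint representation is trivial, $\Ad_g = \operatorname{id}$, so the fibered adjoint map $\ad$ on the generalized adjoint bundle vanishes identically and hence $\ad_{\overline s}^* = 0$. The same triviality of $\Ad$ makes the $\mathcal G$-action on $\varfrak g = T^*X$ defining $\tilde{\varfrak g}$ trivial, so $\tilde{\varfrak g} \simeq T^*X$ and the induced connection $\nabla^{\tilde{\varfrak g}}$ is just the linear connection $\nabla^*$ on $T^*X$ associated to $\nu$; therefore $\operatorname{div}^{\tilde{\varfrak g}} = \operatorname{div}^*$. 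Finally, the choice $\varphi = 0$, legitimate by \eqref{eq:Hnusimetrico} together with the hypothesis $\hat\nu(T^*X)\subset H$, gives $\tilde\varphi_* = 0$ and hence $\tilde\varphi_*^\dagger = 0$. Substituting these three facts into the vertical equation $(\operatorname{div}^{\tilde{\varfrak g}} - \ad_{\overline s}^* + \tilde\varphi_*^\dagger)(\delta l/\delta\overline s) = 0$ yields exactly $\operatorname{div}^*(\delta l/\delta\tilde F^A) = 0$. Since $A$ solves the Euler--Lagrange equations for $L$ by hypothesis, assertion $(ii)$ of Theorem \ref{theorem:reducedequationsmathfrakH} holds, hence so does $(iv)$, and both reduced equations follow; no further computation beyond these vanishing checks and the identification $\operatorname{div}^{\tilde{\varfrak g}} = \operatorname{div}^*$ is needed.
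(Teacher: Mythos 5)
Your proposal is correct and follows essentially the same route as the paper: the paper's entire justification is the sentence that the equations are ``straightforwardly obtained from \emph{(iv)} of Theorem \ref{theorem:reducedequationsmathfrakH}'', i.e.\ exactly the specialization you carry out. Your dictionary ($Y=C(P)$, $\mathcal G=T^*X$, $Y/\mathcal G\simeq C(P_0)$, $\sigma_s=A_0$, $\overline s=\tilde F^A$) together with the three vanishing checks --- $\ad_{\overline s}^*=0$ by commutativity of $T^*X$, $\tilde\varphi_*^\dagger=0$ since $\varphi=0$, and $\operatorname{div}^{\tilde{\varfrak g}}=\operatorname{div}^*$ because the trivial adjoint action gives $\tilde{\varfrak g}\simeq T^*X$ with induced connection $\nu$ --- supplies precisely the details the paper leaves implicit.
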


\subsection{Non-Abelian gauge theories}

Electromagnetism (section \ref{sec:em} above) is an instance of an Abelian gauge theory. We now extend these result to non-Abelian gauge theories. In particular, we show that the Utiyama theorem (see \cite{Ut1956} for the original version, and \cite{Ga1977} for the geometric version) and the Yang--Mills equations may be obtained through the gauge reduction process described in this article.

\subsubsection{Geometric formulation}

Let $\pi_{P,X}:P\to X$ be a (standard) principal bundle with a semisimple structure group  $G$. The configuration bundle of gauge theories is the bundle of connections:
\begin{equation*}
\pi_{C(P),X}:C(P)=\left(J^1 P\right)/G\to X,    
\end{equation*}
whose sections define principal connections on $\pi_{P,X}$, and which is an affine bundle modelled on $\pi_{T^*X\otimes\mathfrak g,X}:T^*X\otimes\tilde{\mathfrak g}\to X$, the bundle of covectors taking values in the adjoint bundle $\tilde{\mathfrak g}=(P\times\mathfrak g)/G$, where $\mathfrak g$ is the Lie algebra of $G$ and the action of $G$ on $\mathfrak g$ is given by the adjoint representation.

The main instance in this framework is the \emph{Yang--Mills theory}. In this case, given a pseudo-Riemannian metric, $g$, on $X$ and the Killing metric, $K:\mathfrak g\times\mathfrak g\to\mathbb R$, on the Lie algebra, we first define a fibered inner product:
\begin{equation*}
\begin{array}{rccl}
\left\langle\cdot,\cdot\right\rangle_g: & \left(\bigwedge^2 T^*X\otimes\tilde{\mathfrak g}\right)\times_X\left(\bigwedge^2 T^*X\otimes\tilde{\mathfrak g}\right)& \to & \mathbb R\\
& \left(\alpha_1\otimes[p,\xi_1]_G,\alpha_2\otimes[p,\xi_2]_G\right) & \mapsto & g(\alpha_1,\alpha_2)\,K(\xi_1,\xi_2).
\end{array}
\end{equation*}
Then the Yang--Mills Lagrangian density is $\mathfrak L=L\,v_g$, where $v_g\in\Omega^n(X)$ is the pseudo-Riemannian volume form and
\begin{equation}\label{eq:ymdensity}
L:J^1 C(P)\to\mathbb R,\quad j_x^1 A\mapsto\left\langle\tilde F_x^A,\tilde F_x^A\right\rangle_g, 
\end{equation}
where $\tilde F^A\in\Omega^2(X,\tilde{\mathfrak g})$ is the \emph{reduced curvature} of the principal connection $A$, i.e., the curvature regarded as a 2-form on the base manifold with values in the adjoint bundle. 

The Yang--Mills Lagrangian density is a particular case of a gauge invariant Lagrangian. From the perspective of fibered actions, gauge transformations can be regarded as sections of the associated bundle $\pi_{\tilde G,X}:\tilde{G}=(P\times G)/G\to X$, where the action of $G$ on itself is by conjugation, which is a Lie group bundle with the fiberwise group structure inherited from $G$. this Lie group bundle acts on the left on $\pi_{P,X}$. In order to have a fiber action on $C(P)$, we need first derivatives of the sections of $\pi_{\tilde G,X}$, i.e., we consider its first jet bundle. More precisely, we define
\begin{equation}\label{eq:fiberedactionymj}
J^1 \tilde{G}\times_X C(P)\to C(P),\quad\left(j_x^1\gamma,[j_x^1 s]_G\right)\mapsto \left[j_x^1(\gamma\cdot s)\right]_G.
\end{equation}
Following the notation in this article, $\mathcal{G}=J^1\tilde{G}$ is the Lie group bundle of the reduction. Furthermore, the gauge invariance of a Lagrangian is understood with respect to the Lie group subbundle $H=J^2\tilde{G} \subset J^1\left(J^1 \tilde{G}\right)=J^1\mathcal G$.

Unfortunately, since the fibered action \eqref{eq:fiberedactionymj} is not free, the corresponding quotients fail to be manifolds and our results cannot be applied as they are. However, this singular situation can be fixed by extending the configuration bundle of the theory; namely, we take $\pi_{J^1 P,X}$ instead of $\pi_{C(P),X}$ as the configuration bundle. The action of $J^1\tilde{G}$ is now free and, on fact, transitive, i.e., $\left.J^1 P\right/J^1 \tilde{G}\simeq X$.

Observe that any Lagrangian $L:J^1C(P)\to \mathbb{R}$ may be lifted to a new (unreduced) Lagrangian $\hat{L}: J^1\left(J^1P\right)\to\mathbb{R}$ by setting $\hat L\left(j_x^1\hat s\right)=L\left(j^1_x[\hat{s}]_G\right)$ for each $j^1_x\hat{s}\in J^1\left(J^1P\right)$. As a straightforward consequence of \eqref{eq:quotientfulljets}, a function $\hat{L}\in C^\infty\left(J^1\left(J^1P\right)\right)$ comes from a function $L\in C^\infty\left(J^1 C(P)\right)$ if and only if it is invariant with respect to the following (right) fibered action,
\begin{equation}\label{eq:actionyangmillsj}
J^1\left(J^1 P\right)\times_X J^1(X,G)\to J^1\left(J^1 P\right),\quad\left(j^1_x\hat{s},j^1_x\tilde h\right)\mapsto j^1_x\left(\hat{s}\cdot\tilde h\right),
\end{equation}
where $J^1(X,G)$ is the jet of functions from $X$ to $G$, which coincides with the jet of the trivial bundle $X\times G\to X$.

\subsubsection{Reduced configuration space}

Since all the results concerning reduction are local, for brevity we will confine ourselves to trivializing charts. More precisely, we may assume that $P=X\times G$ with $X=\mathbb{R}^n$, whence $\tilde{G}=X\times G$ and $\tilde{\mathfrak g}=X\times\mathfrak g$. By means of the right trivialization of the tangent bundle of $G$, we can identify $J^1 P=J^1 \tilde{G}=G\ltimes(T^*X\otimes\mathfrak g)$ and $J^1\tilde{\mathfrak g}=\mathfrak g\times(T^*X\otimes\mathfrak g)$. The expression of the fibered group product with this identification is (cf. \cite[Theorem 4.2]{CaRo2023b}):
\begin{equation*}
\left(g,\xi_x\right)\cdot\left(h,A_x\right)=\left(gh,\xi_x+\Ad_g\circ A_x\right),\qquad(g,\xi_x),(h,A_x)\in J^1\tilde{G}.
\end{equation*}
This expression also holds for the fibered action of $\pi_{J^1\tilde G,X}$ on $\pi_{J^1 P,X}$. In addition, with the aid of any linear connection $\nabla^X$ on the tangent bundle of $X$, we have an identification (cf. \cite[Theorem 3.1]{CaRo2023b})
\begin{equation*}
J^1\left(J^1 P\right)=J^1\left(J^1 \tilde{G}\right)\simeq G\times(T^*X\otimes\mathfrak g)\times_X(T^*X\otimes\mathfrak g)\times_X(T^*X\otimes T^*X\otimes\mathfrak g).
\end{equation*}
In this trivialization, the extension of the dual connection of $\nabla^X$ to $\mathfrak g$-valued forms on $X$ is used, i.e., $\tilde\nabla:\Gamma(T^*X\otimes\mathfrak g)\to\Gamma(T^*X\otimes T^*X\otimes\mathfrak g)$. In this situation, the second jet bundle takes the form (cf. \cite[Corollary 5.5]{CaRo2023b}) 
\begin{equation*}
\label{Hj}
H= J^2 \tilde{G}\simeq \left\{\left(g,\xi_x;\eta_x,\phi_x\right)\in J^1\left(J^1 \tilde{G}\right):\xi_x=\eta_x,~\operatorname{Skew}(\phi_x)=-\frac{1}{2}[\xi_x,\xi_x]\right\},
\end{equation*}
where $\operatorname{Skew}:T^*X\otimes T^*X\to\bigwedge^2 T^*X$ is the skew-symmetrization map. As a Lie group connection on $\pi_{J^1\tilde G,X}$ and principal bundle connection on $\pi_{J^1 P,X}$ we choose the jet fields $\hat\nu:J^1\tilde G\to J^1(J^1\tilde G)$, $ \hat{\nu}(g,\xi_x)=(g,\xi_x;0_x,0_x)$ and $\hat\omega:J^1 P\to J^1\left(J^1 P\right)$, $\hat\omega (g,\xi_x)=(g,\xi_x;0_x,0_x)$ (see \cite[Lemma 4.1]{CaRo2023} and \cite[Proposition 4.3]{CaRo2023}, respectively). With them, we have  $\mathfrak{H}=\bigvee ^2T^*X\otimes \tilde{\mathfrak{g}}$ and we can take 
\begin{equation*}
\varphi:J^1\tilde G\to T^*X\times_X J^1\tilde{\mathfrak g},\qquad \varphi(g,\xi_x)=\left(\xi_x,-\frac{1}{2}[\xi_x,\xi_x]\right),
\end{equation*}
for $H$ in Proposition \ref{ggg}. Under these identifications, it can be checked that Theorem \ref{theorem:descomposicioncociente} reads as follows.

\begin{proposition}[Reduced space]\label{lemma:identificationRHS}
For gauge theories, the identification \eqref{ISO} is given by
\begin{equation}\label{ISOgauj}
\left.\left(J^1\left(J^1P\right)\right)\right/J^2\tilde{G}\overset{\sim}{\to}(T^*X\otimes\mathfrak{g})\oplus\left(\textstyle\bigwedge ^2T^*X\otimes\mathfrak{g}\right),\quad\left[j_x^1\hat s\right]_{J^2\tilde G}\mapsto\left(A_x,-\tilde F_x^A\right),
\end{equation}
where $A_x=[\hat s(x)]_{G}\in C(P)$ is the (pointwise) principal connection and $\tilde F_x^A\in\bigwedge^2 T^*X\otimes\mathfrak g$ denotes its \emph{reduced curvature}.
\end{proposition}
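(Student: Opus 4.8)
The plan is to specialize Theorem~\ref{theorem:descomposicioncociente} to the gauge data $Y=J^1P$, $\mathcal G=J^1\tilde G$ and $H=J^2\tilde G$, equipped with the flat jet fields $\hat\nu,\hat\omega$ and the map $\varphi(g,\xi_x)=\bigl(\xi_x,-\tfrac12[\xi_x,\xi_x]\bigr)$. First I would check that this data meets the hypotheses of Proposition~\ref{ggg}: the identity of $J^1\tilde G$ is $(1,0_x)$, so $\varphi(1,0_x)=0$, and the element $(g,\xi_x;\varphi(g,\xi_x))=(g,\xi_x;\xi_x,-\tfrac12[\xi_x,\xi_x])$ satisfies the defining relations $\eta_x=\xi_x$ and $\operatorname{Skew}(\phi_x)=-\tfrac12[\xi_x,\xi_x]$ of $J^2\tilde G$, since $[\xi_x,\xi_x]$ is already skew. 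The residual directions one may add while staying inside $H$ are exactly those with vanishing first component and symmetric second component, which pins down the model bundle as $\mathfrak H=\bigvee^2T^*X\otimes\tilde{\mathfrak g}$, as claimed.

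Next I would exploit transitivity. Because the action of $J^1\tilde G$ on $J^1P$ is fibrewise transitive, $Y/\mathcal G\simeq X$, so the jet factor $J^1(Y/\mathcal G)$ appearing in \eqref{ISO} carries no information and the entire content of the identification collapses onto the second factor $(T^*X\otimes\varfrak g)_{\varphi,\mathfrak H}$. To identify this factor I would use $\varfrak g=\operatorname{Lie}(J^1\tilde G)=J^1\tilde{\mathfrak g}\simeq\tilde{\mathfrak g}\oplus(T^*X\otimes\tilde{\mathfrak g})$ induced by $\hat\nu$, whence $T^*X\otimes\varfrak g\simeq(T^*X\otimes\tilde{\mathfrak g})\oplus(T^*X\otimes T^*X\otimes\tilde{\mathfrak g})$. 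Quotienting by $\mathfrak H=\{0\}\oplus(\bigvee^2T^*X\otimes\tilde{\mathfrak g})$ replaces the second summand by its skew part $\bigwedge^2T^*X\otimes\tilde{\mathfrak g}$; fixing a representative via transitivity then gives $(T^*X\otimes\varfrak g)_{\varphi,\mathfrak H}\simeq(T^*X\otimes\mathfrak g)\oplus(\bigwedge^2T^*X\otimes\mathfrak g)$ over $X$ (locally $\tilde{\mathfrak g}=\mathfrak g$).

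The heart of the argument is to evaluate the map $[j^1_x\hat s]_{J^2\tilde G}\mapsto\llbracket\hat s(x),\omega_{\hat s(x)}\circ(d\hat s)_x\rrbracket_{\mathfrak H,\varphi}$ in these coordinates. Writing $\hat s(x)=(g(x),\mathcal A_x)$, the flat jet field $\hat\omega$ makes the $\tilde{\mathfrak g}$-valued part of $\omega_{\hat s(x)}\circ(d\hat s)_x$ record exactly the principal connection $A_x=[\hat s(x)]_G$, while the remaining $T^*X\otimes T^*X\otimes\tilde{\mathfrak g}$-valued part records the covariant derivative of $\mathcal A$ via $\tilde\nabla$, i.e. a term of the form $dA$ after passing to $\tilde{\mathfrak g}$. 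Reducing modulo $\mathfrak H$ keeps only its skew part $\operatorname{Skew}(dA)$, and the normalization by $\Phi_\varphi$ supplies the quadratic term $\tfrac12[A_x,A_x]$ dictated by $\varphi$. The structure equation $\tilde F^A=dA+\tfrac12[A,A]$ then assembles these two pieces into $-\tilde F^A_x$, yielding the asserted image $(A_x,-\tilde F^A_x)$.

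The step I expect to be the main obstacle is this final one: coordinating the three simultaneous reductions — the $G$-quotient identifying $A_x=[\hat s(x)]_G$, the $\mathfrak H$-quotient performing the skew-symmetrization, and the $\Phi_\varphi$-normalization contributing the bracket term — and verifying that the quadratic piece produced by $\varphi$ precisely completes $\operatorname{Skew}(dA)$ into the curvature $dA+\tfrac12[A,A]$ with the correct sign. This bookkeeping rests on the explicit descriptions of $J^1(J^1P)$ and $J^2\tilde G$ from \cite{CaRo2023b} and on the choice of the flat connections $\hat\nu,\hat\omega$, which prevent the Christoffel-type terms of $\nabla^X$ and $\tilde\nabla$ from contaminating the skew part; with those normalizations in force the identity reduces to the Maurer--Cartan structure equation, recovering the reduced curvature and, as a by-product, the Utiyama phenomenon that the reduced space sees only curvature.
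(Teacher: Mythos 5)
Your skeleton is the same as the paper's (which offers no more detail than ``it can be checked''): specialize Theorem \ref{theorem:descomposicioncociente} to $Y=J^1P$, $\mathcal G=J^1\tilde G$, $H=J^2\tilde G$ with the flat jet fields $\hat\nu,\hat\omega$; verify the hypotheses of Proposition \ref{ggg}, obtaining $\mathfrak H=\bigvee^2T^*X\otimes\tilde{\mathfrak g}$ and $\varphi(g,\xi_x)=(\xi_x,-\tfrac{1}{2}[\xi_x,\xi_x])$; use transitivity to discard the $J^1(Y/\mathcal G)$ factor; and identify $(T^*X\otimes\varfrak{g})_{\varphi,\mathfrak{H}}$ with $(T^*X\otimes\mathfrak g)\oplus(\bigwedge^2T^*X\otimes\mathfrak g)$. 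All of that is correct. But the entire content of the Proposition is the evaluation of the map, which you yourself single out as the main obstacle, and there your argument fails.

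The wrong step is the assertion that the $\tilde{\mathfrak g}$-valued part of $\omega_{\hat s(x)}\circ(d\hat s)_x$ ``records exactly the principal connection $A_x=[\hat s(x)]_G$''. It does not: a connection form applied to $(d\hat s)_x$ produces derivative data. Writing $\hat s=(g,\mathcal A)$ in the trivialization $J^1P\simeq G\ltimes(T^*X\otimes\mathfrak g)$, the flat $\hat\omega$ returns the right logarithmic derivative of $\hat s$ in the group bundle, whose $\mathfrak g$-slot is $(dg)_x\,g(x)^{-1}$, not $\mathcal A_x$. There is also a structural reason why your mechanism cannot work: the action of $\mathcal G_x$ on $Y_x$ is transitive and $H$ projects onto $\mathcal G$, so the $H$-action moves the point value $\hat s(x)$ arbitrarily inside its fiber, and hence no quantity computed from $\hat s(x)$ alone is constant on $J^2\tilde G$-orbits --- concretely, acting by $j^2_x\gamma$ with $\gamma(x)=1$ and $(d\gamma)_x\neq 0$ replaces $A_x$ by $A_x+(d\gamma)_x$, so $[\hat s(x)]_G$ is not an invariant. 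In the correct computation $\mathcal A_x$ enters only through the $\Phi_\varphi$-quotient: normalizing the $Y$-component to a reference point means acting by $(g(x),\mathcal A_x)^{-1}$, whose image under $\varphi$ carries $-\Ad_{g(x)^{-1}}\circ\mathcal A_x$ in its first slot; this combines with the logarithmic derivative to produce the gauge-transformed potential $B_x=\Ad_{g(x)^{-1}}\circ\bigl(\mathcal A_x-(dg)_x\,g(x)^{-1}\bigr)$, and the second slot assembles into $-\tilde F^{B}_x$ only after a cancellation involving $\varphi$'s quadratic term, the Maurer--Cartan identity for $(dg)g^{-1}$, and the $\Ad$-corrections of the semidirect product --- not by ``$\varphi$ completing $\operatorname{Skew}(dA)$''. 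The well-defined map is therefore $[j^1_x\hat s]_{J^2\tilde G}\mapsto(B_x,-\tilde F^{B}_x)$, where $B$ is the gauge transform of $A$ by $g^{-1}$; this coincides with the stated $(A_x,-\tilde F^A_x)$ precisely on representatives with trivial group part, $\hat s=(1,A)$, which is the implicit gauge-fixing under which the Proposition's formula (and your conclusion) must be read. Since your proof neither performs this normalization correctly nor addresses the well-definedness of its output, the key step is a genuine gap.
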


As a result, given an extended section $\hat s\in\Gamma(\pi_{J^1 P,X})$ such that $A=\pi_{J^1 P,C(P)}\circ s\in\Gamma(\pi_{C(P),X})$, then the corresponding reduced section is given by
\begin{equation*}
\overline s=(A,-\tilde F^A)\in\Gamma\left((T^*X\otimes\mathfrak g)\oplus\left(\textstyle\bigwedge^2 T^*X\otimes\mathfrak g\right)\right).
\end{equation*}

\subsubsection{Utiyama's theorem and Yang--Mills equations}

From Proposition \ref{lemma:identificationRHS}, the reduced Lagrangian $l$ depends on two arguments: $(A_x,-F_x)$. However, we now have to take into account that the unreduced Lagrangian $L$ was in fact defined in $J^1(C(P))$ and not in $J^1(J^1P)$. In other words, we have to impose that $l$ is invariant by the action of $J^1(X,G)$. After transferring this (right) action by \eqref{ISOgauj}, we obtain
\begin{equation}
\label{rightinvariance}
\left(A_x,\tilde F_x\right)\cdot(h,\mu_x)=\left(\Ad_{h^{-1}}\circ(A_x+\mu_x),\Ad_{h^{-1}}\circ \tilde F_x\right),
\end{equation}
for each $\left(A_x,\tilde F_x\right)\in(T^*X\otimes\mathfrak g)\times_X\left(\bigwedge^2 T^*X\otimes\mathfrak g\right)$ and $(h,\mu_x)\in G\ltimes(T^*X\otimes\mathfrak g)$. If $l$ must be invariant invariant with respect to this action, then $l$ must not depend on $A_x$ and the dependence on $\tilde{F}_x$ is adjoint invariant. Utiyama's theorem is now straightforward.

\begin{corollary}[Utiyama's theorem]\label{cor:utiyama}
A Lagrangian $L:J^1 C(P)\to\mathbb R$ is gauge invariant if and only if $L=l\circ\tilde F$, where
\begin{equation*}
\tilde F:J^1 C(P)\to\textstyle\bigwedge^2 T^*X\otimes\tilde{\mathfrak g},\quad j_x^1 A\mapsto\tilde F_x^A,
\end{equation*}
is the \emph{reduced curvature map}, and $l:\textstyle\bigwedge^2 T^*X\otimes\tilde{\mathfrak g}\to\mathbb R$ is an $\Ad$-invariant function.
\end{corollary}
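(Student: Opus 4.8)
The plan is to unwind the reduction machinery that has already been set up so that Utiyama's theorem becomes a translation of the invariance condition \eqref{rightinvariance} across the identification \eqref{ISOgauj}. The essential observation is that the content of the corollary is entirely contained in the discussion immediately preceding it: the unreduced Lagrangian $L$ lives on $J^1 C(P)$, but after enlarging the configuration bundle to $J^1 P$ we must view $\hat L$ on $J^1(J^1 P)$, and a function there descends to $J^1 C(P)$ precisely when it is invariant under the action \eqref{eq:actionyangmillsj} of $J^1(X,G)$. So the proof is a biconditional: I must show that gauge invariance of $L$ forces it to factor through the reduced curvature map $\tilde F$ with an $\Ad$-invariant $l$, and conversely that any such $l\circ\tilde F$ is gauge invariant.

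For the forward direction, I would first invoke Proposition \ref{lemma:identificationRHS} to transport the reduced Lagrangian to a function on $(T^*X\otimes\mathfrak g)\oplus(\bigwedge^2 T^*X\otimes\mathfrak g)$, writing its arguments as the pair $(A_x,\tilde F_x)$. The decisive step is to read off the transferred action \eqref{rightinvariance}: since the first component $A_x\mapsto\Ad_{h^{-1}}\circ(A_x+\mu_x)$ allows the translation part $\mu_x\in T^*X\otimes\mathfrak g$ to be arbitrary, invariance under this full family of translations forces $l$ to be constant in the $A_x$ slot. Once the $A_x$-dependence is killed, the residual action on the surviving argument $\tilde F_x$ is just $\tilde F_x\mapsto\Ad_{h^{-1}}\circ\tilde F_x$, so invariance is exactly $\Ad$-invariance of $l$ as a function on $\bigwedge^2 T^*X\otimes\mathfrak g$. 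Passing from the trivializing chart back to the associated-bundle description $\bigwedge^2 T^*X\otimes\tilde{\mathfrak g}$ is routine, since the $\Ad$-equivariance is precisely what makes the fiberwise function well defined on the quotient $\tilde{\mathfrak g}=(P\times\mathfrak g)/G$. The converse is immediate: if $l$ is $\Ad$-invariant and $L=l\circ\tilde F$, then because $\tilde F^A$ is by construction a genuine section-level object (the curvature as a $\tilde{\mathfrak g}$-valued $2$-form, intrinsically attached to the connection $A=[\hat s]_G$ and independent of the jet data modded out by $J^2\tilde G$), $L$ manifestly depends only on $[j^1_x A]$ and is gauge invariant.

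The main obstacle, and the point where care is genuinely required rather than bookkeeping, is justifying that invariance under the translational part of \eqref{rightinvariance} truly eliminates all dependence on $A_x$ and not merely dependence along some subspace. Here I would emphasize that $\mu_x$ ranges over the entire fiber $T^*X\otimes\mathfrak g$, which is exactly the model vector bundle of the affine bundle $C(P)$; hence the orbit of any $A_x$ under translation is the whole affine fiber, and constancy of $l$ on each such orbit is constancy in $A_x$ outright. I would also note that because the action is simultaneously twisting the curvature slot by $\Ad_{h^{-1}}$, one cannot decouple the two invariances naively; the clean way is to first specialize to $h=1_x$ (pure translation) to dispose of $A_x$, and only afterwards let $h$ vary to extract $\Ad$-invariance of the curvature dependence. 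With these two specializations the biconditional closes. The remainder — verifying that the locally derived statement globalizes over $X$ — follows from the fact that the construction of $\tilde F$ and the quotient $\bigwedge^2 T^*X\otimes\tilde{\mathfrak g}$ are manifestly chart-independent, so the local characterization patches to the global one without further hypotheses.
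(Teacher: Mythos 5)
Your proposal is correct and follows essentially the same route as the paper: both transfer the residual $J^1(X,G)$-action across the identification \eqref{ISOgauj} to obtain \eqref{rightinvariance}, use the arbitrariness of the translation part $\mu_x$ to eliminate all dependence on $A_x$, and then read off $\Ad$-invariance of the remaining dependence on the curvature slot, with the converse being immediate. Your extra care (specializing to $h=1_x$ first and only then varying $h$, plus the chart-independence remark) merely makes explicit what the paper compresses into ``Utiyama's theorem is now straightforward.''
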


This represents an alternative proof of the Utiyama's theorem that relies on the gauge reduction theory developed in this article. Roughly speaking, this new approach takes advantage of the gauge invariance of the Lagrangian density to transfer it to the corresponding fibered quotient and, by means of the identification \eqref{ISOgauj}, this quotient space is seen to be nothing but the curvature bundle of the theory. Of course, the $\Ad$-invariance of the reduced Lagrangian is also taken into account from our perspective.

Finally, we explore the reduced equations. It can be checked easily that the dual operator of the map \eqref{eq:tildevarphi} is given by
\begin{equation*}
\varphi_*^\dagger:TX\otimes\left(J^1\tilde{\mathfrak g}\right)^*\to\left(J^1\tilde{\mathfrak g}\right)^*,\quad(\alpha_x,\beta_x)\mapsto(0,\alpha_x).
\end{equation*}
On the other hand, the coadjoint representation given in Lemma \ref{lemma:restrictedcoadjoint} reads
\begin{equation*}
\ad_{\overline s}^*:{\mathfrak H}^\circ\to\left(J^1\tilde{\mathfrak g}\right)^*,\quad(\varpi_x,\Pi_x)\mapsto\left(\ad_{A_x}^*\circ\varpi_x-\ad_{\tilde F_x^A}^*\circ\Pi_x,~\ad_{A_x}^*\circ\Pi_x\right),
\end{equation*}
where ${\mathfrak H}^\circ=(TX\otimes\mathfrak g^*)\times_X\left(\bigwedge^2 TX\otimes\mathfrak g^*\right)$. Then one can check that the reduced equations for a section $(A,-\tilde{F})$ of $(T^*X\otimes \mathfrak{g} )\oplus (\bigwedge ^2 T^*X\otimes \mathfrak{g})$ are
\begin{eqnarray}\label{eq:redeqym1}
\ad_{\tilde F^A}^*\left(\frac{\delta l}{\delta\tilde F}\right)=0,\\\label{eq:redeqym2}
\left(\tilde{\operatorname{div}}-\ad_A^*\right)\left(\frac{\delta l}{\delta\tilde F}\right)=0,
\end{eqnarray}
where we have taken into account that $\partial l /\partial A=0$ from \eqref{rightinvariance}, and $\tilde{\operatorname{div}}:\Gamma(TX\otimes TX\otimes\mathfrak g^*)\to\Gamma(TX\otimes\mathfrak g^*)$ denotes the divergence of $\tilde\nabla$. But equation \eqref{eq:redeqym1} is already satisfied by the adjoint invariance of $l$ described by the other condition provided by \eqref{rightinvariance}. We have thus the single equation \eqref{eq:redeqym2}. Note that, even though $l$ does not depend on $A$, this variable is still a variational field of the reduced problem and it appears in the equations.

In particular, for the Yang--Mills Lagrangian $l =\parallel F\parallel^2$ we recover equation $\star({\rm d}\star F+[A,\star F])=0$, which together with the reconstruction equation ${\rm d}+[A,F]=0$ are the Yang--Mills equations.

%%%%%%%%%%%%%%%%%%%%%%%%%%%%%%%%%%%%%%%%%%%%%%%%%
\paragraph{Acknowledgments.}

MCL and ARA have been partially supported by Ministerio de Ciencia e Innovación (Spain), under grants PGC2018-098321-B-I00 and PID2021-126124NB-I00. ARA has been supported by Ministerio de Universidades (Spain) under an FPU grant.

%%%%%%%%%%%%%%%%%%%%%%%%%%%%%%%%%%%%%%%%%%%%%%%%%%
\bibliographystyle{plain}
\bibliography{biblio_reduction.bib}

\end{document}